\documentclass[11pt]{amsart}
\usepackage[T1]{fontenc}
\usepackage{graphicx} 
\usepackage{amsmath,amsfonts,amssymb,amsthm}
\usepackage{mathtools,mathrsfs} 
\usepackage{pifont} 

\usepackage{float}
\usepackage{tikz-cd}
\usepackage{quiver}
\usepackage{pbox} 
\usepackage{multicol}
\usepackage[margin=1.5in]{geometry}
\usepackage{array}
\usepackage{longtable}
\usepackage{multirow}
\usepackage{verbatim}
\usepackage{fancyvrb} 
\usepackage{fvextra}
\usepackage{soul} 
\usepackage{MnSymbol} 
\usepackage{hyperref}
\hypersetup{
    colorlinks=true,
    linkcolor=black,
    citecolor=black,
    filecolor=black,      
    urlcolor=black,
    }

\usepackage[dvipsnames]{xcolor}

\usepackage{cancel}  

\usepackage{pifont}

\newtheorem{thm}{Theorem}[section]
\newtheorem{lemma}[thm]{Lemma}
\newtheorem{prop}[thm]{Proposition}
\newtheorem{coro}[thm]{Corollary}

\newtheorem{remark}[thm]{Remark}
    
\newcommand{\am}{\operatorname{Am}}
\newcommand{\aut}{\operatorname{Aut}}

\newcommand{\pic}{\operatorname{Pic}}

\newcommand{\pp}{\mathbb{P}}
\newcommand{\cc}{\mathbb{C}}
\newcommand{\zz}{\mathbb{Z}}
\newcommand{\autp}{\operatorname{AutP}}

\numberwithin{equation}{section} 

\author{Shreya Sharma}
\address{Shreya Sharma, University of Basel, Switzerland.}
\email{shreya.sharma@unibas.ch}

\title{Amitsur groups of Imprimitive Fano Threefolds}

\begin{document}
\begin{abstract}
    We classify possible Amitsur groups of smooth imprimitive Fano threefolds defined over $\cc$ that admit a faithful action of a finite group.
\end{abstract}
\maketitle
\section{Introduction}   \label{section:intro}
Let $X$ be a proper variety over $\cc$ with a faithful action of a finite group $G$. The Amitsur group $\am(X,G)$ measures the obstruction to lifting the $G$-action to the total
spaces of $G$-invariant line bundles. The term was introduced in the arithmetic context in
\cite{Liedtke-Amitsur-arithmetic-version}, where it measures the failure of Galois descent for line bundles. The equivariant version was first defined in \cite{BCDP}.

\smallskip
A line bundle $\mathcal{L}$ on $X$ is $G$-invariant if there are isomorphisms $\phi_g:g^*\mathcal{L}\to \mathcal{L}$ for every $g\in G$. For a $G$-invariant line bundle $\mathcal{L}$, the lifting group $G_{\mathcal{L}}$ consists of  isomorphisms $\phi_g:g^*\mathcal{L}\to\mathcal{L}$ lifting $g$ over all $g\in G$, see \cite[2.1]{duncan2025numericalamitsurgroup}. It is an extension of $G$ by $\cc^\times$, which splits if and only if $\mathcal{L}$ is $G$-linearizable. There is an exact sequence of abelian groups 
\begin{equation}   \label{sequence:cohomological-for-amitsur}
1 \rightarrow \operatorname{Hom}(G,\mathbb{C}^\times) \rightarrow \pic(X,G) \rightarrow \pic(X)^G \xrightarrow{\partial} H^2(G,\mathbb{C}^\times),
\end{equation}
see also \cite[\S A]{BCDP}. 

The \emph{Amitsur group} is defined as 
\begin{equation}   \label{defn:Amitsur}
  \am(X,G):= \operatorname{Im}\partial,   
\end{equation}
or equivalently, it is isomorphic to $\operatorname{coker}(\pic(X,G) \to \pic(X)^G)$. 

\smallskip
The Amitsur group has been shown to be an equivariant birational invariant in \cite{BCDP}, where the authors use it to distinguish conjugacy classes of finite subgroups of $\operatorname{Cr}_3(\cc)$. It provides an obstruction to equivariant versions of rationality,
such as linearizability; see for example, \cite[Lemma 2.22]{cheltsov2026equivariantunirationalityfanothreefolds}. 

\smallskip
The Amitsur group is known for smooth curves
\cite{dolga-modular-curves}, and its possibilities for rational surfaces were classified in
\cite[Proposition A.7]{BCDP}. 
The author classified the possible Amitsur groups of smooth primitive Fano threefolds in
\cite{sharma2026amitsurgroupsprimitivefano}.
In addition, the methods for computing these groups for toric varieties
were developed in
\cite{duncan2025numericalamitsurgroup,
kresch2025equivariantunirationalitytoricvarieties}, see \cite[\S 3]{sharma2026amitsurgroupsprimitivefano} for their application to smooth toric Fano threefolds. 

\medskip
In this article, we classify the possible Amitsur groups for the remaining families of smooth imprimitive Fano threefolds. Our main result is the following. 
\begin{thm}
    Let $X$ be a smooth Fano threefold with Picard rank $\rho\leq 5$ and $G\subseteq \aut(X)$ be a finite group. Then the Table \ref{table:amitsur} lists the largest possible Amitsur group $\am(X,G)$. 
    Moreover, for each entry in the table, except possibly for family \textnumero 4.2, there is an $X$ and a group $G$ for which that possibility occurs.   
\end{thm}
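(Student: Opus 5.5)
The plan is to bound $\am(X,G)$ family by family through the Mori--Mukai classification of imprimitive Fano threefolds with $2\le\rho\le 5$, using the exact sequence \eqref{sequence:cohomological-for-amitsur}, and then to exhibit a pair $(X,G)$ realizing each bound.

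\textbf{Universal constraints.} First I record the constraints that hold in every family.
\begin{enumerate}
\item The anticanonical class $-K_X$ is always $G$-linearizable, so $\partial(-K_X)=0$. Hence $\am(X,G)$ is a quotient of $\pic(X)^G/\langle -K_X\rangle$.
\item For any $G$-invariant line bundle $\mathcal L$, the obstruction class $\partial(\mathcal L)$ is killed by $h^0(X,\mathcal L)$ whenever that number is nonzero: the lifting group $G_{\mathcal L}$ acts linearly on $H^0(X,\mathcal L)$, and taking determinants gives the splitting. More generally it is killed by the Euler characteristic $\chi(\mathcal L)$, by the standard Riemann--Roch/determinant argument.
\item The class $\partial(\mathcal L)$ depends only on $\mathcal L$ modulo $G$-linearizable classes. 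Such classes include pullbacks along $G$-equivariant contractions $X\to Y$ of classes already linearizable on $Y$, and exceptional divisors of $G$-invariant blowups. Exceptional divisors of non-invariant blowups, taken as $G$-orbit sums, are linearizable as well.
\end{enumerate}

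\textbf{Computing $\pic(X)^G$.} For each family I determine the possible images of $G$ in $\aut(\pic(X))$, which preserve $K_X$, the intersection form and the Mori cone. I then compute $\pic(X)^G$ in a basis adapted to the extremal contractions. The common cases are:
\begin{itemize}
\item blowups of $\pp^3$, $Q$, $V_d$ or $\pp^1\times\pp^2$ in curves;
\item conic bundles and del Pezzo fibrations over $\pp^2$ or $\pp^1$;
\item products $\pp^1\times S_d$.
\end{itemize}
In each case $\pic(X)^G$ is generated, up to linearizable classes, by pullbacks of the generators $H$ of the bases. Their obstructions are controlled by the Amitsur groups of the base with its induced action. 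The relevant known inputs are:
\begin{itemize}
\item $\am(\pp^n,G)$ is cyclic of order dividing $n+1$;
\item for $\pp^1$ the order divides $2$;
\item for del Pezzo surfaces the possible groups are given by \cite[Proposition A.7]{BCDP};
\item for primitive targets, including $\pp^1\times\pp^2$, $Q$ and $V_d$, I use \cite{sharma2026amitsurgroupsprimitivefano}.
\end{itemize}
Combining these with the divisibility by $h^0$ and $\chi$ gives the upper bound in Table \ref{table:amitsur}. Typical outcomes are trivial, $\zz/2$, $(\zz/2)^2$, $\zz/3$, or $\zz/2\times\zz/4$-type groups coming from $\pp^1\times\pp^1\times\pp^1$-like structures. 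For families of the form $\pp^1\times S$ I use the Künneth-type decomposition: $\am(X,G)$ is a subquotient of $\am(\pp^1)\times\am(S)$, cut down when $G$ swaps factors.

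\textbf{Realization.} For the existence part I construct explicit actions.
\begin{itemize}
\item The typical building block is the Heisenberg-type group $(\zz/2)^2$ acting on $\pp^1$ without linearization, or $(\zz/3)^2$ on $\pp^2$.
\item I extend it to the blowup center by choosing a $G$-invariant curve in the relevant linear system; when the action must be nonlinearizable, $(\zz/2)^2$-invariant curves exist by averaging.
\item I then verify that $\partial$ is nonzero on the chosen class. The check is that some element pair commutes in $G$ but its lifts anticommute on a fiber over a $G$-fixed point, or that the restriction to an invariant curve or fiber has a nontrivial Amitsur class.
\end{itemize}

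\textbf{Main obstacle.} The hard part is the realization step in families where the bound is attained only if the blowup center can be chosen invariant under a group acting nonlinearizably on the base. Family \textnumero 4.2 is an instance: it is the blowup of the quadric cone in a disjoint union of a conic and the vertex. Here an invariant curve compatible with the required non-split extension may not exist, which is why that case is left open. For the upper bounds, the delicate cases are those where $G$ permutes several extremal rays, as in $\rho=3,4$ families with $S_3$ or $S_4$ symmetry. There I expect to compute $H^2$-obstructions on the orbit-sum classes by hand, using restriction to fibers and the divisibility from step (2).
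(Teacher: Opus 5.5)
Your upper-bound strategy matches the paper's in outline. $K_X$ and pullbacks along equivariant morphisms are linearizable. An equivariant blow-up does not change $\am$. And your $\chi$-divisibility is what produces the congruences of Proposition \ref{prop:amitsur-period-congruences}. Two things are missing, though. First, the descent step needs Lemma \ref{Lemma:autblowup}: $G$ descends exactly when $E$ is $G$-invariant, so you must know $\autp(X,G)$. That is a separate classification, which the paper imports from \cite{sharma2026actionspicardgroupsmooth}; it cannot just be ``determined'' in passing. Second, you never say which classes the $h^0$/$\chi$ argument is applied to. The base bound alone is usually far from the table value. Blow-ups of $\pp^3$ inherit $\zz/4\zz$, yet \textnumero 2.4, 2.15, 2.19, 2.27, 2.28, 2.30 and 2.35 have $0$ or $\zz/2\zz$. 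Blow-ups of $\pp^1\times\pp^2$ inherit $\zz/6\zz$, yet \textnumero 3.5, 3.8, 3.21 and 3.22 have $\zz/2\zz$ or $0$. \textnumero 2.16 inherits $\zz/2\zz$ from $V_4$ but has $0$. The cut-down always comes from the specific center:
\begin{itemize}
\item the congruences for $\pi^*H-E$, computed from $\deg C$ and $g(C)$;
\item the linear span of $C$ (the plane of the cubic in 2.28, the plane of the conic in 2.16), which gives $1$- and $3$-dimensional subrepresentations;
\item invariant images of $C$ under the projections (3.22);
\item Lemma \ref{lemma:G-invariance-for-CI}, which makes the equations of a complete-intersection center semi-invariant so that the $D_i$ are linearizable (3.8, 3.21, 3.24, 4.3, 4.6--4.8, 4.13).
\end{itemize}
Your determinant argument can reproduce all of these once it is refined to subrepresentations (Proposition \ref{Gsubrepn_result}) and applied to classes such as $\pi^*H-E$. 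As written, however, the upper half is a template, not a proof.

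The genuine gap is in the realization half. Averaging over a finite lift of $G$ gives invariant or semi-invariant hypersurfaces. It says nothing about whether they cut out a smooth irreducible center of the prescribed type (two cubics in $\pp^3$, a genus-$3$ sextic, a curve of tridegree $(1,1,3)$, \dots). That is the whole content of each existence claim, and the paper settles it only by explicit equations checked case by case.

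Your building blocks also cannot produce the $\zz/4\zz$ entries (\textnumero 2.12, and \textnumero 2.25 as realized in Lemma \ref{Am:2.25}). There even $\mathcal{O}_{\pp^3}(2)$ must be non-linearizable, i.e.\ the class of $\mathcal{O}_{\pp^3}(1)$ must have order $4$. The paper gets this from lifts whose commutator is the scalar $i$. By contrast, $(\zz/2\zz)^2$ on $\pp^1$ and $(\zz/3\zz)^2$ on $\pp^2$, or Segre products of them, give only $2$- and $3$-torsion.

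Your test ``lifts anticommute on a fiber over a $G$-fixed point'' fails if it refers to a fiber of $\mathcal{L}$. If $G$ fixes $x\in X$, then $G_{\mathcal{L}}$ acts on the line $\mathcal{L}_x$ by a character that restricts to the identity on $\cc^\times$. This splits the extension, so $\partial(\mathcal{L})=0$. The valid version is restriction to a $G$-invariant subvariety without $G$-fixed points, e.g.\ a fiber of a fibration over a fixed point of the base.

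Finally, a uniform recipe overreaches. The paper itself cannot realize the $\autp(X,G)=(\zz/2\zz)^2$ row of \textnumero 4.1 (Remark \ref{remark:for4.1}(iv) and the ``?'' in Table \ref{table:amitsur}). Your description of \textnumero 4.2 is also wrong: the center is the vertex together with an elliptic curve of bidegree $(2,2)$, not a conic. What is open there is whether the bound $\langle\overline{h_1},\overline{h_2}\rangle\subseteq(\zz/2\zz)^2$ of Lemma \ref{Am:4.2}, for $\autp(X,G)=\langle\iota^*\rangle$, is attained.
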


\begin{remark}
 \normalfont \hfill
 \begin{enumerate}
 \item[(i)] If $X$ is a smooth Fano threefold in family \textnumero 4.1 with $\autp(X,G)=(\zz/2\zz)^2$, we only provide an upper bound on $\am(X,G)\subseteq \zz/2\zz$; see Remark \ref{remark:for4.1} (iv). The question of whether this bound can be realized is not yet answered. 
     \item[(ii)] If $X$ is a smooth Fano threefold in family \textnumero 4.2, then Lemma \ref{Am:4.2} provides an upper bound on $\am(X,G) \subseteq (\zz/2\zz)^2$. It is not yet known whether this bound is achieved. 
     \item[(ii)] A smooth Fano threefold $X$ with $\rho(X)\geq 6$ belongs to one of the families 
     \begin{center}
        \textnumero 6.1,\; \textnumero 7.1,\; \textnumero 8.1,\; \textnumero 9.1,\; \textnumero 10.1 
     \end{center} 
     and is isomorphic to $\pp^1 \times S_d$, where $S_d$ is a del Pezzo surface of degree $d\leq 5$. For all these families, the computations of Amitsur groups is reduced to the study of the induced actions on the two factors, see \S \ref{section:directproducts}. 
 \end{enumerate}
\end{remark}

Our approach uses the natural action of $\aut(X)$ on $\pic(X)$, 
\begin{equation}  \label{autpaction}
    \operatorname{Aut}(X) \xrightarrow{\alpha} \operatorname{Aut}\operatorname{Pic}(X).
\end{equation}
For a finite subgroup $G\subseteq\aut(X)$, set
$\autp(X,G):=\alpha(G)$, and write $\autp(X):=\alpha(\aut(X))$.
Since
\[
\pic(X)^G=\pic(X)^{\autp(X,G)},
\]
the image $\autp(X,G)$ determines the $G$-invariant Picard subgroup, $\pic(X)^G$. We use the results on $G$-actions on $\pic(X)$ obtained by the author in
\cite{sharma2026actionspicardgroupsmooth}, see Table \ref{table:amitsur} for a summary. In many of our examples, we begin by identifying classes in $\pic(X)^G$ that are known to be $G$-linearizable, such as the class of canonical divisor. Quotienting by the subgroup they generate restricts the possibilities for $\am(X,G)$ and yields upper bounds for this group.
We then construct explicit members $X$ of the deformation families, together with finite subgroups $G\subseteq\aut(X)$, to determine which of the nontrivial possibilities are realized.
 
\medskip
\subsection*{Outline} The document is organized as follows. In \S 2, we recall the basic definitions and machinery concerning linearizations of line bundles and Amitsur groups. We also establish some general results concerning imprimitive Fano threefolds and complete intersections. In \S 3, we consider families whose members arise as blow-ups of del Pezzo and quadric threefolds, and in \S 4 those that arise as blow-ups of projective space. Sections 5 and 6 concern Fano threefolds described as divisors and complete intersections, respectively, in products of projective spaces. Sections 7 and 8 deal with blow-ups of $\pp^1\times\pp^2$ and $\pp^1\times\pp^1\times\pp^1$, respectively. In \S 9, we treat the remaining imprimitive Fano threefolds. In \S 10, we consider families containing a member of the form $\pp^1\times S_d$, where $S_d$ is a del Pezzo surface of degree $d$. Finally, in \S 11, we summarize the classification in a table listing the largest possible Amitsur groups for all primitive and imprimitive Fano threefolds.

\subsection*{AI Declaration} The author declares that generative AI has not been used in any part of this work.  

\section{Preliminaries}
\subsection{Notation and Conventions} Throughout, all varieties are assumed to be smooth, projective, and defined over $\cc$. 
A deformation family of smooth Fano threefolds will be denoted by \textnumero $\rho$.N, where $\rho$ is the Picard rank of its members and $N$ is its number in the Mori-Mukai classification \cite{MM81classtable, MMerratum}. So, by a member $X$ of a deformation family \textnumero $\rho.N$, we would mean a smooth Fano threefold belonging to the family. 

\smallskip
Let $X$ be a $G$-variety, that is, a variety admitting action by a finite group. 

We write $\pic(X)^G$ for the group of isomorphism classes of $G$-invariant line bundles on $X$ and $\pic(X,G)$ for the group of isomorphism classes of $G$-linearizable line bundles for a fixed choice of linearization. Every $G$-linearizable line bundle is $G$-invariant, so the forgetful homomorphism $\pic(X,G) \to \pic(X)^G$ fits into the exact sequence \eqref{sequence:cohomological-for-amitsur}. We recall that the Amitsur group is defined as 
\[
\am(X,G):= \operatorname{Im}\partial \cong \operatorname{coker}(\pic(X,G)\to \pic(X)^G).
\]

\smallskip
We use additive notation for $\pic(X)$. When no confusion can arise, we use the same notation $D$ for a Cartier divisor, the line bundle $\mathcal{L}:=\mathcal{O}_X(D)$, and its class $[\mathcal{L}] \in \pic(X)$. A divisor $D\in \pic(X)^G$ is $G$-linearizable if corresponding line bundle class $\mathcal{O}_X(D)$ is $G$-linearizable, and we write $\overline{D}:= \partial(D) \in \am(X,G)$ for the image of divisor $D\in \pic(X)^G$. In particular, $\overline{D}=0$ means that $D$ admits a $G$-linearization. This notation requires the class of $D$ to be $G$-invariant but the divisor itself need not be $G$-invariant.

\subsection{Linearization Results and Amitsur Period}

\begin{prop}\cite[Proposition 2.11]{BCDP}   \label{KXlinearizable}
    If $X$ is a smooth $G$-variety, then the canonical bundle has a canonical linearization. 
\end{prop}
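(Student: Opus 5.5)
The plan is to write down the linearization explicitly, using the pushforward of differential forms. For each $g\in G$, let $g\colon X\to X$ denote the corresponding automorphism. Since $X$ is smooth, $\omega_X=\Omega^n_X=\det\Omega^1_X$ is a line bundle, where $n=\dim X$. Pullback of forms gives a natural isomorphism $dg\colon g^*\Omega^1_X\xrightarrow{\sim}\Omega^1_X$. Taking the $n$-th exterior power gives an isomorphism
\[
\phi_g:=\wedge^n(dg)^{\vee\vee}\colon g^*\omega_X\xrightarrow{\sim}\omega_X,
\]
or, more concretely, $\phi_g$ is the map sending a local $n$-form to its pullback (inverse pushforward) along $g$. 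I would fix the convention so that on local sections $\phi_g(g^*\eta)=(g^{-1})^*\eta$, or equivalently whichever of $g^*$ or $(g^{-1})^*$ is compatible with the paper's left-action convention for $G_{\mathcal L}$.

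The key step is to verify the cocycle condition
\[
\phi_{gh}=\phi_h\circ h^*(\phi_g)\qquad(\text{or the analogous order for the chosen convention}).
\]
This follows from functoriality of pullback of differential forms: $(gh)^*=h^*g^*$ on $\Omega^1_X$, and hence on $\wedge^n\Omega^1_X$. Together with $\phi_e=\mathrm{id}$, this shows that $g\mapsto\phi_g$ is a group-theoretic section $G\to G_{\omega_X}$ of the extension $1\to\cc^\times\to G_{\omega_X}\to G\to1$. The extension therefore splits, and $\omega_X$, and hence $K_X$, is $G$-linearizable. The linearization is canonical because no choices were made: the construction uses only the functor $\Omega^n$. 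It also automatically gives the tangent-bundle linearization on $\omega_X^{-1}=\det T_X$ by taking duals. In the notation of the paper, this means $\overline{K_X}=0$ in $\am(X,G)$.

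The only step requiring care, and the one I expect to be the main obstacle, is bookkeeping rather than mathematics. It consists of matching the direction of the maps (pullback versus pushforward, and left versus right action) with the precise definition of the lifting group in \cite[2.1]{duncan2025numericalamitsurgroup}, so that the identity $\phi_{gh}=\phi_h\circ h^*\phi_g$ holds exactly and not merely up to a scalar. I would check this on a local chart, where an $n$-form $f\,dx_1\wedge\cdots\wedge dx_n$ transforms by the Jacobian determinant. The chain rule for Jacobians gives the cocycle identity on the nose, with no scalar ambiguity. Smoothness is used precisely so that $\Omega^n_X$ is invertible and equal to $\mathcal O_X(K_X)$.
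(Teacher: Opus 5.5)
Your proposal is correct and is the standard argument: the paper gives no proof of its own and simply imports the statement from \cite[Proposition 2.11]{BCDP}, where the canonical linearization is the one induced by the natural action of $G$ on differential forms (the canonical maps $g^*\Omega^1_X\to\Omega^1_X$ and their determinants), with the cocycle identity $\phi_{gh}=\phi_h\circ h^*\phi_g$ following, as you say, from the functoriality $(gh)^*=h^*g^*$. One cosmetic correction: the natural map $\phi_g\colon g^*\omega_X\to\omega_X$ sends the formal pullback $g^*\eta$ to the honest pullback of the form $\eta$ along $g$; your formula with $(g^{-1})^*\eta$ describes the induced left action on sections rather than $\phi_g$, and the double dual in your definition of $\phi_g$ is superfluous.
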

Since linearizations are preserved under pullbacks along $G$-equivariant morphisms, canonical divisors and their equivariant pullbacks provide relations in $\am(X,G)$. 

\smallskip
The following is another consequence of this proposition. 
\begin{prop}\cite[Proposition A.5]{BCDP}   \label{Prop:rkPic_G=1}
    If $X$ is a smooth Fano variety of index $1$ with $\operatorname{rk}\pic(X)^G=1$, then $\am(X,G)=0$ for all finite groups $G\subseteq \aut(X)$.
\end{prop}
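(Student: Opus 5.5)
The proof rests on the canonical linearization of $K_X$ from Proposition \ref{KXlinearizable} together with the exact sequence \eqref{sequence:cohomological-for-amitsur}. Let $r$ denote the index of $X$, and write $-K_X = rH$ for a primitive class $H\in\pic(X)$.

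Since $\operatorname{rk}\pic(X)^G=1$ and $\pic(X)$ is torsion-free for a smooth Fano variety, $\pic(X)^G$ is infinite cyclic. Let $D$ be its generator, oriented so that $D$ is ample. The class $-K_X$ is $G$-invariant, because $g^*K_X\cong K_X$ for every automorphism $g$. Hence $-K_X = mD$ for some integer $m\ge 1$.

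Next we compare $m$ with the index. Since $-K_X = mD$ with $D\in\pic(X)$, the integer $m$ divides the Fano index $r$, which is the largest integer dividing $-K_X$ in $\pic(X)$. As $r=1$, we get $m=1$. Therefore $-K_X$ itself generates $\pic(X)^G$.

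By Proposition \ref{KXlinearizable}, $K_X$ carries a canonical $G$-linearization, and so does its dual $-K_X$. So $\partial(-K_X)=0$, that is, $\overline{-K_X}=0$. Because $\partial$ is a homomorphism and $-K_X$ generates $\pic(X)^G$, we conclude $\am(X,G)=\operatorname{Im}\partial=0$.

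The only step needing care is the index argument that forces $m=1$. It uses just two facts: the Fano index is the divisibility of $-K_X$ in $\pic(X)$, and $\pic(X)$ is torsion-free. Both are standard for smooth Fano varieties, since $H^1(X,\mathcal O_X)=H^2(X,\mathcal O_X)=0$ and $\pi_1(X)=1$.
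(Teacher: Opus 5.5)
Your proof is correct and follows the paper's argument. The paper simply asserts that the hypotheses give $\pic(X)^G=\zz[K_X]$ and then invokes Proposition \ref{KXlinearizable}. You supply the details behind that assertion, namely that $\pic(X)$ is torsion-free and that index $1$ forces $m=1$.
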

\begin{proof}
    The hypotheses imply $\pic(X)^G=\zz[K_X]$. The result follows from Proposition \ref{KXlinearizable}.
\end{proof}

Let $\mathcal{L}\in \pic(X)^G$. The \emph{Amitsur period} of $\mathcal{L}$, denoted $m(\mathcal{L})$, is the order of $\partial(\mathcal{L})$ in $H^2(G,\cc^\times)$, see \cite[Definition 2.5]{duncan2025numericalamitsurgroup}. If $D$ is a divisor and $\mathcal{L}=\mathcal{O}_X(D)$, we write $m(D):=m(\mathcal{L})$. 

\begin{prop}(\cite[Proposition 2.6]{duncan2025numericalamitsurgroup}) \label{Gsubrepn_result}
    Let $X$ be a $G$-variety and let $\mathcal{L}$ be a $G$-invariant line bundle on $X$ with Amitsur period $m(\mathcal{L})$. If there exists a $G_{\mathcal{L}}$-subrepresentation of $H^0(X,\mathcal{L})$ of dimension $d$, then $d\partial(\mathcal{L})=0$. In particular, $m(\mathcal{L})$ divides $d$.
\end{prop}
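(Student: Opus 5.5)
The approach is to work directly with the lifting group $G_{\mathcal{L}}$, the central extension
\[
1\to \cc^\times \to G_{\mathcal{L}} \to G \to 1
\]
whose class in $H^2(G,\cc^\times)$ is $\partial(\mathcal{L})$. The group $G_{\mathcal{L}}$ acts linearly on $H^0(X,\mathcal{L})$, with the central $\cc^\times$ acting by scalars. Let $V\subseteq H^0(X,\mathcal{L})$ be a $G_{\mathcal{L}}$-subrepresentation of dimension $d$. Consider the determinant representation $\det V=\Lambda^d V$, a one-dimensional representation of $G_{\mathcal{L}}$ on which $t\in\cc^\times$ acts by $t^d$.

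The key step is to translate this character into a statement about the $d$-th power extension. The class $d\,\partial(\mathcal{L})$ is represented by the pushout of the extension along $\cc^\times\to\cc^\times$, $t\mapsto t^d$. Concretely, it is the lifting group of $\mathcal{L}^{\otimes d}$, since $\partial$ is a homomorphism by the exact sequence \eqref{sequence:cohomological-for-amitsur}. Equivalently, one can argue purely group-theoretically: a central extension $E$ of $G$ by $\cc^\times$ has trivial class after pushing out along the $d$-th power map if and only if there is a homomorphism $\chi\colon E\to\cc^\times$ with $\chi(t)=t^d$ on the center. Indeed, such a $\chi$ induces a map from the pushout to $\cc^\times$ restricting to the identity on $\cc^\times$, which is a retraction and hence splits the pushed-out sequence. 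Applying this with $\chi=\det V$ gives $d\,\partial(\mathcal{L})=0$.

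To make the splitting explicit: choose a set-theoretic section $s\colon G\to G_{\mathcal{L}}$ with cocycle $c(g,h)=s(g)s(h)s(gh)^{-1}\in\cc^\times$. Applying $\chi$ gives
\[
c(g,h)^d=\chi(s(g))\,\chi(s(h))\,\chi(s(gh))^{-1},
\]
so $c^d$ is a coboundary. Since $H^2(G,\cc^\times)$ is written additively, this says exactly $d\,\partial(\mathcal{L})=0$. The divisibility $m(\mathcal{L})\mid d$ then follows from the definition of the order of $\partial(\mathcal{L})$.

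The only point needing care is confirming that the central $\cc^\times\subseteq G_{\mathcal{L}}$ acts on sections by the tautological scalar multiplication, so that $\det V$ restricts to $t\mapsto t^d$. This holds because the elements of $\cc^\times$ are the automorphisms of $\mathcal{L}$ lifting the identity, which act fiberwise by scalars. Beyond this, the argument is routine.
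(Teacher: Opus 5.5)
Your argument is correct. The determinant of the $d$-dimensional $G_{\mathcal{L}}$-subrepresentation is a character of $G_{\mathcal{L}}$ that restricts to $t\mapsto t^{d}$ on the central $\cc^\times$; with the opposite convention for the action on sections it is $t^{-d}$, which is harmless. Evaluating this character on a set-theoretic section then shows that the $d$-th power of the cocycle is a coboundary.

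The paper gives no proof of its own and simply cites \cite{duncan2025numericalamitsurgroup}. Your determinant argument is the standard proof of this fact, namely that the Schur class of a $d$-dimensional projective representation is $d$-torsion.
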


For Fano threefolds, intersection products impose further restrictions on these periods, providing a uniform bound.

\begin{prop}( \cite[Proposition 4.8]{duncan2025numericalamitsurgroup})   \label{prop:amitsur-period-congruences}
    Suppose $X$ is a smooth Fano threefold and $D\in \pic(X)^G$. Then $m(D)$ divides $12$ and 
    \[
    2D^3 \equiv 6 \pmod{m(D)}, \quad D^2\cdot K_X \equiv 4 \pmod{2m(D)}.
    \]
\end{prop}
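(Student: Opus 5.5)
The statement to prove is Proposition \ref{prop:amitsur-period-congruences}: for $D\in\pic(X)^G$ on a smooth Fano threefold, $m(D)\mid 12$, $2D^3\equiv 6 \pmod{m(D)}$, and $D^2\cdot K_X\equiv 4\pmod{2m(D)}$. The plan is to deduce these congruences from Proposition \ref{Gsubrepn_result} applied to many line bundles simultaneously, using Riemann--Roch and Kawamata--Viehweg vanishing to control the dimensions of their spaces of sections.

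\textbf{Step 1: additivity and the twisted bundles.} Since $\partial$ is a homomorphism, $\partial(nD + kK_X) = n\,\partial(D)$ for all integers $n,k$; this uses Proposition \ref{KXlinearizable}, which gives $\partial(K_X)=0$. The lifting group $G_{\mathcal L}$ for $\mathcal{L}=\mathcal{O}_X(nD-K_X)$ acts linearly on the whole space $H^0(X,\mathcal L)$. Proposition \ref{Gsubrepn_result}, applied with the full space as subrepresentation, then gives
\[
\operatorname{ord}\bigl(n\,\partial(D)\bigr) \;\big|\; h^0(X,nD-K_X).
\]
The same holds after replacing $h^0$ by the Euler characteristic $\chi$ whenever the higher cohomology vanishes.

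\textbf{Step 2: a polynomial in $n$.} Kawamata--Viehweg vanishing shows $h^i(nD-K_X)=0$ for $i>0$ whenever $nD-K_X - K_X$ is ample. Since $-K_X$ is ample, this holds for $n$ in a range near $0$ (at least $n=0$), and for $D$ replaced by $D+ t(-K_X)$ it holds for all $n$ after a shift. Hence I will work with $D' = D - tK_X$ for $t\gg0$. This has the same image under $\partial$ and is ample, and $nD'-K_X$ has vanishing higher cohomology for all $n\ge 0$. Riemann--Roch on a threefold with $\chi(\mathcal O_X)=1$ and $c_2\cdot K_X=-24$ gives
\[
\chi(L)=\tfrac{1}{12}L\cdot(L-K_X)\cdot(2L-K_X)+\tfrac{1}{12}c_2\cdot L+1 .
\]
Evaluating at $L = nD'-K_X$ yields an integer-valued cubic polynomial $P(n)$ in $n$. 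By Step 1, $m\mid P(n)$ for every $n$ with $\gcd(n,m)=1$, and more generally $\operatorname{ord}(n\partial D)\mid P(n)$.

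\textbf{Step 3: extracting the congruences.} Choosing $n\equiv 1 \pmod m$ along an arithmetic progression shows that $P$ reduced mod $m$ vanishes on the residue class $1$. Taking finite differences of $P$ along $n=1+km$ then gives congruences on the coefficients. The coefficients are combinations of $D'^3$, $D'^2K_X$, $D'K_X^2$ and $c_2D'$, and $c_2 D'$ can be eliminated via the integrality of $\chi(D')$ and $\chi(-D')$. Translating back from $D'$ to $D$ via $D'=D-tK_X$ should produce the congruences on $2D^3$ and $D^2\cdot K_X$. Using $n=0$, where $P(0)=h^0(-K_X)=-K_X^3/2+3$, together with the order statements for $n$ sharing factors with $m$, should bound $m$ to divide $12$. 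Heuristically, the universal integrality denominators in Riemann--Roch are $12$ and $2$.

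\textbf{Main obstacle.} The hardest part is Step 3: getting precisely the stated moduli ($m$ for $2D^3$ and $2m$ for $D^2K_X$), and the divisibility $m\mid 12$, rather than weaker ones. This requires carefully combining the evaluations at several $n$ with the parity facts $K_X^3$ even and $D\cdot(D-K_X)\cdot K_X$ even, the latter from integrality of $\chi$. If direct evaluation does not suffice, I would also apply Proposition \ref{Gsubrepn_result} to $-nD'$ twisted by multiples of $-K_X$, and use Serre duality to pair the resulting values $P(-n)$ with $P(n)$.
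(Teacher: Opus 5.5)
\textbf{There is a genuine gap: Step 3, where all the content of the proposition lies, is not carried out.} The paper gives no proof of its own here; it imports the statement from the cited reference, so your proposal has to stand on its own. Steps 1 and 2 are sound. Step 3, however, is a list of hopes rather than an argument, and the mechanisms it names do not produce the congruences.

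\emph{Why the proposed route fails.}
Sampling the single cubic $P(n)=\chi(nD'-K_X)$ on the class $n\equiv 1\pmod m$ cannot work. The unknown $c_2\cdot D'$ enters $P$ only through its linear coefficient, so $m\mid P(1)$ on its own merely constrains $c_2\cdot D'$. Meanwhile $P(1+km)-P(1)$ is $m$ times a rational number with denominator dividing $12$, so it yields only integrality-type conditions. For $m=5$ these hold automatically, so this route cannot even exclude $m=5$.

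Three further problems:
\begin{itemize}
\item Integrality of $\chi(\pm D')$ pins down $c_2\cdot D'$ only modulo $12$, not modulo $m$, so it does not eliminate it.
\item The evaluation at $n=0$ imposes no condition at all, since $\partial(-K_X)=0$; it cannot bound $m$.
\item Your fallback ideas (other twists, Serre duality, $n$ sharing factors with $m$) point in the right direction but are never assembled. The final translation from $D'$ back to $D$ would also need congruences for $D\cdot K_X^2$ and $K_X^3$ modulo $m$, which you do not derive.
\end{itemize}

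\emph{The missing idea.} Use the two Riemann--Roch combinations in which $c_2\cdot D$ cancels identically, and compare $D$ with $2D$. Write $m=m(D)$; the period of $2D$ is $m_2:=m/\gcd(m,2)$. Using $\chi(\mathcal{O}_X)=1$,
\[
\chi(D)+\chi(-D)=2-\tfrac12\,D^2\cdot K_X,\qquad \chi(2D)-2\chi(D)=D^3-\tfrac12\,D^2\cdot K_X-1.
\]
The determinant argument proving Proposition \ref{Gsubrepn_result} applies to every $H^i(X,\mathcal{O}_X(D))$, since the central $\cc^\times$ of the lifting group acts on it by scalars. Hence $m(D)$ divides every $h^i$, and therefore divides $\chi(D)$. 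This gives $m\mid\chi(\pm D)$ and $m_2\mid\chi(\pm 2D)$ directly, with no need for $D'$ or a vanishing theorem.

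\emph{How the three claims follow.}
\begin{itemize}
\item The first identity, with $m\mid\chi(\pm D)$, gives $D^2\cdot K_X\equiv 4\pmod{2m}$.
\item Applying the first identity to $2D$ gives $m_2\mid 2-2D^2\cdot K_X$. By the previous congruence, $2-2D^2\cdot K_X\equiv -6\pmod m$. Hence $m_2\mid 6$, and so $m\mid 2m_2\mid 12$.
\item The second identity gives $m_2\mid D^3-\tfrac12 D^2\cdot K_X-1$, and the right side is $\equiv D^3-3\pmod m$. Hence $m_2\mid D^3-3$, and so $m\mid 2m_2\mid 2D^3-6$.
\end{itemize}
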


\subsection{Equivariant Morphisms and Blow-ups}
Let $X$ be a $G$-variety. Then $\am(X,-)$ is a contravariant functor. 

\begin{remark}  \label{remark:equivariant-restrction-Amitsur}
\normalfont Let $H\subseteq G$ be a subgroup. Then a line bundle $\mathcal{L}\in \pic(X)^G$ is not $G$-linearizable if it is not $H$-linearizable. 
\end{remark}
See \cite[Proposition 2.12]{duncan2025numericalamitsurgroup} for a generalization to any group homomorphism $\phi: H \to G$. 

\begin{prop}  \label{prop:g-eq-morphism-sameAmitsur}
    Let $f: X \to Y$ be a $G$-equivariant morphism such that $f^*: \pic(Y) \to \pic(X)$ is an isomorphism. Then $\am(X,G)=\am(Y,G)$. 
\end{prop}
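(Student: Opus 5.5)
The plan is to compare the exact sequences \eqref{sequence:cohomological-for-amitsur} for $X$ and $Y$, linked by pullback along $f$, and show that the pullback maps are isomorphisms at both the $\pic(-)^G$ and the $\pic(-,G)$ levels. Since $\am(-,G)$ is the cokernel of $\pic(-,G)\to\pic(-)^G$, the two Amitsur groups then agree.

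First, $f^*$ is $G$-equivariant: for $g\in G$ we have $f\circ g = g\circ f$, hence $g^*f^* = f^*g^*$ on $\pic(Y)$. Since $f^*:\pic(Y)\to\pic(X)$ is a group isomorphism commuting with the $G$-actions, it restricts to an isomorphism $\pic(Y)^G\xrightarrow{\sim}\pic(X)^G$. Second, pullback of linearized line bundles along an equivariant morphism is again linearized: a linearization $\phi_g:g^*\mathcal{L}\to\mathcal{L}$ pulls back to $f^*\phi_g: g^*f^*\mathcal{L}\to f^*\mathcal{L}$, and the cocycle condition is preserved. This gives a map of lifting groups $G_{\mathcal{L}}\to G_{f^*\mathcal{L}}$ over $G$ that is the identity on $\cc^\times$. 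Here we use that $f$ is proper and surjective with connected fibers, so $\cc^\times$-automorphisms agree. If one wants to avoid this, it suffices to note that a map of central extensions of $G$ by $\cc^\times$ compatible with the identity on both ends is an isomorphism.

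From this we get that the extension classes coincide, $\partial_X(f^*\mathcal{L})=\partial_Y(\mathcal{L})$ in $H^2(G,\cc^\times)$; equivalently, the square formed by $f^*$ and the two boundary maps commutes. Since $f^*$ is bijective on $\pic(-)^G$, we conclude
\[
\am(X,G)=\partial_X(\pic(X)^G)=\partial_X(f^*\pic(Y)^G)=\partial_Y(\pic(Y)^G)=\am(Y,G)
\]
as subgroups of $H^2(G,\cc^\times)$.

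The main point to check carefully is the compatibility $\partial_X\circ f^*=\partial_Y$, that is, that pulling back does not change the extension class of the lifting group. We will verify this directly: choose set-theoretic lifts $\phi_g$ for $\mathcal{L}$ on $Y$ with $\phi_{gh}=c(g,h)\,\phi_g\circ g^*\phi_h$. Pulling back gives lifts $f^*\phi_g$ for $f^*\mathcal{L}$ satisfying the same relation with the same scalars $c(g,h)$, because $f^*$ is linear on isomorphisms of line bundles. Hence both bundles define the same cocycle, which is exactly what is needed. No hypothesis beyond $f^*$ being an isomorphism on Picard groups is required.
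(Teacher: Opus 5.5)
Your argument is correct and is essentially the standard one the paper cites. You show that $f^*$ restricts to an isomorphism $\pic(Y)^G\to\pic(X)^G$, and that pulling back lifts $\phi_g$ with cocycle $c(g,h)$ gives lifts of $f^*\mathcal{L}$ with the same cocycle, so $\partial_X\circ f^*=\partial_Y$ and the two images in $H^2(G,\cc^\times)$ coincide.

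Do drop the aside that $f$ is ``surjective with connected fibers''. It is not a hypothesis, and it fails for the closed embeddings $\iota\colon X\hookrightarrow Y$ to which the paper applies this proposition in Sections 5 and 6. It is also unnecessary: $f^*$ sends the scalar automorphism $c$ of $\mathcal{L}$ to the scalar $c$ of $f^*\mathcal{L}$, and your cocycle computation uses nothing more.
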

\begin{proof}
    See \cite[Proposition 2.6]{sharma2026amitsurgroupsprimitivefano} and \cite[Lemma 2.1]{kresch-unramified-brauergroup} for a justification.
\end{proof}

A Fano threefold $X$ is called \emph{imprimitive} if it is isomorphic to the blow-up of a Fano threefold $Y$ along a smooth irreducible curve $C\subset Y$ \cite{MM81classtable}, otherwise it is called primitive. The notation $\aut(Y;C)$ will be used for the group of automorphisms of $Y$ that preserve $C$. 

By the universal property of blow-ups, the following lifting lemma provides a necessary and sufficient condition for automorphisms of $Y$ to lift to $X$.

\begin{lemma}    \label{Lemma:autblowup}
   Let $X$ be an imprimitive Fano threefold obtained by blowing-up a Fano threefold $Y$ in a curve $C$, and let $E\subset X$ denote the exceptional divisor. There is a natural map $\aut(Y;C) \to \aut(X)$. Conversely, $G \subseteq \aut(X)$ is a lift of a subgroup of $\aut(Y)$ if and only if $g^*E=E$ for all $g\in G$. 
\end{lemma}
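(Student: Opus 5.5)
We treat the two directions separately and use only the universal property of blow-ups together with the fact that the exceptional divisor is rigid on $X$.

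\emph{Construction of the map.} Write $\pi\colon X=\mathrm{Bl}_C Y\to Y$. Take $h\in\aut(Y;C)$, so $h(C)=C$ and $h^{-1}\mathcal{I}_C\cdot\mathcal{O}_Y=\mathcal{I}_C$. The composite $h\circ\pi\colon X\to Y$ pulls $\mathcal{I}_C$ back to $\pi^{-1}\mathcal{I}_C\cdot\mathcal{O}_X=\mathcal{O}_X(-E)$, which is invertible. The universal property of the blow-up then gives a unique $\tilde h\colon X\to X$ with $\pi\circ\tilde h=h\circ\pi$. Uniqueness gives $\widetilde{h_1h_2}=\tilde h_1\tilde h_2$ and $\widetilde{\mathrm{id}}=\mathrm{id}$. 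Hence $\tilde h$ is an automorphism, and $h\mapsto\tilde h$ is a homomorphism $\aut(Y;C)\to\aut(X)$. It is injective, since $\pi$ is birational and surjective. Finally, $\tilde h$ maps $E=\pi^{-1}(C)$ onto $\pi^{-1}(h(C))=E$, so $\tilde h^*E=E$.

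\emph{Converse.} The computation just made proves the forward direction: a lifted group satisfies $g^*E=E$ for every $g$. For the other direction, suppose $g\in\aut(X)$ satisfies $g^*E=E$. Here $E$ is the irreducible reduced exceptional divisor; if one only knows equality of classes in $\pic(X)$, note that $E$ is the unique effective divisor in its class, because $h^0(X,\mathcal{O}_X(E))=1$ for an exceptional divisor. Then $g(E)=E$.

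We show that $\pi\circ g$ factors through $\pi$. The morphism $\pi$ is the contraction of the $K_X$-negative extremal ray $R$ spanned by the class $f$ of a fibre of $E\to C$, so it suffices to show $g_*R=R$. The curves contracted by $\pi$ are exactly the curves lying in fibres of $E\to C$. Since $g$ preserves $E$, it maps the covering family of fibres of $E$ to a covering family of rational curves on $E$ with the same numerical invariants $E\cdot\ell=-1$ and $K_X\cdot\ell=-1$. If $E\to C$ is the unique ruling of $E$ by such curves, then $g$ maps fibres to fibres and $g_*[f]=[f]$.

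By rigidity (the factorization lemma for contractions), $\pi\circ g$ then descends to a unique morphism $h\colon Y\to Y$ with $\pi\circ g=h\circ\pi$. The map $h$ is bijective, with inverse induced by $g^{-1}$, so $h\in\aut(Y)$, and $h(C)=h(\pi(E))=\pi(g(E))=C$. By uniqueness in the first part, $\tilde h=g$. Applying this to every $g\in G$ exhibits $G$ as the lift of a subgroup of $\aut(Y;C)$.

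\emph{Main obstacle.} The delicate step is showing that $g$ preserves the ray $[f]$, rather than merely the divisor $E$. If $E\cong\pp^1\times\pp^1$, the surface has two rulings, and an automorphism could a priori swap them. The first thing to try is to observe that the normal bundle distinguishes the rulings: $\mathcal{O}_E(E)$ restricts with degree $-1$ on fibres $f$, but the degree on the other ruling is computed from $\deg N_{C/Y}$ and generally differs. Since $g$ preserves $E$, it preserves $\mathcal{O}_E(E)$, and so it cannot swap rulings on which this degree differs. If the degrees happen to agree, then, as $f$ spans an extremal ray whose contraction is the given $\pi$, we would instead use that $g^*$ preserves both the nef cone and the class $\pi^*\pic(Y)=E^\perp\cap\{\text{classes vanishing on }f\}$. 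Establishing this from $g^*E=E$ alone is precisely the content of the converse.
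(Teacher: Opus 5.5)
Your construction of $\aut(Y;C)\to\aut(X)$ via the universal property is correct. So are the forward implication, the observation that $E$ is the only effective divisor in its class, and the reduction of the converse (by the rigidity lemma) to showing that $g|_E$ maps fibres of $E\to C$ to fibres. This already says more than the paper, which only cites the universal property of blow-ups.

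However, the converse is not proved. In the one case that is not automatic, $E\cong\pp^1\times\pp^1$, you only say the degrees ``generally'' differ. Your fallback for the equal-degree case is circular: that $g$ preserves $\pi^*\pic(Y)$ (i.e.\ the ray $\mathbb{R}_{\ge 0}[f]$) is exactly what has to be shown.

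This is not a formality. The equal-degree case is $N_{C/Y}\cong\mathcal{O}(-1)^{\oplus 2}$, where $E$ contracts along either ruling and the converse genuinely fails. For example, $X=\pp_{\pp^1\times\pp^1}(\mathcal{O}\oplus\mathcal{O}(1,1))$ (family 3.31) is the blow-up of $Y=\pp_{\pp^1}(\mathcal{O}\oplus\mathcal{O}(1)^{\oplus 2})$ along its $(-1,-1)$-curve, with $E=S_-$. The involution exchanging the factors of $\pp^1\times\pp^1$ preserves $E$ but swaps its rulings, hence does not descend to $Y$. So the hypothesis that $Y$ is Fano must be used, and your argument never uses it.

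The missing step is short. If $C$ has positive genus, or $E\cong\mathbb{F}_n$ with $n\ge 1$, the ruling of $E$ is unique. If $E\cong\pp^1\times\pp^1$, then $C\cong\pp^1$ and $N_{C/Y}\cong\mathcal{O}(a)^{\oplus 2}$ with $2a=\deg N_{C/Y}=-K_Y\cdot C-2$. Let $f$ be a fibre and $s$ a horizontal section, given by a subbundle $\mathcal{O}(a)\subset N_{C/Y}$. Since $\mathcal{O}_E(E)$ is the tautological subbundle, $E\cdot f=-1$ and $E\cdot s=a$. Hence $K_X\cdot f=-1$ and $K_X\cdot s=K_Y\cdot C+a=-a-2$. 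Because $Y$ is Fano, $-K_Y\cdot C>0$, so $2a>-2$, i.e.\ $a\ge 0$, and $K_X\cdot s\le -2\ne K_X\cdot f$.

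Every automorphism preserves $K_X$, and $g(E)=E$, so $g$ cannot exchange the two rulings. It therefore maps fibres to fibres, and your rigidity argument finishes the proof. You had already listed $K_X\cdot\ell=-1$ among the invariants preserved by $g$; you only needed to evaluate $K_X$ on the other ruling. Note that $X$ being Fano gives only $a\ge -1$, which still allows the flop example above.
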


Let $G \subseteq \aut(X)$ be a finite subgroup such that $E$ is $G$-invariant, and let $H$ denote a descended image of $G$ in $\aut(Y;C)$. Then $H$ is unique and we will identify it with the group $G$. 

\smallskip
The following result will be of practical importance in our examples. It says that $\am(X,G)$ is an equivariant birational invariant.

\begin{thm}\cite[Theorem A.1]{BCDP} \label{imprimitivebcdp}
    If $X$ and $Y$ are smooth projective $G$-varieties that are $G$-equivariantly birationally equivalent, then 
    \[
    \am(X,G)=\am(Y,G).
    \]
\end{thm}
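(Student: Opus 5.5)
The plan is to reduce to a single equivariant blow-up via equivariant weak factorization, and then compare Picard groups directly. By the equivariant form of the weak factorization theorem (Abramovich--Karu--Matsuki--W\l odarczyk), any $G$-equivariant birational map $X \dashrightarrow Y$ between smooth projective $G$-varieties factors as a finite sequence of blow-ups and blow-downs. Each centre is smooth and $G$-invariant, possibly disconnected with components permuted by $G$, and every intermediate variety is a smooth projective $G$-variety. Equality of Amitsur groups is transitive along such a chain. It therefore suffices to treat a single $G$-equivariant blow-up $\pi\colon X=\operatorname{Bl}_Z Y \to Y$, where $Z=\bigsqcup_i Z_i$ is smooth, $G$-invariant, and each $Z_i$ has codimension at least $2$. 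Components of codimension one change nothing and can be discarded.

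\emph{Step 1: Picard groups.} With $E_i=\pi^{-1}(Z_i)$ we have $\pic(X)=\pi^*\pic(Y)\oplus\bigoplus_i \zz[E_i]$. This decomposition is $G$-stable, and $G$ acts on the second summand by permuting the $E_i$. Hence
\[
\pic(X)^G=\pi^*\pic(Y)^G\oplus\bigoplus_{O}\zz[E_O],\qquad E_O=\sum_{i\in O}E_i,
\]
where $O$ runs over the $G$-orbits of components. Here we use that $\pi^*$ is injective and $G$-equivariant, so that $\pi^*L$ is $G$-invariant exactly when $L$ is.

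\emph{Step 2: the new classes are linearizable.} Each $E_O$ is a $G$-invariant effective Cartier divisor. Its ideal sheaf $\mathcal{O}_X(-E_O)\subset\mathcal{O}_X$ is a $G$-stable subsheaf, so it inherits the canonical linearization of $\mathcal{O}_X$. Dualizing linearizes $\mathcal{O}_X(E_O)$, and therefore $\partial(E_O)=0$.

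\emph{Step 3: compatibility of $\partial$ with pullback.} Let $L\in\pic(Y)^G$. Since $\pi_*\mathcal{O}_X=\mathcal{O}_Y$, pulling back isomorphisms $\phi_g\colon g^*L\to L$ gives an isomorphism of central extensions $G_L\cong G_{\pi^*L}$ of $G$ by $\cc^\times$. The inverse is given by pushing forward: $\pi_*\pi^*L\cong L$ by the projection formula, and an isomorphism of line bundles on $X$ is determined by its restriction over the complement of the exceptional locus. Hence $\partial_X(\pi^*L)=\partial_Y(L)$ in $H^2(G,\cc^\times)$. Combining Steps 1--3, we get $\operatorname{Im}\partial_X=\partial_Y(\pic(Y)^G)$, that is, $\am(X,G)=\am(Y,G)$.

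The main obstacle is the first reduction: the whole argument rests on the equivariant weak factorization theorem, including the fact that the centres can be chosen smooth and $G$-invariant. This deep input is what I would cite rather than prove. The remaining steps are formal, apart from the one point needing care in Step 1. The invariants of the permutation lattice $\bigoplus_i\zz[E_i]$ are spanned by orbit sums, and the direct sum decomposition of $\pic(X)$ must be $G$-equivariant, not merely an abstract splitting.
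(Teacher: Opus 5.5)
Your proof is correct, and it follows essentially the same route as the proof of \cite[Theorem A.1]{BCDP}, which the paper cites rather than reproves. That proof reduces via $G$-equivariant weak factorization to a single blow-up along a smooth $G$-invariant centre, observes that $\pic(X)^G=\pi^*\pic(Y)^G\oplus\bigoplus_O\zz[E_O]$ with the orbit sums $E_O$ being $G$-linearizable (the fact the paper extracts right after the statement), and concludes from $\partial_X\circ\pi^*=\partial_Y$.
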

In particular, if $X$ is the blow-up of $Y$ in a $G$-invariant subvariety and $E$ is the exceptional divisor, then $\mathcal{O}_X(E)$ is $G$-linearizable, see the proof of \cite[Theorem A.1]{BCDP}.

When working with blow-up descriptions, we use the classification of Amitsur groups of smooth primitive Fano threefolds from \cite{sharma2026amitsurgroupsprimitivefano}. Table \ref{table:amitsur} summarizes this classification.  

\subsection{Divisors and Complete Intersections}
We first recall when the natural embedding of a smooth complete intersection into its ambient variety induces an isomorphism of Picard groups. This is a consequence of the Lefschetz hyperplane
theorem; see \cite[Proposition 2.1]{sharma2026actionspicardgroupsmooth} for a proof.
\begin{prop}\label{thm:lefschetz}
   Let $Y$ be a smooth Fano variety of dimension at least $4$. Let $X$ be a smooth complete intersection of ample divisors on $Y$ such that $X$ is a Fano threefold. Then $\pic(Y)\cong \pic(X)$.
\end{prop}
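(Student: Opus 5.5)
The statement to prove is Proposition \ref{thm:lefschetz}: for $X\subset Y$ a smooth complete intersection of ample divisors $D_1,\dots,D_r$ with $\dim Y\geq 4$ and $\dim X=3$, the restriction $\pic(Y)\to\pic(X)$ is an isomorphism. The plan is to apply the Lefschetz hyperplane theorem one divisor at a time and then pass from cohomology to Picard groups using the vanishing of $H^1$ and $H^2$ of the structure sheaf, which holds for Fano varieties.

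First, I set up the chain
\[
X=Y_r\subset Y_{r-1}\subset\cdots\subset Y_0=Y,
\]
where $Y_i=D_1\cap\cdots\cap D_i$ and $\dim Y_i=\dim Y-i\geq 3$. A smooth complete intersection need not have smooth intermediate members for an arbitrary choice of the $D_j$. Since only the divisor classes matter for the statement, I would either replace the $D_j$ by general members of the same linear systems, or use the version of Lefschetz that only requires $Y\setminus X$ to be a union of affine pieces. Indeed, $Y\setminus Y_i$ is covered by the affine opens $Y\setminus D_j$, which suffices for the Andreotti--Frankel/Lefschetz argument. This way I avoid smoothness of the intermediate $Y_i$ altogether and apply Lefschetz once to $X\subset Y$.

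The general Lefschetz theorem for a complement of $r$ ample hypersurfaces gives that $H^k(Y,\zz)\to H^k(X,\zz)$ is an isomorphism for $k<\dim X=3$ and injective for $k=3$. In particular, restriction is an isomorphism on $H^2(-,\zz)$.

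Next, I use the exponential sequence on both $Y$ and $X$. Since $Y$ is Fano, Kodaira vanishing gives $H^i(Y,\mathcal{O}_Y)=0$ for $i>0$. The adjunction hypothesis that $X$ is Fano gives the same vanishing on $X$. Hence $c_1\colon\pic\to H^2(-,\zz)$ is an isomorphism for both varieties. These isomorphisms are compatible with restriction by naturality of the exponential sequence, so $\pic(Y)\cong\pic(X)$ via restriction.

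The main obstacle is the Lefschetz step, namely making sure the hypotheses apply to a complete intersection rather than a single hyperplane section. If I use the iterated version instead, I need each intermediate $Y_i$ to be smooth of dimension $\geq 4$ at the step $Y_{i+1}\subset Y_i$. I would handle this by the affine complement argument above, which requires only that $Y\setminus X$ be a union of $r$ affine opens (Hamm's version of Lefschetz). In that version the bound on the isomorphism range is $k<\dim X$, and $\dim X=3$ gives exactly $k=2$.
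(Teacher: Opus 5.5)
Your proposal is correct and follows the route the paper has in mind: the paper only cites the Lefschetz hyperplane theorem (with a reference) to get $H^2(Y,\zz)\cong H^2(X,\zz)$, and passing to Picard groups via $\pic\cong H^2(-,\zz)$, using $H^1(\mathcal{O})=H^2(\mathcal{O})=0$ on Fano varieties, is the standard way to conclude. The union-of-affines (Hamm) version of Lefschetz you rely on is the right way to avoid needing smooth intermediate sections $Y_i$; drop the aside about replacing the $D_j$ by general members, since that changes $X$ itself and would require an extra deformation argument.
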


The following observation will be useful to work with the automorphisms of the ambient projective variety. 

\begin{remark}  \label{remark:equivarince-of-embedding}
  \normalfont Let $Y:=\pp^{n_1}\times \dots \times \pp^{n_r}$, and let $\iota : X \to Y$ denote a smooth projective subvariety. Denote by $\operatorname{pr}_i\colon Y\to\pp^{n_i}$ the
natural projections and set $H_i :=\operatorname{pr}_i^*\mathcal{O}_{\pp^{n_i}}(1)$. Let $G\subseteq \aut(X)$ be a finite group. Suppose that $\operatorname{pr}_i|_X: X \to \pp^{n_i}$ is the morphism induced by the linear system $|\iota^*H_i|$. If $G$ acts on the set $\{ \iota^*H_1, \dots , \iota^*H_r\}$ by permutations, then the $G$-action extends to $Y$. In particular, $\iota$ is $G$-equivariant. 
\end{remark}

The following lemma was communicated to the author by Alexander Duncan. 
\begin{lemma}  \label{lemma:G-invariance-for-CI}
Let $R=\bigoplus R_D$ be a finitely generated $\cc$-algebra multigraded nonnegatively\footnote{A ring $R$ is multigraded nonnegatively by a free abelian group $P$ if for any $D\in P\setminus \{0\}$ either $R_D=0$ or $R_{-D}=0$. }. Let $\widetilde{G}$ be a finite group that acts on $R$ by graded algebra automorphisms. Suppose that $I=\langle f_1,\dots , f_r\rangle$ is a $\widetilde{G}$-invariant homogeneous ideal minimally generated by homogeneous polynomials $f_1,\dots , f_r$ with $\operatorname{deg}f_i=D_i$, $1\leq i \leq r$. 
\begin{enumerate}
    \item[(i)] If $R_{D_i-D_j}=0$ whenever $i\neq j$, then $f_1, \dots , f_r$ are $\widetilde{G}$-semi-invariant. 
    \item[(ii)] If the degrees $D_1, \dots , D_r$ are distinct in $P$, then there exist a choice of $\widetilde{G}$-semi-invariant homogeneous polynomials $f_1',\dots , f_r'$ with $\deg f_i'=D_i$ such that $I=\langle f_1',\dots , f_r'\rangle$.
\end{enumerate}
\end{lemma}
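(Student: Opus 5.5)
The plan is to exploit the grading. Let $g\in\widetilde{G}$. Since $g$ acts by graded automorphisms preserving $I$, each $g\cdot f_i$ is a homogeneous element of $I$ of degree $D_i$. Write it in terms of the generators:
\[
g\cdot f_i=\sum_j a_{ij}f_j, \qquad a_{ij}\in R_{D_i-D_j}.
\]
We may take each $a_{ij}$ homogeneous of degree $D_i-D_j$ by projecting onto the degree $D_i$ component, and we set $a_{ij}=0$ when $R_{D_i-D_j}=0$.

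For part (i), the hypothesis $R_{D_i-D_j}=0$ for $i\neq j$ kills all off-diagonal terms. Hence $g\cdot f_i=a_{ii}f_i$ with $a_{ii}\in R_0$. Nonnegativity of the multigrading should force $R_0=\cc$ in the relevant situation (for example the Cox/multihomogeneous coordinate ring of a product of projective spaces). If one only has $R_0$ a general finitely generated algebra, I would instead argue as follows. Because $f_1,\dots,f_r$ is a minimal system, $f_i\notin\langle f_j : j\neq i\rangle$. The map $g\mapsto a_{ii}$ then defines a character of the finite group, provided $a_{ii}$ is a scalar.

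To obtain scalars, I would pass to $I/\mathfrak{m}I$, where $\mathfrak{m}$ is the irrelevant ideal generated by the positive-degree part. By graded Nakayama, the images of the $f_i$ form a basis of $I/\mathfrak{m}I$, which is a graded $\widetilde{G}$-representation. The condition $R_{D_i-D_j}=0$ then says $g\cdot f_i-\lambda f_i\in \mathfrak{m}I$ with no room for correction terms. The main subtlety, and the obstacle I expect, is justifying $R_0=\cc$ (or working modulo the degree-zero part) from nonnegativity. I would first try to show that nonnegativity implies the degree-$0$ piece is exactly $\cc$, as in the intended applications.

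For part (ii), I would take the degree-$D_i$ component $I_{D_i}$ and the subspace $J_i=(\mathfrak{m}I)_{D_i}+\sum_{j\neq i}R_{D_i-D_j}f_j$. Both are $\widetilde{G}$-stable, since distinct degrees make $J_i$ equal to $(\mathfrak{m}I)_{D_i}$ plus the part generated by lower generators, a $\widetilde{G}$-invariant notion. The quotient $I_{D_i}/(\mathfrak{m}I)_{D_i}$ is one-dimensional because the degrees are distinct and the system is minimal. Maschke's theorem gives a $\widetilde{G}$-stable complement line in $I_{D_i}$ (finite-dimensional by finite generation and nonnegativity), spanned by a semi-invariant $f_i'$ mapping to a nonzero multiple of $f_i$ modulo $\mathfrak{m}I$. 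Graded Nakayama then gives $I=\langle f_1',\dots,f_r'\rangle$.
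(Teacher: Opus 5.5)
Your argument is correct once you add the hypothesis $R_0=\cc$. Do not try to deduce this from nonnegativity, as you propose: it does not follow, and the lemma is false without it.

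Here is a counterexample. Take $R=\cc[t,t^{-1}]$ concentrated in degree $0$, so the footnote's condition holds vacuously. Let $\widetilde G=\zz/2\zz$ act by $t\mapsto t^{-1}$, and let $I=\langle 1+t\rangle$. This ideal is invariant, since $1+t^{-1}=t^{-1}(1+t)$. Every generator of $I$ has the form $ct^k(1+t)$, and it is sent to $ct^{-k-1}(1+t)$, which is never proportional to it. So both (i) and (ii) fail.

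The paper's proof also uses $R_0=\cc$ without saying so: it asserts that $I\cap R_{D_i}$ is one-dimensional, and that $U_i=V_i+\cc f_i$. The hypothesis does hold in the application, the Cox ring of a product of projective spaces. With it, your part (i) is exactly the paper's argument: $I\cap R_{D_i}=\sum_j R_{D_i-D_j}f_j=\cc f_i$.

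For (ii) you take a different route from the paper.

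\emph{The paper's route.} It orders the generators so that $R_{D_i-D_j}=0$ for $i<j$. It then filters $I$ by the $\widetilde G$-stable ideals $I_i=\langle f_1,\dots,f_i\rangle$. Inductively, it picks $f_i'$ in a Maschke complement of $I_{i-1}\cap R_{D_i}$ inside $I\cap R_{D_i}$, normalized so that $f_i'-f_i\in I_{i-1}$. Generation then follows by induction, with no Nakayama argument.

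\emph{Your route.} You work modulo $\mathfrak m=\bigoplus_{D\neq 0}R_D$. Nonnegativity is precisely what makes this an ideal, since $R_DR_{-D}=0$ for $D\neq 0$. Note also that your $J_i$ is just $(\mathfrak mI)_{D_i}$, because the extra sum is already contained in it.

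\emph{Comparison.} The paper's construction is explicit and triangular. However, it needs the ordering, and it asserts this ordering exists without argument. That assertion is fine when $R$ is a domain, as Cox rings are: then $D\preceq D'\iff R_{D'-D}\neq 0$ is a partial order, and any linear extension works. Your argument needs no ordering of $P$ at all. You should, though, justify graded Nakayama in this unordered setting. In a relation $m_1=\sum_j a_jm_j$ among minimal homogeneous generators, you may take each $a_j$ homogeneous of degree $\deg m_1-\deg m_j$. Then $a_1\in\mathfrak m_0=0$, which contradicts minimality. This is the second place where $R_0=\cc$ is used.
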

\begin{proof}
    Since $I$ is $\widetilde{G}$-invariant and the $\widetilde{G}$-action preserves degrees, then $I\cap R_{D_i}$ is a one-dimensional $\widetilde{G}$-invariant space generated by $f_i$. This shows (i). 

    For (ii), let us first order the generators $f_i$ so that $R_{D_i-D_j}=0$ for $i<j$. For $1\leq i \leq r$, set 
    $I_i:= \langle f_1,\dots , f_i\rangle$, and consider the sequence of ideals 
    \[
    (0)=: I_0\subsetneq I_1 \subsetneq I_2 \subsetneq \dots \subsetneq I_{r-1}\subsetneq I.
    \]
    Observe that $R_{D_i}\cap I\subseteq I_i$ and $I_i$ is $\widetilde{G}$-invariant. Let $U_i:= I\cap R_{D_i}$ and $V_i:= I_{i-1}\cap R_{D_i}$. Then we have $U_i=V_i+\cc f_i$. Moreover, $U_i/V_i$ is a one-dimensional $\widetilde{G}$-invariant vector space due to minimality of generators $f_i$'s. Now since $\widetilde{G}$ is finite, $V_i$ has a one-dimensional $\widetilde{G}$-stable complement $W\subset U_i$ such that $U_i=V_i\oplus W$. Choose a nonzero element $f'_i\in W$. It is $\widetilde{G}$-semi-invariant with degree $D_i$, and after rescaling, if necessary, satisfies $f'_i-f_i\in I_{i-1}$. 
    Now by induction on $i$, 
    \[
    \langle f'_1,\dots , f'_i\rangle = \langle f'_1,\dots , f'_{i-1}\rangle + \langle f'_i\rangle = I_{i-1} + \langle f'_i\rangle = I_i
    \]
    for every $i$. This completes the proof of (ii).
\end{proof}

In our application, $Y$ is a product of projective spaces, and $R:=\operatorname{Cox}(Y)$ is its Cox ring. For a finite group $G\subseteq \aut(Y)$, let $\widetilde{G}$ be a finite lifting group acting on $R$ such that it induces the $G$-action on $Y$. We apply the above lemma to the $\widetilde{G}$-action on $R$ and to a $\widetilde{G}$-invariant complete intersection ideal  $I$ of a curve $C$ cut out by hypersurfaces in $Y$. 

\begin{remark}
\normalfont If the $G$-action on $\pic(Y)$ is nontrivial, the $\widetilde{G}$-action on $R$ may permute its graded components. In each application of Lemma \ref{lemma:G-invariance-for-CI}, we verify that the classes of $D_1, \dots , D_r$ are individually $G$-invariant. Thus, the graded components of $R$ are $\widetilde{G}$-stable and the proof of the lemma remains valid in this setting. 

\end{remark}

Under the hypotheses of lemma, the divisors $D_i$s may be replaced by $G$-invariant linearly equivalent divisors without changing the complete intersection $C$. 
\begin{remark} \label{remark:CI}
 \normalfont Let $D_i:=\mathbb{V}(f_i)$. Suppose that Lemma \ref{lemma:G-invariance-for-CI} (ii) yields a $\widetilde{G}$-semi-invariant polynomial $f'_i$ with $\operatorname{deg}f'_i=D_i$. Let $D'_i:=\mathbb{V}(f'_i)$. Then $D'_i$ is a $G$-invariant divisor, $D'_i \in |D_i|$, and for $g\in G$, the isomorphisms $\phi_g:g^*\mathcal{O}(D'_i)\to \mathcal{O}(D'_i)$ satisfy the compatibility condition of $2$-cocycles. As a result, $\mathcal{O}(D'_i)$, and hence $\mathcal{O}(D_i)$ admits a $G$-linearization. In (i), the divisors $D_i$ are themselves $G$-invariant, hence, are $G$-linearizable by the same argument. 
\end{remark}

\section{Blow-ups of del Pezzo and quadric threefolds} 

Let $Y$ be a Fano threefold with Picrd rank $\rho(X)=1$ belonging to one of the families
\begin{center} \textnumero 1.11, \textnumero 1.12, \textnumero 1.13, \textnumero 1.14, \textnumero 1.15. \end{center}
Then $Y$ has index $r=2$ and is called a del Pezzo threefold. A member of family \textnumero 1.16 is a quadric threefold and has index $r=3$. In this section, we determine the possible Amitsur groups of Fano threefolds $X$ obtained by blowing up these threefolds along smooth irreducible curves. More precisely, we classify possible $\am(X,G)$
\begin{enumerate}
    \item[(i)] if $X$ belongs to one of the families
\begin{center}
    \textnumero 2.1, \textnumero 2.3, \textnumero 2.5, \textnumero 2.10, \textnumero 2.11, \textnumero 2.14, \textnumero 2.16, \textnumero 2.19, \textnumero 2.20, \textnumero 2.22, \textnumero 2.26.
    \end{center} 
    Here $X$ is the blow-up of a del Pezzo threefold along a smooth
    irreducible curve; 
   \item[(ii)] if $X$ belongs to one of the families
   \begin{center}
   \textnumero 2.7, \textnumero 2.13, \textnumero 2.17, \textnumero 2.29, \textnumero 2.31.
\end{center}
Here $X$ is the blow-up of the quadric threefold \textnumero 1.16 along a smooth irreducible curve.
\end{enumerate}

Note that $\autp(Y)=0$. Let $\pi : X \to Y$ denote the blow-up morphism with exceptional divisor $E$. Let $L$ denote a general hyperplane on $Y$, then $\pic(X)=\zz[\pi^*L]\oplus \zz[E]$. 

\subsection{Blow-ups of del Pezzo threefolds}
Let $Y$ be a del Pezzo threefold that belongs to family \textnumero 1.11, it is a hypersurface of degree $6$ in $\pp(1,1,1,2,3)$, and let $G\subseteq \aut(Y)$ be a finite group. Then $\am(Y,G)=0$ (Table \ref{table:amitsur}). 

\smallskip
A Fano threefold $X$ in family \textnumero 2.1 is the blow-up of $Y$ along a smooth elliptic curve $C$ \cite[Table 2]{MM81classtable}.

\begin{lemma}   \label{Am:2.1}
    Let $X$ be a Fano threefold in family \textnumero 2.1. Let $G\subseteq \aut(X)$ be a finite group. Then $\am(X,G)=0$.
\end{lemma}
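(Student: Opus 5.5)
Family \textnumero 2.1 is the blow-up $\pi\colon X\to Y$ of a del Pezzo threefold $Y$ of degree $1$ along an elliptic curve $C$. By \cite[Table 2]{MM81classtable}, $C$ is the intersection of two divisors in $|L|$, where $-K_Y=2L$. Throughout, $\pic(X)=\zz[\pi^*L]\oplus\zz[E]$ and $K_X=\pi^*K_Y+E=-2\pi^*L+E$. I would split into two cases according to whether $G$ preserves the exceptional divisor $E$, that is, according to $\autp(X,G)$.

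\emph{Case 1: $G$ preserves $E$.} Then $\pic(X)^G=\pic(X)$. By Lemma \ref{Lemma:autblowup}, $G$ descends to a subgroup of $\aut(Y;C)$, and $\pi$ is $G$-equivariant. Since $\am(Y,G)=0$ (Table \ref{table:amitsur}), $L$ is $G$-linearizable on $Y$, so $\pi^*L$ is $G$-linearizable on $X$. Alternatively, $K_X$ is linearizable by Proposition \ref{KXlinearizable}, and $E$ is linearizable by the remark after Theorem \ref{imprimitivebcdp} because $C$ is $G$-invariant. Then $\pi^*L=\tfrac12(E-K_X)$, and its class modulo linearizable bundles is $2$-torsion. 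Either way, both generators of $\pic(X)$ are linearizable, so $\am(X,G)=0$. I would use the first argument, since it gives the generators directly.

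\emph{Case 2: some $g\in G$ moves $E$.} Here I would invoke \cite{sharma2026actionspicardgroupsmooth} to determine the possible $\autp(X,G)$. The expectation is either that $\autp(X)$ is trivial for \textnumero 2.1, so Case 2 does not occur, or that $\autp(X,G)$ is an involution with $\operatorname{rk}\pic(X)^G=1$. For the involution, I would check the cone of curves. Since $X$ has only the two extremal contractions coming from $\pi$ (and the other ray is of a different type, namely a del Pezzo fibration of degree 1 over $\pp^1$ given by the pencil $|\pi^*L-E|$), no automorphism can swap the rays. Hence $\autp(X)=0$ and Case 1 covers everything. If an involution did exist, the invariant sublattice would have rank $1$ and would be generated by a primitive class $D$. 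Since $K_X$ is primitive (as $-K_X=2\pi^*L-E$ has coefficient $-1$ on $E$), we would have $D=\pm K_X$, and Proposition \ref{Prop:rkPic_G=1}, or directly Proposition \ref{KXlinearizable}, would give $\am=0$.

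The main obstacle is justifying that $\autp(X,G)$ is trivial, i.e.\ that no automorphism exchanges the two extremal rays. I would do this by noting that the two contractions are of different types: a divisorial contraction to a curve on one side and a fibration over $\pp^1$ on the other. An automorphism must preserve the type of each contraction, so it fixes each ray and acts trivially on $\pic(X)$.
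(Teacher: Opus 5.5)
Your proposal is correct and follows the paper's proof. Once $\autp(X)=0$, Lemma \ref{Lemma:autblowup} lets you regard $G\subseteq\aut(Y;C)$ with $\pi$ $G$-equivariant, and $\am(Y,G)=0$ then forces $\am(X,G)=0$; the paper states this through Theorem \ref{imprimitivebcdp}, and your extremal-ray argument only supplies the justification of $\autp(X)=0$ that the paper reads off Table \ref{table:amitsur}. One small overstatement: your ``alternative'' via $K_X$ and $E$ alone gives only $2\overline{\pi^*L}=0$, not $\overline{\pi^*L}=0$, so ``either way'' needs an extra input such as Proposition \ref{prop:amitsur-period-congruences} (here $(\pi^*L)^3=1$ and $(\pi^*L)^2\cdot K_X=-2$ force $m(\pi^*L)=1$).
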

\begin{proof}
    Since $\autp(X)=0$ (Table \ref{table:amitsur}), Lemma \ref{Lemma:autblowup} implies that we may regard $G$ as a subgroup of $\aut(Y;C)$, and $\pi$ is $G$-equivariant. By Theorem \ref{imprimitivebcdp}, $\am(X,G)=\am(Y,G)$. Since $\am(Y,G)=0$, we get $\am(Y,G)=0$.
\end{proof}

Suppose that $Y$ is a Fano threefold in family \textnumero 1.12. Then $Y$ is a double cover of $\mathbb{P}^3$ branched along a smooth quartic surface $B$ \cite{isko2,MM81classtable}. Let $\phi: Y \to \pp^3$ denote this double covering map. Let $D_1, D_2 \in |-\frac{1}{2} K_Y|$ and let $C := D_1\cap D_2$ be their intrsection, which is a smooth elliptic curve on $Y$. A Fano threefold $X$ in family \textnumero 2.3 is obtained by blowing up $Y$ along $C$.

\begin{lemma}   \label{Am:2.3}
    Let $X$ be a Fano threefold in family \textnumero 2.3. Let $G\subseteq \aut(X)$ be a finite group. Then $\am(X,G)$ is isomorphic to $\zz/2\zz$ or trivial.
\end{lemma}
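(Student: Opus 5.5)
Since the blow-up center is $C=D_1\cap D_2$ with $D_1,D_2\in|-\tfrac12K_Y|$, the threefold $X$ carries a second structure: the pencil spanned by $D_1,D_2$ gives a morphism $f\colon X\to\pp^1$ whose fibres are the strict transforms of the members of the pencil. These fibres are del Pezzo surfaces of degree $2$. With $H:=\pi^*(-\tfrac12K_Y)$ we have $\pic(X)=\zz[H]\oplus\zz[E]$ and $f^*\mathcal{O}_{\pp^1}(1)=H-E$. Moreover $-K_X=2H-E$. The plan is to split into cases according to $\autp(X,G)$, which by the table is either trivial or $\zz/2\zz$.

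\textbf{Trivial action on $\pic(X)$.} Suppose $\autp(X,G)$ is trivial. Then $E$ is $G$-invariant, so by Lemma \ref{Lemma:autblowup} the group $G$ descends to $\aut(Y;C)$ and $\pi$ is $G$-equivariant. Theorem \ref{imprimitivebcdp} gives $\am(X,G)=\am(Y,G)$, and it remains to bound $\am(Y,G)$ for a member $Y$ of family \textnumero 1.12. Here $\pic(Y)=\zz[L]$ with $L=-\tfrac12K_Y$, and $K_Y=-2L$ is $G$-linearizable by Proposition \ref{KXlinearizable}. Hence $\am(Y,G)$ is generated by $\overline L$ and satisfies $2\overline L=0$. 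So $\am(Y,G)\subseteq\zz/2\zz$, and the table of primitive Amitsur groups confirms this. The same conclusion can be seen directly on $X$. Since $E$ is $G$-linearizable by the remark after Theorem \ref{imprimitivebcdp}, and $K_X$ is $G$-linearizable, we get $2\overline H=\overline E=0$. As $\pic(X)^G=\pic(X)$ is generated by $H$ and $E$, this gives $\am(X,G)\subseteq\zz/2\zz$.

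\textbf{Nontrivial action on $\pic(X)$.} Suppose $\autp(X,G)=\zz/2\zz$. I expect this to be the main obstacle, since it requires knowing the second extremal contraction and the lattice involution precisely. The nontrivial element must preserve $K_X$ and the nef cone. The nef cone has rays $H$ and $H-E$, and the involution has to swap them up to the relation with $-K_X$. Since $H$ and $H-E$ have different self-intersection behaviour ($H^3=2$, while $(H-E)^2=0$), I will check against \cite{sharma2026actionspicardgroupsmooth} what the involution actually is. Most likely it is $H\mapsto H$ composed with a twist, or the case does not occur and only the trivial action is possible. In any case $\pic(X)^G$ then has rank $1$ and contains $-K_X$. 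If $\pic(X)^G=\zz[K_X]$, Proposition \ref{KXlinearizable} gives $\am(X,G)=0$. If instead $\pic(X)^G$ is generated by a class $D$ with $K_X=kD$, then $\am(X,G)$ is cyclic of order dividing $k$. In that situation I will use Proposition \ref{prop:amitsur-period-congruences}, computing $D^3$ and $D^2\cdot K_X$ from $H^3=2$, $H^2E=0$, $HE^2=-\deg C=-2$ and $E^3=-\deg N_{C/Y}=-(-K_Y\cdot C)=-4$. The congruences should force $m(D)\mid 2$.

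\textbf{Conclusion.} Combining the two cases yields $\am(X,G)\subseteq\zz/2\zz$.
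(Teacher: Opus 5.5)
Your argument for the case $\autp(X,G)=0$ is correct and matches the paper's: descend to $\aut(Y;C)$ by Lemma \ref{Lemma:autblowup}, apply Theorem \ref{imprimitivebcdp}, and bound $\am(Y,G)$ via $K_Y=-2L$. Your direct check on $X$ ($\overline{E}=\overline{K_X}=0$, so $2\overline{H}=0$) is a fine self-contained variant that avoids quoting the table for \textnumero 1.12.

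The gap is the second case, which as written is not a proof. You defer to ``checking'' what the involution is, you guess it might be ``$H\mapsto H$ composed with a twist'', and you assert that the period congruences ``should force'' $m(D)\mid 2$ without computing anything. In fact the table lists only $\autp(X)=0$ for \textnumero 2.3; the paper uses exactly this, citing \cite{sharma2026actionspicardgroupsmooth}. You already had the ingredients to prove it yourself:
\begin{itemize}
\item every $g\in\aut(X)$ acts on $\pic(X)$ preserving the nef cone, whose two rays are spanned by the primitive classes $H$ and $H-E$;
\item these two classes form a basis of $\pic(X)$, so a nontrivial $g^*$ must swap them;
\item a swap is impossible, since $g^*$ preserves intersection numbers and $H^3=2\neq 0=(H-E)^3$. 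Equivalently, one extremal contraction is birational and the other is a del Pezzo fibration.
\end{itemize}
So the case $\autp(X,G)=\zz/2\zz$ is vacuous. Your tentative candidate is also impossible on its face: a map fixing $H$ and $K_X$ fixes $E=2H+K_X$, hence is trivial.

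Even if you keep a hypothetical rank-one case, the branch ``$-K_X=kD$ with $k>1$'' cannot arise. The group $\pic(X)^G$ is saturated in $\pic(X)$, and $-K_X=2H-E$ is primitive. So a rank-one $\pic(X)^G$ would equal $\zz[K_X]$, and Proposition \ref{Prop:rkPic_G=1} would give $\am(X,G)=0$ immediately, with no need for Proposition \ref{prop:amitsur-period-congruences}. With either one-line fix, your proposal proves the stated bound $\am(X,G)\subseteq\zz/2\zz$.

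The paper's proof also shows the bound is attained, which your proposal does not address. It takes the Fermat quartic as branch surface, a group $\langle\sigma,\tau\rangle\subset\aut(\pp^3)$ preserving it whose linear lifts anticommute, and $C$ the preimage in $Y$ of the invariant line $\{x_0=x_2=0\}$. The lemma as literally stated does not demand this, but the corresponding table entry does.
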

\begin{proof}
From Table \ref{table:amitsur}, we have $\autp(X)=0$. So, by Lemma \ref{Lemma:autblowup}, $G$ descends to a subgroup of $\aut(Y;C)$ and we may assume that $\pi$ is $G$-equivariant. Since $\am(Y,G)\subseteq \zz/2\zz$ for every $G\subseteq \aut(Y)$ (Table \ref{table:amitsur}), Theorem \ref{imprimitivebcdp} gives 
\[
\am(X,G) \subseteq \zz/2\zz.
\]
We now show that this upper bound can be realized. Let $\mathcal{G}\subset \aut(\pp^3)$ be the group generated by 
\begin{equation*}      
\begin{split}
   \sigma :[x_0:x_1:x_2:x_3] &\mapsto [x_0:ix_1:-x_2:-ix_3]\\
    \tau : [x_0:x_1:x_2:x_3] &\mapsto [x_2:x_3: x_0:x_1]  
\end{split}
\end{equation*}
Let $B:= \{x_0^4 + x_1^4 + x_2^4 + x_3^4 =0\} \subset \pp^3$, it is $\mathcal{G}$-invariant, and can be verified to be both smooth and irreducible. So, $\mathcal{G}\subset \aut(\pp^3;B)$. Denote by $\overline{\mathcal{G}} \subset \aut(Y)$ the preimage of $\mathcal{G}$.

Observe that $\mathcal{G}$-action on $\pp^3$ does not lift to a homomorphism to $\operatorname{GL}_4(\cc)$:  if $S,T\in \operatorname{GL}_4(\cc)$ are linear lifts of $\sigma$ and $\tau$, then we have $ST = -TS$. Hence $\mathcal{O}_{\pp^3}(1)$ is not $\mathcal{G}$-linearizable. However, $\mathcal{O}_{\pp^3}(2)$ is $\mathcal{G}$-linearizable since the $\mathcal{G}$-action on $\pp^9$ lifts to a linear action. Thus, $\am(\pp^3,\mathcal{G})= \zz/2\zz$.

By the same argument as in \cite[Lemma 4.4]{sharma2026amitsurgroupsprimitivefano}, the natural map 
\[ 
\am(\pp^3,\mathcal{G})\longrightarrow \am(Y,\overline{\mathcal{G}})
\] 
is surjective with kernel generated by $[\mathcal{O}_{\pp^3}(2)]$. Since this class is already trivial in $\am(\pp^3,\mathcal{G})$, we obtain
\[
\am(Y,\overline{\mathcal{G}})\cong\zz/2\zz. 
\]

Denote the hyperplane class on $\pp^3$ by $H$. By the Hurwitz formula, we have $-\frac{1}{2}K_Y = \phi^*H$. Let $D_1 := \{x_0=0\}$ and $D_2:= \{x_2=0\}$ be two hyperplanes in $\pp^3$. 
Then 
\[
D_1 \cap D_2=\{ x_0=0,\; x_2=0\} \subset \pp^3
\]
is $\mathcal{G}$-invariant, smooth, and irreducible. Set $C:=\phi^{-1}(D_1\cap D_2)$. Then $C$ is $\overline{\mathcal{G}}$-invariant and smooth on $Y$. Thus, $\overline{\mathcal{G}}\subset \aut(Y;C)$ and it lifts to a subgroup of $\aut(X)$, making $\pi$ a $\overline{\mathcal{G}}$-equivariant map. By Theorem \ref{imprimitivebcdp}, it follows that 
\[
\am(X,\overline{\mathcal{G}})= \am(Y,\overline{\mathcal{G}}) \cong \zz/2\zz.
\]
\end{proof}

A del Pezzo threefold $Y$ in family \textnumero 1.13 is a cubic threefold in $\pp^4$ \cite{MM81classtable}. We have $\autp(Y)=0$ and $\am(Y,G)=0$ for every finite group $G\subseteq \aut(Y)$; see Table \ref{table:amitsur}. Let $X$ be a Fano threefold belonging to  \textnumero 2.5 or \textnumero 2.11. Then it is the blow-up of $Y$ along a plane cubic or a line, respectively. 
\begin{lemma}    \label{Am:2.5,2.11}
    Let $X$ be a Fano threefold belonging to family \textnumero 2.5 or \textnumero 2.11. Let $G\subseteq \aut(X)$ be a finite group. Then $\am(X,G)=0$.
\end{lemma}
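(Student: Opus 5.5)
The plan mirrors the proof of Lemma \ref{Am:2.1}. The first step is to reduce to an equivariant blow-up. Table \ref{table:amitsur} records $\autp(X)=0$ for both families \textnumero 2.5 and \textnumero 2.11. So every $g\in G$ acts trivially on $\pic(X)=\zz[\pi^*L]\oplus\zz[E]$, and in particular $g^*E=E$.

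By Lemma \ref{Lemma:autblowup}, $G$ then descends to a subgroup of $\aut(Y;C)$, where $Y$ is the cubic threefold and $C$ is the plane cubic or the line. With this identification, $\pi\colon X\to Y$ is $G$-equivariant.

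Next, since $\pi$ is a $G$-equivariant birational morphism of smooth projective $G$-varieties, Theorem \ref{imprimitivebcdp} gives $\am(X,G)=\am(Y,G)$. From Table \ref{table:amitsur} we have $\am(Y,G)=0$ for every finite $G\subseteq\aut(Y)$, which gives the claim.

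As an independent check that avoids the table entry for $Y$, one can argue directly. First, $E$ is $G$-linearizable because it is the exceptional divisor of a $G$-equivariant blow-up (see the remark after Theorem \ref{imprimitivebcdp}). Second, $K_X=\pi^*K_Y+E=-2\pi^*L+E$ is linearizable by Proposition \ref{KXlinearizable}. Hence $2\pi^*L$ is linearizable, and $\am(X,G)$ is generated by $\overline{\pi^*L}$ with $2\overline{\pi^*L}=0$.

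To kill $\overline{\pi^*L}$, observe that $\pi^*\mathcal{O}_Y(1)$ is pulled back from $Y$. On $Y$, the class $\mathcal{O}_Y(1)$ satisfies $\mathcal{O}_Y(-3)\cdot\mathcal{O}_Y(1)^{\otimes 5}=\mathcal{O}_Y(2)$, and $\mathcal{O}_Y(-2)=K_Y$ is linearizable. Therefore $\mathcal{O}_Y(1)$ has period dividing $\gcd(2,\,\cdot)$ coming from its embedding. More concretely, Proposition \ref{prop:amitsur-period-congruences} applied to $D=\pi^*L$ gives $2D^3=2\cdot 3=6\equiv 6$, which is no restriction. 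The better tool is the Lefschetz/Cox-ring description: $\aut(Y)$ is induced by linear automorphisms of $\pp^4$ preserving the cubic form. By Lemma \ref{lemma:G-invariance-for-CI}(i), that form is semi-invariant under a lift to $\operatorname{GL}_5$, so $\mathcal{O}_Y(1)$ is linearizable.

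The only step needing care is confirming $\autp(X)=0$, that is, that no automorphism swaps $E$ with another extremal divisor. I will take this from Table \ref{table:amitsur}, that is, \cite{sharma2026actionspicardgroupsmooth}. It is also clear a priori: the two extremal contractions of $X$ are of different types, namely the blow-down to $Y$ and a conic bundle or del Pezzo fibration, so they cannot be interchanged.
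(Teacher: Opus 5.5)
Your main argument is exactly the paper's proof: $\autp(X)=0$, then descent of $G$ to $\aut(Y;C)$ via Lemma~\ref{Lemma:autblowup}, then Theorem~\ref{imprimitivebcdp} together with $\am(Y,G)=0$.

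The optional ``independent check'' is flawed as written and should be repaired or dropped. First, semi-invariance of the cubic form does not by itself make $\mathcal{O}_Y(1)$ linearizable; it only yields $3\overline{\pi^*L}=0$, which you would then combine with your $2\overline{\pi^*L}=0$. Compare the invariant Fermat quartic in the proof of Lemma~\ref{Am:2.3}, where $\mathcal{O}_{\pp^3}(1)$ is still not linearizable. Second, Proposition~\ref{prop:amitsur-period-congruences} is far from ``no restriction.'' Its second congruence, with $(\pi^*L)^2\cdot K_X=L^2\cdot K_Y=-6\equiv 4 \pmod{2m}$, already forces $m(\pi^*L)\mid\gcd(5,12)=1$.
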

\begin{proof}
     In both cases, $\autp(X)=0$; see Table \ref{table:amitsur}. Hence, by Lemma \ref{Lemma:autblowup}, the $G$-action on $X$ descends to $Y$, and we may identify $G$ with a subgroup of $\aut(Y;C)$. Theorem \ref{imprimitivebcdp} then gives $\am(X,G)=\am(Y,G)$. Since $\am(Y,G)=0$, we conclude $\am(X,G)=0$.
\end{proof}

A del Pezzo threefold $Y$ in family \textnumero 1.14 is the complete intersection of two quadrics in $\mathbb{P}^5$ \cite{isko2,MM81classtable}. Let $X$ be a Fano threefold $X$ belonging to family \textnumero 2.10 or \textnumero 2.16, then it is the blow-up of $Y$ along a smooth curve $C$. If $X$ belongs to \textnumero 2.10, then $C$ is an elliptic curve cut out by two hyperplanes, whereas if $X$ belongs to  \textnumero 2.16, then $C$ is a conic \cite[Table 2]{MM81classtable}.

\begin{remark}   \label{remark:blow-ups-of-1.14}
\normalfont In both cases, $\autp(X)=0$ (Table \ref{table:amitsur}). Hence, by Lemma \ref{Lemma:autblowup} we may identify $G$ as a subgroup of $\aut(Y;C)$ so that $\pi$ is $G$-equivariant. Theorem \ref{imprimitivebcdp} then gives
\begin{equation}  \label{eqn:amitsur-bound-for-1.14}
\am(X,G)=\am(Y,G) \subseteq \zz/2\zz,
\end{equation}
where the inclusion follows from the classification of Amitsur groups of $Y$; see Table \ref{table:amitsur}.
\end{remark}

It remains to check whether $\am(X,G)\cong \mathbb{Z}/2\mathbb{Z}$ can be realized for some $X$ and finite group $G\subseteq \aut(X)$. More concretely, it remains to determine whether $\pi^*L$ is $G$-linearizable, where $\pi: X \to Y$ is the blow-up map.

\begin{lemma}     \label{Am:2.10}
   Let $X$ be a Fano threefold in family \textnumero 2.10. Let $G\subseteq \aut(X)$ be a finite group. Then $\am(X,G)$ is isomorphic to $\zz/2\zz$ or trivial.
\end{lemma}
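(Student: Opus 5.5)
The upper bound is already in Remark \ref{remark:blow-ups-of-1.14}. Since $\autp(X)=0$, Lemma \ref{Lemma:autblowup} lets us identify $G$ with a subgroup of $\aut(Y;C)$, where $Y\subset\pp^5$ is a smooth intersection of two quadrics and $C=Y\cap\Pi_1\cap\Pi_2$ for hyperplanes $\Pi_1,\Pi_2$. Theorem \ref{imprimitivebcdp} then gives $\am(X,G)=\am(Y,G)\subseteq\zz/2\zz$. What remains is to exhibit $X$ and $G$ with $\am(X,G)\cong\zz/2\zz$. Equivalently, we need $\pi^*L$ (with $L$ the hyperplane class, $-K_Y=2L$) to be non-$G$-linearizable, i.e.\ a projective $G$-action on $\pp^5$ that does not lift to $\operatorname{GL}_6(\cc)$. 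I would model the construction on the one in Lemma \ref{Am:2.3}.

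Concretely, take coordinates $x_0,\dots,x_5$ and let $\mathcal{G}\subset\operatorname{PGL}_6(\cc)$ be generated by two commuting elements whose lifts anticommute. A natural choice is
\[
\sigma:(x_0,x_1,x_2,x_3,x_4,x_5)\mapsto(x_0,-x_1,x_2,-x_3,x_4,-x_5)
\]
and $\tau$ the permutation $x_0\leftrightarrow x_1$, $x_2\leftrightarrow x_3$, $x_4\leftrightarrow x_5$. Lifts $S,T$ then satisfy $ST=-TS$, so $\mathcal{G}\cong(\zz/2\zz)^2$ acts on $\pp^5$ with $\mathcal{O}(1)$ not linearizable, while $\mathcal{O}(2)$ is linearizable. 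For $Y$ I would take quadrics semi-invariant under $\widetilde{\mathcal{G}}$ (the Heisenberg-type lift), for instance
\[
Q_1=\sum_{k}(x_{2k}^2+x_{2k+1}^2)\,a_k,\qquad Q_2=\sum_k(x_{2k}^2-x_{2k+1}^2)\,b_k
\]
with generic constants $a_k,b_k$, or diagonal quadrics $\sum\lambda_i x_i^2$ arranged to be invariant up to sign. Smoothness of $Y$ follows from the standard criterion for a diagonal pencil: all $\lambda_i$ distinct, i.e.\ the binary form $\det(sQ_1+tQ_2)$ has distinct roots.

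For the curve I need two hyperplanes spanning a $\mathcal{G}$-invariant pencil such that $C=Y\cap\Pi_1\cap\Pi_2$ is a smooth elliptic curve. The obstacle is that no single hyperplane can be $\mathcal{G}$-invariant when $\mathcal{O}(1)$ is not linearizable (an invariant hyperplane would give a one-dimensional $G_L$-subrepresentation, contradicting Proposition \ref{Gsubrepn_result}). Instead I need a $2$-dimensional $\widetilde{\mathcal{G}}$-subrepresentation of $H^0(\mathcal{O}(1))$. Such subrepresentations exist, e.g.\ $\langle x_0,x_1\rangle$, whose common zero locus $\{x_0=x_1=0\}$ is $\mathcal{G}$-invariant. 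Then $C$ is the intersection of two quadrics in $\pp^3$ with coordinates $x_2,\dots,x_5$. This is smooth, and hence elliptic, when the restricted pencil again has distinct eigenvalues, which holds for generic choices. So $C$ is $\mathcal{G}$-invariant, $\mathcal{G}$ lifts to $X$, and $\am(X,\mathcal{G})=\am(Y,\mathcal{G})$.

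Finally, $\am(Y,\mathcal{G})\cong\zz/2\zz$ because $\pic(Y)=\zz L$ by Proposition \ref{thm:lefschetz}, and $L$ is not linearizable: a linearization of $L$ would, via the embedding by $|L|$ (with $H^0(Y,L)=H^0(\pp^5,\mathcal{O}(1))$), give a linear lift of $\mathcal{G}$ to $\operatorname{GL}_6$, which is impossible. The main step requiring care is making the quadrics, and their restriction to $\{x_0=x_1=0\}$, simultaneously semi-invariant and smooth. I would try the diagonal form first and adjust the sign patterns until the pencil has distinct eigenvalues both on $\pp^5$ and on the $\pp^3$.
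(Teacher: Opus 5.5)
Your proposal is correct and follows the paper's proof. The bound comes from descending $G$ to $\aut(Y;C)$ and applying Theorem~\ref{imprimitivebcdp}; sharpness comes from a $(\zz/2\zz)^2$-action on $\pp^5$ with anticommuting lifts that preserves $Y$ and a codimension-two linear subspace whose two defining hyperplanes are swapped. The one difference is your explicit example: with $a_kb_k\neq 0$ and $b_k/a_k\neq \pm b_j/a_j$, the sign-paired diagonal pencil lets you verify by hand, via the distinct-eigenvalue criterion, that $Y$ and the elliptic quartic $C\subset\{x_0=x_1=0\}$ are smooth, whereas the paper reuses the group and quadrics from its earlier work and checks $C$ by computer.
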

\begin{proof}

We recall the following construction of $Y$ from \cite[Lemma 4.7]{sharma2026amitsurgroupsprimitivefano}. Let $\mathcal{G}:=\langle\sigma$, $\tau\rangle \subset \aut(\pp^5)$, where
\begin{align}
\begin{split}
    \sigma : [x_0:x_1:x_2:x_3:x_4:x_5] &\mapsto [-x_0:-x_1:-x_2:x_3:x_4:x_5]\\
    \tau : [x_0:x_1:x_2:x_3:x_4:x_5] &\mapsto [\zeta x_3:\zeta x_4:\zeta x_5: x_0:x_1:x_2],  
\end{split}
\end{align}
and $\zeta$ is a primitive sixth root of unity.
Consider the quadrics
\begin{align} 
\begin{split}
    Q_1 &: x_0^2 -x_1^2 +  2x_2^2 -2 x_3^2 +  3 x_4^2 -3x_5^2=0\\
    Q_2 &: -2x_0^2 +  3x_1^2 -3x_2^2 + \zeta^2(x_3^2 -x_4^2 + 2x_5^2)=0.  
\end{split}
\end{align}
Furthermore, the complete intersection $Y:=Q_1\cap Q_2$ is $\mathcal{G}$-invariant and defines a smooth Fano threefold in family \textnumero 1.14 such that $\am(Y,\mathcal{G})\cong\zz/2\zz$. 
  
Let 
$H_1:=\{x_0=0\}$ and $H_2:=\{x_3=0\}$ be hyperplanes in $\mathbb{P}^5$, and set 
\[
C:=Y \cap H_1 \cap H_2 = \{Q_1= Q_2 =x_0 =x_3=0\}.
\]
The subspace $H_1\cap H_2$ is $\mathcal{G}$-invariant: it is preserved by $\sigma$, while $\tau$ interchanges the two hyperplanes. Since $Y$ is also $\mathcal{G}$-invariant, the curve $C$ is $\mathcal G$-invariant. A direct computation in Magma shows that $C$ is smooth and irreducible \cite{magma}. Let $X$ denote the blow-up of $Y$ in $C$, then $X$ is a Fano threefold in \textnumero 2.10. Since $C$ is $\mathcal{G}$-invariant, $\mathcal{G}$ lifts to a subgroup of $\aut(X)$. 
Finally, Theorem \ref{imprimitivebcdp} implies that 
\[
\am(X,\mathcal{G}) \cong \mathbb{Z}/2\mathbb{Z}.
\]
\end{proof}

\begin{lemma}      \label{Am:2.16}
     Let $X$ be a Fano threefold in family \textnumero 2.16. Let $G\subseteq \aut(X)$ be a finite group. Then $\am(X,G)=0$.
\end{lemma}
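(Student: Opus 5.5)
By Remark \ref{remark:blow-ups-of-1.14}, we may identify $G$ with a subgroup of $\aut(Y;C)$, where $Y$ is the complete intersection of two quadrics in $\pp^5$ and $C$ is a smooth conic. Then $\pi$ is $G$-equivariant and $\am(X,G)=\am(Y,G)\subseteq\zz/2\zz$. Since $\pic(Y)=\zz[L]$, it suffices to show that $L$, equivalently $\pi^*L$, is $G$-linearizable whenever $C$ is a $G$-invariant conic.

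The plan is to use the linear span of $C$. A smooth conic spans a plane $\Pi\cong\pp^2\subset\pp^5$. Since the embedding $Y\subset\pp^5$ is given by $|L|$, the $G$-action extends to $\pp^5$ through the projective representation on $H^0(Y,L)^\vee$. Under this extension $\Pi$ is $G$-invariant, because $C$ is. Consequently the subspace $W:=H^0(\pp^5,\mathcal{I}_\Pi(1))|_Y\subset H^0(Y,L)$ of linear forms vanishing on $\Pi$ is $G_L$-stable and has dimension $3$. Proposition \ref{Gsubrepn_result} then gives $3\,\partial(L)=0$. Together with $2\,\partial(L)=0$, this yields $\partial(L)=0$.

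For this we need $Y\cap\Pi$ to be of the expected form, so that $\Pi$ is genuinely determined by $C$ and the forms in $W$ restrict to independent sections on $Y$. The restriction to $Y$ is injective on linear forms, since $Y$ is nondegenerate. Hence $\dim W=3$ automatically, and $\Pi=\langle C\rangle$ is canonical, so its invariance needs no further check.

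The main point to verify is that $\partial(L)$ is already known to be $2$-torsion. This holds because $\am(Y,G)\subseteq\zz/2\zz$. Alternatively, $K_Y=-2L$ is linearizable by Proposition \ref{KXlinearizable}. A backup route uses the exceptional divisor instead: on $X$ we have $-K_X=2\pi^*L-E$, and $E$ is $G$-linearizable by the remark after Theorem \ref{imprimitivebcdp}. Then $2\pi^*L$ is linearizable, and the odd-dimensional subrepresentation $W$ kills the remaining $2$-torsion.
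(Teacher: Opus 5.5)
Your proof is correct and follows the paper's argument: the plane $\Pi=\langle C\rangle$ is $G$-invariant, so the $3$-dimensional space of linear forms vanishing on it (equivalently $H^0(Y,\mathcal{I}_C\otimes L)$) is a $G_L$-subrepresentation, and Proposition~\ref{Gsubrepn_result} then gives $3\partial(L)=0$, which together with $2\partial(L)=0$ forces $\partial(L)=0$. The only minor difference is that you also get the $2$-torsion directly from the linearizability of $K_Y=-2L$ (Proposition~\ref{KXlinearizable}), rather than only from the tabulated bound $\am(Y,G)\subseteq\zz/2\zz$; this makes your argument slightly more self-contained.
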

\begin{proof}
 Since a conic $C\subset \pp^5$ uniquely spans a $2$-dimensional space, it is cut out by three linearly independent hyperplanes, $H_1, H_2,$ and $H_3$ in $\pp^5$. Then 
\[
C:= Y \cap S, \text{ where } S := H_1 \cap H_2 \cap H_3.
\]
Since $C$ is $G$-invariant (Remark \ref{remark:blow-ups-of-1.14}), $S$ is also $G$-invariant. If $G_{\mathcal{O}_{\pp^5}(1)}$ denotes the lifting group of $\mathcal{O}_{\pp^5}(1)$, there is a $3$-dimensional $G_{\mathcal{O}_{\pp^5}(1)}$-subrepresentation of $H^0(\pp^5,\mathcal{O}_{\pp^5}(1))$. It now follows from Proposition \ref{Gsubrepn_result} that $3\overline{H}=0$, where $H$ denotes a general hyperplane on $\pp^5$. Consequently, $\overline{3L}=0$. Since $\pi$ is $G$-equivariant, $3\overline{\pi^*L}=0$. Together with $2\overline{\pi^*L}=0$ obtained from \eqref{eqn:amitsur-bound-for-1.14}, this gives $\overline{\pi^*L}=0$, and consequently, $\am(X,G)=0$.
\end{proof}

Let $Y$ denote a Fano threefold in family \textnumero 1.15. Then $\autp(Y)=0$ and $\am(Y,G)=0$ for any finite subgroup $G\subseteq \aut(Y)$, Table \ref{table:amitsur}. A Fano threefold $X$ in any of the following families 
\begin{center}
    \textnumero 2.14,\; \textnumero 2.20,\; \textnumero  2.22,\; \textnumero 2.26
\end{center} 
is the blow-up of $Y$ along a smooth curve $C$, where $C$ is, a complete intersection of two hyperplanes, a twisted cubic, a conic, or a line, respectively \cite[Table 2]{MM81classtable}.
\begin{lemma}    \label{Am:2.14,2.20,2.22,2.26}
    Let $X$ be a Fano threefold belonging to one of the following families 
    \begin{center}
        \textnumero 2.14, \textnumero 2.20, \textnumero  2.22, \textnumero  2.26.
    \end{center}
    Let $G\subseteq \aut(X)$ be a finite group. Then $\am(X,G)=0$.
\end{lemma}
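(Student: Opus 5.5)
The plan is to follow the same scheme as Lemmas \ref{Am:2.1} and \ref{Am:2.5,2.11}. First, I will read off from Table \ref{table:amitsur} that $\autp(X)=0$ for each of the families \textnumero 2.14, \textnumero 2.20, \textnumero 2.22 and \textnumero 2.26. In particular, every $g\in G$ acts trivially on $\pic(X)=\zz[\pi^*L]\oplus\zz[E]$, so $g^*E=E$ for all $g\in G$. The exceptional divisor is the unique effective divisor in its class, so the class equality gives equality of divisors.

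By Lemma \ref{Lemma:autblowup}, $G$ is then a lift of a subgroup of $\aut(Y;C)$, where $Y$ is the del Pezzo threefold of degree $5$ (family \textnumero 1.15). We identify $G$ with this subgroup, so that the blow-up $\pi\colon X\to Y$ becomes $G$-equivariant.

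Since $\pi$ is a $G$-equivariant birational morphism of smooth projective $G$-varieties, Theorem \ref{imprimitivebcdp} gives $\am(X,G)=\am(Y,G)$. By the classification for primitive Fano threefolds recorded in Table \ref{table:amitsur}, $\am(Y,G)=0$ for every finite $G\subseteq\aut(Y)$. Hence $\am(X,G)=0$.

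As a direct alternative to invoking the table for $Y$: $\pic(Y)^G=\zz[L]$ with $-K_Y=2L$. Moreover, $h^0(Y,L)=7$ is odd, so Proposition \ref{Gsubrepn_result} gives $7\overline{L}=0$, and together with $2\overline{L}=\overline{-K_Y}=0$ this forces $\overline{L}=0$. Then $\pic(X)^G$ is generated by $\pi^*L$ and $E$, which are both $G$-linearizable ($E$ by the remark after Theorem \ref{imprimitivebcdp}).

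The only step needing care is the descent of $G$ to $Y$, i.e.\ that $\autp(X)=0$ for these four families. This is the input from the cited classification of Picard actions. If the table turned out not to apply in some case, I would argue instead that the two extremal contractions of $X$ are of distinct types, so no automorphism can swap them and each $g$ fixes the ray of $E$.
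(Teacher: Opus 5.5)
Your proof is correct and matches the paper's argument: since $\autp(X)=0$, Lemma \ref{Lemma:autblowup} lets $G$ descend to $\aut(Y;C)$, and Theorem \ref{imprimitivebcdp} together with $\am(Y,G)=0$ for family \textnumero 1.15 gives $\am(X,G)=0$. Your alternative check of $\am(Y,G)=0$ is also valid: $h^0(Y,L)=7$ gives $7\overline{L}=0$, and $2\overline{L}=\overline{-K_Y}=0$, so $\overline{L}=0$; this makes the argument independent of the primitive classification.
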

\begin{proof}
  From Table \ref{table:amitsur}, we recall $\autp(X)=0$. By Lemma \ref{Lemma:autblowup}, we may regard $G\subseteq \aut(Y;C)$. Since $\am(Y,G)=0$, Theorem \ref{imprimitivebcdp} yields $\am(X,G)=0$. 
\end{proof}

\subsection{Blow-ups of a quadric threefold}
Let $Q$ denote a smooth quadric threefold, that is, a member of family \textnumero 1.16. From Table \ref{table:amitsur}, we know that $\autp(Q)=0$ and 
\begin{equation}   \label{eqn:AmQ}
    \am(Q,G)=0
\end{equation}
for every finite subgroup $G\subseteq \aut(Q)$. 
As an application of Theorem \ref{imprimitivebcdp}, we obtain the classification of Amitsur groups for several families with Picard rank $\rho=2$ and $3$. 

\begin{lemma}  \label{Am:2.7,2.13,2.17,2.21,2.23,2.29,2.31}
    Let $X$ be a Fano threefold in any one of the following families
        \begin{center}
       \textnumero 2.7,\; \textnumero 2.13,\; \textnumero 2.17,\; \textnumero 2.21,\; \textnumero 2.23,\; \textnumero 2.29,\; \textnumero 2.31. 
        \end{center}
        Let $G\subseteq \aut(X)$ be a finite group. Then $\am(X,G)=0$. 
\end{lemma}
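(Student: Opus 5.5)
The plan is to follow the pattern of Lemmas \ref{Am:2.5,2.11} and \ref{Am:2.14,2.20,2.22,2.26}: reduce to the quadric $Q$ via Lemma \ref{Lemma:autblowup}, then apply Theorem \ref{imprimitivebcdp} together with \eqref{eqn:AmQ}. For the families \textnumero 2.7, \textnumero 2.13, \textnumero 2.17, \textnumero 2.29, \textnumero 2.31, the threefold $X$ is the blow-up $\pi\colon X\to Q$ along a smooth irreducible curve $C$. The curve $C$ is, respectively, an intersection of two divisors in $|\mathcal{O}_Q(2)|$, a curve of degree $6$ and genus $2$, an elliptic quintic, a conic, or a line. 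First I will read off $\autp(X)$ from Table \ref{table:amitsur}. Whenever $\autp(X)=0$, every $g\in G$ satisfies $g^*E=E$, so $G$ descends to $\aut(Q;C)$ and $\pi$ is $G$-equivariant. Then Theorem \ref{imprimitivebcdp} gives $\am(X,G)=\am(Q,G)=0$.

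Families \textnumero 2.21 and \textnumero 2.23 are also blow-ups of $Q$, along a twisted quartic and along an intersection of a hyperplane section with a quadric section, respectively. They need more care, because $\autp(X)$ may be nontrivial. For instance, a member of \textnumero 2.21 carries two blow-down structures to $Q$, and $G$ may swap them. The key observation is that Theorem \ref{imprimitivebcdp} does not need $G$ to preserve $E$. It only needs some $G$-equivariantly birational model whose Amitsur group is known. So I will split into cases according to $\autp(X,G)$.

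\begin{itemize}
\item If $\autp(X,G)=0$, we argue exactly as in the first paragraph.
\item If $\autp(X,G)\neq 0$, the group $\pic(X)^G$ has rank $1$. Indeed, a nontrivial involution of $\pic(X)\cong\zz^2$ fixing $K_X$ acts on the rank-$2$ lattice with $K_X$ spanning a fixed line, so the invariant lattice is saturated of rank $1$.
\end{itemize}

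In the second case I want $\pic(X)^G=\zz[K_X]$, or at least generated by a class that is visibly linearizable. I then conclude by Proposition \ref{KXlinearizable}, or by Proposition \ref{Prop:rkPic_G=1} when $X$ has index $1$. The main obstacle is this rank-one case: $K_X$ might be divisible in $\pic(X)$, so that $\pic(X)^G$ is generated by $-\tfrac12K_X$ rather than by $K_X$. To handle it, I will write $-K_X=3\pi^*L-E$ and $\pi^*L$ in the basis $\pi^*L,E$, and compute the invariant generator explicitly. If it equals $K_X$ up to sign, we are done. If not, I will use the period congruences of Proposition \ref{prop:amitsur-period-congruences} with $D^3$ and $D^2\cdot K_X$ computed from the intersection numbers $L^3=2$, $L\cdot C=\deg C$, $E^3=-\deg N_{C/Q}$. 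I expect these congruences to force $m(D)=1$; if they do not, I will fall back on Proposition \ref{Gsubrepn_result} with an odd-dimensional invariant subspace of sections.
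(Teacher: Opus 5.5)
Your proposal is correct and follows the paper's proof: when $\autp(X,G)=0$ you descend to $Q$ and use Theorem \ref{imprimitivebcdp} with \eqref{eqn:AmQ}, and in the rank-one case, which by Table \ref{table:amitsur} only occurs for \textnumero 2.21, you apply Proposition \ref{Prop:rkPic_G=1}. Your worry about divisibility of $K_X$ does not arise, since $-K_X=3\pi^*L-E$ has $E$-coefficient $-1$ and is therefore primitive. So $X$ has index $1$, $\pic(X)^G=\zz[K_X]$, and neither the period congruences nor Proposition \ref{Gsubrepn_result} is needed.
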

\begin{proof}
By Table \ref{table:amitsur}, we have $\autp(X)=0$ for all families under consideration except \textnumero 2.21, for which $\autp(X)=\zz/2\zz$.

First, suppose that $X$ belongs to family \textnumero 2.21 and $\autp(X,G)=\zz/2\zz$. Then $\operatorname{rk}\pic(X)^G=1$, so Proposition \ref{Prop:rkPic_G=1} gives $\am(X,G)=0$. In all remaining cases, $\autp(X,G)=0$. Hence, the $G$-action descends to $Q$ and $\pi$ is $G$-equivariant. Theorem \ref{imprimitivebcdp} and \eqref{eqn:AmQ} then yield $\am(X,G)=0$.
\end{proof}

 Let $X$ be a Fano threefold that belongs to one of the families
 \begin{center}
     \textnumero 3.10,\; \textnumero 3.15,\; \textnumero 3.20.
 \end{center} 
Then it is the blow-up of $Q$ in the disjoint union $C:=C_1\sqcup C_2$. Let $\phi: X \to Q$ denote this blow-up, $E_1$ and $E_2$ are exceptional divisors, and write $H:=\phi^*\mathcal{O}_Q(1)$. From Table \ref{table:amitsur} and \cite[Table 3]{MM81classtable}, we have 
\begin{center}
\renewcommand{\arraystretch}{1.2}
\begin{tabular}{c l c}
\hline
Family & $C_1\sqcup C_2\subset Q$ & $\autp(X)$ \\
\hline
\textnumero $3.10$ & Two disjoint conics & $\zz/2\zz$ \\
\textnumero $3.15$ & A disjoint line and conic & $0$ \\
\textnumero $3.20$ & Two disjoint lines & $\zz/2\zz$ \\
\hline
\end{tabular}
\end{center}

The previous lemma yields the following result.

\begin{lemma}  \label{Am:3.10,3.15,3.20}
Let $X$ be a Fano threefold belonging to one of the following families
   \begin{center}
   \textnumero 3.10,\; \textnumero 3.15,\; \textnumero 3.20.
   \end{center}
Let $G\subseteq \aut(X)$ be a finite group. Then $\am(X,G)=0$.
\end{lemma}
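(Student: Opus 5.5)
The plan is to split according to the image $\autp(X,G)\subseteq\autp(X)$, as in the proof of Lemma \ref{Am:2.7,2.13,2.17,2.21,2.23,2.29,2.31}. There are two situations.

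\textbf{Case $\autp(X,G)=0$.} This happens automatically for family \textnumero 3.15, and it is one of two possibilities for \textnumero 3.10 and \textnumero 3.20. Here every $g\in G$ fixes the classes $E_1$ and $E_2$. Since each $E_i$ is the unique effective divisor in its class (it is exceptional), we get $g(E_i)=E_i$. Hence $E=E_1+E_2$ is $G$-invariant, and Lemma \ref{Lemma:autblowup}, applied to the blow-up of the disconnected curve, shows that $G$ descends to a subgroup of $\aut(Q;C)$ preserving each $C_i$. Strictly speaking the lemma is stated for irreducible $C$, but the universal property of blow-ups gives the same conclusion. Thus $\phi$ is $G$-equivariant, and Theorem \ref{imprimitivebcdp} together with \eqref{eqn:AmQ} gives $\am(X,G)=\am(Q,G)=0$.

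If one prefers to stay within the irreducible-curve framework, there is an alternative route. Factor $\phi$ as $X\to X'\to Q$, where $X'$ is the blow-up of $Q$ in $C_1$, so that $X'$ lies in family \textnumero 2.29, \textnumero 2.31, etc. Then apply Lemma \ref{Am:2.7,2.13,2.17,2.21,2.23,2.29,2.31} to $X'$ and Theorem \ref{imprimitivebcdp} once more. This is presumably what ``the previous lemma yields'' refers to.

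\textbf{Case $\autp(X,G)=\zz/2\zz$.} This occurs only for \textnumero 3.10 and \textnumero 3.20. The nontrivial element must swap $E_1$ and $E_2$ and fix $H$, since it preserves $K_X=-3H+E_1+E_2$ and permutes the extremal contractions. Therefore
\[
\pic(X)^G=\zz H\oplus \zz(E_1+E_2).
\]
The divisor $E_1+E_2$ is the exceptional divisor of the blow-up of the $G$-invariant curve $C$. Moreover $G$ still descends to $Q$, because $E_1+E_2$ is $G$-invariant, so Lemma \ref{Lemma:autblowup} (or the universal property) applies and $\phi$ is $G$-equivariant. By the remark after Theorem \ref{imprimitivebcdp}, $E_1+E_2$ is $G$-linearizable. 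Then $H=\phi^*\mathcal O_Q(1)$ is linearizable because $\am(Q,G)=0$. Alternatively, $K_X$ is linearizable by Proposition \ref{KXlinearizable}, and $-3H=K_X-(E_1+E_2)$, so $3\overline H=0$. Hence $\am(X,G)=0$.

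\textbf{Main obstacle.} The main point to verify is that $G$ descends to $Q$ in the swapping case. This requires checking that $\autp(X,G)$ acts on $\pic(X)$ exactly by swapping $E_1$ and $E_2$ while fixing $H$, and not by some other involution. I would read this off from the description of $\autp(X)$ in \cite{sharma2026actionspicardgroupsmooth}. Failing that, I would use the fact that the action must preserve $K_X$ and the intersection form; these constraints force the involution to fix $H$ in both families.
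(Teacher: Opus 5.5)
Your proposal is correct and follows essentially the paper's proof. You split according to $\autp(X,G)$, observe that the total exceptional divisor $E_1+E_2$ is $G$-invariant in both cases (using the description of the swapping involution from \cite{sharma2026actionspicardgroupsmooth}), descend $G$ to $\aut(Q;C)$ so that $\phi$ is $G$-equivariant, and conclude from Theorem~\ref{imprimitivebcdp} and $\am(Q,G)=0$. Your ``alternatively'' remark on its own only gives $3\overline{H}=0$ and would need an extra input to finish, for example $\am(Q,G)=0$ or the congruence $2H^3\equiv 6 \pmod{m(H)}$, which forces $m(H)\mid 2$; but your main argument does not rely on that remark.
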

\begin{proof}
Suppose first that $X$ belongs to \textnumero 3.10 (resp. \textnumero 3.20) and $\autp(X,G)=\zz/2\zz$. By \cite[Lemmas 4.11, 4.13]{sharma2026actionspicardgroupsmooth}, the nontrivial element of $\autp(X,G)$ exchanges the classes of $E_1,E_2$, while keeping the hyperplane class fixed. Since the total exceptional divisor $E:=E_1\cup E_2 \in \pic(X)^G$, it follows by Lemma \ref{Lemma:autblowup} that $G$ descends to a subgroup of $\aut(Y;C)$ and $\phi$ is $G$-equivariant. From Theorem \ref{imprimitivebcdp} and \eqref{eqn:AmQ}, we obtain $\am(X,G)=0$.

If $\autp(X,G)=0$ in these two cases, or if $X$ is in \textnumero 3.15, then each exceptional divisor is $G$-invariant. It again follows that $\phi$ is $G$-equivariant, and consequently, by Theorem \ref{imprimitivebcdp} and \eqref{eqn:AmQ}, $\am(X,G)=0$.
\end{proof}

Next suppose that $X$ is a Fano threefold in family \textnumero 3.19. Then $X$ is the blow-up of $Q$ in two non collinear points $p$ and $q$. Denote the blow-up map by $\phi:X \to Q$, and the exceptional divisors of $p$ and $q$ by $E_p$ and $E_q$, respectively. 

\begin{lemma}   \label{Am:3.19}
    Let $X$ be a Fano threefold in \textnumero 3.19. Let $G\subseteq \aut(X)$ be a finite group. Then $\am(X,G)=0$.
\end{lemma}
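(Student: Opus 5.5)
We follow the pattern of Lemma \ref{Am:3.10,3.15,3.20} and split into cases according to $\autp(X,G)$. Here $\pic(X)=\zz[H]\oplus\zz[E_p]\oplus\zz[E_q]$, with $H:=\phi^*\mathcal{O}_Q(1)$. By \cite{sharma2026actionspicardgroupsmooth}, summarized in Table \ref{table:amitsur}, we expect $\autp(X)\subseteq\zz/2\zz$. A nontrivial element can only exchange $E_p$ and $E_q$ while fixing $H$. The reason is that $E_p$ and $E_q$ are the only extremal classes of this type. The only other candidate is the strict transform of the line through $p,q$ and the associated contraction. The points are non-collinear, so there is no such line, and this candidate does not appear.

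\emph{Case $\autp(X,G)=0$.} Then $E_p$ and $E_q$ are each $G$-invariant. By Lemma \ref{Lemma:autblowup}, applied to the blow-up of the $G$-invariant subvariety $\{p,q\}$ (the proof is the same for points), $G$ descends to $\aut(Q;\{p,q\})$ and $\phi$ is $G$-equivariant. Theorem \ref{imprimitivebcdp} together with \eqref{eqn:AmQ} then gives $\am(X,G)=\am(Q,G)=0$.

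\emph{Case $\autp(X,G)=\zz/2\zz$.} The nontrivial element swaps $E_p$ and $E_q$, so the total exceptional divisor $E:=E_p+E_q$ is $G$-invariant. Hence $G$ still descends to $Q$, now preserving the set $\{p,q\}$, and $\phi$ is the blow-up of a $G$-invariant subvariety. Theorem \ref{imprimitivebcdp} and \eqref{eqn:AmQ} again give $\am(X,G)=0$.

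As a cross-check in this case, $\pic(X)^G=\zz H\oplus\zz(E_p+E_q)$. The class $\mathcal{O}(E)$ is $G$-linearizable by the remark after Theorem \ref{imprimitivebcdp}. Also $K_X=-3H+2E$ is linearizable by Proposition \ref{KXlinearizable}. Therefore $\overline{3H}=0$. On the other hand, $H$ is a pullback from $Q$, and $\am(Q,G)=0$ gives $\overline{H}=0$ directly.

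The main obstacle is pinning down $\autp(X)$, that is, justifying that no element of $\aut(X)$ acts on $\pic(X)$ other than by fixing or swapping $E_p$ and $E_q$. We will cite \cite{sharma2026actionspicardgroupsmooth} for this. If needed, we will argue via the action on the extremal rays of the Mori cone: the divisorial contractions to points have exceptional divisors $E_p$ and $E_q$, and any automorphism must permute these.
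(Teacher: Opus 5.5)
Your proposal is correct and follows the paper's proof. In both cases, $\autp(X,G)=0$ and $\autp(X,G)=\zz/2\zz$, the exceptional locus $E_p\cup E_q$ is $G$-invariant, so $G$ descends to $\aut(Q;\{p,q\})$, $\phi$ is $G$-equivariant, and Theorem~\ref{imprimitivebcdp} together with \eqref{eqn:AmQ} gives $\am(X,G)=0$. Your extremal-ray heuristic for $\autp(X)$ is not needed, and it is slightly off: the other extremal rays of $X$ come from the strict transforms of the tangent cones $T_pQ\cap Q$ and $T_qQ\cap Q$, not from a line through $p$ and $q$. Since you cite \cite{sharma2026actionspicardgroupsmooth} for $\autp(X)$, as the paper does via Table~\ref{table:amitsur}, this does not affect the argument.
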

\begin{proof}
    From Table \ref{table:amitsur}, $\autp(X)=\zz/2\zz$. If $\autp(X,G)=\zz/2\zz$, then $\pic(X)^G=\zz[H,E_p+E_q]$. Since $E_p+E_q$ is $G$-invariant, $G$ descends to a subgroup of $\aut(Q;\{p,q\})$, and $\phi$ is $G$-equivariant. From Theorem \ref{imprimitivebcdp} and \eqref{eqn:AmQ}, we have $ \am(X,G)=0$.
    If $\autp(X,G)=0$. Then both $E_p$ and $E_q$ are $G$-invariant, hence, it again follows that $\phi$ is $G$-equivariant. Thus, $ \am(X,G)=0$.
\end{proof}

\section{Blow-ups of projective space}
In this section, we discuss the following families of Fano threefolds
\begin{center}
    \textnumero 2.4,\; \textnumero 2.12,\; \textnumero 2.15,\;  \textnumero 2.19,\; \textnumero 2.25,\; \textnumero 2.27,\; \textnumero 2.28,\; \textnumero 2.30,\; \textnumero 2.35,\;
     \textnumero 3.6,\; \textnumero 3.11,\; \textnumero 3.12,\; \textnumero 3.14,\; \textnumero 3.16,\; \textnumero 3.23,\; \textnumero 3.26,\; \textnumero 3.29,\; \textnumero 3.30.
\end{center}

\medskip
Let $X$ denote a member of any of above listed families and suppose that $\rho(X)=2$. By Mori-Mukai classification, $X$ is obtained by blowing-up $\mathbb{P}^3$ along a smooth curve $C$. Let $H$ denote the hyperplane class on $\pp^3$, and let $\pi : X \to \mathbb{P}^3$ be the blow-up map with exceptional divisor $E$. Then $\pic(X) = \zz [\pi^*H,E]$ and $-K_X=4\pi^*H -E$. 

\smallskip
Unless noted otherwise, let $x_0,x_1,x_2,x_3$ be coordinates on $\pp^3$. 

\begin{remark}  \label{remark:blow-ups-P3}
\normalfont Let $G\subseteq \aut(X)$ be a finite group, and suppose $\autp(X,G)=0$. Then $E$ is $G$-invariant. By Lemma \ref{Lemma:autblowup}, we may assume that $G\subseteq \aut(\pp^3;C)$, and in particular, $\pi$ is $G$-equivariant. It now follows from Theorem \ref{imprimitivebcdp} and the classification of $\am(\pp^3,G)$ discussed in \cite[Example 2.3]{sharma2026amitsurgroupsprimitivefano} that
\begin{equation}   \label{Amitsur-bound-Bl-of-P3}
\am(X,G) = \am(\pp^3,G)  \subseteq \zz/4\zz,
\end{equation}
\end{remark}
and $E$ is $G$-linearizable. For all families considered here other than \textnumero $2.12$,
we have $\autp(X)=0$, so $\pi$ is $G$-equivariant for every
finite subgroup $G\subseteq\aut(X)$. We shall therefore use
this remark without further comment for these families.
For family \textnumero $2.12$, we apply it only when
$\autp(X,G)=0$. In these cases, $\am(X,G)=\langle\overline{\pi^*H}\rangle$, so it suffices to determine the
Amitsur period of $\pi^*H$.

\medskip
   In many of the following examples, we determine the Amitsur period of $\pi^*H$ by instead considering the divisor $D:=\pi^*H-E$. Suppose $E$ is $G$-linearizable, then $D$ is $G$-linearizable if and only if $\pi^*H$ is, and $m(\pi^*H)=m(D)$. Denote it by $m$. Then the upper bounds on $m$ are obtained using Proposition \ref{prop:amitsur-period-congruences}. 
\medskip

A Fano threefold $X$ in family \textnumero $2.4$ is the blow-up of $\pp^3$ along a curve $C$ that is a complete intersection of two cubics \cite[Table 2]{MM81classtable}.

\begin{lemma}   \label{Am:2.4}
Let $X$ be a Fano threefold in family \textnumero 2.4. Let $G\subseteq \aut(X)$ be a finite group. Then $\am(X,G)$ is isomorphic to $\zz/2\zz$ or trivial.
\end{lemma}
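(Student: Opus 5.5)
Since $\autp(X)=0$ for family \textnumero 2.4, Remark \ref{remark:blow-ups-P3} applies to every finite $G\subseteq\aut(X)$. Thus $\pi$ is $G$-equivariant, $E$ is $G$-linearizable, and
\[
\am(X,G)=\am(\pp^3,G)=\langle\overline{\pi^*H}\rangle\subseteq\zz/4\zz .
\]
It therefore suffices to show $2\overline{\pi^*H}=0$ and then to exhibit an example with $\overline{\pi^*H}\neq 0$. Let $m$ denote the Amitsur period of $\pi^*H$.

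For the upper bound, I would rule out $m=4$ using Proposition \ref{prop:amitsur-period-congruences} applied to a suitable $G$-invariant class. Take $D=\pi^*H$, with $H^3=1$, $H^2\cdot E=0$, and $-K_X=4\pi^*H-E$. Then $2D^3=2$, and $2\equiv 6 \pmod m$ forces $m\mid 4$, which is not enough. The second congruence gives $D^2\cdot K_X=-4\equiv 4 \pmod{2m}$, so $2m\mid 8$, again allowing $m=4$.

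I would therefore use a geometric argument instead. The curve $C=F_1\cap F_2$ is a complete intersection of two cubics. Its ideal in degree $3$ is the $2$-dimensional space $\langle F_1,F_2\rangle\subset H^0(\pp^3,\mathcal O(3))$, and this space is stable under the lifting group $G_{\mathcal O(3)}$, because $C$ is $G$-invariant. Proposition \ref{Gsubrepn_result} then gives $2\overline{3H}=0$, that is $6\overline H=0$. Combined with $4\overline H=0$, this yields $2\overline H=0$. Pulling back along the equivariant map $\pi$ gives $2\overline{\pi^*H}=0$, so $\am(X,G)\subseteq\zz/2\zz$.

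For realization, I would reuse the group $\mathcal G=\langle\sigma,\tau\rangle$ from the proof of Lemma \ref{Am:2.3}, which has $\am(\pp^3,\mathcal G)=\zz/2\zz$. The task is to find a $\mathcal G$-invariant pencil of cubics whose base locus is a smooth irreducible curve.

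The obvious candidates do not work. Linear lifts satisfy $ST=-TS$, so the degree-$3$ forms carry a projective rather than linear representation. Lifts of $\sigma$ act on monomials by characters $i^{a_1}(-1)^{a_2}(-i)^{a_3}$, and $\tau$ swaps $x_0\leftrightarrow x_2$, $x_1\leftrightarrow x_3$. A $2$-dimensional invariant subspace should exist, since the irreducible projective representations of this Heisenberg-type group are $2$-dimensional. I would search among pairs such as $F_1=x_0^2x_1+ax_2^2x_3+\dots$ with $F_2=\tau^*F_1$, adjusted so that $\sigma$ preserves $\langle F_1,F_2\rangle$.

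Two conditions must then be checked. First, smoothness and irreducibility of $C$, which I would verify by computer as in Lemma \ref{Am:2.10}. Second, the constructed $X$ must be Fano in family \textnumero 2.4; this is automatic, since the blow-up of $\pp^3$ along any smooth complete intersection of two cubics belongs to that family.

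Theorem \ref{imprimitivebcdp} then gives $\am(X,\mathcal G)\cong\zz/2\zz$.

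The main obstacle is finding an invariant cubic pencil with smooth base curve. If $\mathcal G$ fails to give one, I would try other subgroups of $\operatorname{PGL}_4$ with $\am(\pp^3,G)=\zz/2\zz$, for instance $(\zz/2\zz)^2$ acting by sign changes and a double-transposition permutation.
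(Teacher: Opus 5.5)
Your upper bound is correct, but you reach it by a different route than the paper. The paper stays numerical. It applies Proposition \ref{prop:amitsur-period-congruences} not to $\pi^*H$ (where, as you found, one only gets $m\mid 4$) but to $D=\pi^*H-E$, whose period equals that of $\pi^*H$ because $E$ is linearizable. With $\pi^*H\cdot E^2=-9$ and $E^3=-54$ one gets $D^3=28$, so $m\mid\gcd(50,12)=2$.

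You instead use that $H^0(\pp^3,\mathcal{I}_C(3))=\langle F_1,F_2\rangle$ is a $2$-dimensional subrepresentation for the lifting group of $\mathcal{O}_{\pp^3}(3)$. Proposition \ref{Gsubrepn_result} then gives $6\overline{H}=0$, and $\gcd(6,4)=2$ finishes. Your argument needs no intersection numbers, only that $C$ is a complete intersection of two cubics, so the degree-$3$ part of its ideal is exactly $2$-dimensional. It is in the spirit of Lemmas \ref{Am:2.16} and \ref{Am:2.28}, whereas the paper's is the uniform computation used throughout \S 4. Either route proves $\am(X,G)\subseteq\zz/2\zz$, which is all the statement literally asserts.

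The realization, which the paper also proves in this lemma, is not carried out, and your primary plan cannot succeed. For the group $\mathcal{G}$ of Lemma \ref{Am:2.3}, the element $\sigma^2=\operatorname{diag}(1,-1,1,-1)$ commutes exactly with the linear lifts of $\sigma$ and $\tau$. Those lifts anticommute on cubic forms, since $(-1)^3=-1$. Hence a $\mathcal{G}$-stable pencil has no common eigenvector, so it is irreducible, and $\sigma^2$ acts on it by a scalar $\pm 1$.

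Consider the two signs. If the scalar is $+1$, every member of the pencil uses only monomials with $a_1+a_3$ even, hence $a_0+a_2$ odd, so all members vanish on $\{x_0=x_2=0\}$. If it is $-1$, all members use only monomials with $a_1+a_3$ odd, so they vanish on $\{x_1=x_3=0\}$. Either way the base locus contains a line and is never a smooth irreducible curve. Your trial $x_0^2x_1+\dots$ already falls in the second case.

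Your fallback, a $(\zz/2\zz)^2$ with nontrivial class, is the right kind of group: its commutator pairing is nondegenerate, so this obstruction disappears. The paper takes $\sigma\colon[x_0:x_1:x_2:x_3]\mapsto[x_2:-x_3:x_0:-x_1]$ and $\tau\colon[x_0:x_1:x_2:x_3]\mapsto[x_1:x_0:x_3:x_2]$, whose lifts square to $I$ and anticommute. The pencil is $f_1=x_0^3+x_1^3+2(x_2^3+x_3^3)$ and $f_2=2(x_0^3-x_1^3)+x_2^3-x_3^3$. Here $\sigma^*$ swaps $f_1$ and $f_2$, while $\tau^*$ fixes $f_1$ and negates $f_2$. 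Smoothness of $C$ follows because all $2\times 2$ minors of $\begin{pmatrix}1&1&2&2\\ 2&-2&1&-1\end{pmatrix}$ are nonzero.
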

\begin{proof}
Let $m$ denote the Amitsur period of $D:=\pi^*H-E$. Using \cite[Lemma 1.22]{prokhorov-fano-threefolds}, we determine
\[
\begin{aligned}
(\pi^*H)^3             &= 1,
&\qquad\qquad
(\pi^*H)^2\cdot E      &= 0, \\
\pi^*H\cdot E^2        &=-9,
&\qquad\qquad
E^3                    &= -54.
\end{aligned}
\]
Then $D^3= 28$. From Proposition \ref{prop:amitsur-period-congruences}, $m$ divides $\gcd(50,12)$=2. This implies that $\am(X,G)\subseteq \zz/2\zz$.

To show that this bound is realized, consider the smooth curve $C$ from \cite[\S 4.5]{calabi} defined by $C:=\{f_1=f_2=0\}$, where
\begin{align*}
 f_1: x_0^3 + x_1^3 +2 (x_2^3+x_3^3) &=0,\\
 f_2:   2(x_0^3-x_1^3) + x_2^3 - x_3^3 &=0. 
\end{align*}
Let $\mathcal{G}:= \langle \sigma , \tau\rangle \cong (\zz/2\zz)^2$, where 
\begin{align*}
 \sigma : ([x_0:x_1:x_2:x_3]) &\mapsto ([x_2:-x_3:x_0:-x_1])\\
   \tau : ([x_0:x_1:x_2:x_3]) &\mapsto ([x_1:x_0:x_3:x_2]).
\end{align*}
We observe that $\sigma$ exchanges the two cubics, while $\tau$ preserves them. Consequently, their intersection $C$ is $\mathcal{G}$-invariant. Since the $\mathcal{G}$-action on $\pp^3$ does not lift to a linear representation. $H$ is not $\mathcal{G}$-linearizable. However, $2H$ is $\mathcal{G}$-linearizble since the induced $\mathcal{G}$-action on $\pp(V^\vee)$ has a linear lift, where $V=H^0(\pp^3,\mathcal{O}_{\pp^3}(2)$. Thus, $\am(\mathbb{P}^3, \mathcal{G})\cong \mathbb{Z}/2\mathbb{Z}$, and from \eqref{Amitsur-bound-Bl-of-P3}, we obtain $\am(X,\mathcal{G}) \cong \mathbb{Z}/2\mathbb{Z}$. 
\end{proof}

Let $X$ be a Fano threefold in family \textnumero 2.12, it is the blow-up of $\pp^3$ along a smooth curve of degree $6$ and genus $3$, which is an intersection of four cubic hypersurfaces \cite[Table 2]{MM81classtable}. We use an alternative description of $X$ from \cite[§5.4]{calabi}, in which $X$ is realized as the intersection of three divisors of bidegree $(1,1)$ in $\pp^3\times\pp^3$; see also \cite{cheltsovs2-12family}. Let $\iota : X \hookrightarrow \mathbb{P}^3 \times \mathbb{P}^3$ denote the natural embedding of $X$, let $H_i:=\operatorname{pr}_i^*\mathcal{O}_{\pp^3}(1)$, where $\operatorname{pr}_i$ is the projection of $\pp^3\times\pp^3$ onto its $i$th factor, $i=1,2$. Then $X :=D_1\cap D_2\cap D_3$ for $D_1, D_2,D_3 \in |H_1+H_2|$, and 
\[
\pic(X)= \zz[\iota^*H_1]\oplus\zz[\iota^*H_2], \quad -K_X=\iota^*(H_1+H_2).
\]

Let $G\subseteq \aut(X)$ be a finite group. Since the $G$-action swaps the two generators of $\pic(X)$ or preserves each of them (\cite[Lemma 4.8]{sharma2026actionspicardgroupsmooth}), Remark \ref{remark:equivarince-of-embedding} allows us to identify $G$ as a subgroup of $\aut(\pp^3\times\pp^3;X)$, so that $\iota$ is $G$-equivariant.

\begin{lemma}  \label{Am:2.12}
    Let $X$ be a Fano threefold in family \textnumero $2.12$. Let $G\subseteq \aut(X)$ be a finite group. Then $\am(X,G)$ is isomorphic to $\zz/4\zz$, or $\zz/2\zz$, or trivial. 
\end{lemma}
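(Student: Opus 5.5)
The plan is to split according to $\autp(X,G)$, which by \cite[Lemma 4.8]{sharma2026actionspicardgroupsmooth} is either trivial or $\zz/2\zz$ acting by swapping $\iota^*H_1$ and $\iota^*H_2$.

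\emph{Case $\autp(X,G)=\zz/2\zz$.} Here $\pic(X)^G=\zz[\iota^*(H_1+H_2)]=\zz[-K_X]$, so $\operatorname{rk}\pic(X)^G=1$. The index of $X$ is $1$, so Proposition \ref{Prop:rkPic_G=1} gives $\am(X,G)=0$. Equivalently, the class is $-K_X$, which is linearizable by Proposition \ref{KXlinearizable}.

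\emph{Case $\autp(X,G)=0$.} Both $\iota^*H_1$ and $\iota^*H_2$ are $G$-invariant, and their sum $-K_X$ is linearizable. Hence $\am(X,G)$ is cyclic, generated by $\overline{\iota^*H_1}$. Since $\iota$ is $G$-equivariant and $G$ preserves each factor, $\iota^*H_1$ is the pullback of $\mathcal{O}_{\pp^3}(1)$ under the $G$-equivariant projection $\operatorname{pr}_1\circ\iota$. Its period therefore divides the period of $\mathcal{O}_{\pp^3}(1)$ for the induced action on $\pp^3$, and that period divides $4$ by \cite[Example 2.3]{sharma2026amitsurgroupsprimitivefano}. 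The same bound also follows from Remark \ref{remark:blow-ups-P3}, since $\operatorname{pr}_1\circ\iota$ is the blow-up $\pi$ and $\iota^*H_1=\pi^*H$. Hence $\am(X,G)\subseteq\zz/4\zz$, which forces it to be $\zz/4\zz$, $\zz/2\zz$, or trivial.

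As a consistency check, Proposition \ref{prop:amitsur-period-congruences} with $D=\iota^*H_1$ gives $D^3=1$, so $2\equiv 6 \pmod m$ and $m\mid 4$. Also $D^2\cdot K_X=-1\cdot\ldots$, which needs checking; it should not exclude $m=4$.

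The main obstacle is realizing $\zz/4\zz$, which is what makes the statement sharp. I would take $G\subset\aut(\pp^3)$ to be the image of a Heisenberg-type group $\mathcal{H}\cong(\zz/4\zz)^2$. It is generated by $x_j\mapsto i^j x_j$ and the cyclic shift $x_j\mapsto x_{j+1}$, whose lifts commute only up to a primitive fourth root of unity, so $\am(\pp^3,\mathcal{H})=\zz/4\zz$. Let $G$ act diagonally on $\pp^3\times\pp^3$ via the standard representation on both factors (or a dual or twisted version). I would then look for a $3$-dimensional space of $\mathcal{H}$-invariant $(1,1)$-forms. Since $V\otimes V$ or $V\otimes V^\vee$ is a genuine representation of $\mathcal{H}/\mu_4$, it decomposes into characters, and three semi-invariant bilinear forms spanning an invariant subspace suffice, following the \cite[\S5.4]{calabi} model. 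The final step is to verify by computer that the resulting $X=D_1\cap D_2\cap D_3$ is smooth. Then $\am(X,G)\cong\zz/4\zz$, because $\iota^*H_1$ pulls back from $\pp^3$ and the period is detected by Remark \ref{remark:equivariant-restrction-Amitsur}/Theorem \ref{imprimitivebcdp} via $\am(X,G)=\am(\pp^3,G)$.

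For $\zz/2\zz$ I would use an analogous $(\zz/2\zz)^2$ example; for $0$, the trivial group suffices.
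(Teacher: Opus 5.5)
Your upper bound is correct and is essentially the paper's argument. When $\autp(X,G)=\zz/2\zz$ you get $\pic(X)^G=\zz[-K_X]$. When $\autp(X,G)=0$, the first projection is the $G$-equivariant blow-up $\pi$, so $\am(X,G)=\langle\overline{\iota^*H_1}\rangle\cong\am(\pp^3,G)\subseteq\zz/4\zz$. Your pending check gives $D^2\cdot K_X=-4$, hence $2m\mid 8$, which is consistent. This already proves the lemma as literally stated.

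What is missing is sharpness, which the paper's proof establishes and which you yourself call the main obstacle. You never actually produce an $X$ with $\am(X,G)\cong\zz/4\zz$ or $\zz/2\zz$.

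\textbf{The $\zz/4\zz$ case.} Your construction is only a search strategy ending in a smoothness check you have not done, and it contains a slip. $V\otimes V$ is not a genuine representation of $(\zz/4\zz)^2$: its central character has order $2$, so all its irreducible constituents are $2$-dimensional. With the same action on both factors there is therefore no invariant $3$-dimensional space of $(1,1)$-forms at all. Only the contragredient choice works. There $V^\vee\otimes V\cong\operatorname{End}(V)$ is the regular representation of $(\zz/4\zz)^2$, which leaves finitely many triples of eigen-forms to test.

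The paper avoids this search by citing \cite[\S 2.2]{cheltsovs2-12family}. The group $\overline{\mathcal{G}_2}=\langle\overline{B},\overline{C}\rangle$ with $BC=iCB$ is your Heisenberg group, up to conjugacy in $\operatorname{PGL}_4(\cc)$, and it preserves a smooth curve of degree $6$ and genus $3$. Since $\operatorname{Sym}^2B$ and $\operatorname{Sym}^2C$ anticommute, $\am(X_2,\overline{\mathcal{G}_2})=\am(\pp^3,\overline{\mathcal{G}_2})\cong\zz/4\zz$.

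\textbf{The $\zz/2\zz$ case.} Your ``analogous $(\zz/2\zz)^2$ example'' is left unspecified. The paper uses Klein's $\operatorname{PSL}_2(\mathbb{F}_7)$ acting on Edge's curve: the action lifts only to the nonsplit $\operatorname{SL}_2(\mathbb{F}_7)\subset\operatorname{GL}_4(\cc)$, but $-I_4$ acts trivially on $\operatorname{Sym}^2V$. A cheaper route is to restrict to the subgroup $\langle\overline{B},\overline{C}^2\rangle\cong\zz/4\zz\times\zz/2\zz$ of $\overline{\mathcal{G}_2}$. There $BC^2=-C^2B$, so $\mathcal{O}_{\pp^3}(1)$ is not linearizable. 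Meanwhile $\operatorname{Sym}^2B$ and $\operatorname{Sym}^2C^2$ commute and have orders $4$ and $2$, so they give a linear lift on $\operatorname{Sym}^2V$. Hence $\am\cong\zz/2\zz$ on the same $X_2$.
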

\begin{proof}
From Table \ref{table:amitsur}, $\autp(X)=\zz/2\zz$. Suppose first $\autp(X,G)=\zz/2\zz$. Then $\pic(X)^G=\zz[-K_X]$, and we obtain $\am(X,G)=0$ using Proposition \ref{KXlinearizable}.

Now suppose $\autp(X,G)=0$. Let $V:= H^0(\pp^3,\mathcal{O}_{\pp^3}(1))$. From \cite[\S 5.4]{calabi}, we recall the matrices 
\[
A = \begin{pmatrix} 1 & 0 & 0 & 0\\ 0 & \zeta & 0 & 0\\  0 & 0 & \zeta^4 & 0\\ 0 & 0 & 0 & \zeta^2 \end{pmatrix} 
\text{ and } B= \frac{i}{\sqrt{7}} \begin{pmatrix} 1 &\sqrt{2} & \sqrt{2} & \sqrt{2} \\ \sqrt{2} & \zeta^2 + \zeta^5 & \zeta^3 +\zeta^4 & \zeta+\zeta^6 \\ \sqrt{2} & \zeta^3+\zeta^4 & \zeta+\zeta^6 & \zeta^2+\zeta^5 \\ \sqrt{2} & \zeta+\zeta^6 & \zeta^2+\zeta^5 & \zeta^3+\zeta^4  &   \end{pmatrix},
\]
where $\zeta$ is a primitive seventh root of unity. These generate the group $\mathcal{G}_1 \cong\operatorname{SL}_2(\mathbb{F}_7)$.  Let $\overline{A}, \overline{B} \in\operatorname{PSL}_2(\mathbb{F}_7)$ denote projective images of $A$ and $B$, respectively, and set $\overline{\mathcal{G}_1}:=\langle \overline{A}, \overline{B}\rangle$. Then  $\overline{\mathcal{G}_1} \cong \operatorname{PSL}_2(\mathbb{F}_7)$. Furthermore, the group $\overline{\mathcal{G}_1}$ acts on $\pp^3$ while preserving a smooth curve $C_1 \subset \pp^3$ of degree $6$ and genus $3$ \cite{edge-kleingroupfor2.12family}. The blow-up of $\pp^3$ along $C_1$ produces a Fano threefold $X_1$ in family \textnumero 2.12. Note that $\autp(X_1,\overline{\mathcal{G}_1})=0$.

We claim that $\mathcal{O}_{\pp^3}(1)$ is not $\overline{\mathcal{G}_1}$-linearizable, but $\mathcal{O}_{\pp^3}(2)$ is. Suppose that the projective representation $\overline{\mathcal{G}_1} \to \operatorname{PGL}(V)$ lifts to a linear representation. Then there is a map $s: \overline{\mathcal{G}_1} \to \mathcal{G}_{\mathcal{O}_{\pp^3}(1)}$, where $\mathcal{G}_{\mathcal{O}_{\pp^3}(1)}$ is the lifting group of $\mathcal{O}_{\pp^3}(1)$. Since $\overline{\mathcal{G}}_1$ is simple, $s(\overline{\mathcal{G}}_1)\subseteq \mathcal{G}_1$, which splits the following sequence
\[
1\to \{\pm I_4\} \to \mathcal{G}_1 \to \overline{\mathcal{G}_1} \to 1
\]
Now since $B^2=-I$, $\overline{B}$ has order $2$, and so $s(\overline{B})$ also has order $2$. However both of its preimages satisfy $B^2=-I_4$ or $(-B)^2=-I_4$, and hence have order $4$, a contradiction. Thus, $\mathcal{O}_{\pp^3}(1)$ is not $\overline{\mathcal{G}}_1$-linearizable. On the other hand, via the Veronese embedding $\nu: \pp^3\hookrightarrow \pp(\operatorname{Sym}^2(V))=\pp^9$, $\overline{\mathcal{G}_1}$ induces an action on $\pp^9$ and $\nu$ is $\overline{\mathcal{G}_1}$-equivariant. Since 
\[
\operatorname{Sym}^2(I_4) = \operatorname{Sym}^2(-I_4)= I_{10},
\]
the kernel $\{\pm I_4\}$ acts trivially on $\operatorname{Sym^2(V)}$. Therefore, the $\overline{\mathcal{G}_1}$-action lifts to a linear representation on $\operatorname{Sym}^2(V)$. Hence, $\mathcal{O}_{\pp^9}(1)$, and consequently, $\nu^*\mathcal{O}_{\pp^9}(1)=\mathcal{O}_{\pp^3}(2)$ and $\pi^*\mathcal{O}_{\pp^3}(2)$ are both $\overline{\mathcal{G}_1}$-linearizable. Thus, 
\[
\am(X_1,\overline{\mathcal{G}_1}) \cong \zz/2\zz.
\]

From \cite[\S 2.2]{cheltsovs2-12family}, we recall the group $\mathcal{G}_2:=\langle B,C\rangle \subset \operatorname{GL}_4(\cc)$, where
\[
B = \begin{pmatrix}
    0 & i & 0 & 0\\
    1 & 0 & 0 & 0\\
    0 & 0 & 0 & -i\\
    0 & 0 & 1 & 0
\end{pmatrix}, \hspace{0.5cm} 
C = \begin{pmatrix}
    0 & 0 & 0 & 1\\
    0 & 0 & 1 & 0\\
    0 & i & 0 & 0\\
    -i & 0 & 0 & 0
  \end{pmatrix},
\]
where $i$ is a primitive fourth root of unity. Let $\overline{\mathcal{G}_2}$ denote its projective image. Then the authors show that there is a smooth $\overline{\mathcal{G}_2}$-invariant curve $C_2 \subset \pp^3$, whose blow-up gives a Fano threefold $X_2$ in family \textnumero 2.12. So, we can assume that $\overline{\mathcal{G}_2}$ lifts to a subgroup of $\aut(X_2)$ and $\pi$ is $\overline{\mathcal{G}_2}$-equivariant. Note that $BC = i CB$, so the $\overline{\mathcal{G}_2}$-action on $\pp^3$ does not lift to a linear representation. Thus, $\mathcal{O}_{\pp^3}(1)$ is not $\overline{\mathcal{G}_2}$-linearizable. 

Now let $B':=\operatorname{Sym}^2(B)$, $C':=\operatorname{Sym^2(C)} \in \operatorname{GL}_{10}(\cc)$, and note that $B'C'= -C'B'$. So, the induced $\overline{\mathcal{G}_2}$-action on $\mathbb P(\operatorname{Sym}^2(V))$ does not lift to a linear representation on $\operatorname{Sym}^2(V)$. Thus, $\mathcal{O}_{\pp^3}(2)$, and consequently, $\pi^*\mathcal{O}_{\pp^3}(2)$ is not $\overline{\mathcal{G}_2}$-linearizable. It now follows from \eqref{Amitsur-bound-Bl-of-P3} that 
\[
\am(X_2,\overline{\mathcal{G}_2})=\am(\pp^3,\overline{\mathcal{G}_2})\cong \zz/4\zz.
\]
\end{proof} 

A Fano threefold $X$ in family \textnumero $2.15$ is the blow-up of $\pp^3$ along a curve $C := Q \cap \mathscr{C}$, where $Q$ is a quadric and $\mathscr{C}$ is a cubic. So, $\deg C= 6$ and $g(C)=4$. There are two cases--- a) $Q$ is smooth, b) $Q$ is singular but reduced (\cite[Table 2]{MM81classtable}). 

\begin{lemma}  \label{Am:2.15}
     Let $X$ be a Fano threefold in family \textnumero 2.15. Let $G\subseteq \aut(X)$ be a finite group. Then $\am(X,G)=0$.
\end{lemma}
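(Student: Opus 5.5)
Since $\autp(X)=0$ for this family (Table \ref{table:amitsur}), Remark \ref{remark:blow-ups-P3} applies: we may regard $G\subseteq \aut(\pp^3;C)$, the blow-up $\pi$ is $G$-equivariant, $E$ is $G$-linearizable, and $\am(X,G)=\am(\pp^3,G)=\langle\overline{\pi^*H}\rangle\subseteq\zz/4\zz$. It therefore suffices to show that $\overline{\pi^*H}=0$. The plan is to prove $2\overline{\pi^*H}=0$ and $5\cdot 3\,\overline{\pi^*H}=0$, and then combine the two relations.

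\emph{Step 1: $2\overline{\pi^*H}=0$.} A curve $C=Q\cap\mathscr{C}$ of degree $6$ and genus $4$ lies on a unique quadric. From the Koszul resolution of $\mathcal{I}_C$ we get $h^0(\pp^3,\mathcal{I}_C(2))=1$, and this holds whether $Q$ is smooth or only reduced. Since $C$ is $G$-invariant, the quadric $Q$ is $G$-invariant. Equivalently, in the notation of Lemma \ref{lemma:G-invariance-for-CI}, the degree-$2$ generator of the ideal is semi-invariant. Hence the line $\cc\cdot q\subset H^0(\pp^3,\mathcal{O}(2))$ is a $1$-dimensional subrepresentation of the lifting group. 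Proposition \ref{Gsubrepn_result} then gives $\overline{2H}=0$ on $\pp^3$, and pulling back gives $2\overline{\pi^*H}=0$. Alternatively, the strict transform $\widetilde{Q}\in|2\pi^*H-E|$ is a $G$-invariant effective divisor, so it is $G$-linearizable by the argument of Remark \ref{remark:CI}; combined with the linearizability of $E$, this again gives $2\overline{\pi^*H}=0$.

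\emph{Step 2: an odd relation.} Consider $H^0(\pp^3,\mathcal{I}_C(3))\subset H^0(\pp^3,\mathcal{O}(3))$. By the Koszul resolution it equals $q\cdot H^0(\mathcal{O}(1))\oplus\cc\cdot c$, which has dimension $5$. This subspace is stable under the lifting group of $\mathcal{O}_{\pp^3}(3)$, because $C$ is $G$-invariant. Proposition \ref{Gsubrepn_result} then yields $5\cdot\overline{3H}=0$, so $15\,\overline{\pi^*H}=0$. Together with $2\overline{\pi^*H}=0$ from Step 1, this forces $\overline{\pi^*H}=0$. Hence $\am(X,G)=0$.

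As a cross-check, one can instead apply Proposition \ref{prop:amitsur-period-congruences} to $D:=\pi^*H-E$, whose period equals $m(\pi^*H)$. The intersection numbers are obtained from \cite[Lemma 1.22]{prokhorov-fano-threefolds} with $\deg C=6$, $g=4$. I expect the main obstacle to be the verification in Step 1 that the quadric is unique and $G$-invariant in the singular-quadric case b). This requires the vanishing $h^0(\mathcal{I}_C(1))=0$ and the Koszul computation to go through for a complete intersection with $Q$ a singular reduced quadric. If that computation proves delicate, I would fall back on the congruence bound for $D$.
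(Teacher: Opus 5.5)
Your proof is correct, but it is not the paper's main route; the paper's proof is exactly your ``cross-check.'' The paper takes $D=\pi^*H-E$, computes $D^2\cdot K_X=-4+36-30=2$ from $\deg C=6$ and $g(C)=4$, and applies Proposition \ref{prop:amitsur-period-congruences}: $2\equiv 4 \pmod{2m}$ forces $m=1$, and linearizability of $E$ then gives that of $\pi^*H$. You instead stay on $\pp^3$ and use Proposition \ref{Gsubrepn_result}. The Koszul resolution of $\mathcal{I}_C$ gives $G_{\mathcal{L}}$-stable subspaces $H^0(\mathcal{I}_C(2))$ of dimension $1$ and $H^0(\mathcal{I}_C(3))$ of dimension $5$. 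These counts use only that $q,c$ form a regular sequence, so case b) is no obstacle. Smoothness of $C$ also forces $dq\neq 0$ along $C$, so $Q$ is smooth along $C$, which justifies $\widetilde{Q}\in|2\pi^*H-E|$. Step 1 is in fact superfluous: $\overline{4H}=0$ already holds because $-K_{\pp^3}=\mathcal{O}(4)$ is canonically linearized, and $\gcd(15,4)=1$. As for what each buys: the paper's argument is purely numerical, needs only $\deg C$ and $g(C)$, and so would apply to any invariant curve with these invariants. Yours uses the specific shape of the ideal of $C$, but it avoids intersection numbers and the Riemann--Roch congruences. It also yields extra geometric information (the invariant quadric linearizes $\mathcal{O}_{\pp^3}(2)$), in the same spirit as the paper's own proofs for \textnumero 2.16 and \textnumero 2.28.
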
 
\begin{proof}
We determine the Amitsur period of $\pi^*H$ as follows. Set $D:\pi^*H-E$, then using \cite[Lemma 1.22]{prokhorov-fano-threefolds},
\begin{align*}
    D^2\cdot K_X = (\pi^*H-E)^2\cdot(-4\pi^*H+E) &= -4(\pi^*H)^3 + 9(\pi^*H)^2E-6(\pi^*H)E^2 +E^3\\
    &= -4+36-30=2.
\end{align*}
Then by Proposition \ref{prop:amitsur-period-congruences}, $2m$ divides $2$, or simply, $m=1$. This implies that $D$, hence, $\pi^*H$ is $G$-linearizable. Thus, we get $\am(X,G)=0$. 
 
\end{proof}

Let $X$ denote a Fano threefold in \textnumero $2.19$, then it is the blow-up of a Fano threefold $Y$ from family \textnumero $1.14$ in a line \cite[Table 2]{MM81classtable}. Here we use an alternative description of $X$ following \cite[\S 4.4]{calabi}: $X$ is realized as the blow-up of $\pp^3$ along a curve $C$ of bidegree $(2,3)$ in $\mathbb{P}^1_{x_0,x_1}\times \mathbb{P}^1_{y_0,y_1}\hookrightarrow \mathbb{P}^3$. Then $\deg C =5$ and $g(C)=2$. 

\begin{lemma}   \label{Am:2.19}
    Let $X$ be a Fano threefold in family \textnumero $2.19$. Let $G \subseteq \aut(X)$ be a finite group. Then $\am(X,G)$ is isomorphic to $\zz/2\zz$ or trivial.
\end{lemma}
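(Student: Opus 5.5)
We follow the pattern used for family \textnumero 2.4. Since $\autp(X)=0$ for \textnumero 2.19 (Table \ref{table:amitsur}), Remark \ref{remark:blow-ups-P3} applies to every finite $G\subseteq\aut(X)$. Thus $\pi$ is $G$-equivariant, $E$ is $G$-linearizable, and $\am(X,G)=\langle\overline{\pi^*H}\rangle\subseteq\zz/4\zz$. It therefore suffices to bound the Amitsur period $m$ of $D:=\pi^*H-E$, which equals $m(\pi^*H)$.

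For the upper bound, take $\deg C=5$ and $g(C)=2$ and apply \cite[Lemma 1.22]{prokhorov-fano-threefolds}. This gives
\[
(\pi^*H)^3=1,\qquad (\pi^*H)^2\cdot E=0,\qquad \pi^*H\cdot E^2=-5,\qquad E^3=-\deg N_{C/\pp^3}=-(4\cdot 5+2g-2)=-22.
\]
Then
\[
D^3=1-3\cdot 5\cdot(-1)\cdot(-1)\cdot\ldots,
\]
and computed carefully, $D^3=1+3(\pi^*H)E^2-E^3=1-15+22=8$. Hence $2D^3-6=10$, and Proposition \ref{prop:amitsur-period-congruences} forces $m\mid\gcd(10,12)=2$. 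As a consistency check,
\[
D^2\cdot K_X=-4+9\cdot 0\ldots=-4-6(-5)+(-22)=4,
\]
so the second congruence $4\equiv 4\pmod{2m}$ gives no further restriction. Therefore $\am(X,G)\subseteq\zz/2\zz$. The main risk in this step is the arithmetic of the intersection numbers, which we will recheck against $(-K_X)^3=26$ for \textnumero 2.19.

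For realization, we need a $G$-invariant smooth curve $C$ of bidegree $(2,3)$ on a smooth quadric $Q\cong\pp^1\times\pp^1\subset\pp^3$, with $G$ acting on $\pp^3$ such that $\mathcal{O}_{\pp^3}(1)$ is not linearizable. Bidegree $(2,3)$ is asymmetric, so $G$ must preserve both rulings of $Q$. We therefore take $G\subseteq\aut(\pp^1)\times\aut(\pp^1)$ acting through the Segre embedding, where $H$ restricts to $\mathcal{O}(1,1)$.

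The natural choice is to let $\mathcal{G}\cong(\zz/2\zz)^2$ act diagonally on both factors by the standard non-linearizable projective representation,
\[
x\mapsto(x_1,x_0),\qquad x\mapsto(x_0,-x_1),
\]
and similarly on the $y$-coordinates. On the first factor $\mathcal{O}(1,0)$ is then non-linearizable. The class $\mathcal{O}(1,1)$ corresponds to the tensor product of two copies of the same cocycle class in $H^2(\mathcal{G},\cc^\times)=\zz/2\zz$, and that product is trivial. This would make $H$ linearizable, which is the wrong outcome.

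To fix this, we will make $\mathcal{G}$ act non-linearizably on one factor and linearizably on the other. For instance, use the two involutions above on the $x$-factor and a linear $(\zz/2\zz)^2$ (or trivial) action on the $y$-factor. Then $\mathcal{O}(1,1)$ has class equal to the nonzero element, so $H$ is not $\mathcal{G}$-linearizable on $\pp^3$, while $2H$ is.

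It remains to find a smooth $\mathcal{G}$-invariant curve in $|\mathcal{O}(2,3)|$. Its ideal needs a semi-invariant bihomogeneous form of bidegree $(2,3)$. Here $\mathcal{O}(2,\ast)$ is linearizable, so semi-invariants exist in $H^0(\mathcal{O}(2,3))$, for example
\[
(x_0^2+x_1^2)(y_0^3+y_1^3)+(x_0^2-x_1^2)(y_0^3-y_1^3)\cdots.
\]
The main obstacle is exhibiting one explicit semi-invariant whose zero locus is smooth. We will first try generic linear combinations within one isotypic component, choosing the $y$-action trivial so that any form $\alpha(x)\,q(y)$ with $\alpha\in\{x_0^2+x_1^2,\;x_0x_1,\;x_0^2-x_1^2\}$ invariant up to character can be used, and verify smoothness by the Jacobian criterion or with Magma. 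Given such a curve, Remark \ref{remark:blow-ups-P3} yields $\am(X,\mathcal{G})=\am(\pp^3,\mathcal{G})\cong\zz/2\zz$.
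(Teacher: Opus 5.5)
Your upper bound is the paper's argument. Since $E$ is linearizable, $m(\pi^*H)=m(D)$ for $D=\pi^*H-E$, and $D^3=1+3(-5)+22=8$ forces $m\mid\gcd(10,12)=2$. Your intersection numbers are correct, and $(-K_X)^3=26$ confirms them; just delete the garbled first line for $D^3$. The realization strategy also matches the paper in spirit: the Klein four-group acts non-linearizably on one ruling and linearizably on the other, so $\mathcal{O}(1,1)$ has nonzero class.

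\textbf{The gap.} You never produce the smooth invariant curve, and the route you say you will try first cannot work. Suppose $\mathcal{G}$ acts trivially on $\pp^1_y$. The quadrics $x_0^2+x_1^2$, $x_0^2-x_1^2$, $x_0x_1$ span three pairwise distinct characters of $\mathcal{G}$, so every semi-invariant in $H^0(\mathcal{O}(2,3))$ has the form $\alpha(x)q(y)$. Its zero locus is two lines of one ruling plus three of the other, never a smooth irreducible curve. Geometrically, each fibre $\pp^1_x\times\{y\}$ would meet $C$ in a $\mathcal{G}$-invariant length-$2$ subscheme. The Klein group on $\pp^1$ has only three such subschemes, so $C$ would be a union of fibres. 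Your fallback of a ``linear $(\zz/2\zz)^2$'' action on $\pp^1_y$ is also underspecified. Such an action can only factor through a quotient of order at most $2$, because the faithful Klein four-subgroup of $\operatorname{PGL}_2$ is not linearizable. You must say which quotient, and then check that a smooth invariant curve exists.

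\textbf{The fix (the paper's choice).} Let one generator act on both factors:
\[
\sigma\colon([x_0:x_1],[y_0:y_1])\mapsto([-x_0:x_1],[y_0:y_1]),
\]
and let $\tau$ swap coordinates in both factors.
\begin{itemize}
\item On $\pp^1_x$ the lifts of $\sigma,\tau$ anticommute; on $\pp^1_y$ they commute. Hence their Segre lifts to $\pp^3$ anticommute, so $H$ is not linearizable while $2H$ is.
\item The invariant curves are $x_0^2f(y_0,y_1)+x_1^2f(y_1,y_0)=0$. Such a curve is a double cover of $\pp^1_y$.
\item It is smooth exactly when $f(y_0,y_1)$ and $f(y_1,y_0)$ are coprime and squarefree. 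Being a connected ample divisor, it is then irreducible.
\item These conditions hold for $f=2y_0^3+2y_1^3+2y_0^2y_1-y_0y_1^2$, the paper's $(a_0,a_1,a_2,a_3)=(2,2,2,-1)$. Indeed $f(y_0,y_1)-f(y_1,y_0)=3y_0y_1(y_0-y_1)$, $f$ is nonzero at $[1:0]$, $[0:1]$, $[1:1]$, and $f$ has discriminant $-628\neq0$.
\end{itemize}
Remark \ref{remark:blow-ups-P3} then gives $\am(X,\mathcal{G})\cong\zz/2\zz$.
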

\begin{proof}
Let $D:=\pi^*H-E$. Using  \cite[Lemma 1.22]{prokhorov-fano-threefolds}, we determine 
\[
D^3 = (\pi^*H)^3-E^3+3\pi^*H E^2 = 1+22-15 = 8
\]
So, by Proposition \ref{prop:amitsur-period-congruences}, $m$ divides $\gcd(12,10)=2$. Thus, $\am(X,G)\subseteq \zz/2\zz$.
We claim that the nontrivial possibility for $\am(X,G)$ can be realized.

Let $\mathcal{G}:=\langle \sigma , \tau \rangle \subset \aut(\pp^1\times\pp^1)$ generated by 
the involutions 
\begin{align*}
   \sigma : ([x_0:x_1],[y_0:y_1]) &\mapsto ([-x_0:x_1],[y_0:y_1]) \\
  \tau :  ([x_0:x_1],[y_0:y_1]) &\mapsto ([x_1:x_0],[y_1:y_0]).
\end{align*}
Under the Segre embedding $\nu: \pp^1_{x_0,x_1}\times\pp^1_{y_0,y_1} \hookrightarrow \pp^3$, these induce the involutions 
\begin{align*}
   \sigma': [z_0:z_1:z_2:z_3] &\mapsto [-z_0:-z_1:z_2:z_3]\\
    \tau':[z_0:z_1:z_2:z_3] &\mapsto [z_3:z_2:z_1:z_0], 
\end{align*}
where $\pp^3$ is coordinated by $z_0:=x_0y_0, z_1:=x_0y_1,z_2:=x_1y_0,z_3:=x_1y_1$. Let $\mathcal{G}':=\langle \sigma' , \tau' \rangle$. Then $\mathcal{G}$ induces the subgroup $\mathcal{G}'$ of $\aut(\pp^3)$ such that $\nu$ is $(\mathcal{G},\mathcal{G}')$-equivariant. 

Let $C$ be the curve defined by the equation 
\begin{equation*}
\begin{split}
x_0^2(a_0y_0^3 + a_1y_1^3 + a_2y_0^2y_1 + a_3y_0y_1^2) + x_1^2(a_1y_0^3 + a_0y_1^3 + 
a_3y_0^2y_1 + a_2y_0y_1^2)=0.
\end{split}
\end{equation*}
It is $\mathcal{G}$-invariant. The smoothness and irreducibility of $C$ can be verified for the choice of coefficients $(a_0,a_1,a_2,a_3)=(2,2,2,-1)$. Thus, its image $\nu(C)\subset \pp^3$ is $\mathcal{G}'$-invariant. 

Observe that the $\mathcal{G}'$-action on $\pp^3$ does not lift to a linear representation to $\operatorname{GL}_4(\cc)$, so $\mathcal{O}_{\mathbb{P}^3}(1)$ is not $\mathcal{G}'$-linearizable. But $\mathcal{O}_{\pp^3}(2)$ is $\mathcal{G}'$-linearizable. Thus, $\am(\pp^3,\mathcal{G}')\cong \zz/2\zz$. Together with \eqref{Amitsur-bound-Bl-of-P3}, we obtain
\[
\am(X,\mathcal{G}') \cong \mathbb{Z}/2\mathbb{Z}.
\]
\end{proof}

Let $X$ be a Fano threefold in family \textnumero 2.25. In the Mori-Mukai description, $X$ is the blow-up of $\pp^3$ in an elliptic curve $C$ which is an intersection of two quadrics. 

\begin{lemma}   \label{Am:2.25}
    Let $X$ be a Fano threefold in \textnumero $2.25$. Let $G\subseteq \aut(X)$ be a finite group. Then $\am(X,G)$ is isomorphic to $\zz/4\zz$, $\zz/2\zz$, or trivial.
\end{lemma}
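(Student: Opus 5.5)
Since $\autp(X)=0$ for family \textnumero 2.25, Remark \ref{remark:blow-ups-P3} applies to every finite $G\subseteq\aut(X)$: the blow-up $\pi$ is $G$-equivariant, $E$ is $G$-linearizable, and
\[
\am(X,G)=\am(\pp^3,G)=\langle\overline{\pi^*H}\rangle\subseteq\zz/4\zz .
\]
This already gives the upper bound, so the statement reduces to realizing both nontrivial possibilities. As a consistency check I would compute the congruences of Proposition \ref{prop:amitsur-period-congruences} for $D=\pi^*H-E$. Here $\deg C=4$, $g(C)=1$, and \cite[Lemma 1.22]{prokhorov-fano-threefolds} gives $(\pi^*H)^2E=0$, $\pi^*H\cdot E^2=-4$ and $E^3=-16$. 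I expect these not to rule out period $4$; $D^2\cdot K_X=-4$ is indeed $\equiv 4 \pmod 8$, and $2D^3$ should be $\equiv 6\pmod 4$.

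To realize the bound, I would work with an elliptic quartic $C=\{q_1=q_2=0\}$ in a Heisenberg-type group. For $\zz/2\zz$, take the group $\mathcal{G}'$ from the proof of Lemma \ref{Am:2.19}. It is a $(\zz/2\zz)^2$ acting on $\pp^3$ whose lifts anticommute, and $\mathcal{O}_{\pp^3}(2)$ is linearizable for it. I would find two $\mathcal{G}'$-invariant (or swapped) quadrics, for instance $z_0z_3+a z_1z_2$ and $z_0^2+z_3^2+b(z_1^2+z_2^2)$, and choose coefficients so that the intersection is smooth. Smoothness of a pencil intersection is equivalent to the discriminant of the pencil having distinct roots, which is easy to arrange for generic coefficients. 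Then $\am(X,\mathcal{G}')=\am(\pp^3,\mathcal{G}')\cong\zz/2\zz$.

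For $\zz/4\zz$, I would use the group $\overline{\mathcal{G}_2}=\langle B,C\rangle$ from the proof of Lemma \ref{Am:2.12}. It satisfies $BC=iCB$, and Lemma \ref{Am:2.12} already shows $\am(\pp^3,\overline{\mathcal{G}_2})\cong\zz/4\zz$. The remaining step is to find a $\overline{\mathcal{G}_2}$-invariant smooth intersection of two quadrics. Lemma \ref{lemma:G-invariance-for-CI} does not apply directly because both generators have the same degree, so I would instead compute $\operatorname{Sym}^2$ of the lifting group on the $10$-dimensional space of quadrics. I would then look for a $2$-dimensional subrepresentation of $\operatorname{Sym}^2V$, meaning an invariant pencil, whose base locus is smooth.

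The main obstacle is finding this invariant pencil. It must form a $2$-dimensional irreducible or semi-invariant piece of $\operatorname{Sym}^2V$ under a group whose commutator acts by $-1$ there, since $B'C'=-C'B'$. That forces the pencil to be a $2$-dimensional irreducible representation on which the two lifts anticommute. I would decompose $\operatorname{Sym}^2V$ explicitly, pick a $2$-dimensional summand, and check the discriminant of the pencil, for example in Magma. If this fails, the fallback is a standard Heisenberg group $H_4$ acting on $\pp^3$. An elliptic normal quartic is preserved by a $(\zz/4\zz)^2$ translation group whose Schur cocycle has order $4$, so $\am(\pp^3,G)\cong\zz/4\zz$. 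The Heisenberg-invariant pencil of quadrics defining such a curve gives $C$ directly, and Theorem \ref{imprimitivebcdp} then yields $\am(X,G)\cong\zz/4\zz$.
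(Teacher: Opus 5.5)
Your proposal is correct and follows the paper's route. The bound comes from Remark \ref{remark:blow-ups-P3}, and the nontrivial values are realized by invariant pencils of quadrics: for $\zz/2\zz$ under a $(\zz/2\zz)^2$ whose lifts anticommute, and for $\zz/4\zz$ under the level-$4$ Heisenberg group, which is exactly your fallback and the paper's $\mathcal{G}_1=\langle \operatorname{diag}(1,i,-1,-i),\ \text{cyclic shift}\rangle$ with $C=\{x_0^2+x_2^2+x_1x_3=x_1^2+x_3^2+x_0x_2=0\}$ (your $\overline{\mathcal{G}_2}$ is conjugate to it in $\operatorname{PGL}_4(\cc)$ by Stone--von Neumann, so no search is needed), with one harmless slip in your consistency check: $D^2\cdot K_X=-4+24-16=4$, not $-4$, which is still $\equiv 4 \pmod 8$.
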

\begin{proof}
By Remark \ref{remark:blow-ups-P3}, we assume that $G\subseteq \aut(\pp^3;C)$. It would be sufficient to determine possible groups $\am(\pp^3,G)$.

First, consider the group $\mathcal{G}_1\cong (\zz/4\zz)^2 $ generated by 
\begin{align*}
      \sigma :  [x_0:x_1:x_2:x_3] &\mapsto [x_0:i x_1:-x_2:-i x_3]\\
      \tau :  [x_0:x_1:x_2:x_3] &\mapsto [x_3:x_0:x_1:x_2],
\end{align*}
and let $C\subset \pp^3$ be the intersection of quadrics
\[
\begin{cases}
    x_0^2 + x_2^2 + x_1x_3 =0\\
    x_1^2 + x_3^2 + x_0x_2 =0.
\end{cases}
\]
Note that $\sigma$ preserves each hypersurface, while $\tau$ interchanges them, so $C$ is $\mathcal{G}_1$-invariant. It can also be verified to be both smooth and irreducible \cite{magma}. If $A,B \in \operatorname{GL}_4(\cc)$ denote the matrix lifts of $\sigma$ and $\tau$, respectively, then $AB=iBA$. So, the $\mathcal{G}_1$-action on $\pp^3$ does not lift to a linear representation, and $H$ is not $\mathcal{G}_1$-linearizable. In addition, $2H$ is not $\mathcal{G}_1$-linearizable since $\operatorname{Sym}^2A$ and $\operatorname{Sym}^2 B$ anticommute. Thus, the induced $\mathcal{G}_1$-action on $\pp^{9}$ does not lifts to a $10$-dimensional linear representation. Hence, $\am(X,\mathcal{G}_1)\cong \zz/4\zz$.

Now consider the group $\mathcal{G}_2\cong (\zz/2\zz)^2$ generated by involutions 
\begin{align*}
[x_0:x_1:x_2:x_3] &\mapsto [x_1:x_0:x_3:x_2]\\
[x_0:x_1:x_2:x_3] &\mapsto  [x_0:-x_1:x_2:-x_3], \end{align*}
and curve $C$ defined by the intersection
\[
\begin{cases}
   x_0^2+x_1^2+2 (x_2^2+x_3^2) =0\\
   x_0^2 - x_1^2 +x_2^2-x_3^2  =0.
\end{cases}
\]
Then $C$ is $\mathcal{G}_2$-invariant since both quadrics are preserved by $\mathcal{G}_2$, and it is also smooth and irreducible \cite{magma}. The $\mathcal{G}_2$-action on $\pp^3$ does not lift to a linear representation, but its action on $\pp^9$ does have a linear lift, therefore, $\am(X,\mathcal{G}_2)=\zz/2\zz$.

\end{proof}

Let $X$ be a Fano threefold in family \textnumero $2.27$. Mori-Mukai describe $X$ as the blow-up of $\pp^3$ in a twisted cubic $C$ \cite[Table 3]{MM81classtable}, so $\deg C =3$ and $g(C)=0$.

\begin{lemma}   \label{Am:2.27}
Let $X$ be a Fano threefold in family \textnumero 2.27. Let $G\subseteq \aut(X)$ be a finite group. Then $\am(X,G)$ is isomorphic to $\zz/2\zz$ or trivial.
\end{lemma}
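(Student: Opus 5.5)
Since $\autp(X)=0$ for family \textnumero 2.27, Remark \ref{remark:blow-ups-P3} applies: we may regard $G\subseteq \aut(\pp^3;C)$, the map $\pi$ is $G$-equivariant, $E$ is $G$-linearizable, and $\am(X,G)=\am(\pp^3,G)=\langle \overline{\pi^*H}\rangle\subseteq \zz/4\zz$. So it suffices to show that the Amitsur period $m$ of $\pi^*H$ divides $2$, and then to exhibit a $G$ with $m=2$.

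For the upper bound I follow the pattern of Lemmas \ref{Am:2.4} and \ref{Am:2.19} and work with $D:=\pi^*H-E$, which has $m(D)=m$. For a smooth curve of degree $d$ and genus $g$ in $\pp^3$, \cite[Lemma 1.22]{prokhorov-fano-threefolds} gives
\[
(\pi^*H)^3=1,\quad (\pi^*H)^2\cdot E=0,\quad \pi^*H\cdot E^2=-d,\quad E^3=-(4d+2g-2).
\]
With $d=3$ and $g=0$ this yields $\pi^*H\cdot E^2=-3$ and $E^3=-10$. Hence
\[
D^3=1-0+3(-3)+10=2 .
\]
The first congruence of Proposition \ref{prop:amitsur-period-congruences} becomes $4\equiv 6 \pmod m$, so $m\mid 2$. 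This gives $\am(X,G)\subseteq\zz/2\zz$. As a sanity check, the second congruence $D^2\cdot K_X\equiv 4 \pmod{2m}$ should be consistent with $m=2$.

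For realization, I use that the twisted cubic is the image of $\nu_3\colon\pp^1\to\pp^3=\pp(\operatorname{Sym}^3 W)$ with $W=\cc^2$. Every $\mathcal{G}\subset\operatorname{PGL}_2(\cc)$ therefore acts on $\pp^3$ preserving $C$ and lifts to $\aut(X)$. Take $\mathcal{G}\cong(\zz/2\zz)^2$ generated by $[u:v]\mapsto[v:u]$ and $[u:v]\mapsto[u:-v]$. Its preimage in $\operatorname{SL}_2$ is $Q_8$, and $-1$ acts on $\operatorname{Sym}^3W$ as $-1$.

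To show $H$ is not $\mathcal{G}$-linearizable, the cleanest route is restriction: $\mathcal{O}_{\pp^3}(1)|_C=\mathcal{O}_{\pp^1}(3)$, and odd-degree line bundles on $\pp^1$ are not linearizable for the Klein four group, since the two commuting involutions lift to anticommuting matrices on any odd $\operatorname{Sym}^k W$. Alternatively, one checks directly that the matrix lifts on $\operatorname{Sym}^3 W$ anticommute. Since a linearization pulls back along $C\hookrightarrow\pp^3$, this shows $\overline{H}\neq 0$. On the other hand $\operatorname{Sym}^2\operatorname{Sym}^3W$ is a genuine representation of $\mathcal{G}$, because $-1$ acts trivially, so $2H$ is linearizable. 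Hence $\am(X,\mathcal{G})=\am(\pp^3,\mathcal{G})\cong\zz/2\zz$.

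The only step needing care is the sign bookkeeping in the intersection numbers (especially $E^3$); the realization is routine.
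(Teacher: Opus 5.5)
Your proposal is correct and follows the paper's proof essentially verbatim. It uses the same divisor $D=\pi^*H-E$ with $D^3=2$, giving $m\mid 2$ via Proposition~\ref{prop:amitsur-period-congruences}, and the same Klein four group acting on the standard twisted cubic: the paper's involutions $[x_0:-x_1:x_2:-x_3]$ and $[x_3:x_2:x_1:x_0]$ are exactly the images of yours under $\operatorname{Sym}^3$. The paper simply asserts that $H$ is not linearizable; your restriction to $C\cong\pp^1$, where $\mathcal{O}_{\pp^3}(1)|_C=\mathcal{O}_{\pp^1}(3)$, is a clean justification of that step.
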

\begin{proof} 
We first show that $\am(X,G)\subseteq \zz/2\zz$. Set $D:=\pi^*H-E$. Then 
\[
D^3 = (\pi^*H)^3 -E^3 + 3\pi^*HE^2 = 1+10-9 =2,
\]
and Proposition \ref{prop:amitsur-period-congruences} implies that $m$ divides $2$. Thus, by Remark \ref{remark:blow-ups-P3} $\am(X,G) = \langle \pi^*H\rangle \subseteq\zz/2\zz$. 

The curve $C$ can be parametrized by the image of the embedding $\mathbb{P}^1 \hookrightarrow \mathbb{P}^3$, equivalently, it can be given by the intersection 
\[
x_0x_3=x_1x_2,\; x_1^2=x_0x_2,\; x_2^2=x_1x_3.
\]
This is smooth and irreducible. Consider $\mathcal{G} \subset \aut(\pp^3)$ generated by the following involutions
\begin{equation}   \label{otherdihedralorder8}
\begin{split}
    [x_0:x_1:x_2:x_3]  &\mapsto  [x_0:-x_1:x_2:-x_3]\\
     [x_0:x_1:x_2:x_3] &\mapsto  [x_3:x_2:x_1:x_0]
\end{split}
\end{equation}

Then $C$ is $\mathcal{G}$-invariant. Blowing-up $\pp^3$ in $C$ now gives a Fano threefold $X$ in family \textnumero 2.27. The $\mathcal{G}$-action on $\pp^3$ does not lift to a linear representation, whereas its induced action on $\pp^9$ via the Veronese embedding does. So, $H$ is not $\mathcal{G}$-linearizable but $2H$ is. Hence, $\am(\pp^3,\mathcal{G})\cong \zz/2\zz$. From \eqref{Amitsur-bound-Bl-of-P3}, it follows that 
\[
\am(X,\mathcal{G})\cong\mathbb{Z}/2\mathbb{Z}.
\]
\end{proof}

Let $X$ be a Fano threefold in \textnumero $2.28$, then $X$ is the blow-up of $\pp^3$ is a smooth planar cubic $C:=\mathscr{H}\cap S$, where $\mathscr{H}$ is a hyperplane and $S$ is a cubic surface in $\mathbb{P}^3$.

\begin{lemma}  \label{Am:2.28}
     Let $X$ be a Fano threefold in family \textnumero 2.28. Let $G\subseteq \aut(X)$ be a finite group. Then $\am(X,G)=0$.
\end{lemma}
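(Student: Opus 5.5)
By Remark \ref{remark:blow-ups-P3}, $\autp(X)=0$ for family \textnumero 2.28. So $\pi$ is $G$-equivariant, $E$ is $G$-linearizable, and $\am(X,G)=\langle\overline{\pi^*H}\rangle$. It therefore suffices to show that $\pi^*H$ is $G$-linearizable. I would give two independent arguments, a geometric one and a numerical one; either suffices.

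\emph{Geometric argument.} The curve $C$ is a smooth plane cubic, so it spans a unique plane $\mathscr{H}\subset\pp^3$. Since $C$ is $G$-invariant, $\mathscr{H}$ is $G$-invariant as well. Hence the lifting group $G_{\mathcal{O}_{\pp^3}(1)}$ preserves the line $\cc\cdot h\subset H^0(\pp^3,\mathcal{O}_{\pp^3}(1))$, where $h$ is the linear form defining $\mathscr{H}$. This gives a $1$-dimensional subrepresentation. By Proposition \ref{Gsubrepn_result}, $\overline{H}=0$ in $\am(\pp^3,G)$. Pulling back along the $G$-equivariant map $\pi$ gives $\overline{\pi^*H}=0$.

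\emph{Numerical argument.} Let $D:=\pi^*H-E$, which is the strict transform of $\mathscr{H}$. Then $m(D)=m(\pi^*H)$. For a curve $C$ of degree $3$ and genus $1$, \cite[Lemma 1.22]{prokhorov-fano-threefolds} gives
\[
(\pi^*H)^3=1,\quad (\pi^*H)^2E=0,\quad \pi^*H\cdot E^2=-3,\quad E^3=-\deg N_C=-(4\cdot 3+2g-2)=-12.
\]
Hence
\[
D^3=1+3(\pi^*H)E^2\cdot(-1)\cdot(-1)-E^3=1-9+12=4.
\]
Proposition \ref{prop:amitsur-period-congruences} then gives $m\mid \gcd(12,2D^3-6)=\gcd(12,2)=2$. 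To pin $m$ down completely, compute $D^2\cdot K_X$ with $K_X=-4\pi^*H+E$:
\[
D^2\cdot K_X=-4(\pi^*H)^3+9(\pi^*H)^2E-6\,\pi^*H\cdot E^2+E^3=-4+0+18-12=2.
\]
The congruence $D^2\cdot K_X\equiv 4 \pmod{2m}$ forces $2m\mid 2$, so $m=1$.

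There is no real obstacle in either argument. The only point needing care is getting $E^3$ and the intersection numbers right. For that reason I would lead with the geometric argument via the invariant plane, which avoids intersection computations entirely, and keep the numerical check as confirmation.
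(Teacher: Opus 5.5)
Your geometric argument is the paper's proof. The unique plane $\mathscr{H}$ spanned by $C$ is $G$-invariant, which gives a $1$-dimensional subrepresentation of $H^0(\pp^3,\mathcal{O}_{\pp^3}(1))$. Proposition \ref{Gsubrepn_result} then makes $H$, and hence $\pi^*H$, linearizable, and $E$ is linearizable by Remark \ref{remark:blow-ups-P3}.

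Your numerical cross-check is also correct. You get $D^3=4$ and $D^2\cdot K_X=2$, which force $m=1$ via Proposition \ref{prop:amitsur-period-congruences}. This is the same method the paper uses for \textnumero 2.15 and \textnumero 2.30.
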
 
\begin{proof}
By Remark \ref{remark:blow-ups-P3}, we assume that $G\subseteq \aut(\pp^3;C)$. Set $V:=H^0(\pp^3,\mathcal{O}_{\pp^3}(1))$. Since $C$ is $G$-invariant and $\mathscr{H}$ is the unique hyperplane containing $C$, $\mathscr{H}$ is also $G$-invariant. As a consequence, the natural $G_H$-representation on $V$ contains a $G_H$-subrepresentation of dimension $1$. By Proposition \ref{Gsubrepn_result}, $H$ is $G$-linearizable on $\pp^3$, consequently, $\pi^*H$ is $G$-linearizable on $X$. Since $E$ is $G$-linearizable by Remark \ref{remark:blow-ups-P3}, we get $\am(X,G)=0$.
\end{proof}   

Let $X$ be a Fano threefold in family \textnumero $2.30$. Then $X$ is the blow-up of $\pp^3$ in a conic $C$, which is an intersection of a quadric and a hyperplane, so $\deg C = 2$ and $g(C)=0$. 
\begin{lemma}  \label{Am:2.30}
    Let $X$ be a smooth Fano threefold in family \textnumero 2.30. Let $G\subseteq \aut(X)$ be a finite group. Then $\am(X,G)=0$.
\end{lemma}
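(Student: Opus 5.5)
Following the pattern of Lemma \ref{Am:2.28}, I would argue via the unique hyperplane spanned by the conic. Since $\autp(X)=0$ for family \textnumero 2.30 (Table \ref{table:amitsur}), Remark \ref{remark:blow-ups-P3} lets us regard $G$ as a subgroup of $\aut(\pp^3;C)$. It also makes $\pi$ $G$-equivariant, $E$ $G$-linearizable, and gives $\am(X,G)=\langle\overline{\pi^*H}\rangle$. So it suffices to show that $H$ is $G$-linearizable on $\pp^3$.

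The conic $C$ is a plane curve, and it spans a unique plane $\mathscr{H}\subset\pp^3$: since $\deg C=2$ and $C$ is irreducible, it is not contained in a line, and two distinct planes meet in a line. Hence every $g\in G$, which preserves $C$, also preserves $\mathscr{H}$.

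Let $V:=H^0(\pp^3,\mathcal{O}_{\pp^3}(1))$. The linear form defining $\mathscr{H}$ spans a one-dimensional $G_H$-subrepresentation of $V$, where $G_H$ is the lifting group of $H$. Proposition \ref{Gsubrepn_result} then gives $\overline{H}=0$ in $\am(\pp^3,G)$. Pulling back along the $G$-equivariant map $\pi$, the class $\pi^*H$ is $G$-linearizable. Together with the linearizability of $E$, this shows that all of $\pic(X)=\zz[\pi^*H]\oplus\zz[E]$ is $G$-linearizable, so $\am(X,G)=0$.

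As a cross-check independent of the geometry, one can also compute with Proposition \ref{prop:amitsur-period-congruences} applied to $D=\pi^*H-E$. Using \cite[Lemma 1.22]{prokhorov-fano-threefolds} with $\deg C=2$ and $g(C)=0$, one has $(\pi^*H)^2E=0$, $\pi^*H\cdot E^2=-2$, and $E^3=-\deg N_C=-(4\cdot 2+2g-2)=-6$. The congruence conditions on $m(D)$ should then also force $m=1$.

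The main point needing care is the uniqueness of the spanning plane. This is immediate for a smooth irreducible conic, so I do not expect a real obstacle; the step is just verifying that $C$ is not a line and invoking Proposition \ref{Gsubrepn_result}.
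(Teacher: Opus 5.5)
Your argument is correct, but it is not the route the paper takes for this lemma. The paper's proof is exactly your ``cross-check'': with $D=\pi^*H-E$ it computes $D^2\cdot K_X=-4(\pi^*H)^3+9(\pi^*H)^2E-6\,\pi^*H\cdot E^2+E^3=-4+0+12-6=2$. The congruence $D^2\cdot K_X\equiv 4 \pmod{2m(D)}$ of Proposition \ref{prop:amitsur-period-congruences} then forces $2m(D)\mid 2$, i.e.\ $m(D)=1$, and linearizability of $E$ transfers this to $\pi^*H$. So your intersection numbers are right and that computation does close.

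Your main argument instead transplants the geometric proof of Lemma \ref{Am:2.28}. The plane spanned by $C$ is $G$-invariant, so its defining linear form gives a one-dimensional $G_H$-subrepresentation of $H^0(\pp^3,\mathcal{O}_{\pp^3}(1))$, and Proposition \ref{Gsubrepn_result} with $d=1$ gives $\overline{H}=0$.

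Your route needs no intersection theory. It also proves something stronger: $\am(\pp^3,G)=0$ for every finite $G$ preserving some plane curve, whatever its degree and genus. The paper's route uses only the numerical invariants $\deg C=2$ and $g(C)=0$, with no information about $G$-invariant subvarieties. That puts it in the same uniform template the paper applies to non-planar centers, such as the degree-$6$, genus-$4$ curve in family \textnumero 2.15, where your plane argument is unavailable.
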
 
\begin{proof}
Let $D:= \pi^*H-E$. Using \cite[Lemma 1.22]{prokhorov-fano-threefolds}, we determine
\begin{align*}
    D^2\cdot K_X &= -4(\pi^*H)^3 + 9(\pi^*H)^2E-6(\pi^*H)E^2 +E^3\\ 
    &= -4+12 -6 = 2.
\end{align*}
By Proposition \ref{prop:amitsur-period-congruences}, $2m$ divides $2$, thus, $m=1$. Since $E$ is $G$-linearizable by Remark \ref{remark:blow-ups-P3}, we get $\am(X,G)=0$. 
\end{proof}

A Fano threefold $X'$ in \textnumero 3.18 is the blow-up of a Fano threefold $X$ from \textnumero 2.30 along the strict transform $\widetilde{\ell}\subset X$ of a line $\ell\subset \pp^3$ disjoint from the conic blown-up to get $X$. Let $\phi: X'\to X$ denote this blow-up. 

\begin{coro}    \label{Am:3.18}
    Let $X'$ be a Fano threefold in family \textnumero 3.18. Let $G\subseteq \aut(X')$ be a finite group. Then $\am(X',G)=0$. 
\end{coro}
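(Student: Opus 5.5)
The plan is to reduce to Lemma \ref{Am:2.30} through the blow-up $\phi\colon X'\to X$. Write $\pi\colon X\to\pp^3$ for the blow-up of the conic $C$, with exceptional divisor $E_C$, and let $E_\ell$ be the exceptional divisor of $\phi$. Then
\[
\pic(X')=\zz[\phi^*\pi^*H]\oplus\zz[\phi^*E_C]\oplus\zz[E_\ell].
\]
The first step is to read off $\autp(X')$ from Table \ref{table:amitsur}, using \cite{sharma2026actionspicardgroupsmooth}. I expect it to be trivial. The two centres have different degrees and genera: a conic with normal bundle of degree $6$, and a line with normal bundle of degree $2$. The classes $\phi^*E_C$ and $E_\ell$ therefore have different intersection numbers, for example with $-K_{X'}$, so no element of $\aut(X')$ can exchange them. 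Hence every finite $G\subseteq\aut(X')$ acts trivially on $\pic(X')$.

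With $\autp(X',G)=0$ the divisor $E_\ell$ is $G$-invariant. Lemma \ref{Lemma:autblowup} then lets $G$ descend to a subgroup of $\aut(X;\widetilde{\ell})$, so that $\phi$ is $G$-equivariant. By Theorem \ref{imprimitivebcdp}, $\am(X',G)=\am(X,G)$, and Lemma \ref{Am:2.30} gives $\am(X,G)=0$. Two alternatives avoid relying on equivariance of $\phi$. One is to apply Theorem \ref{imprimitivebcdp} directly, since $X'$ is $G$-birational to $X$. The other is to note that $E_\ell$ is $G$-linearizable by the remark after Theorem \ref{imprimitivebcdp}, while $\phi^*\pi^*H$ and $\phi^*E_C$ are pullbacks of $G$-linearizable classes from $X$ by the proof of Lemma \ref{Am:2.30}. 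Since these three classes generate $\pic(X')^G=\pic(X')$, this also gives $\am(X',G)=0$.

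The main obstacle is justifying $\autp(X')=0$, or at least that $E_\ell$ is $G$-invariant for every finite $G$. If the table instead records a nontrivial $\autp(X')$, I would look for extremal contractions of $X'$ that $G$ could permute, such as the other contraction coming from the projection from the line $\ell$. In that case I would either check that $\operatorname{rk}\pic(X')^G$ is small enough to apply Proposition \ref{Prop:rkPic_G=1} or Proposition \ref{KXlinearizable}, or find another $G$-equivariant contraction onto a variety with trivial Amitsur group and argue as before.
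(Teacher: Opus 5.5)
Your main argument is the paper's proof: since $\autp(X')=0$ (Table \ref{table:amitsur}), the divisor $E_\ell$ is $G$-invariant, so Lemma \ref{Lemma:autblowup} makes $\phi$ $G$-equivariant, and then Theorem \ref{imprimitivebcdp} together with Lemma \ref{Am:2.30} gives $\am(X',G)=0$. Two caveats: your intersection-number remark only shows that $E_\ell$ and $\phi^*E_C$ cannot be exchanged, not that $\autp(X')=0$, so you do need the cited table entry; and neither of your alternative routes actually avoids equivariance of $\phi$, because $G$-birationality of $X'$ and $X$, and pulling back linearizations from $X$, both presuppose the descended regular $G$-action on $X$.
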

\begin{proof}
    Since $\autp(X')=0$, $\phi$ is $G$-equivariant using Lemma \ref{Lemma:autblowup}. Then by Theorem \ref{imprimitivebcdp} and the previous lemma, it follows that $\am(X',G)=0$. 
\end{proof}

A Fano threefold $X$ in family \textnumero $2.35$ is the blow-up of $\pp^3$ at a point $p$ \cite{MM81classtable}. The threefold $X$ is toric, so its Amitsur groups can be computed using the methods of \cite{duncan2025numericalamitsurgroup}; for example, see
\cite[\S3]{sharma2026amitsurgroupsprimitivefano}. We give an alternative proof using Proposition \ref{Gsubrepn_result}.

\begin{lemma}  \label{Am:2.35}
    Let $X$ be a Fano threefold in family \textnumero 2.35. Let $G\subseteq \aut(X)$ be a finite group. Then $\am(X,G)=0$.
\end{lemma}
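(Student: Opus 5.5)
The plan is to reduce to $\pp^3$ and then use that the blown-up point is fixed. Since $X$ is the blow-up of $\pp^3$ at a single point $p$, $\autp(X)=0$: the exceptional divisor $E$ is the only irreducible surface in its class, being rigid with $E\cong\pp^2$ and normal bundle $\mathcal{O}(-1)$. Lemma \ref{Lemma:autblowup} then identifies $G$ with a subgroup of $\aut(\pp^3;\{p\})$, and $\pi$ is $G$-equivariant. By Remark \ref{remark:blow-ups-P3}, or directly by Theorem \ref{imprimitivebcdp} together with the remark following it, $E$ is $G$-linearizable and $\am(X,G)=\langle\overline{\pi^*H}\rangle$. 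It therefore suffices to show that $\mathcal{O}_{\pp^3}(1)$ is $G$-linearizable.

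To see this, consider the lifting group $G_H$ of $\mathcal{O}_{\pp^3}(1)$ acting on $V:=H^0(\pp^3,\mathcal{O}_{\pp^3}(1))$. The point $p$ is $G$-fixed. Hence the $3$-dimensional subspace $W\subset V$ of linear forms vanishing at $p$ is $G_H$-stable, because for $\phi_g$ lifting $g$ the pullback of a form vanishing at $p$ still vanishes at $g^{-1}(p)=p$.

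Since $G_H$ is an extension of the finite group $G$ by $\cc^\times$, it acts on $V$ through a finite central extension. The representation is therefore completely reducible, and $W$ has a $G_H$-stable complement of dimension $1$. Alternatively, $V^\vee$ contains the line corresponding to $p$, and its dual quotient gives the same conclusion.

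Proposition \ref{Gsubrepn_result} applied to this $1$-dimensional subrepresentation gives $\overline{H}=0$, so $\mathcal{O}_{\pp^3}(1)$ is $G$-linearizable. Pulling back along $\pi$ shows that $\pi^*H$ is $G$-linearizable, and hence $\am(X,G)=0$.

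The only point needing care is the complete reducibility of $V$ as a $G_H$-module, since $G_H$ itself is infinite. I would handle it by restricting to a finite subgroup of $G_H$ surjecting onto $G$, for instance the preimage of $G$ in $\operatorname{SL}_4$. Such a subgroup stabilizes the same subspaces, so the complement it provides is also $G_H$-stable. As a cross-check, one can use intersection numbers: with $D=\pi^*H-E$ we have $D^3=1+1=2$ (since $E^3=1$), so Proposition \ref{prop:amitsur-period-congruences} only gives $m\mid 2$. This is why the representation-theoretic argument is needed rather than a purely numerical one.
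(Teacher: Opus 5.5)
Your argument is correct and is essentially the paper's proof: you use $\autp(X)=0$ to reduce to $G\subseteq\aut(\pp^3;\{p\})$, use the fixed point to split $H^0(\pp^3,\mathcal{O}_{\pp^3}(1))$ (or its dual) as $1\oplus 3$, and then apply Proposition \ref{Gsubrepn_result} together with the $G$-linearizability of $E$. Your closing aside is wrong, though: $D^3=(\pi^*H)^3-E^3=1-1=0$, not $2$, and $D^2\cdot K_X=-2$, so Proposition \ref{prop:amitsur-period-congruences} gives $m(D)\mid 3$, while $(\pi^*H)^3=1$ gives $m(\pi^*H)\mid 4$; since $m(D)=m(\pi^*H)$, the purely numerical route also yields $\am(X,G)=0$.
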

\begin{proof}
    By Remark \ref{remark:blow-ups-P3}, we assume that $G\subseteq \aut(\pp^3;\{p\})$. Set $V:=H^0(\pp^3,\mathcal{O}_{\pp^3}(1))$. Since $G$ fixes a point $p$, the natural $G_{H}$-representation on $V^\vee$ splits into a direct sum of $G_{H}$-subrepresentations of dimensions $1$ and $3$. By Proposition \ref{Gsubrepn_result}, $H$ is $G$-linearizable. Since $E$ is $G$-linearizable (Remark \ref{remark:blow-ups-P3}), we get $\am(X,G)=0$.
\end{proof}

A Fano threefold $X'$ in any of the families 
\begin{center}
 \textnumero 3.11, \textnumero 3.14, \textnumero 3.16, \textnumero 3.23, \textnumero 3.26, \textnumero 3.29, \textnumero 3.30
\end{center}
can be obtained by blowing up the Fano threefold $X$ from family \textnumero 2.35 in a curve \cite[Table 3]{MM81classtable}. Note that $X'$ is toric if it belongs to families \textnumero 3.26, \textnumero 3.29, or \textnumero 3.30, and the Amitsur groups for these families have been classified in \cite{sharma2026amitsurgroupsprimitivefano}. Let $\phi: X' \to X$ denote the blow-up map with the exceptional divisor $F$. Then $\pic(X')=\phi^*\pic(X)\oplus \zz[F]$. 

\begin{coro}    \label{Am:blowup_of_2.35}
Let $X'$ be a Fano threefold belonging to deformation families 
 \begin{center}
    \textnumero 3.11,\; \textnumero 3.14,\; \textnumero 3.16,\; \textnumero 3.23,\; \textnumero 3.26,\; \textnumero 3.29,\; \textnumero 3.30.
\end{center}
Let $G \subseteq \aut(X')$ be a finite group. Then $Am(X',G)=0$. 
\end{coro}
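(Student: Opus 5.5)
The plan is to reduce to $X$ via the blow-up $\phi$ and then apply Theorem \ref{imprimitivebcdp} together with Lemma \ref{Am:2.35}, in the same way as Corollary \ref{Am:3.18}. The key point is that $\phi\colon X'\to X$ is $G$-equivariant for every finite $G\subseteq\aut(X')$. By Lemma \ref{Lemma:autblowup}, this holds as soon as the class of the exceptional divisor $F$ is $G$-invariant. For all families in the list I expect $\autp(X')=0$ by Table \ref{table:amitsur}, and then $F$ is automatically $G$-invariant. So the first step is to check Table \ref{table:amitsur} (that is, the results of \cite{sharma2026actionspicardgroupsmooth}) for each of the seven families.

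If $\autp(X')=0$, the group $G$ descends to a subgroup of $\aut(X;C)$, where $C$ is the curve that was blown up, and $\phi$ is $G$-equivariant. Theorem \ref{imprimitivebcdp} then gives
\[
\am(X',G)=\am(X,G),
\]
and Lemma \ref{Am:2.35} gives $\am(X,G)=0$.

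The main obstacle is any family among these where $\autp(X')\neq 0$. The toric ones (\textnumero 3.26, \textnumero 3.29, \textnumero 3.30) may fall into this case, for instance through a second contraction structure; 3.26 has two $\mathbb{P}^3$-contractions. For those I would do two things:
\begin{enumerate}
\item[(a)] cite the toric classification in \cite[\S 3]{sharma2026amitsurgroupsprimitivefano}, which already gives trivial Amitsur groups;
\item[(b)] alternatively, split on $\autp(X',G)$.
\end{enumerate}
In case (b), when $\autp(X',G)$ is trivial the argument above applies. When it is nontrivial, I would exhibit a different $G$-equivariant contraction to a variety with trivial Amitsur groups, again using Lemma \ref{Lemma:autblowup} and Theorem \ref{imprimitivebcdp}. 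Failing that, I would note that $\pic(X')^G$ is generated by $K_{X'}$ together with $G$-invariant sums of exceptional divisors of $G$-equivariant blow-ups, all of which are $G$-linearizable by Proposition \ref{KXlinearizable} and the remark after Theorem \ref{imprimitivebcdp}.
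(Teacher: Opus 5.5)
Your proof is correct and is essentially the paper's argument: Table \ref{table:amitsur} gives $\autp(X')=0$ for all seven families, so $F$ is $G$-invariant and $\phi$ is $G$-equivariant by Lemma \ref{Lemma:autblowup}, and then Theorem \ref{imprimitivebcdp} together with Lemma \ref{Am:2.35} yields $\am(X',G)=\am(X,G)=0$. Your contingency for a nontrivial $\autp(X')$ is therefore never needed; your guess that \textnumero 3.26 has two $\pp^3$-contractions is also inaccurate, since its three extremal contractions go to \textnumero 2.33, \textnumero 2.34 and \textnumero 2.35, which is consistent with $\autp(X')=0$.
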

\begin{proof}
  In all these cases, we have $\autp(X')=\autp(X)=0$, so we may assume that $\phi$ is $G$-equivariant. By Theorem \ref{imprimitivebcdp}, $\am(X',G)=\am(X,G)$. From the previous Lemma, it follows that $\am(X',G)=0$. 
\end{proof}

Suppose now that $X$ denotes a Fano threefold in family \textnumero 2.33. Then $X$ is the blow-up of $\mathbb{P}^3$ along a line $\ell$. It also admits a toric description, and it was shown in \cite[\S 3]{sharma2026amitsurgroupsprimitivefano} that
\[
\am(X,G)\subseteq \zz/2\zz
\]
for every finite subgroup $G\subseteq \aut(X)$, where the nontrivial possibility is realizable.

We consider Fano threefolds that can be obtained by blowing up $X$ along a curve $C$:
\begin{center}
   \textnumero 3.6,\; \textnumero 3.12,\; \textnumero 3.25. 
\end{center} 
Let $X'$ denote a member of these families. If $X'$ belongs to family \textnumero $3.25$, it has a toric description and $\am(X',G)$ has been classified in \cite{sharma2026amitsurgroupsprimitivefano}, therefore we will forego its discussion.   

\medskip
Suppose that $X'$ belongs to \textnumero 3.6 or \textnumero 3.12, then it is the blow-up of $X$ in an elliptic curve or a twisted cubic, respectively. Let $\phi: X' \to X$ denote the blow-up map and $F$ be the exceptional divisor. Then $\pic(X')=\phi^*\pic(X)\oplus \zz[F]$. 

We also recall that $\autp(X')=\autp(X)=0$ (Table \ref{table:amitsur}). Let $G\subseteq \aut(X')$ be a finite group.  By Lemma \ref{Lemma:autblowup}, we may assume that $G$ is a lift of a subgroup of $\aut(X;C)$, and consequently, $\phi$ is $G$-equivariant. From Theorem \ref{imprimitivebcdp}, 
\[
\am(X',G) = \am(X,G) \subseteq \zz/2\zz
\] 
For these two families, we now verify whether $\am(X',G)=\zz/2\zz$ can be realized for some Fano threefold $X'$ and a finite group $G\subseteq \aut(X')$. 

\begin{lemma}   \label{Am:3.6,3.12}
    Let $X'$ be a Fano threefold belonging to \textnumero 3.6 or \textnumero 3.12. Let $G\subseteq \aut(X')$ be a finite group. Then $\am(X',G)$ is isomorphic to $\zz/2\zz$ or trivial.
\end{lemma}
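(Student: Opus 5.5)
The upper bound $\am(X',G)\subseteq\zz/2\zz$ is already established in the discussion preceding the statement. We identify $G$ with a subgroup of $\aut(X;C)$ via Lemma \ref{Lemma:autblowup}, and Theorem \ref{imprimitivebcdp} gives $\am(X',G)=\am(X,G)$, where $X=\operatorname{Bl}_\ell\pp^3$. So it remains to produce, for each family, a member $X'$ and a finite $G$ with $\am(X',G)\cong\zz/2\zz$.

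We construct a subgroup $\mathcal{G}\subset\aut(\pp^3)$ together with a $\mathcal{G}$-invariant line $\ell$ and a $\mathcal{G}$-invariant curve $C\subset\pp^3$ meeting the right conditions. For \textnumero 3.12 (resp. \textnumero 3.6), $C$ should be a smooth twisted cubic (resp. a smooth elliptic quartic, i.e. an intersection of two quadrics) disjoint from $\ell$, as in \cite[Table 3]{MM81classtable}. We also need $\mathcal{O}_{\pp^3}(1)$ not to be $\mathcal{G}$-linearizable. The strict transform $\widetilde C\subset X$ is then $\mathcal{G}$-invariant, and $\mathcal{G}$ lifts to $X'$. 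By Proposition \ref{prop:g-eq-morphism-sameAmitsur}-type reasoning (or directly Theorem \ref{imprimitivebcdp} applied to $X'\to X\to\pp^3$), we get $\am(X',\mathcal{G})=\am(\pp^3,\mathcal{G})$.

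A natural choice is the Klein-type group $\mathcal{G}=\langle\sigma,\tau\rangle\cong(\zz/2\zz)^2$ with
\[
\sigma:[x_0:x_1:x_2:x_3]\mapsto[x_0:-x_1:x_2:-x_3],\qquad \tau:[x_0:x_1:x_2:x_3]\mapsto[x_3:x_2:x_1:x_0],
\]
from \eqref{otherdihedralorder8}. Its linear lifts anticommute, so $H$ is not linearizable while $2H$ is, and hence $\am(\pp^3,\mathcal{G})\cong\zz/2\zz$. For \textnumero 3.12, the twisted cubic $C$ from Lemma \ref{Am:2.27} is already $\mathcal{G}$-invariant. What remains is an invariant line disjoint from $C$. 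Invariant lines for this group are spanned by pairs of points of $\pp^3$ permuted by $\mathcal{G}$; a candidate is $\ell=\{x_0+x_3=0,\ x_1+x_2=0\}$ (or the analogous line with signs adjusted so that $\sigma$ preserves it, e.g. $\{x_1=0,x_3=0\}$ is not $\tau$-stable, so one tests lines of the form $\{x_0=\lambda x_2,\ x_3=\lambda x_1\}$). Disjointness from $C$ is checked through the parametrization $[s^3:s^2t:st^2:t^3]$.

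For \textnumero 3.6, we use the same group $\mathcal{G}_2$ and the invariant pencil of diagonal quadrics as in Lemma \ref{Am:2.25}, for instance $x_0^2+x_1^2+2(x_2^2+x_3^2)=0$ and $x_0^2-x_1^2+x_2^2-x_3^2=0$. We then choose an invariant line such as $\{x_0=x_1=0\}$ (stable under both generators of $\mathcal{G}_2$) and verify that it misses $C$. On this line the equations restrict to $2(x_2^2+x_3^2)=0$ and $x_2^2-x_3^2=0$, which have no common nonzero solution, so the line is disjoint from $C$. If necessary, the coefficients are perturbed so that $C$ stays smooth while remaining disjoint from $\ell$.

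The main obstacle is finding an invariant line disjoint from the invariant curve while the group still has a nonlinearizable $H$. For \textnumero 3.12 I would first test lines joining $\mathcal{G}$-fixed configurations and verify disjointness via the parametrization. If that fails, I would conjugate $\mathcal{G}$ by an element of $\operatorname{PGL}_2$ acting on the twisted cubic, so that it contains a non-liftable $(\zz/2\zz)^2\subset\operatorname{PGL}_2$ and stabilizes a secant-free line.
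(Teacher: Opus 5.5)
Your overall strategy is the paper's. The upper bound comes from $\am(X',G)=\am(\operatorname{Bl}_\ell\pp^3,G)\subseteq\zz/2\zz$. Sharpness comes from a non-liftable Klein four-group on $\pp^3$ preserving $\ell\sqcup C$, with $\am(X',\mathcal{G})=\am(\pp^3,\mathcal{G})$ by Theorem \ref{imprimitivebcdp}. (Proposition \ref{prop:g-eq-morphism-sameAmitsur} is not the right tool, since a blow-up does not induce an isomorphism of Picard groups.) Your \textnumero 3.6 example is complete and correct. The pencil from Lemma \ref{Am:2.25} is already smooth: all $2\times 2$ minors of the coefficient rows $(1,1,2,2)$ and $(1,-1,1,-1)$ are nonzero, so no perturbation is needed. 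The line $\{x_0=x_1=0\}$ is $\mathcal{G}_2$-stable and misses $C$. The paper uses the same group with $\{x_2=x_3=0\}$ and a slightly different second quadric.

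For \textnumero 3.12 there is a gap: you never exhibit the line, and the one you name is not invariant, since $\sigma$ maps $\{x_0+x_3=0,\ x_1+x_2=0\}$ to $\{x_0=x_3,\ x_1=x_2\}$. The family you then suggest does work, so the proposed conjugation is unnecessary. Each $\ell_\lambda=\{x_0=\lambda x_2,\ x_3=\lambda x_1\}$ is $\mathcal{G}$-stable: $\tau$ exchanges the two equations, and $\sigma$ fixes the first and negates the second. On $C$, parametrized by $[s^3:s^2t:st^2:t^3]$, the equations become $s(s^2-\lambda t^2)=0$ and $t(t^2-\lambda s^2)=0$. Since $s=0$ forces $t=0$ and vice versa, a common point needs $s^2=\lambda t^2$ and $t^2=\lambda s^2$, hence $\lambda^2=1$. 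So for $\lambda\neq\pm1$ the line misses $C$. For instance $\ell_0=\{x_0=x_3=0\}$ works, where $s^3=t^3=0$ is impossible. Blowing up $\ell_0\sqcup C$ then gives a member of \textnumero 3.12 with $\am(X',\mathcal{G})\cong\zz/2\zz$.

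This check is not a formality. The lines $\ell_{\pm1}$ and $\ell_\infty=\{x_1=x_2=0\}$ are bisecants of $C$, passing through the three orbits of size two of $\mathcal{G}$ on $C\cong\pp^1$. Rename the paper's coordinates $x_1,x_2,x_3,x_0$ as $x_0,x_1,x_2,x_3$. Then the paper's \textnumero 3.12 example becomes exactly your group and twisted cubic, and its line $\{x_2=x_3=0\}$ becomes the bisecant $\ell_\infty$. Indeed, that line contains $[1:0:0:0]$ and $[0:1:0:0]$, both of which satisfy the paper's three quadric equations. So your construction, completed with $\ell_0$ (which is $\{x_0=x_1=0\}$ in the paper's coordinates), is what the paper's proof needs at that step as well.
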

\begin{proof}
First, suppose that $X'$ is in family \textnumero $3.6$, then it is the blow-up of $\mathbb{P}^3$ along the disjoint union of a line, $\ell$ and a quartic elliptic curve, $C$. In this notation, we write $X:=\operatorname{Bl}_{\ell}(\pp^3)$. Let $\mathcal{G} :=\langle \sigma , \tau\rangle $, where 
\begin{equation}      \label{eqn:group-for-blow-up-2.33}
\begin{split}
 \sigma \colon [x_0:x_1:x_2:x_3] &\mapsto [x_0:-x_1:x_2:-x_3] \\
 \tau \colon [x_0:x_1:x_2:x_3] &\mapsto [x_1:x_0:x_3:x_2].
 \end{split}
\end{equation}
A quartic elliptic curve in $\pp^3$ is given by the intersection of two quadrics 
\[
\begin{cases}
    x_0^2+x_1^2+2(x_2^2+x_3^2) =0,\\
    2(x_0^2 - x_1^2) +x_2^2-x_3^2=0,
\end{cases} 
\]
and let the line be given by $\ell:=\{ x_2=x_3=0\} \subset \pp^3$. Then both $C$ and $\ell$ are $\mathcal{G}$-invariant, so $C \sqcup \ell$ is also $\mathcal{G}$-invariant. From \cite[Example 2.3]{sharma2026amitsurgroupsprimitivefano}, we know that $\am(\pp^3,\mathcal{G})=\zz/2\zz$. It now follows that 
\[
\am(X',\mathcal{G})=\am(X,\mathcal{G}) = \am(\pp^3,\mathcal{G}) \cong \zz/2\zz.
\]

If $X'$ is in family \textnumero $3.12$, it is the blow-up of $\pp^3$ in the disjoint union of a line $\ell$ and a twisted cubic $C$. Let $\ell:=\{ x_2=x_3=0  \}$ and $C$ be the given by the intersection 
\[
x_0x_1=x_2x_3, \; x_2^2 = x_1x_3, \; x_3^2 = x_0x_2.
\] 
Then the group $\mathcal{G}\subset \aut(\pp^3)$ generated by involutions in \eqref{eqn:group-for-blow-up-2.33} keeps both $\ell$ and $C$ invariant. Thus, we again conclude
\[
\am(X',\mathcal{G}) = \am(X,\mathcal{G}) = \am(\pp^3,\mathcal{G})\cong \zz/2\zz.
\]
\end{proof}

\section{Divisors} \label{divisors}
We discuss the families
\begin{center}
    \textnumero 3.3, \; \textnumero 3.17,\;  \textnumero $4.1$.
\end{center}
\medskip
A Fano threefold $X$ in these families can be realized as a hypersurface in a fourfold that is a product of projective spaces.

Consider families \textnumero 3.3 and \textnumero 3.17. Their members are hypersurfaces of tridegrees $(1,1,2)$ and $(1,1,1)$ in $Y:=\mathbb{P}^1 \times \mathbb{P}^1 \times \mathbb{P}^2$, respectively \cite[Table 3]{MM81classtable}. Let $\operatorname{pr}_i$ denote the projection of $Y$ onto its $i$th factor, and set $H_i:= \operatorname{pr}_i^*\mathcal{O}(1)$. Let $\iota : X \to Y$ be the natural embedding. Proposition \ref{thm:lefschetz} gives
$ \pic(X)= \zz[\iota^*H_1]\oplus[ \iota^*H_2]\oplus[\iota^*H_3]$. 

For both families, $\autp(X)=\zz/2\zz$, and $G$-action on $\pic(X)$ fixes $\iota^*H_3$ and either fixes or swaps $\iota^*H_1$ and $\iota^*H_2$. Therefore, $\operatorname{pr}_3$ is $G$-equivariant. 
Let $G\subseteq \aut(X)$ be a finite group. By Remark \ref{remark:equivarince-of-embedding}, we may regard $G\subseteq \aut(Y;X)$ as a subgroup, so that $\iota$ is $G$-equivariant. By Proposition \ref{KXlinearizable}, we have 
\begin{equation}  \label{relations:3.3,3.17}
    \overline{3\iota^*H_3} =0, \quad \overline{-K_X}=0 
\end{equation}
in $\am(X,G)$.

\medskip
First, let $X$ denote a Fano threefold in \textnumero 3.3. Since $\iota$ is $G$-equivariant, Proposition \ref{KXlinearizable} implies  
\begin{align}  \label{relations:3.3}
    \overline{\iota^*(H_1+H_2+H_3)} =0, \qquad
    \overline{\iota^*H_3}=0 ,
\end{align}
where we use $-K_X=\iota^*(H_1+H_2+H_3)$ and $\iota^*H_3=\iota^*(-K_Y)+2K_X$.

\medskip
Let $([x_0:x_1],[y_0:y_1],[z_0:z_1:z_2])$ be coordinates on $Y:=\pp^1\times\pp^1\times\pp^2$.

\begin{lemma}   \label{Am:3.3}
    Let $X$ be a Fano threefold in family \textnumero $3.3$. Let $G\subseteq \aut(X)$ be a finite group. Then $\am(X,G)$ is isomorphic to $\zz/2\zz$ or trivial. 
\end{lemma}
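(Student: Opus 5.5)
We split according to $\autp(X,G)\in\{0,\zz/2\zz\}$, using the relations \eqref{relations:3.3}: $\overline{\iota^*H_3}=0$ and $\overline{\iota^*(H_1+H_2+H_3)}=0$, hence $\overline{\iota^*H_1}+\overline{\iota^*H_2}=0$.

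\emph{Case $\autp(X,G)=\zz/2\zz$.} Here $G$ swaps $\iota^*H_1$ and $\iota^*H_2$ and fixes $\iota^*H_3$. The invariant lattice is $\pic(X)^G=\zz[\iota^*(H_1+H_2)]\oplus\zz[\iota^*H_3]$. Both generators are linearizable by \eqref{relations:3.3}, so $\am(X,G)=0$.

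\emph{Case $\autp(X,G)=0$.} Now $\pic(X)^G=\pic(X)$, and $\am(X,G)$ is generated by $\overline{\iota^*H_1}=-\overline{\iota^*H_2}$. Since $\operatorname{pr}_1|_X$ is $G$-equivariant and $\iota^*H_1$ is pulled back from $\pp^1$, the class $\overline{\iota^*H_1}$ lies in the image of $\am(\pp^1,G_1)\to\am(X,G)$. Here $G_1$ is the image of $G$ in $\aut(\pp^1)$, and $\am(\pp^1,G_1)\subseteq\zz/2\zz$ because $\mathcal{O}_{\pp^1}(2)=-K_{\pp^1}$ is linearizable by Proposition \ref{KXlinearizable}. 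Hence $2\overline{\iota^*H_1}=0$ and $\am(X,G)\subseteq\zz/2\zz$. As a sanity check, Proposition \ref{prop:amitsur-period-congruences} should be consistent with this: $(\iota^*H_1)^3=0$ gives $m\mid 6$, which together with the above still gives $m\mid 2$.

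\emph{Realization.} We need a $G$ for which $\iota^*H_1$ is not linearizable. Take $G\cong(\zz/2\zz)^2$ acting on each $\pp^1$ factor by $\sigma:[x_0:x_1]\mapsto[-x_0:x_1]$ and $\tau:[x_0:x_1]\mapsto[x_1:x_0]$, the same on $[y_0:y_1]$, and on $\pp^2$ by a suitable diagonal or permutation action. The form $f$ of tridegree $(1,1,2)$ must be $\widetilde G$-semi-invariant. Write
\[
f=(x_0y_0+x_1y_1)\,q_1(z)+(x_0y_1+x_1y_0)\,q_2(z)
\]
or a variant. The bilinear forms $x_0y_0\pm x_1y_1$ and $x_0y_1\pm x_1y_0$ are eigenvectors for the diagonal action, because $\operatorname{Sym}$ of two copies of the non-liftable $2$-dimensional representation is a genuine representation. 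The quadrics $q_1,q_2$ in $z$ are then chosen with matching characters. Next check by computer (Magma) that $X=\{f=0\}$ is smooth.

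\emph{Non-linearizability.} Restrict to the subgroup acting on the first $\pp^1$ through the Klein four-group. The lifts of $\sigma,\tau$ to $\operatorname{GL}_2$ anticommute, so $\mathcal{O}_{\pp^1}(1)$ is not linearizable. To transfer this to $X$, use the fibration $\operatorname{pr}_{1}|_X$ and restrict $\iota^*H_1$ to a $G$-invariant curve or fiber where the obstruction survives. Alternatively, show that $\partial(\iota^*H_1)$ equals the nonzero class of $H^2(G,\cc^\times)$ obtained by pulling back from $\pp^1$; since $\partial$ is computed from the lifting group of sections, $H^0(X,\iota^*H_1)=H^0(\pp^1,\mathcal{O}(1))$ (by Künneth plus the Koszul sequence, as the cokernel term vanishes in degree $(1,0,0)-(1,1,2)$). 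The lifting group acts on this $2$-dimensional space as the Heisenberg-type group, so the extension is nonsplit. This last identification is the main step to get right. The choice of $f$ that is both smooth and semi-invariant is the other potential obstacle.
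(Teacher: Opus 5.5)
Your upper bound is the paper's argument. For $\autp(X,G)=\zz/2\zz$, both generators $\iota^*(H_1+H_2)$ and $\iota^*H_3$ of $\pic(X)^G$ are linearizable by \eqref{relations:3.3}. For $\autp(X,G)=0$, those relations together with $2\overline{\iota^*H_i}=0$ give $\am(X,G)=\langle\overline{\iota^*H_1}\rangle\subseteq\zz/2\zz$. This proves the statement as worded.

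The gap is in the sharpness part, which the paper does carry out. Your ansatz $f=(x_0y_0+x_1y_1)q_1(z)+(x_0y_1+x_1y_0)q_2(z)$ is never smooth. Nor is any $f=B_1(x,y)q_1(z)+B_2(x,y)q_2(z)$ with only two bilinear terms, for three reasons:
\begin{enumerate}
\item two curves of bidegree $(1,1)$ on $\pp^1\times\pp^1$ always meet; for your $B_1,B_2$ they meet at $([1:1],[1:-1])$ and $([1:-1],[1:1])$;
\item two conics in $\pp^2$ always meet;
\item at any point with $B_1=B_2=q_1=q_2=0$, every partial derivative of $f$ vanishes.
\end{enumerate}

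This constrains the action on $\pp^2$. Its lift to $H^0(\pp^2,\mathcal{O}(1))$ is automatically linear, since the obstruction has order dividing both $3$ and $2$. Unless this representation consists of three distinct characters of $(\zz/2\zz)^2$, its $\operatorname{Sym}^2$ carries at most two characters. Then every semi-invariant $f$ is again of the two-term shape, hence singular.

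The paper takes $[z_0:-z_1:z_2]$ and $[z_2:z_1:z_0]$ on $\pp^2$, which give three distinct characters. This lets all four eigenforms $x_0y_0\pm x_1y_1$, $x_0y_1\pm x_1y_0$ occur in
$f=x_0y_0Q+x_1y_1Q'+x_0y_1z_0z_1+x_1y_0z_1z_2$, where $Q=az_0^2+bz_1^2+cz_2^2+dz_0z_2$ and $Q'=cz_0^2+bz_1^2+az_2^2+dz_0z_2$. The paper then verifies smoothness by computer for $(a,b,c,d)=(1,2026,100,4)$. Until you exhibit such a smooth invariant member, your realization is not established.

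On the other hand, the step you single out as the main one is formal. Pull back the isomorphisms $\phi_g$ along the $G$-equivariant map $\operatorname{pr}_1|_X$. This gives a homomorphism from the lifting group of $\mathcal{O}_{\pp^1}(1)$ to that of $\iota^*H_1$. It is the identity on $\cc^\times$ and on $G$, hence an isomorphism of extensions. So $\partial(\iota^*H_1)$ equals the class of $\mathcal{O}_{\pp^1}(1)$ in $H^2(G,\cc^\times)$. That class is nonzero because $G$ maps isomorphically onto a Klein four-group in $\operatorname{PGL}_2(\cc)$ whose lifts anticommute.

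So no computation of $H^0(X,\iota^*H_1)$ and no restriction to curves is needed. The real obstacle is smoothness of the invariant hypersurface.
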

\begin{proof} 
Suppose that $\autp(X,G)=\zz/2\zz$. Then $\pic(X)^G= \zz[\iota^*(H_1+H_2)]\oplus\zz[\iota^*H_3]$. From \eqref{relations:3.3,3.17} and \eqref{relations:3.3}, we get
\begin{align*}  
\begin{split}
\overline{\iota^*H_3} = 0, \quad \overline{\iota^*(H_1+H_2)}=0. 
\end{split}
\end{align*}
Thus, $\am(X,G)=0$.

Now suppose $\autp(X,G)=0$. Then $G$ fixes each factor of $Y$, so $\operatorname{pr}_i$ is $G$-equivariant for $1\leq i \leq 3$. By Proposition \ref{KXlinearizable}, $2\iota^*H_1$, $2\iota^*H_2$ are $G$-linearizable. Together with the relations in \eqref{relations:3.3,3.17} and \eqref{relations:3.3},  
\begin{equation}  \label{Amitsur-bound-3.3}
  \am(X,G)= \langle \overline{\iota^*H_1}\rangle = \langle \overline{\iota^*H_2}\rangle \subseteq \zz/2\zz. 
\end{equation}
We show that there is a Fano threefold $X$ in this family and a finite group $G\subseteq \aut(X)$ for which this upper bound is achieved. 

Consider the group $\mathcal{G}\cong (\zz/2\zz)^2$ generated by
\begin{equation}    \label{eqn:group-for3.3}
\begin{split}
    ([x_0:x_1],[y_0:y_1],[z_0:z_1:z_2]) &\mapsto  ([x_0:-x_1],[y_0:-y_1],[z_0:-z_1:z_2]) \\
    ([x_0:x_1],[y_0:y_1],[z_0:z_1:z_2]) &\mapsto ([x_1:x_0],[y_1:y_0],[z_2:z_1:z_0])
\end{split}
\end{equation}
and a hypersurface $X$ defined by the equation
\[
\begin{aligned}
x_0y_0(az_0^2 + bz_1^2 + cz_2^2 + dz_0z_2) + x_0y_1z_0z_1+ \\
x_1y_0z_1z_2 + x_1y_1(cz_0^2 + bz_1^2 + az_2^2 + dz_0z_2) =0
\end{aligned}
\]
where $(a,b,c,d)=(1,2026,100,4)$. The equation is $\mathcal{G}$-invariant, and defines a smooth and irreducible hypersurface in $Y$ which can be verified using \cite{magma}. Hence, $X$ is a smooth Fano threefold in family \textnumero 3.3 and $\mathcal{G}$ acts faithfully on $X$.  

Finally, we observe that the induced $\mathcal{G}$-action on $\pp^1_x$ and $\pp^1_y$ does not lift to a linear representation, therefore, $\iota^*H_1$ and $\iota^*H_2$ are not $\mathcal{G}$-linearizable. Combined with \eqref{Amitsur-bound-3.3}, we obtain $\am(X,\mathcal{G})\cong \zz/2\zz$. 

\end{proof}

Let $X$ be a Fano threefold in family \textnumero $3.17$. Then $-K_X = \iota^*H_1 + \iota^*H_2+2\iota^*H_3$ and 
\begin{equation*}
    \iota^*H_3 = \iota^*(-K_{Y})+2K_X, \qquad
\iota^*H_1+\iota^*H_2+\iota^*H_3 = \iota^*(-K_{Y})+K_X.
\end{equation*}
Combining the $G$-equivariance of $\iota$,  Proposition \ref{KXlinearizable}, and \eqref{relations:3.3,3.17} now imply that 
\begin{equation}  \label{Amitsur-relations-for-3.17}
    \overline{\iota^*H_3}=0, \qquad \overline{\iota^*(H_1+H_2+H_3)}=0.
\end{equation}

\begin{lemma}   \label{Am:3.17}
    Let $X$ be a Fano threefold in \textnumero $3.17$. Let $G\subseteq \aut(X)$ be a finite group. Then $\am(X,G)$ is isomorphic to $\zz/2\zz$ or trivial.
\end{lemma}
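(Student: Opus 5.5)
I would follow the template of Lemma \ref{Am:3.3} and split into cases according to $\autp(X,G)\subseteq\zz/2\zz$. Throughout, $\iota$ is $G$-equivariant by Remark \ref{remark:equivarince-of-embedding}, and the relations \eqref{Amitsur-relations-for-3.17} hold: $\overline{\iota^*H_3}=0$ and $\overline{\iota^*(H_1+H_2+H_3)}=0$.

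\emph{Case $\autp(X,G)=\zz/2\zz$.} Here the nontrivial element swaps $\iota^*H_1$ and $\iota^*H_2$. Hence
\[
\pic(X)^G=\zz[\iota^*(H_1+H_2)]\oplus\zz[\iota^*H_3].
\]
Both generators have trivial image in $\am(X,G)$ by \eqref{Amitsur-relations-for-3.17}, so $\am(X,G)=0$.

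\emph{Case $\autp(X,G)=0$.} All three classes $\iota^*H_i$ are $G$-invariant, so every projection $\operatorname{pr}_i|_X$ is $G$-equivariant. The class $2\iota^*H_1$ is the pullback of $-K_{\pp^1}$, so it is $G$-linearizable by Proposition \ref{KXlinearizable}; the same holds for $2\iota^*H_2$. Combining this with \eqref{Amitsur-relations-for-3.17} gives $\overline{\iota^*H_1}=-\overline{\iota^*H_2}=\overline{\iota^*H_2}$, and therefore
\[
\am(X,G)=\langle\overline{\iota^*H_1}\rangle\subseteq\zz/2\zz .
\]

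\emph{Realizing $\zz/2\zz$.} This is the main step. I would reuse the group $\mathcal{G}\cong(\zz/2\zz)^2$ from \eqref{eqn:group-for3.3}, which acts on $\pp^1_x$ and $\pp^1_y$ through non-linearizable Klein four-groups and acts on $\pp^2$ linearly. For $X$ I would take a $\mathcal{G}$-invariant trilinear form
\[
x_0y_0\,\ell_{00}(z)+x_0y_1\,\ell_{01}(z)+x_1y_0\,\ell_{10}(z)+x_1y_1\,\ell_{11}(z)=0 .
\]

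To make this invariant, it is cleaner to impose semi-invariance with a consistent character, lifting $\mathcal{G}$ to $\operatorname{GL}_2\times\operatorname{GL}_2\times\operatorname{GL}_3$; any character is allowed, since only the zero locus matters. Under the first generator, the monomials $x_ay_b$ have weight $(-1)^{a+b}$. Matching weights forces:
\begin{itemize}
\item $\ell_{00}$ and $\ell_{11}$ to lie in the span of $z_0,z_2$ and $\ell_{01}$, $\ell_{10}$ to be multiples of $z_1$; or
\item the opposite assignment.
\end{itemize}
The second generator then imposes $\ell_{11}(z_0,z_1,z_2)=\ell_{00}(z_2,z_1,z_0)$ and $\ell_{10}=\pm\ell_{01}$ (after the swap). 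A candidate is
\[
x_0y_0(z_0+az_2)+x_1y_1(az_0+z_2)+(x_0y_1+x_1y_0)z_1 .
\]
I would check its smoothness with Magma for a generic parameter $a$, as in the earlier lemmas. If the sign conventions fail, I would switch to the other weight choice.

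\emph{Conclusion.} The actions of $\mathcal{G}$ on $\pp^1_x$ and $\pp^1_y$ do not lift to $\operatorname{GL}_2$, because the lifts anticommute. Hence $\iota^*H_1$ is not $\mathcal{G}$-linearizable, since linearizability would descend through the equivariant morphism $\operatorname{pr}_1|_X$ via the isomorphism $H^0(\pp^1,\mathcal{O}(1))\cong H^0(X,\iota^*H_1)$. Therefore $\am(X,\mathcal{G})\cong\zz/2\zz$.

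The hardest step is producing an explicit smooth invariant member of the family. Generic trilinear forms in the invariant space should be smooth, but this needs verification.
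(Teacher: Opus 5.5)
Your proof is correct and follows the paper's argument essentially step for step. It uses the same split by $\autp(X,G)$, the same relations $\overline{\iota^*H_3}=\overline{\iota^*(H_1+H_2+H_3)}=0$ together with $\overline{2\iota^*H_i}=0$, and the same group $\mathcal{G}$ from \eqref{eqn:group-for3.3}. The only difference is the explicit member: the paper uses the opposite weight assignment, pairing $z_1$ with $x_0y_0$ and $x_1y_1$. Your invariant form works as well, and its smoothness for $a\neq\pm1$ can be checked by hand without Magma, since the $z$-partials $x_0y_0+ax_1y_1$, $x_0y_1+x_1y_0$, $ax_0y_0+x_1y_1$ have no common zero on $\pp^1\times\pp^1$.
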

\begin{proof}
Suppose first that $\autp(X,G)=\zz/2\zz$. Then $\pic(X)^G=\zz[\iota^*(H_1+H_2),\iota^*H_3]$. The
relations in \eqref{Amitsur-relations-for-3.17} yield $\overline{\iota^*(H_1+H_2)}=0$. Thus, $\am(X,G)=0$. 

Suppose $\autp(X,G)=0$. Then for $1\leq i \leq 3$, $\iota^*H_i$ is $G$-invariant, and so, $\operatorname{pr}_i$ is $\operatorname{G}$-equivariant. Consequently
\[
2\overline{\iota^*H_1}=2\overline{\iota^*H_2}=0.
\]
Combined with relations in \eqref{Amitsur-relations-for-3.17}, we get
\[
\am(X,G)=  \langle\overline{\iota^*H_1}\rangle = \langle\overline{\iota^*H_2}\rangle \subseteq \zz/2\zz. 
\]

Consider the group $\mathcal{G}\cong (\zz/2\zz)^2$ defined in \eqref{eqn:group-for3.3} and the hypersurface $X$ defined by the equation 
\[
(x_0y_0z_1 + x_1y_1z_1) +2(x_0y_1z_0+ x_1y_0z_2) +3(x_0y_1z_2+x_1y_0z_0)=0.
\]
This equation is $\mathcal{G}$-invariant, and we verify it to be smooth and irreducible; for example using \cite{magma}. Hence, $X$ is a member of \textnumero 3.17. Since the $\mathcal{G}$-action on $\pp^1_x$ and $\pp^1_y$ does not lift to a homomorphism to $\operatorname{GL}_2(\cc)$, $\iota^*H_1$ and $\iota^*H_2$ are not $\mathcal{G}$-linearizable. Thus, $\am(X,\mathcal{G})\cong \zz/2\zz$. 
\end{proof}

\medskip
For family \textnumero $4.1$, set $Y:=(\pp^1)^4$.
Let $\operatorname{pr}_i : Y\to\pp^1$ denote the
projection onto the $i$-th factor, and set
$H_i:=\operatorname{pr}_i^*\mathcal{O}_{\pp^1}(1)$,
$1\leq i\leq 4$. A Fano threefold $X$ in family \textnumero $4.1$ is a member of $|H_1+H_2+H_3+H_4|$. Let $\iota : X \hookrightarrow Y$ be the natural embedding. 

We have  $\pic(X)=\oplus_{i=1}^4 \zz[\iota^*H_i]$, and $-K_X = \iota^*(H_1+H_2+H_3+H_4)$. Recall $\autp(X)=S_4$, so a finite group $G\subseteq \aut(X)$ acts on $\pic(X)$ by permuting its generators. Remark \ref{remark:equivarince-of-embedding} now allows us to assume that $G$-action extends to $Y$ and $\iota$ is $G$-equivariant. 
We identify $\autp(X,G)$ with $\autp(Y,G)$, viewed as a subgroup of $S_4$. Up to conjugacy, their possibilities are in the table below. 
\begin{table}[h]
\centering
\renewcommand{\arraystretch}{1.4}
\begin{tabular}{c|c}
\hline
$\autp(X,G)$ & Action on $\{H_1,H_2,H_3,H_4\}$ \\
\hline
$0$ 
& $\{1\}$ \\
\hline
$\zz/2\zz$
& $\langle(12)\rangle$ \\
\hline
$(\zz/2\zz)^2$
& $\langle(12)(34)\rangle,\;\langle(12),(34)\rangle$ \\
\hline
$\zz/3\zz,\;S_3$
& $\langle(123)\rangle,\;\langle(12),(123)\rangle$ \\
\hline
$\begin{gathered}
\zz/4\zz,\;(\zz/2\zz)^2,\\
D_8,\;A_4,\;S_4
\end{gathered}$
&
$\begin{gathered}
\langle(1234)\rangle,\;
\langle(12)(34),(13)(24)\rangle,\\
\langle(1234),(13)\rangle,\;A_4,\;S_4
\end{gathered}$ \\
\hline
\end{tabular}
\label{table:subgroups-S4}
\end{table}
Here $(ijk)$ denotes the permutation that sends $H_i\mapsto H_j\mapsto H_j\mapsto H_k$, fixing the remaining generators. We use the analogous notation for other permutations. 

We organize the proof according to the rows of the table, since the subgroups in the first column of each row produce the same $\pic(X)^G$. The resulting upper bounds on $\am(X,G)$ therefore apply to each subgroup in that row. 

\begin{remark}   \label{remark:for4.1}
\hfill \normalfont
\begin{enumerate}
\item[(i)] Suppose $\autp(X,G)=\autp(Y,G)=0$. Then each $\iota^*H_i$ is $G$-invariant, and $\operatorname{pr}_i$ is $G$-equivariant. By Proposition \ref{KXlinearizable}, $-K_X$ and $\iota^*\operatorname{pr}_i^*(-K_{\pp^1})$ for $1\leq i\leq 4$ are $G$-linearizable. Consequently,
\[
\overline{2\iota^*H_i}=0,\quad \overline{\iota^*(H_1+H_2+H_3+H_4)}=0.
\]
Thus 
\[
\am(X,G)= \langle \overline{\iota^*H_1}, \overline{\iota^*H_2}, \overline{\iota^*H_3} \rangle \subseteq (\zz/2\zz)^3.
\]

\item[(ii)] Suppose now that $\autp(X,G)=\autp(Y,G)=\zz/2\zz$. We may assume that $G$-action exchanges $\iota^*H_1$, $\iota^*H_2$ and fixes $\iota^*H_3$, $\iota^*H_4$. Then $\pic(X)^G=\zz[\iota^*(H_1+H_2)]\oplus\zz[\iota^*H_3]\oplus\zz[\iota^*H_4]$ and $\operatorname{pr}_3$ and $\operatorname{pr}_4$ are $G$-equivariant. Consequently, $2\iota^*H_3$, $2\iota^*H_4$ are $G$-linearizable, and since $\overline{-K_X}=0$, we obtain 
\[
\overline{2\iota^*H_3}= \overline{2\iota^*H_4}=0,\qquad\overline{\iota^*(H_1+H_2)} = \overline{\iota^*H_3}+\overline{\iota^*H_4}.
\]
Thus, we get $\am(X,G)=\langle \overline{\iota^*H_3}, \overline{\iota^*H_4}\rangle  \subseteq (\zz/2\zz)^2$.

\item[(iii)] Suppose that $\autp(X,G)=\autp(Y,G) = \zz/3\zz$. So, after relabeling if necessary, we may assume that $G$ cyclically permutes the generators, $\iota^*H_1, \iota^*H_2$, and $\iota^*H_3$. Then $\pic(X)^G= \zz[\iota^*(H_1+H_2+H_3)]\oplus\zz[\iota^*H_4]$. So, $\operatorname{pr}_4$ is $G$-equivariant and $\overline{2\iota^*H_4}=0$. Since $-K_X$ is also $G$-linearizable, we get 
\[
\overline{\iota^*(H_1+H_2+H_3)}=\overline{\iota^*H_4}.
\]
Thus, $\am(X,G)=\langle \overline{\iota^*H_4}\rangle \subseteq \zz/2\zz$.

\item[(iv)] Suppose that $\autp(X,G)=\autp(Y,G)=(\zz/2\zz)^2$. There are two cases: either this group is isomorphic to $\langle (12)(34)\rangle$ or $\langle (12), (34)\rangle$. In both cases, we have 
\[
\pic(X)^G\cong \pic(Y)^G=\zz[H_1+H_2]\oplus\zz[H_3+H_4].
\]
Note that $m(H_1+H_2)$, $m(H_3+H_4)\;|\; 2$. Since $-K_X=\iota^*(H_1+H_2)+\iota^*(H_3+H_4)$ is $G$-linearizable, we have
\[
\overline{2\iota^*(H_1+H_2)} = \overline{2\iota^*(H_3+H_4)}=0, \quad \overline{\iota^*(H_1+H_2)}=\overline{\iota^*(H_3+H_4)}. 
\]
Thus, $\am(X,G) = \langle \overline{\iota^*(H_1+H_2)} \rangle \subseteq \zz/2\zz$. It is not yet known whether there exists a Fano threefold $X$ and a group $G$ for which $\am(X,G)\cong \zz/2\zz$.

\item[(v)] The remaining possibilities for $\autp(X,G)$ are
\[
\zz/4\zz,\; A_4,\; \langle (12)(34),(13)(24),(14)(23)\rangle ,\; D_8,\; S_4.
\]
In all these cases, $G$ acts transitively on $\pic(X)$, so $\pic(X)^{G} = \zz[-K_X]$. Since $-K_X$ is $G$-linearizable, it follows that $\am(X,G)=0$.
    \end{enumerate}
\end{remark}

By Proposition \ref{prop:g-eq-morphism-sameAmitsur}, we have 
\begin{equation}  \label{eqn:for4.1-Amitsurs-same}
\am(X,G)=\am(Y,G). 
\end{equation}

Let $[x_0:x_1],[y_0:y_1],[z_0:z_1],[w_0:w_1]$ be coordinates on $(\pp^1)^4$.

\begin{lemma}  \label{Am:4.1}
    Let $X$ be a Fano threefold in family \textnumero $4.1$. Let $G\subseteq \aut(X)$ be a finite group. Then $\am(X,G)$ is isomorphic to $(\zz/2\zz)^3$, $(\zz/2\zz)^2$, $\zz/2\zz$, or trivial.
\end{lemma}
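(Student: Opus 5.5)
The upper bounds come directly from Remark \ref{remark:for4.1}, so the proof reduces to realizing each bound. By (i)--(v) of that remark, $\am(X,G)$ is contained in $(\zz/2\zz)^3$, $(\zz/2\zz)^2$, $\zz/2\zz$, $\zz/2\zz$, or $0$, according to the row of the table containing $\autp(X,G)$. In every case the group is an elementary abelian $2$-group of rank at most $3$. This already yields the list of possibilities in the statement. It then remains to exhibit, for each of $(\zz/2\zz)^3$ and $(\zz/2\zz)^2$, a member $X$ of \textnumero 4.1 and a finite $G\subseteq\aut(X)$ realizing it. For $\zz/2\zz$ we need one example; the trivial case is immediate from (v).

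For the constructions I would use \eqref{eqn:for4.1-Amitsurs-same}, that is $\am(X,G)=\am(Y,G)$ with $Y=(\pp^1)^4$, and choose groups with $\autp(X,G)=0$. On $\pp^1$, let $K:=(\zz/2\zz)^2$ act by $\sigma:[u_0:u_1]\mapsto[u_0:-u_1]$ and $\tau:[u_0:u_1]\mapsto[u_1:u_0]$. This action does not lift to $\operatorname{GL}_2(\cc)$, since the lifts anticommute, so $\mathcal{O}_{\pp^1}(1)$ is not $K$-linearizable.

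\begin{itemize}
\item For $(\zz/2\zz)^3$, take $G\cong K^3$. Copies $1,2,3$ act on the factors $\pp^1_x$, $\pp^1_y$, $\pp^1_z$ respectively. On $\pp^1_w$, the $k$-th copy acts by the same $K$-action for $k=1,2,3$. Then the restriction to the $k$-th copy of $K$ shows that a class $\sum a_iH_i$ is linearizable only if $a_k+a_4$ is even. Hence $\overline{H_1},\overline{H_2},\overline{H_3}$ are independent, which uses Remark \ref{remark:equivariant-restrction-Amitsur}.
\item For $(\zz/2\zz)^2$, use $K^2$ arranged similarly, with one factor carrying a linearizable action.
\item For $\zz/2\zz$, one can take the diagonal $K$ on all four factors. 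Then $\overline{H_i}$ is nonzero for each $i$ and all four images coincide.
\end{itemize}

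The main obstacle is finding a smooth $G$-invariant divisor $X\in|H_1+H_2+H_3+H_4|$. The equation must be a $\widetilde G$-semi-invariant multilinear form. For the diagonal $K$, a combination such as
\[
\sum_{\epsilon}c_\epsilon\, x_{\epsilon_1}y_{\epsilon_2}z_{\epsilon_3}w_{\epsilon_4}
\]
with $c_\epsilon$ depending only on the parity pattern of $\epsilon$ and symmetric under $\epsilon\mapsto\bar\epsilon$ is invariant. One then checks smoothness for explicit coefficients in Magma, as in Lemmas \ref{Am:3.3} and \ref{Am:3.17}.

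For $K^3$, the invariance constraints are severe: $\sigma$ in each copy must scale the form by a sign. I would first try products-of-sums shaped equations and, if no smooth invariant member exists, replace $G$ by a smaller subgroup that still separates $H_1,H_2,H_3$. For instance, take $\langle\sigma_k,\tau_k\rangle$ acting on factor $k$ and on $\pp^1_w$ by the image under an isomorphism $K^3\to$ a subgroup generated compatibly. The independence condition then only requires that each restriction to factors $k$ and $4$ gives a nonlinearizable class.
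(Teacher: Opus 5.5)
Your bound argument is correct and matches the paper: Remark~\ref{remark:for4.1}~(i)--(v) shows that $\am(X,G)$ is elementary abelian of rank at most $3$. Read literally, that already gives the list in the statement. The gap is in the realizations, which make up the entire paper's proof and which you also treat as required.

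\textbf{The $K^3$ and $K^2$ constructions cannot work.} For your $G=K^3$ there is no $G$-invariant member of $|H_1+H_2+H_3+H_4|$ at all, smooth or not. Take $\sigma$ from the first copy and $\tau$ from the second.
\begin{itemize}
\item They commute in $G$.
\item Any lifts of them to $\operatorname{GL}(H^0(Y,H_1+H_2+H_3+H_4))$ anticommute. This space is the tensor product of the four $2$-dimensional spaces of linear forms, and the two lifts overlap only in the $w$-slot, where lifts of $\sigma$ and $\tau$ anticommute.
\item So they have no common eigenvector, i.e.\ there is no semi-invariant quadrilinear form.
\end{itemize}
Equivalently, by Proposition~\ref{Gsubrepn_result} with $d=1$, an invariant $X$ forces $\partial(H_1)+\partial(H_2)+\partial(H_3)+\partial(H_4)=0$. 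For your action this class has commutator pairing $\prod_{i\neq j}\beta(a_i,b_j)$ on $a,b\in K^3$, where $\beta(\cdot,\cdot)=\pm1$ is the commutator of lifts in $K$. This pairing is nontrivial, e.g.\ on $(\sigma,1,1)$ and $(1,\tau,1)$. Your restriction to a single copy only detects the diagonal terms ($a_k+a_4$ even) and misses these cross terms. Your $K^2$ construction for $(\zz/2\zz)^2$, arranged the same way, fails for the same reason.

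\textbf{The fallback is not yet a construction.} Passing to a smaller subgroup is the right direction, but no subgroup is specified, and $K^3\cong(\zz/2\zz)^6$ cannot map isomorphically into $\aut(\pp^1_w)$. You need three things:
\begin{itemize}
\item a subgroup on which every pair of elements has anticommuting lifts on an even number of the four factors, so that $\sum_i\partial(H_i)=0$;
\item independence of $\partial(H_1),\partial(H_2),\partial(H_3)$ on that subgroup;
\item an explicit smooth semi-invariant equation.
\end{itemize}
The paper's $\mathcal{G}_1$ is exactly such a subgroup of your $K^3$. It is generated by $\sigma_1=(\sigma,\sigma,\sigma)$, $\sigma_2=(\tau,\tau,\tau)$ and $\sigma_3=(\tau\sigma,\sigma,1)$, acting on $\pp^1_w$ by the products $\sigma,\tau,\tau$. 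The paper also supplies a smooth $\mathcal{G}_1$-semi-invariant equation.

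Your diagonal-$K$ example for $\zz/2\zz$ is sound. The same equation is invariant under $\langle\sigma_1,\sigma_2\rangle$, which is your diagonal $K$, so it gives a concrete smooth member.

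Finally, the paper realizes $(\zz/2\zz)^2$ and $\zz/2\zz$ with groups $\mathcal{G}_2,\mathcal{G}_3,\mathcal{G}_3'$ having $\autp(X,G)=\zz/2\zz$, $\zz/3\zz$, $S_3$. This also shows the bounds in Remark~\ref{remark:for4.1}~(ii), (iii) are sharp. Realizations with $\autp(X,G)=0$ alone cannot show that.
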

\begin{proof}
Suppose $\autp(Y,G)=0$. 
We show that the upper bound in Remark \ref{remark:for4.1} (i) can be realized for a suitable $X$ and a finite group $G\subseteq \aut(X)$. 

Consider the group $\mathcal{G}_1\cong (\zz/2\zz)^3 $ generated by involutions
\[
\begin{aligned}
\sigma_1: \ &
([x_0:x_1],[y_0:y_1],[z_0:z_1],[w_0:w_1])\\
&\qquad  \longmapsto
([x_0:-x_1],[y_0:-y_1],[z_0:-z_1],[w_0:-w_1]),\\[4pt]
\sigma_2:\ &
([x_0:x_1],[y_0:y_1],[z_0:z_1],[w_0:w_1])\\
&\qquad \longmapsto
([x_1:x_0],[y_1:y_0],[z_1:z_0],[w_1:w_0]),\\[4pt]
\sigma_3:\ &
([x_0:x_1],[y_0:y_1],[z_0:z_1],[w_0:w_1])\\
&\qquad \longmapsto
([x_1:-x_0],[y_0:-y_1],[z_0:z_1],[w_1:w_0]).
\end{aligned}
\]
Note that $\autp(Y,\mathcal{G}_1)=0$. The hypersurface defined by
\begin{multline} 
    a_0(x_0y_0z_0w_0 + x_1y_1z_1w_1) +a_1(x_0y_0z_1w_1 + x_1y_1z_0w_0) +\\
    ia_1(x_0y_1z_0w_1 + x_1y_0z_1w_0) +ia_0(x_0y_1z_1w_0 + x_1y_0z_0w_1) =0,
\end{multline}
is $\mathcal{G}_1$-invariant and for $(a_0,a_1)=(2,-3)$, it is smooth and irreducible \cite{magma}. Thus, the hypersurface defines a smooth Fano threefold $X$ in family \textnumero 4.1 and $\mathcal{G}_1\subset \aut(X)$. Observe that $\langle \sigma_1,\sigma_2\rangle$-action on $\pp^1_z$ does not lift to a linear representation, so $H_3$ is not $\langle \sigma_1,\sigma_2\rangle$-linearizable. Similarly, the $\langle \sigma_2,\sigma_3\rangle$-action on $\pp^1_y$ does not lift, whereas its actions on $\pp^1_x$ and $\pp^1_z$ do, so $H_2$ is not $\langle \sigma_2,\sigma_3\rangle$-linearizable. Finally, the $\langle \sigma_1,\sigma_3\rangle$-action on $\pp^1_x$ does not lift but its actions on $\pp^1_y$ and $\pp^1_z$ do. So, only $H_1$ is not $\langle \sigma_1,\sigma_3\rangle$-linearizable. 
Each of $H_1,H_2,H_3$ is not linearizable for a suitable subgroup $H\subseteq \mathcal{G}_1$, therefore they are not $\mathcal{G}_1$-linearizable by Remark \ref{remark:equivariant-restrction-Amitsur} and have independent obstruction classes in $H^2(\mathcal{G}_1,\cc^\times)$. It follows that 
\[
\am(X,\mathcal{G}_1) = \am(Y,\mathcal{G}_1) \cong (\zz/2\zz)^3. 
\]

\medskip
Suppose now that $\autp(Y,G)=\zz/2\zz$. The bound on $\am(X,G)$ attained in Remark \ref{remark:for4.1} (ii) is sharp as we show now. Consider the group $\mathcal{G}_2$ generated by automorphisms
\begin{equation*}
\begin{aligned}
\tau_1: \ &
([x_0:x_1],[y_0:y_1],[z_0:z_1],[w_0:w_1])\\
&\qquad\longmapsto
([y_0:-y_1],[x_0:x_1],[z_1:z_0],[w_0:iw_1]),\\[4pt]
\tau_2: \ &
([x_0:x_1],[y_0:y_1],[z_0:z_1],[w_0:w_1])\\
&\qquad \longmapsto
([y_1:-y_0],[x_0:x_1],[z_0:-iz_1],[w_1:-w_0]).
\end{aligned}
\end{equation*}
and note that $\pic(Y)^{\mathcal{G}_2} = \zz[H_1+H_2, H_3, H_4]$. Let $\tau_3:=\tau_1^2\tau_2^2$, explicitly,  
\[
\begin{aligned}
\tau_3:\ &
([x_0:x_1],[y_0:y_1],[z_0:z_1],[w_0:w_1])\\
&\qquad \longmapsto
([x_1:x_0],[y_1:y_0],[z_0:-z_1],[w_0:-w_1]).
\end{aligned}
\]
Consider the hypersurface $X$ defined by 
\begin{multline}   \label{eqn-second-4.1}
  x_0y_0(z_0w_0 + z_1w_0)  + x_0y_1(z_0w_1 -iz_1w_1) \\
  +x_1y_0(z_0w_1 +iz_1w_1) -x_1y_1(z_0w_0-z_1w_0)=0.
\end{multline}
The hypersurface is $\mathcal{G}_2$-invariant, smooth and irreducible \cite{magma}. So, $X$ defines a smooth Fano threefold in family \textnumero 4.1 and $\mathcal{G}_2\subset \aut(X)$. Observe that $\tau_1\tau_3=\tau_3\tau_1$ but their matrix representatives on $\pp^1_z$ anticommute. Thus, the induced $\langle \tau_1, \tau_3\rangle$-action on $\pp^1_z$ does not lift to a linear representation, thus $H_3$ is not $\langle \tau_1, \tau_3\rangle$-linearizable. 
Similarly, the matrix representatives of $\tau_2,\;\tau_3$ on $\pp^1_w$ anticommute. Consequently, the $\langle \tau_2, \tau_3\rangle$-action on $\pp^1_w$ does not lift to a linear representation, and $H_4$ is not $\langle \tau_2, \tau_3\rangle$-linearizable. 

Finally, the $\langle \tau_1, \tau_3\rangle$-action on $\pp^3$ induced via the segre embedding $\pp^1_x\times \pp^1_y \hookrightarrow \pp^3$ does not lift, therefore $H_1+H_2$ is not $\langle \tau_1, \tau_3\rangle$-linearizable, either. By Remark
\ref{remark:equivariant-restrction-Amitsur}, $H_3$, $H_4$, and $H_1+H_2$ are not $\mathcal{G}_2$-linearizable. Moreover, they non-linearizability correspond to three distinct obstructions classes in $H^2(\mathcal{G}_2,\cc^\times)$. Using \eqref{eqn:for4.1-Amitsurs-same}, we conclude 
\[
\am(X,\mathcal{G}_2)=\am(Y,\mathcal{G}_2)=(\zz/2\zz)^2.
\]

\medskip
Suppose that $\autp(Y,G) = \zz/3\zz$.  We show that the upper bound in Remark \ref{remark:for4.1} (iii) can be realized. Consider the group $\mathcal{G}_3\cong \zz/3\zz\times (\zz/2\zz)^2$ generated by 
\[
\begin{aligned}
&([x_0:x_1],[y_0:y_1],[z_0:z_1],[w_0:w_1])\\
&\qquad\longmapsto
([z_0:-z_1],[x_0:-x_1],[y_0:-y_1],[w_0:-w_1]),\\[4pt]
&([x_0:x_1],[y_0:y_1],[z_0:z_1],[w_0:w_1])\\
&\qquad\longmapsto
([z_1:z_0],[x_1:x_0],[y_1:y_0],[w_1:w_0]).
\end{aligned}
\]
and observe $\pic(Y)^{\mathcal{G}_3}=\zz[H_1+H_2+H_3,H_4]$. Consider the hypersurface $X$ defined by
\begin{multline}   \label{eqn:hypersurface-forG3-fano4.1}
    a_0(x_0y_0z_0w_0 + x_1y_1z_1w_1) + a_1(x_0y_0z_1w_1 + x_0y_1z_0w_1 + \\ x_1y_0z_0w_1 + x_0y_1z_1w_0 + x_1y_0z_1w_0 + x_1y_1z_0w_0) =0.
\end{multline}
It is $\mathcal{G}_3$-invariant. The smoothness and irreduciblity can be verified for $(a_0,a_1)=(2,3)$; for instance using \cite{magma}. Observe that the $\mathcal{G}_3$-action on $\pp^1_w$ does not lift to a linear representation, therefore, $H_4$ is not $\mathcal{G}_3$-linearizable. 
Using \eqref{eqn:for4.1-Amitsurs-same}, we have 
\[
\am(X,\mathcal{G}_3)=\am(Y,\mathcal{G}_3)\cong \zz/2\zz.
\]
Finally, consider the involution
\[
\begin{aligned}
    \sigma : \ & ([x_0:x_1],[y_0:y_1],[z_0:z_1],[w_0:w_1]) \\
    &\qquad\longmapsto ([y_0:y_1],[x_0:x_1],[z_0:z_1],[w_0:w_1]),
\end{aligned}
\]
and let $\mathcal{G}'_3:=\langle\mathcal{G}_3,\sigma \rangle \cong S_3$. Then the equation in \eqref{eqn:hypersurface-forG3-fano4.1} is also $\mathcal{G}_3'$-invariant, and therefore $\mathcal{G}_3'$ lifts to a subgroup of $\aut(X)$ such that $\autp(X,\mathcal{G}_3')=S_3$. By Remark \ref{remark:equivariant-restrction-Amitsur}, since $\iota^*H_4$ is not $\mathcal{G}_3$-linearizable, it is not $\mathcal{G}_3'$-linearizable. Combined with the upper bound in Remark \ref{remark:for4.1} (iii), we get
\[
\am(X,\mathcal{G}_3') \cong \zz/2\zz.
\]
\end{proof}

\section{Complete Intersections} We discuss the following families 
\begin{center}
   \textnumero 2.9, \; \textnumero 3.13, \; \textnumero 3.24, \; \textnumero 4.7.
\end{center}
A member of each of these families can be realized as a complete intersection of divisors on $Y$, where $Y=\pp^3\times\pp^2$ for \textnumero 2.9, $Y:=(\pp^2)^3$ for \textnumero 3.13, $Y:=\pp^1\times\pp^2\times\pp^2$ for \textnumero 3.24, and $Y:=\pp^2\times\pp^2\times\pp^1\times \pp^1$ for \textnumero 4.7. 

In each case, let $\iota: X \hookrightarrow Y$ denote the natural embedding, and $\operatorname{pr}_i$ denote the projection of $Y$ onto its $i$th factor. Set $H_i:= \operatorname{pr}_i^*\mathcal{O}(1)$. 

\begin{remark}   \label{remark:complete-int-equivarianceofiota-and-sameAm}
   \normalfont Suppose that $\iota$ is $G$-equivariant and it induces the isomorphism $\pic(X)\cong \pic(Y)$. Then Proposition \ref{prop:g-eq-morphism-sameAmitsur} gives $\am(X,G)=\am(Y,G)$. Throughout this section, whenever these hypotheses hold, we use this identification without explanation. 
\end{remark}
 
\smallskip
A Fano threefold $X$ in family \textnumero 2.9 is a complete intersection of ample divisors of bidegrees $(1,1)$ and $(2,1)$ in $Y=\pp^3 \times \pp^2$ \cite[Proposition 7.5]{MM86}. By Proposition \ref{thm:lefschetz}, we have $\pic(X)=\zz[\iota^*H_1]\oplus\zz[\iota^*H_2]$.  

\begin{lemma}    \label{Am:2.9}
    Let $X$ be a Fano threefold in family \textnumero $2.9$. Let $G\subseteq \aut(X)$ be a finite group. Then $\am(X,G)=0$.
\end{lemma}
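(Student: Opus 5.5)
We first make the embedding equivariant. The two generators $\iota^*H_1,\iota^*H_2$ of $\pic(X)$ are pullbacks of hyperplane classes on factors of different dimensions, so no automorphism can swap them; indeed $\autp(X)=0$ (Table \ref{table:amitsur}). Hence each $\iota^*H_i$ is $G$-invariant. Moreover $\operatorname{pr}_i|_X$ is the morphism given by $|\iota^*H_i|$, since $H^0(X,\iota^*H_i)\cong H^0(\pp^{n_i},\mathcal O(1))$ by the Koszul resolution and Kodaira vanishing. Remark \ref{remark:equivarince-of-embedding} then lets us regard $G\subseteq\aut(\pp^3\times\pp^2;X)$ with $\iota$ and both projections $G$-equivariant. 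By Remark \ref{remark:complete-int-equivarianceofiota-and-sameAm}, this gives $\am(X,G)=\am(Y,G)$, generated by $\overline{\iota^*H_1}$ and $\overline{\iota^*H_2}$.

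Next we collect linear relations from canonical classes. By Proposition \ref{KXlinearizable}, $K_{\pp^2}$ is linearizable, and pulling back along the equivariant map $\operatorname{pr}_2\circ\iota$ gives $3\overline{\iota^*H_2}=0$. Similarly, pulling back $K_{\pp^3}$ gives $4\overline{\iota^*H_1}=0$. Adjunction gives
\[
-K_X=\iota^*(4H_1+3H_2)-\iota^*(H_1+H_2)-\iota^*(2H_1+H_2)=\iota^*(H_1+H_2),
\]
so $\overline{\iota^*H_1}+\overline{\iota^*H_2}=0$. Combining these, $\overline{\iota^*H_1}=-\overline{\iota^*H_2}$ has order dividing both $3$ and $4$, so it is $0$. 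Therefore $\overline{\iota^*H_2}=0$ as well, and $\am(X,G)=0$.

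The only delicate step is the first one: justifying that $\autp(X,G)$ is trivial and that the $G$-action genuinely extends to $Y$. If the extension argument needed more care, there is an alternative that avoids the relation from $K_{\pp^3}$. Lemma \ref{lemma:G-invariance-for-CI}(ii) applies, because the degrees $(1,1)$ and $(2,1)$ are distinct. Remark \ref{remark:CI} then shows that $H_1+H_2$ and $2H_1+H_2$ are linearizable. Together with $3\overline{\iota^*H_2}=0$, this again forces both classes to vanish. Either route suffices.
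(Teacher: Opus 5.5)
Your proposal is correct and is essentially the paper's argument: $\autp(X)=0$ together with Remark \ref{remark:equivarince-of-embedding} makes $\iota$ and both projections $G$-equivariant, and the linearizability of $4\iota^*H_1$, $3\iota^*H_2$ and $-K_X=\iota^*(H_1+H_2)$ then kills both generators. Your $\gcd(3,4)=1$ bookkeeping is in fact cleaner than the paper's identity, which has a harmless sign slip (the right-hand side equals $-\iota^*H_2$). One small caveat: your alternative via Lemma \ref{lemma:G-invariance-for-CI} does not bypass the extension step, since it also needs the $G$-action on $Y$ and its Cox ring.
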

\begin{proof}
Recall from Table \ref{table:amitsur}, $\autp(X)=0$. So, Remark \ref{remark:equivarince-of-embedding} allows us to assume that $G\subseteq \aut(Y;X)$ and $\iota$ is $G$-equivariant. Since $\autp(Y)=0$, both $\operatorname{pr}_1$ and $\operatorname{pr}_2$ are $G$-equivariant. Hence, by Proposition \ref{KXlinearizable},
\[
\iota^*\operatorname{pr}_1^*(-K_{\pp^3})=4\iota^*H_1, \quad  \iota^*\operatorname{pr}_2^*(-K_{\pp^2})=3\iota^*H_2, \quad -K_X=\iota^*H_1 +  \iota^*H_2
\]
are all $G$-linearizable on $X$. Furthermore, we have $\iota^*H_2 = 4\iota^*H_1+3\iota^*H_2+4K_X$, therefore, both $\iota^*H_2$ and $\iota^*H_1$ are $G$-linearizable on $X$. It follows that $ \am(X,G)=0$.
\end{proof}

Let $X$ be a Fano threefold in family \textnumero $3.13$. Mori-Mukai describe $X$ as the blow-up of a smooth divisor $W\subset \pp^2\times\pp^2$ of bidegree $(1,1)$ along a smooth curve $C$ of bidegree $(2,2)$, such that the composition $C\hookrightarrow \mathbb{P}^2\times \mathbb{P}^2 \to \mathbb{P}^2$ is an embedding for $i=1,2$.

We recall an alternative description of $X$ and the classification of its automorphism groups from \cite[Lemma 5.97]{calabi}. Let $Y:= \mathbb{P}^2\times \mathbb{P}^2\times \mathbb{P}^2$. Set 
$H_i := \operatorname{pr}_i^*\mathcal{O}_{\pp^2}(1)$. Then $X$ is the complete intersection of divisors $D_1, D_2, D_3$, where $D_1\in |H_1+H_2|$, $D_2\in |H_1+H_3|$, and $D_3\in |H_1+H_3|$. We have 
\[
\pic(X) = \zz[\iota^*H_1]\oplus\zz[\iota^*H_2]\oplus\zz[\iota^*H_3],
\]
and $-K_X = \iota^*(H_1+H_2+H_3)$. Moreover, one can choose coordinates on $Y$ such that $X$ is given by 

\begin{equation}   \label{eqn:Gm-3.13}
        \begin{cases}
        x_0y_0 + x_1y_1 + x_2y_2= 0,\\
        y_0z_0 + y_1z_1 + y_2z_2 =0,\\
        (1+s)x_0z_1 + (1-s)x_1z_0 - 2x_2z_2=0,
    \end{cases}
\end{equation}
    where $s\in \mathbb{C}$ such that $s\neq \pm 1$, or
\begin{equation}   \label{eqn:Ga-3.13}
        \begin{cases}
        x_0y_0 + x_1y_1 + x_2y_2= 0,\\
        y_0z_0 + y_1z_1 + y_2z_2 =0,\\
        x_0z_1+x_1z_0+x_1z_2-x_2z_1-2x_2z_2=0.
    \end{cases}
\end{equation}

If $X$ is given by \eqref{eqn:Gm-3.13} with $s\neq 0$, then $\aut(X)\cong \mathbb{G}_m \rtimes S_3$ where the $\mathbb{G}_m$-action on $Y$ is induced via the automorphisms 
\begin{equation}   \label{eqn:Gmautomorphism}
\begin{aligned}
\phi_{\lambda}\colon\ 
&([x_0:x_1:x_2],[y_0:y_1:y_2],[z_0:z_1:z_2]) \\
&\qquad\longmapsto
\left(
 \left[\lambda x_0:\frac{x_1}{\lambda}:x_2\right],
 \left[\frac{y_0}{\lambda}:\lambda y_1:y_2\right],
 \left[\lambda z_0:\frac{z_1}{\lambda}:z_2\right]
\right).
\end{aligned}
\end{equation}
for $\lambda \in \cc^*$, and if $s=0$, then $\aut(X)\cong \operatorname{PGL_2(\cc)\times S_3}$ where the $\operatorname{PGL}_2(\cc)$-action on $Y$ is given by the transformation 
\begin{align}   \label{eqn:PGL2-Ga-infiniteautomorphism}
 \phi_A : \begin{pmatrix}
     x_0\\
     x_1\\
     x_2 
 \end{pmatrix} \mapsto A \begin{pmatrix}
     x_0\\
     x_1\\
     x_2 
 \end{pmatrix},\quad \begin{pmatrix}
     y_0\\
     y_1\\
     y_2 
 \end{pmatrix} \mapsto (A^{-1})^T \begin{pmatrix}
     y_0\\
     y_1\\
     y_2 
 \end{pmatrix},\quad \begin{pmatrix}
     z_0\\
     z_1\\
     z_2 
 \end{pmatrix} \mapsto A \begin{pmatrix}
     z_0\\
     z_1\\
     z_2 
 \end{pmatrix},
\end{align}
where 
\[
A= \begin{pmatrix}
    a^2 & b^2 & 2ab \\
    c^2 & d^2 & 2cd \\
    ac & bd & ad+bc
\end{pmatrix} ,
\]
and $ad-bc\neq 0$. 

If $X$ is given by \eqref{eqn:Ga-3.13}, $\aut(X)\cong \mathbb{G}_a \rtimes S_3$ and the $\mathbb{G}_a$-action is given by the transformation $\phi_A$ in \eqref{eqn:PGL2-Ga-infiniteautomorphism} for the matrix 
\[
A= \begin{pmatrix}
    1 & a^2 & 2a \\
    0 & 1 & 0 \\
    0 & a & 1
\end{pmatrix},
\]
for $a\in \cc$. Moreover, we observe that the groups $\operatorname{PGL}_2(\cc)$, $\mathbb{G}_m$, and $\mathbb{G}_a$ act trivially on $\pic(X)$, and the nontrivial action comes from the second factor in the product $\aut(X)$. 

\medskip
Let $G\subseteq \aut(X)$ be a finite group. 
In \cite{sharma2026actionspicardgroupsmooth}, it was shown that $\autp(X)=S_3$. There are three cases to consider: $\autp(X,G)$ contains a $3$-cycle, is cyclic of order $2$, or is trivial. 

\begin{lemma}   \label{Am:3.13}
    Let $X$ be a Fano threefold in \textnumero $3.13$. Let $G\subseteq \aut(X)$ be a finite group. Then $\am(X,G)=0$. 
\end{lemma}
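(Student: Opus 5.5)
The plan is to use the description of $X$ as a complete intersection in $Y=(\pp^2)^3$ of three divisors of bidegrees $H_1+H_2$, $H_2+H_3$ and $H_1+H_3$, one for each pair of factors. The coordinate equations \eqref{eqn:Gm-3.13}, \eqref{eqn:Ga-3.13} have exactly these three bidegrees, so I take the degrees to be $H_i+H_j$ for $i\neq j$. By Proposition \ref{thm:lefschetz} we have $\pic(X)\cong\pic(Y)$. Since $G$ acts on $\{\iota^*H_1,\iota^*H_2,\iota^*H_3\}$ by permutations, Remark \ref{remark:equivarince-of-embedding} lets us regard $G\subseteq\aut(Y;X)$ with $\iota$ equivariant. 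Then Remark \ref{remark:complete-int-equivarianceofiota-and-sameAm} gives $\am(X,G)=\am(Y,G)$, although I will mostly argue directly on $X$. I split into the three cases for $\autp(X,G)\subseteq S_3$.

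\emph{Case 1: $\autp(X,G)$ contains a $3$-cycle.} Then $\pic(X)^G=\zz[\iota^*(H_1+H_2+H_3)]=\zz[-K_X]$. Since $-K_X$ is primitive, $X$ has index $1$, and Proposition \ref{Prop:rkPic_G=1} (or directly Proposition \ref{KXlinearizable}) gives $\am(X,G)=0$.

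\emph{Case 2: $\autp(X,G)\cong\zz/2\zz$.} All transpositions are conjugate, so after relabeling $G$ swaps $H_1,H_2$ and fixes $H_3$. Then $\pic(X)^G=\zz[\iota^*(H_1+H_2)]\oplus\zz[\iota^*H_3]$. The defining ideal $I\subset R=\operatorname{Cox}(Y)$ is $\widetilde G$-invariant. Its degree-$(H_1+H_2)$ component $I\cap R_{H_1+H_2}$ is one-dimensional, since the other two generators have degrees $H_1+H_3$ and $H_2+H_3$, and these do not divide into $H_1+H_2$ with a nonzero effective cofactor. Because the class $H_1+H_2$ is $G$-fixed, this graded piece is $\widetilde G$-stable. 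Hence the generator $f_1$ is semi-invariant, exactly as in the proof of Lemma \ref{lemma:G-invariance-for-CI} (i). By Remark \ref{remark:CI}, $\mathcal{O}(D_1)=\mathcal{O}(H_1+H_2)$ is $G$-linearizable on $Y$, so $\overline{\iota^*(H_1+H_2)}=0$. Since $\overline{-K_X}=0$, we get $\overline{\iota^*H_3}=\overline{-K_X}-\overline{\iota^*(H_1+H_2)}=0$, and so $\am(X,G)=0$.

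\emph{Case 3: $\autp(X,G)=0$.} Every class $H_i$ and every degree $H_i+H_j$ is $G$-fixed, and $R_{(H_i+H_j)-(H_k+H_l)}=0$ for distinct pairs, because the difference has a negative coordinate. So Lemma \ref{lemma:G-invariance-for-CI} (i) applies and all three $f$'s are semi-invariant. By Remark \ref{remark:CI}, each of $H_1+H_2$, $H_2+H_3$, $H_1+H_3$ is $G$-linearizable. Subtracting each of these from the linearizable class $-K_X=\iota^*(H_1+H_2+H_3)$ shows that every $\iota^*H_i$ is linearizable, hence $\am(X,G)=0$. As a consistency check, equivariance of $\operatorname{pr}_i$ also gives $\overline{3\iota^*H_i}=0$, which agrees with this.

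The main obstacle I expect is justifying the semi-invariance step when $G$ acts nontrivially on $\pic(Y)$, as in Case 2. There the lifted action on the Cox ring permutes graded components, so I must check that the particular component $R_{H_1+H_2}$ is stable and that $I\cap R_{H_1+H_2}$ is really one-dimensional. A secondary check is that the lifting group $\widetilde G$ acting on $R$ is finite, or at least that its image in $\mathrm{GL}(R_{H_1+H_2})$ gives a genuine cocycle for Remark \ref{remark:CI}.
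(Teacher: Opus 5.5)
Your argument is correct. Case 1 is identical to the paper's, but in Cases 2 and 3 you take a different route.

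The paper relies on the normal forms \eqref{eqn:Gm-3.13}, \eqref{eqn:Ga-3.13} and on the resulting structure $\aut(X)\cong\mathbb{G}_m\rtimes S_3$, $\operatorname{PGL}_2(\cc)\times S_3$ or $\mathbb{G}_a\rtimes S_3$ from \cite[Lemma 5.97]{calabi}.
\begin{itemize}
\item When $\autp(X,G)=\zz/2\zz$, it places $G$ in the identity component extended by the explicit involution $\tau_{x,z}$. It then checks matrix relations in $\operatorname{GL}_3(\cc)^3$ to see that the action on the \emph{fixed} factor $\pp^2_y$ lifts linearly. So $\iota^*H_2$ is linearizable, and $\iota^*(H_1+H_3)$ follows from $-K_X$.
\item When $\autp(X,G)=0$, it reads off genuine linear lifts on each factor.
\end{itemize}
You go the other way round. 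You linearize the \emph{swapped} sum $\iota^*(H_i+H_j)$ through the $G$-invariant divisor of that degree in the defining ideal, and recover the fixed class from $-K_X$. This is the same mechanism the paper uses in Lemmas \ref{Am:3.24}, \ref{Am:4.6} and \ref{Am:4.7}.

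Your route is intrinsic: it needs no classification of $\aut(X)$, no normal form and no matrix computation, and it applies uniformly to every member of the family. The paper's route produces explicit linearizations, but it depends on that classification. You were right to read the bidegrees off the equations: they are $H_1+H_2$, $H_2+H_3$, $H_1+H_3$. The text's ``$D_2,D_3\in|H_1+H_3|$'' is a misprint, and your vanishing $R_{D_i-D_j}=0$ depends on the three pairs being distinct.

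Two minor points.
\begin{itemize}
\item Proposition \ref{thm:lefschetz} does not literally apply, since $H_i+H_j$ is not ample on $(\pp^2)^3$. Cite the stated description $\pic(X)=\bigoplus_i\zz[\iota^*H_i]$ instead; that is all your argument uses.
\item You can bypass both obstacles you flag by avoiding the Cox ring.
  \begin{itemize}
  \item Write $W=\{f_{12}=0\}\subset\pp^2\times\pp^2$, and let $D_1=\operatorname{pr}_{12}^{-1}(W)=\{f_{12}=0\}\subset Y$ be the member of $|H_1+H_2|$ cut out by $f_{12}$. The map $X\to W$ is birational: a general fibre is cut out by two independent linear forms in $z$. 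Hence $\operatorname{pr}_{12}(X)=W$.
  \item Every section of $\mathcal{O}_Y(H_1+H_2)$ is pulled back from $\pp^2\times\pp^2$. So $D_1$ is the unique member of $|H_1+H_2|$ containing $X$.
  \item For each $g\in G$, $g(D_1)$ lies in $|H_1+H_2|$ because the class is fixed, and it contains $X$. Hence $g(D_1)=D_1$.
  \item An effective $G$-invariant divisor linearizes $\mathcal{O}_Y(D_1)$ directly, so finiteness of $\widetilde G$ is never needed.
  \end{itemize}
\end{itemize}
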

\begin{proof}
First, suppose that $\autp(X,G)\supseteq \zz/3\zz$. Then $\pic(X)^G=\zz[-K_X]$. By Proposition \ref{KXlinearizable}, we get $\am(X,G)=0$. 

Suppose $\autp(X,G) = \zz/2\zz$. Without loss of generality, we may assume that $G$ exchanges the first and third factors of $Y$, which gives $\pic(X)^G=\zz[\iota^*(H_1+H_3)]\oplus\zz[\iota^*H_2]$. Note that since $-K_X=\iota^*(H_1+H_3)+\iota^*H_2$ is $G$-linearizable, the $G$-linearizability of $\iota^*H_2$ implies that $\iota^*(H_1+H_3)$ is also $G$-linearizable. We claim that $\iota^*H_2$ is $G$-linearizable.    

Let $X$ be given by \eqref{eqn:Gm-3.13} with $s\neq 0$ and let the involution that acts nontrivially on $\pic(X)$ be 
\begin{equation}   \label{eqn:GmPGL2-involution}
\begin{aligned}
\tau_{x,z}\colon\ 
&([x_0:x_1:x_2],[y_0:y_1:y_2],[z_0:z_1:z_2])\\
&\qquad\longmapsto
([z_1:z_0:z_2],[y_1:y_0:y_2],[x_1:x_0:x_2]).
\end{aligned}
\end{equation}
Then we may assume that $G\subseteq \mathbb{G}_m\rtimes \langle \tau_{x,z}\rangle$. Note that we have the relations $\tau_{x,z}\phi_{\lambda}\tau_{x,z}^{-1}=\phi_{\lambda^{-1}}$ and $\tau_{x,z}^2=I$ in $\operatorname{GL}_3(\cc)^3$. Thus, $G$-action lifts to a linear representation, and $\am(X,G)=0$. 

If $X$ is given by \eqref{eqn:Gm-3.13} with $s= 0$, again we may assume that $G\subseteq \operatorname{PGL}_2(\cc)\times \langle \tau_{x,z}\rangle $ such that $\tau_{x,z}\phi_A\tau_{x,z}=\phi_A$ as matrices in $\operatorname{GL}_3(\cc)^3$. 

If $X$ is given by \eqref{eqn:Ga-3.13}, then we let $G\subseteq \mathbb{G}_a\rtimes \tau_{x,z}$, where 
\begin{equation}   \label{eqn:Ga-involution}
\begin{aligned}
\tau_{x,z}\colon\ 
&([x_0:x_1:x_2],[y_0:y_1:y_2],[z_0:z_1:z_2])\\
&\qquad\longmapsto
([z_0:z_1:-z_2],[y_0:y_1:-y_2],[x_0:x_1:-x_2]).
\end{aligned}
\end{equation}
The matrices satisfy the relations $\tau_{x,z}^2=I$ and $\tau_{x,z}\phi_a\tau^{-1}_{x,z}=\phi_{-a}$ in $\operatorname{GL}_3(\cc)^3$. In all three cases, the induced $G$-action on $\pp^2_y$ lifts to a linear representation, so $H_2$ is $G$-linearizable and consequently, $\overline{\iota^*H_2}=0$. Thus, $\am(X,G)=0$. 

Now suppose that $\autp(X,G)=0$. Then $G$-action is induced by the automorphisms in \eqref{eqn:PGL2-Ga-infiniteautomorphism} and \eqref{eqn:Gmautomorphism}. Since the induced $G$-action on each factor $\pp^2$ is also a genuine $3$-dimensional linear representation, $\iota^*H_i$ is $G$-linearizable for $1\leq i \leq 3$. Thus, we obtain $\am(X,G)=0$. 
\end{proof}

A Fano threefold $X$ in family \textnumero $3.24$ is the fiber product of a Fano threefold $W$ from \textnumero 2.32 and $\operatorname{Bl}_p\mathbb{P}^2$ over $\mathbb{P}^2$ \cite[Table 3]{MM81classtable}. In \cite[\S 77]{quantum}, the authors equivalently describe $X$ as follows. Let $Y:=\mathbb{P}^1\times \mathbb{P}^2\times \mathbb{P}^2$. Then $X:=D_1\cap D_2$ is the complete intersection of two divisors $D_1\in |H_1+H_2|$ and $D_2\in |H_2+H_3|$, respectively. We have 
\[
\pic(X) =\zz[\iota^*H_1]\oplus\zz[\iota^*H_2]\oplus\zz[\iota^*H_3],
\]
$-K_X = \iota^*(H_1+H_2+2H_3)$, and $\autp(X)=0$.. 

\begin{lemma}   \label{Am:3.24}
    Let $X$ be a Fano threefold in \textnumero $3.24$. Let $G\subseteq \aut(X)$ be a finite group. Then $\am(X,G)=0$.
\end{lemma}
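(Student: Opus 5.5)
Since $\autp(X)=0$, every class $\iota^*H_i$ is $G$-invariant. So $G$ permutes the set $\{\iota^*H_1,\iota^*H_2,\iota^*H_3\}$ trivially, and by Remark \ref{remark:equivarince-of-embedding} we may regard $G\subseteq\aut(Y;X)$ with $\iota$ $G$-equivariant. For this to apply, each $\operatorname{pr}_i|_X$ must be the morphism given by $|\iota^*H_i|$. I would check this by computing $h^0(X,\iota^*H_i)$ from the Koszul resolution of $X$ in $Y$: one needs $H^0(Y,H_i)\to H^0(X,\iota^*H_i)$ to be an isomorphism, using the vanishing of the relevant cohomology of $H_i-D_1$, $H_i-D_2$ and $H_i-D_1-D_2$ on $Y$.

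Each projection $\operatorname{pr}_i\circ\iota$ is then $G$-equivariant onto $\pp^1$, $\pp^2$, $\pp^2$. Pulling back canonical bundles via Proposition \ref{KXlinearizable} shows that
\[
2\iota^*H_1,\quad 3\iota^*H_2,\quad 3\iota^*H_3,\quad -K_X=\iota^*(H_1+H_2+2H_3)
\]
are all $G$-linearizable. Hence $2\overline{\iota^*H_1}=3\overline{\iota^*H_2}=3\overline{\iota^*H_3}=0$ and $\overline{\iota^*H_1}=-\overline{\iota^*H_2}-2\overline{\iota^*H_3}$.

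The three-torsion relations alone would only give $\am(X,G)\subseteq\zz/3\zz$, so more input is needed, and the divisors $D_1,D_2$ supply it. Their degrees $H_1+H_2$ and $H_2+H_3$ differ by $H_1-H_3$, and neither $R_{H_1-H_3}$ nor $R_{H_3-H_1}$ is nonzero in the Cox ring of $Y$. Both classes are $G$-invariant, so by Lemma \ref{lemma:G-invariance-for-CI}(i) and Remark \ref{remark:CI}, $D_1$ and $D_2$ are $G$-linearizable. This gives
\[
\overline{\iota^*H_1}=-\overline{\iota^*H_2},\qquad \overline{\iota^*H_3}=-\overline{\iota^*H_2}.
\]
Combining with the anticanonical relation gives $0=-3\overline{\iota^*H_2}+\overline{\iota^*H_2}$, i.e. $2\overline{\iota^*H_2}=0$. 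Together with $3\overline{\iota^*H_2}=0$ this forces $\overline{\iota^*H_2}=0$, and then $\overline{\iota^*H_1}=\overline{\iota^*H_3}=0$. Therefore $\am(X,G)=0$.

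As a fallback if the Cox-ring argument were problematic, $D_1$ is the pullback of the $\mathbb{P}^1\times\pp^2$ divisor, so one could argue on $W$ and $\operatorname{Bl}_p\pp^2$ instead. Failing that, $D_2|$ alone, combined with $2\overline{\iota^*H_1}=0$, also suffices: $\overline{\iota^*H_3}=-\overline{\iota^*H_2}$ plugged into $-K_X$ gives $\overline{\iota^*H_1}=\overline{\iota^*H_2}$. This has order dividing both $2$ and $3$, so it vanishes, and then $\overline{\iota^*H_3}$ vanishes too.

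The main obstacle is verifying the hypotheses of Remark \ref{remark:equivarince-of-embedding}, namely that the projections are given by complete linear systems, so that $\iota$ is genuinely $G$-equivariant. Once this holds, the linearizations of $D_1$ and $D_2$ via Lemma \ref{lemma:G-invariance-for-CI} and the resulting relation bookkeeping are routine.
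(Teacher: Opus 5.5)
Your proof is correct and follows essentially the same route as the paper: equivariance of $\iota$ via Remark \ref{remark:equivarince-of-embedding}, the relations $2\overline{\iota^*H_1}=3\overline{\iota^*H_2}=3\overline{\iota^*H_3}=\overline{-K_X}=0$ from pulled-back canonical classes, and linearizability of $D_1,D_2$ via Lemma \ref{lemma:G-invariance-for-CI}(i) and Remark \ref{remark:CI}. Your final bookkeeping is cleaner than the paper's: its last step writes $\iota^*H_3=-K_X-\iota^*D_1$, but the correct identity is $2\iota^*H_3=-K_X-\iota^*D_1$ (harmless there, since $3\overline{\iota^*H_3}=0$); and your Koszul check of the complete-linear-system hypothesis fills in a point the paper leaves implicit.
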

\begin{proof}
By Remark \ref{remark:equivarince-of-embedding}, we may assume that $G\subseteq \aut(X)$ is a finite subgroup of $\aut(Y;X)$ and $\iota$ is $G$-equivariant. The descended $G$-action fixes each factor of $Y$, therefore, $\operatorname{pr}_i$ is $G$-equivariant for $\leq i \leq 3$. Then Proposition \ref{KXlinearizable} implies that the following divisors are $G$-linearizable on $X$: $\iota^*\operatorname{pr}_i^*(-K_{\pp^1})$, $\iota^*\operatorname{pr}_2^*(-K_{\pp^2})$, $\iota^*\operatorname{pr}_3^*(-K_{\pp^2})$, and $-K_X$, so
\[
2\overline{\iota^*H_1}= \; 3\overline{\iota^*H_2}= \; 3\overline{\iota^*H_3}= \; \overline{\iota^*(H_1+H_2+2H_3)}=0.
\]
Then the equation $2\iota^*H_2+\iota^*H_3 = -2K_X -2\iota^*H_1-3\iota^*H_3$ gives $\overline{2\iota^*H_2+\iota^*H_3}=0$. Furthermore, $\iota^*H_2+2\iota^*H_3 = 2(2\iota^*H_2+\iota^*H_3)-3\iota^*H_2$ yields $\overline{\iota^*H_2}+\overline{2\iota^*H_3}=0$. From these two equations, we get
\[
\overline{\iota^*H_1} = -\overline{K_X} - (\overline{\iota^*H_2+2\iota^*H_3}) =0, \qquad \overline{\iota^*H_2}= \overline{\iota^*H_3}. 
\]
It follows that 
\[
\am(X,G) = \langle \overline{\iota^*H_2}\rangle \subseteq \zz/3\zz. 
\]
Now since $X:=D_1\cap D_2$ is $G$-invariant, by Lemma \ref{lemma:G-invariance-for-CI} (i), $D_1$ and $D_2$ are individually $G$-invariant divisors. By Remark \ref{remark:CI}, $D_1$ and $D_2$ are $G$-linearizable on $Y$. 
Thus, $\iota^*D_1$ and $\iota^*D_2$ are also $G$-linearizable on $X$. Finally, since $\iota^*H_1$, $\iota^*H_2$, and $\iota^*H_3$ generate $\pic(X)^G$, and
\[
\iota^*H_3 = -K_X  -\iota^*D_1, \quad \iota^*H_2 = \iota^*D_2 - \iota^*H_3, 
\]
we obtain $\am(X,G)=0$.
\end{proof}

A Fano threefold $X$ in family \textnumero $4.7$ is the blow-up of a Fano threefold $W$ from family \textnumero 2.32 along the disjoint union $C_1\sqcup C_2$ of two curves $C_1$ and $C_2$ of degrees $(0,1)$ and $(1,0)$, respectively. We use the complete-intersection description from \cite{homogeneousvectorbundles}: let $Y:= \pp^2\times \pp^2 \times \pp^1 \times \pp^1$, and let $\operatorname{pr}_i$ denote the projection of $Y$ onto its $i$th factor. Set $H_i:=\operatorname{pr}_i^*\mathcal{O}(1)$. Then $X$ is the complete intersection of divisors $D_1, D_2$ and $D_3$, where 
$D_1 \in |H_1+H_2|$, $D_2\in |H_1+H_3|$, and $D_3\in |H_2+H_4|$. 

We have $\pic(X) = \bigoplus_{i=1}^4 \zz[\iota^*H_i]$, $-K_X = \iota^*(H_1+H_2+H_3+H_4)$, and $\autp(X)=\zz/2\zz$ (Table \ref{table:amitsur}). 
In \cite{sharma2026actionspicardgroupsmooth}, it was shown that $\autp(X)=\zz/2\zz$ using the Mori-Mukai description. It is just as easier to see this from the complete intersection description. 
The intersection defined by the set of equations
\begin{equation*}
    x_0y_0+x_1y_1+x_2y_2=0,\quad x_0z_0+x_1z_1=0, \quad y_0w_0+y_1w_1=0
\end{equation*}
is smooth and irreducible, thus defines a Fano threefold $X$ in \textnumero 4.7. The intersection is also invariant under the involution 
\[
\begin{aligned}
\sigma : ([x_0:x_1:x_2],[y_0:y_1:y_2],&[z_0:z_1],[w_0:w_1])\\
&\longmapsto
([y_0:y_1:y_2],[x_0:x_1:x_2],
 [w_0:w_1],[z_0:z_1]).
\end{aligned}
\]
So, $\sigma \in \aut(X)$ and it acts on $\pic(X)$ by swapping the generators $\iota^*H_1$ with $\iota^*H_2$, and $\iota^*H_3$ with $\iota^*H_4$. Hence, $\autp(X,\langle \sigma\rangle)=\zz/2\zz$, and $\autp(X)=\zz/2\zz$ by \cite[Proposition 3.3]{sharma2026actionspicardgroupsmooth}. 

By Remark \ref{remark:equivarince-of-embedding}, a finite group $G\subseteq \aut(X)$ may be assumed to be a subgroup of $\aut(Y;X)$, and $\iota$ is $G$-equivariant. 
\begin{lemma}  \label{Am:4.7}
    Let $X$ be a Fano threefold in \textnumero $4.7$. Let $G\subseteq \aut(X)$ be a finite group. Then $\am(X,G)=0$.
\end{lemma}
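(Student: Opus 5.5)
Following the pattern of Lemma \ref{Am:3.24}, I would split according to $\autp(X,G)\in\{0,\zz/2\zz\}$, work on $Y=\pp^2\times\pp^2\times\pp^1\times\pp^1$ with $\iota$ being $G$-equivariant by Remark \ref{remark:equivarince-of-embedding}, and use linearizations of the defining divisors $D_1,D_2,D_3$ obtained from Lemma \ref{lemma:G-invariance-for-CI} and Remark \ref{remark:CI}.

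\emph{Case $\autp(X,G)=0$.} Every $\operatorname{pr}_i$ is $G$-equivariant. Proposition \ref{KXlinearizable} then gives
\[
3\overline{\iota^*H_1}=3\overline{\iota^*H_2}=2\overline{\iota^*H_3}=2\overline{\iota^*H_4}=0,\qquad \overline{\iota^*(H_1+H_2+H_3+H_4)}=0.
\]
The degrees $D_1=H_1+H_2$, $D_2=H_1+H_3$ and $D_3=H_2+H_4$ are individually $G$-invariant. Their pairwise differences, for example $H_2-H_3$, are not effective, so $R_{D_i-D_j}=0$ in the Cox ring. Lemma \ref{lemma:G-invariance-for-CI}(i) therefore applies, and each $D_i$ is a $G$-invariant, hence $G$-linearizable, divisor by Remark \ref{remark:CI}. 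We get four linearizable classes
\[
H_1+H_2,\quad H_1+H_3,\quad H_2+H_4,\quad H_1+H_2+H_3+H_4.
\]
From $(H_1+H_2+H_3+H_4)-(H_1+H_3)-(H_2+H_4)=0$ these are dependent, so they only give three relations. Adding the torsion relations is enough, however:
\begin{itemize}
\item $\overline{H_3}=-\overline{H_1}$, and $2\overline{H_3}=0$ together with $3\overline{H_1}=0$ forces $\overline{H_1}=0$, hence $\overline{H_3}=0$;
\item similarly $\overline{H_4}=-\overline{H_2}$ forces $\overline{H_2}=\overline{H_4}=0$.
\end{itemize}
Hence $\am(X,G)=0$.

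\emph{Case $\autp(X,G)=\zz/2\zz$.} By \cite{sharma2026actionspicardgroupsmooth} the nontrivial element acts like $\sigma$, swapping $H_1\leftrightarrow H_2$ and $H_3\leftrightarrow H_4$. Thus
\[
\pic(X)^G=\zz[\iota^*(H_1+H_2)]\oplus\zz[\iota^*(H_3+H_4)].
\]
Since $-K_X$ is linearizable, $\overline{\iota^*(H_3+H_4)}=-\overline{\iota^*(H_1+H_2)}$, so it suffices to linearize $D_1=H_1+H_2$. Its degree is $G$-invariant. The element swaps $D_2$ and $D_3$, but $D_1$ is the only generator in its degree, and $R_{D_1-D_2}=R_{H_2-H_3}=0$ and $R_{D_1-D_3}=0$. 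Hence $I\cap R_{D_1}=\cc f_1$ is a $G$-stable line, exactly as in the proof of Lemma \ref{lemma:G-invariance-for-CI}(i). Here one must check that the argument survives $G$ permuting the graded components $R_{D_2},R_{D_3}$, which it does because only the component $R_{D_1}$ is used. So $f_1$ is $\widetilde G$-semi-invariant, $D_1$ is $G$-invariant and linearizable by Remark \ref{remark:CI}, and $\am(X,G)=0$.

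An alternative for the second case would be to observe that $H_1+H_2$ restricted to $X$ is the pullback from $\pp^2\times\pp^2$ of the ample generator defining the flag variety $W$, and then use the linearizability of the canonical class of $W$. I expect the semi-invariance argument to be cleaner.

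The main obstacle is making the Cox-ring argument rigorous when $G$ does not preserve all the graded pieces. This concerns the swapped case, and also justifying that a lifting group $\widetilde G$ exists acting on $R$ for $G$ of $\autp=\zz/2\zz$. I would handle this by restricting attention to the single $G$-stable degree $D_1$.
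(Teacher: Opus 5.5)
Your proposal is correct and follows the paper's proof essentially step for step. It uses the same case split on $\autp(X,G)$. It linearizes $D_1,D_2,D_3$ (respectively $D_1$ alone) via Lemma~\ref{lemma:G-invariance-for-CI} and Remark~\ref{remark:CI}, and it takes the torsion relations from the $G$-equivariant projections $\operatorname{pr}_i$ and from $-K_X$. Your explicit check that $R_{D_i-D_j}=0$, and your observation that only the $G$-stable component $R_{D_1}$ is needed in the swapped case, fill in verifications the paper leaves implicit. Your deduction of $\overline{\iota^*H_i}=0$ from the relations is also cleaner than the paper's chain of equalities.
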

\begin{proof}
Suppose first that $\autp(X,G)=0$. Since the complete intersection $X:=D_1\cap D_2\cap D_3$ is $G$-invariant, Lemma \ref{lemma:G-invariance-for-CI} (i) applied to the lifted action on $\operatorname{Cox}(Y)$ shows that the divisors $D_1$, $D_2$, $D_3$ are $G$-invariant. By Remark \ref{remark:CI}, $D_i$ and hence, $\iota^*D_i$ is $G$-linearizable for each $i$. Thus, we have 
   \begin{equation}  \label{eqn:4.7-one}
   \overline{\iota^*(H_1+H_2)}=0, \quad \overline{\iota^*(H_1+H_3)}=0, \quad \overline{\iota^*(H_2+H_4)}=0.
   \end{equation}
   Moreover, $\operatorname{pr}_i$ is $G$-equivariant since the $G$-action on $Y$ fixes its each factor. So, we also have the following relations due to Proposition \ref{KXlinearizable}:
   \begin{equation}  \label{eqn:4.7-two}
   3\overline{\iota^*H_1} \;=\; 3\overline{\iota^*H_2} \;=\; 2\overline{\iota^*H_3} \;=\; \overline{2\iota^*H_4}=0. 
   \end{equation}
   Combining with relations from \eqref{eqn:4.7-one}, we get 
   \[ \overline{\iota^*H_1}=\overline{\iota^*H_2}=\overline{\iota^*H_3}=\overline{\iota^*H_4} = \overline{2\iota^*H_1},
   \]
   it follows that $\am(X,G)=0$. 
   
   Suppose $\autp(X,G)=\zz/2\zz$. Then $\pic(X)^G= \zz[\iota^*(H_1+H_2)]\oplus\zz[\iota^*(H_3+H_4)]$.
   In this case, the divisor $D_1$ is $G$-invariant, and by Remark \ref{remark:CI}, it is also $G$-linearizable. Thus, $\overline{\iota^*H_1+\iota^*H_2}=0$. Combined with 
   \[
   -\overline{K_X}=\overline{\iota^*H_1+\iota^*H_2+\iota^*H_3+\iota^*H_4}=0,
   \]
   we obtain
  $\overline{\iota^*H_3+\iota^*H_4}= 0$, thus $\am(X,G)=0$. 
\end{proof}


\section{Blow-ups of \texorpdfstring{$\mathbb{P}^1 \times \mathbb{P}^2$}{P1P2}}
We consider the families
\begin{center}
  \textnumero 3.5,\; \textnumero 3.21,\; \textnumero 3.22,
\end{center}
each of which contains a member $X$ obtained by blowing up $\mathbb{P}^1\times\pp^2$ along a curve $C$. Let $\pi: X \rightarrow \pp^1\times\pp^2$ denote the blow-up, with exceptional divisor $E$. For $i=1,2$, let $\operatorname{pr}_i$ denote the natural projection of $\pp^1\times\pp^2$ onto its $i$th factor, and set $H_1:=\operatorname{pr}_1^*\mathcal{O}_{\pp^1}(1)$ and $H_2:=\operatorname{pr}_2^*\mathcal{O}_{\pp^2}(1)$. Then $\pic(X)= \zz[\pi^*H_1]\oplus\zz[ \pi^*H_2]\oplus\zz[E]$, and $-K_X=2\pi^*H_1+3\pi^*H_2-E$. 

\begin{remark}  \label{remark:for3.5-and-3.21}
   \normalfont Let $G\subseteq \aut(X)$ be a finite group. Recall $\autp(X)=0$ for a Fano threefold $X$ of any of these three families by Table \ref{table:amitsur}. Hence, the $G$-action descends to $\pp^1\times\pp^2$, and we may regard $G \subseteq \aut(\pp^1\times\pp^2;C)$ (Lemma \ref{Lemma:autblowup}), and $\pi$ is $G$-equivariant. Thus, by Theorem \ref{imprimitivebcdp}, 
    \[
    \am(X,G)=\am(\pp^1\times\pp^2,G). 
    \]
\end{remark}

\medskip
Let $X$ be a Fano threefold in family \textnumero $3.5$. Then $X$ is the blow-up of $\pp^1\times \pp^2$ along a curve $C$ of bidegree $(5,2)$ such that $C \hookrightarrow \mathbb{P}^1\times \mathbb{P}^2 \xrightarrow{\operatorname{pr}_2}\mathbb{P}^2$ is an embedding. In particular, $\operatorname{pr}_2(C)$ is a smooth conic in $\mathbb{P}^2$ and 
\[
\operatorname{pr}_2^{-1}(\operatorname{pr}_2(C)) = \mathbb{P}^1 \times \operatorname{pr}_2(C) \cong \mathbb{P}^1\times \mathbb{P}^1.
\]
Following the proof of \cite[Lemma 8.7]{izv19}: Set $S:=\operatorname{pr}_2^{-1}(\operatorname{pr}_2(C)) \subset \pp^1\times\pp^2$. Then $C$ has bidegree $(1,5)$ on $S$. Observe that $S$ is $\aut(\mathbb{P}^1\times\mathbb{P}^2;C)$-invariant. Suppose that $G\subseteq \aut(\pp^1\times\pp^2;C)$ is a finite group. Then we may assume $G \subset \aut(S)$ since every automorphism acting trivially on $S$ acts trivially on the first factor $\pp^1$ and pointwise preserves $\operatorname{pr}_2(C)$, thus also acts trivially on the second factor $\pp^1\times\pp^2$. 

Therefore, the inclusion map $S\hookrightarrow \pp^1\times\pp^2$ is $G$-equivariant. By functoriality, it induces the injective homomorphism
\[
\am(\pp^1\times\pp^2,G) \hookrightarrow \am(S,G) \subseteq (\zz/2\zz)^2.
\]
Recall that we also have $\am(\pp^1\times\pp^2,G) \subseteq \zz/2\zz\times\zz/3\zz$; see the entry for family \textnumero 2.34 in Table \ref{table:amitsur}. Thus, $\am(\pp^1\times\pp^2,G) \subseteq \zz/2\zz$.

The following lemma shows that this upper bound is sharp. 

\begin{lemma}   \label{Am:3.5}
Let $X$ be a smooth member of family \textnumero $3.5$. Let $G \subseteq \aut(X)$ be a finite group. Then $\am(X,G)$ is isomorphic to $\mathbb{Z}/2\mathbb{Z}$ or $0$. 
\end{lemma}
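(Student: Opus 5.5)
The upper bound is already established in the paragraph preceding the statement. By Remark \ref{remark:for3.5-and-3.21}, $\pi$ is $G$-equivariant and $\am(X,G)=\am(\pp^1\times\pp^2,G)$. The restriction to the $G$-invariant surface $S\cong\pp^1\times\pp^1$, together with the bound $\am(\pp^1\times\pp^2,G)\subseteq\zz/2\zz\times\zz/3\zz$ from family \textnumero 2.34, then gives $\am(X,G)\subseteq\zz/2\zz$. So the proof reduces to constructing one member $X$ and one finite group $\mathcal{G}\subseteq\aut(\pp^1\times\pp^2;C)$ for which $\pi^*H_1$ is not $\mathcal{G}$-linearizable.

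For the construction, I take $\mathcal{G}\cong(\zz/2\zz)^2$ acting diagonally on $\pp^1_{x}\times\pp^2_{z}$. The action on $\pp^1_x$ is by $[x_0:x_1]\mapsto[x_0:-x_1]$ and $[x_0:x_1]\mapsto[x_1:x_0]$; these lift to anticommuting matrices, so $H_1$ is not $\mathcal{G}$-linearizable. The action on $\pp^2_z$ is the linear one $[z_0:z_1:z_2]\mapsto[z_0:-z_1:z_2]$, $[z_0:z_1:z_2]\mapsto[z_2:z_1:z_0]$, as in \eqref{eqn:group-for3.3}, which preserves the conic $Q=\{z_1^2=z_0z_2\}$. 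The conic $Q$ is the image of the Veronese map $\nu:[t_0:t_1]\mapsto[t_0^2:t_0t_1:t_1^2]$, and this action on $Q$ is induced by $[t_0:t_1]\mapsto[t_0:-t_1]$ and $[t_0:t_1]\mapsto[t_1:t_0]$ on $\pp^1_t$.

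A curve $C\subset S=\pp^1_x\times Q$ of bidegree $(1,5)$ that maps isomorphically onto $Q$ is the graph of a degree-$5$ map $f:\pp^1_t\to\pp^1_x$. So I choose $f=[p(t):q(t)]$ with $p,q$ binary quintics without common zeros, and I need $f$ to be equivariant for the two actions above. Both actions are the same on $\pp^1_t$ and $\pp^1_x$, so it suffices to take $p$ odd-or-even-compatible and $q(t_0,t_1)=p(t_1,t_0)$ with the right parities. For example, $p=t_0^5+a\,t_0t_1^4$ and $q=t_1^5+a\,t_0^4t_1$, with $a$ generic (say $a=3$), so that $p$ and $q$ have no common zero. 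Then the graph $C$ is smooth and irreducible, being isomorphic to $\pp^1$. Its class in $\pp^1\times\pp^2$ has bidegree $(5,2)$, namely degree $5$ over the first factor and a conic image in the second, and $\operatorname{pr}_2|_C$ is an embedding. Hence $X=\operatorname{Bl}_C(\pp^1\times\pp^2)$ lies in family \textnumero 3.5. One should double-check the bidegree conventions against \cite{MM81classtable}, and adjust the degree of $f$ if the convention is reversed.

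Finally, $\mathcal{G}$ preserves $C$, so it lifts to $\aut(X)$ by Lemma \ref{Lemma:autblowup}. By Remark \ref{remark:for3.5-and-3.21}, $\am(X,\mathcal{G})=\am(\pp^1\times\pp^2,\mathcal{G})$, and this group contains the nonzero class $\overline{H_1}$, because the projective action on $\pp^1_x$ has no linear lift. Therefore $\am(X,\mathcal{G})\cong\zz/2\zz$.

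The main obstacle is making the equivariance of $f$ exact. I need to check that the two involutions on $\pp^1_t$ intertwine with those on $\pp^1_x$ through $f$; in particular, under $t_1\mapsto -t_1$ the quintic $q$ must change sign relative to $p$, which holds for the choice above. I also need to confirm that $p$ and $q$ have no common root, which fails only for finitely many values of $a$.
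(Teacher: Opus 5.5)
Your proposal is correct and follows essentially the same route as the paper. Both arguments take the upper bound from restricting to $S$ together with the \textnumero 2.34 bound, use the same Klein four-group acting on $\pp^1_x\times\pp^1$ compatibly with the Veronese conic and a $\mathcal{G}$-invariant curve of bidegree $(1,5)$ on $S$, and get non-linearizability of $H_1$ from the anticommuting lifts on $\pp^1_x$. The one real difference is that you build $C$ as the graph of the explicit equivariant quintic map $[t_0^5+at_0t_1^4:t_1^5+at_0^4t_1]$ with $a\neq\pm1$, so smoothness and irreducibility are automatic, whereas the paper writes down a specific invariant equation and checks it in Magma.
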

\begin{proof}
We show that there exists a smooth conic $C\subset \pp^1\times\pp^1$ of bidegree $(1,5)$ and a group $\mathcal{G}\subset \aut(S)$ such that $C$ is $\mathcal{G}$-invariant and $\mathcal{G}$ lifts to a subgroup $\overline{\mathcal{G}}\subset \aut(\pp^1\times\pp^2)$ such that 
\[
\am(X,\overline{\mathcal{G}})=\am(\pp^1\times\pp^2,\overline{\mathcal{G}})\cong \zz/2\zz.
\] 
Let $([x_0:x_1],[y_0:y_1])$ be coordinates on $S$, and we have the embedding $S \hookrightarrow \pp^1\times\pp^2$
\[
([x_0:x_1],[y_0:y_1]) \longmapsto ([x_0:x_1],[y_0^2:y_0y_1:y_1^2]).
\]
Consider the group $\mathcal{G}:= \langle \sigma , \tau\rangle \cong (\zz/2\zz)^2$ where
\begin{equation}    \label{2-group}
\begin{split}
\sigma : ([x_0:x_1],[y_0:y_1]) &\mapsto ([x_0:-x_1],[y_0:-y_1]) \\
\tau : ([x_0:x_1],[y_0:y_1]) &\mapsto ([x_1:x_0],[y_1:y_0]).
\end{split}
\end{equation}      
Then $\am(S,\mathcal{G})=\mathbb{Z}/2\mathbb{Z}$. 

Consider the curve $C\subset S$ defined by the equation
\[
x_0(y_0^5+2y_0^3y_1^2)+x_1(y_1^5+2y_0^2y_1^3)=0.
\]
It is $\mathcal{G}$-invariant, and it is smooth and irreducible \cite{magma}. Let $ \overline{\sigma}$ and $\overline{\tau}$ denote lifts of $\sigma$ and $\tau$, respectively, and $\overline{\mathcal{G}}:=\langle \overline{\sigma}, \overline{\tau}\rangle \subset \aut(\pp^1\times\pp^2)$, where 
\begin{align*}
   \overline{\sigma}: ([x_0:x_1],[y_0^2:y_0y_1: y_1^2]) &\mapsto ([x_0:-x_1],[y_0^2:-y_0y_1: y_1^2])\\
   \overline{\tau}: ([x_0:x_1],[y_0^2:y_0y_1: y_1^2]) &\mapsto ([x_1:x_0],[y_1^2:y_0y_1: y_0^2]). 
\end{align*}
Now since the induced $\overline{\mathcal{G}}$-action on $\pp^1_{x_0,x_1}$ does not lift to a linear representation, we get 
$\am(\pp^1\times\pp^2, \overline{\mathcal{G}})\cong \zz/2\zz$.
\end{proof}

A Fano threefold $X$ in \textnumero $3.21$ is the blow-up of $\pp^1\times\pp^2$ in a curve $C$ of bidegree $(2,1)$, where $C$ is the complete intersection of two divisors $D_1\in |H_2|$ and $D_2\in |H_1+2H_2|$, respectively \cite[\S 74]{quantum}. 

\begin{lemma}  \label{Am:3.21}
    Let $X$ be a Fano threefold in \textnumero $3.21$. Let $G\subseteq \aut(X)$ be a finite group. Then  $\am(X,G)=0$. 
\end{lemma}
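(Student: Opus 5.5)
By Remark \ref{remark:for3.5-and-3.21}, we may regard $G\subseteq \aut(\pp^1\times\pp^2;C)$, so that $\pi$ is $G$-equivariant and
\[
\am(X,G)=\am(\pp^1\times\pp^2,G).
\]
It therefore suffices to show that $H_1$ and $H_2$ are $G$-linearizable on $\pp^1\times\pp^2$. We have $\autp(\pp^1\times\pp^2)=0$, so both projections $\operatorname{pr}_1$ and $\operatorname{pr}_2$ are $G$-equivariant. Proposition \ref{KXlinearizable}, applied to $\pp^1$ and $\pp^2$ and pulled back along the projections, gives the relations
\[
2\overline{H_1}=0, \qquad 3\overline{H_2}=0.
\]
In particular, $\am(\pp^1\times\pp^2,G)\subseteq \zz/2\zz\times\zz/3\zz$, which matches the entry for \textnumero 2.34 in Table \ref{table:amitsur}.

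The key step is to exploit the complete intersection structure $C=D_1\cap D_2$, with $D_1\in|H_2|$ and $D_2\in|H_1+2H_2|$. Since $C$ is $G$-invariant, its homogeneous ideal in $R=\operatorname{Cox}(\pp^1\times\pp^2)$ is invariant under a finite lifting group $\widetilde{G}$. The degrees $(0,1)$ and $(1,2)$ are distinct, and both classes are individually $G$-invariant because $G$ acts trivially on $\pic$. I would apply Lemma \ref{lemma:G-invariance-for-CI}, using part (ii) in particular; part (i) fails in one direction, since $R_{(1,1)}\neq 0$, although $R_{(-1,-1)}=0$. This produces $\widetilde{G}$-semi-invariant generators $f_1'$ of degree $H_2$ and $f_2'$ of degree $H_1+2H_2$ that cut out $C$. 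Remark \ref{remark:CI} then shows that $H_2$ and $H_1+2H_2$ are $G$-linearizable.

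One must make sure that the minimal generating set really has these two degrees. Alternatively, one can argue directly that $D_1$ is the unique member of $|H_2|$ containing $C$, since $\operatorname{pr}_2(C)$ is a line. Then $\operatorname{pr}_2(C)$ is a $G$-invariant line in $\pp^2$, so $H^0(\pp^2,\mathcal{O}(1))$ contains a $1$-dimensional $G_{H_2}$-subrepresentation. By Proposition \ref{Gsubrepn_result}, $\overline{H_2}=0$. This second route avoids Lemma \ref{lemma:G-invariance-for-CI} for $H_2$ altogether.

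It remains to show $\overline{H_1}=0$. From $\overline{H_1+2H_2}=0$ (via Remark \ref{remark:CI} applied to $f_2'$) and $\overline{H_2}=0$, we get $\overline{H_1}=0$ directly. If the semi-invariance of the degree $(1,2)$ generator is problematic, I would fall back on a different route. The fibration $C\to\pp^1$ has degree $1$ in the $H_2$-direction, so $C$ is a section-like curve over $\pp^1$ of degree $2$. One could then use $-K_X$ together with the exceptional divisor $E$, which is $G$-linearizable by the proof of Theorem \ref{imprimitivebcdp}, to deduce $\overline{2H_1+3H_2}=0$. Combined with $\overline{H_2}=0$, this only gives $2\overline{H_1}=0$, so it does not suffice on its own. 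The main obstacle is therefore linearizing $H_1$, and I expect the semi-invariant degree-$(1,2)$ equation from Lemma \ref{lemma:G-invariance-for-CI}(ii) to be the decisive input. Once it is in hand, $\am(\pp^1\times\pp^2,G)=0$, and hence $\am(X,G)=0$.
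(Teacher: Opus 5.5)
Your main argument is the paper's proof. You reduce to $\am(\pp^1\times\pp^2,G)$ via Remark \ref{remark:for3.5-and-3.21}, apply Lemma \ref{lemma:G-invariance-for-CI}(ii) to get semi-invariant equations of degrees $H_2$ and $H_1+2H_2$ cutting out $C$, linearize both via Remark \ref{remark:CI}, and conclude from $H_2=D_1$ and $H_1=D_2-2D_1$. Your extra checks are correct: $\langle f_1,f_2\rangle$ is the full, minimally generated ideal of $C$, and you can alternatively get $\overline{H_2}=0$ from the $G$-invariant line $\operatorname{pr}_2(C)$ via Proposition \ref{Gsubrepn_result}. Neither is needed beyond what the paper does, and the fallback discussion via $-K_X$ can be dropped.
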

\begin{proof}
By Remark \ref{remark:for3.5-and-3.21}, let $G\subseteq \aut(\pp^1\times\pp^2;C)$. Therefore, Lemma \ref{lemma:G-invariance-for-CI} (ii) implies that there exist $G$-invariant divisors $D_1'\in |D_1|$ and $D_2'\in |D_2| $ such that $C=D_1'\cap D_2'$. Consequently, by Remark \ref{remark:CI}, $\overline{D_1}=\overline{D_2}=0$ in $\am(\pp^1\times\pp^2,G)$. 
Finally, since $H_1=D_2-2D_1$ and $H_2=D_1$, we obtain 
\[
\am(X,G)=\am(\pp^1\times \pp^2,G)=0.
\] 
\end{proof}

A Fano threefold $X'$ in family \textnumero $4.5$ can be described as the blow-up of $\pp^1\times\pp^2$ in the disjoint union $C:= C_1 \sqcup C_2$, where $C_1$ and $C_2$ are curves of bidegrees $(2,1)$ and $(1,0)$, respectively. Equivalently, $X'$ is the blow-up of a Fano threefold $X$ from \textnumero 3.21 in the proper transform, $\widetilde{C_2}$, of curve $C_2$. Let $\phi: X' \to X$ denote this blow with the exceptional divisor $F$. 
 
As a consequence of the previous lemma, we get

\begin{coro}   \label{Am:4.5}
    Let $X'$ be a Fano threefold in \textnumero $4.5$. Let $G\subseteq \aut(X')$ be a finite group. Then $\am(X',G)=0$. 
\end{coro}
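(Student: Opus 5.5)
The plan is to mimic Corollary \ref{Am:3.18} and Corollary \ref{Am:blowup_of_2.35}: reduce to the Fano threefold $X$ in family \textnumero 3.21 via the blow-up $\phi\colon X'\to X$, and then invoke Lemma \ref{Am:3.21}. The key input is the action of $G$ on $\pic(X')=\phi^*\pic(X)\oplus\zz[F]$. From Table \ref{table:amitsur}, I expect $\autp(X')=0$ for family \textnumero 4.5. If that holds, then $F$ is $G$-invariant. Lemma \ref{Lemma:autblowup} then lets us regard $G$ as a lift of a subgroup of $\aut(X;\widetilde{C_2})$, so that $\phi$ is $G$-equivariant.

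With $\phi$ equivariant, Theorem \ref{imprimitivebcdp} gives $\am(X',G)=\am(X,G)$, and Lemma \ref{Am:3.21} gives $\am(X,G)=0$.

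If instead the table records a nontrivial $\autp(X')$, I would use the alternative description of $X'$ as the blow-up $\pi'\colon X'\to\pp^1\times\pp^2$ in $C_1\sqcup C_2$. Since $C_1$ and $C_2$ have different bidegrees, the exceptional divisors $E_1$ and $E_2$ cannot be exchanged by any automorphism, as they have different intersection numbers with the pulled-back classes. So I would aim to show that $E_1$ and $E_2$ are each $G$-invariant, whence the total exceptional divisor is $G$-invariant. Lemma \ref{Lemma:autblowup} then makes $\pi'$ equivariant, with $G\subseteq\aut(\pp^1\times\pp^2;C_1\sqcup C_2)$, and hence $G$ preserves each $C_i$.

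In that case Theorem \ref{imprimitivebcdp} gives $\am(X',G)=\am(\pp^1\times\pp^2,G)$. Since $C_1$ is a $G$-invariant complete intersection of divisors in $|H_2|$ and $|H_1+2H_2|$, the argument of Lemma \ref{Am:3.21}, using Lemma \ref{lemma:G-invariance-for-CI} (ii) and Remark \ref{remark:CI}, shows that $H_1$ and $H_2$ are $G$-linearizable. This again yields $\am(X',G)=0$.

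The main obstacle is justifying that the $G$-action is compatible with the blow-up structure, meaning $F$, or each $E_i$, is $G$-invariant. I will first try to read this off Table \ref{table:amitsur} and \cite{sharma2026actionspicardgroupsmooth}. Failing that, I will argue through the distinct numerical invariants of the extremal contractions.
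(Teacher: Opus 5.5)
Your main route is exactly the paper's proof: Table \ref{table:amitsur} does record $\autp(X')=0$ for \textnumero 4.5, so $F$ is $G$-invariant and Lemma \ref{Lemma:autblowup} makes $\phi$ $G$-equivariant. Theorem \ref{imprimitivebcdp} and Lemma \ref{Am:3.21} then give $\am(X',G)=\am(X,G)=0$, so your fallback through $\pp^1\times\pp^2$ is not needed.
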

 \begin{proof}
    From Table \ref{table:amitsur}, we have $\autp(X')=0$. So, by Lemma \ref{Lemma:autblowup}, we may regard $G\subseteq \aut(X;\widetilde{C_2})$ and $\phi$ is $G$-equivariant. By Theorem \ref{imprimitivebcdp} and Lemma \ref{Am:3.21}, we obtain 
    \[
    \am(X',G)=\am(X,G)=0.
    \]
 \end{proof}

A Fano threefold $X$ in family \textnumero 3.22 is the blow-up of $\pp^1\times\pp^2$ along a conic $C\simeq \{p\}\times \pp^1$ in the fiber of projection $\operatorname{pr}_1$. 

\begin{lemma}   \label{Am:3.22}
 Let $X$ be a Fano threefold in \textnumero 3.22. Let $G\subseteq \aut(X)$ be a finite group. Then $\am(X,G)=0$. 
\end{lemma}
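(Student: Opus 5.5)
By Remark \ref{remark:for3.5-and-3.21}, the $G$-action descends to $\pp^1\times\pp^2$. We may regard $G\subseteq \aut(\pp^1\times\pp^2;C)$, with $\pi$ $G$-equivariant, so that
\[
\am(X,G)=\am(\pp^1\times\pp^2,G).
\]
It therefore suffices to show that both $H_1$ and $H_2$ are $G$-linearizable on $\pp^1\times\pp^2$ for every finite $G$ preserving the conic $C=\{p\}\times C_0$, where $C_0\subset\pp^2$ is a smooth conic.

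For $H_1$, I will argue as follows. Since $\operatorname{pr}_1$ is $G$-equivariant (as $\autp$ is trivial) and $C$ lies in the single fiber $\operatorname{pr}_1^{-1}(p)$, the point $p\in\pp^1$ is $G$-fixed. Thus the $G_{H_1}$-representation on $H^0(\pp^1,\mathcal{O}(1))^\vee$ has a one-dimensional subrepresentation, the line corresponding to $p$; equivalently, the section vanishing at $p$ spans a one-dimensional subrepresentation of $H^0(\pp^1,\mathcal{O}(1))$. By Proposition \ref{Gsubrepn_result}, $\overline{H_1}=0$. This is the same argument as in Lemma \ref{Am:2.35}. Alternatively, the divisor $\{p\}\times\pp^2\in|H_1|$ is $G$-invariant and is cut out by a semi-invariant form, so Remark \ref{remark:CI} applies.

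For $H_2$, note that $C$ is the complete intersection of $D_1:=\{p\}\times\pp^2\in|H_1|$ and $D_2:=\pp^1\times C_0\in|2H_2|$. The degrees $(1,0)$ and $(0,2)$ satisfy $R_{D_i-D_j}=0$ in the Cox ring for $i\neq j$, because the differences $(1,-2)$ and $(-1,2)$ have a negative entry. Hence Lemma \ref{lemma:G-invariance-for-CI}~(i) shows that the defining equations are semi-invariant, and Remark \ref{remark:CI} gives $\overline{2H_2}=0$ (and again $\overline{H_1}=0$). Separately, $\operatorname{pr}_2$ is $G$-equivariant, so $\operatorname{pr}_2^*(-K_{\pp^2})=3H_2$ is $G$-linearizable by Proposition \ref{KXlinearizable}, i.e.\ $\overline{3H_2}=0$. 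Combining, $\overline{H_2}=\overline{3H_2}-\overline{2H_2}=0$.

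With $\overline{H_1}=\overline{H_2}=0$, we get $\am(\pp^1\times\pp^2,G)=0$, and hence $\am(X,G)=0$. The only delicate point is the grading check in Lemma \ref{lemma:G-invariance-for-CI}, together with the requirement that the ideal of $C$ be minimally generated by these two forms. Both are immediate here, since $C_0$ is a conic and the ideal is $\langle \ell_p, q\rangle$ with $\ell_p$ the linear form vanishing at $p$ and $q$ the quadratic form defining $C_0$.
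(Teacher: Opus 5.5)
Your proof is correct and follows the paper's argument. You reduce to $\pp^1\times\pp^2$ via Remark \ref{remark:for3.5-and-3.21}, get $\overline{H_1}=0$ from the $G$-fixed point $p$ via Proposition \ref{Gsubrepn_result}, get $\overline{2H_2}=0$ from the invariant conic, and conclude using $\overline{3H_2}=0$ from the canonical class of $\pp^2$. The only cosmetic difference is that the paper obtains $\overline{2H_2}=0$ by applying Proposition \ref{Gsubrepn_result} directly to the line spanned by the equation of the $G$-invariant conic $\operatorname{pr}_2(C)$, rather than going through Lemma \ref{lemma:G-invariance-for-CI}~(i) and Remark \ref{remark:CI}; the two amount to the same thing.
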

\begin{proof}
    By Remark \ref{remark:for3.5-and-3.21}, let $G\subseteq \aut(\pp^1\times\pp^2;C)$ and $\pi$ is $G$-equivariant, so
    \[
    \am(X,G)=\am(\pp^1\times\pp^2,G) \subseteq \zz/2\zz\times\zz/3\zz.
    \]
     Since $\autp(\pp^1\times\pp^2,G)=0$, both projection maps $\operatorname{pr}_i$ are $G$-equivariant. Then the $G$-invariance of $C$ implies that $\operatorname{pr}_1(C)\in \pp^1$ and $\operatorname{pr}_2(C)\subset \pp^2$ are $G$-invariant. Consequently, there is a 1-dimensional $G_{\mathcal{O}_{\pp^1}(1)}$-subrepresentation of $H^0(\pp^1,\mathcal{O}_{\pp^1}(1))$, and a $1$-dimensional $G_{\mathcal{O}_{\pp^2}(2)}$-subrepresentation of $H^0(\pp^1,\mathcal{O}_{\pp^2}(2))$. By Proposition \ref{Gsubrepn_result}, $\mathcal{O}_{\pp^1}(1)$ and $\mathcal{O}_{\pp^2}(2)$ are $G$-linearizable, and so, $\overline{H_1}=\overline{2H_2}=0$. Finally, since $3H_2=\operatorname{pr}_2^*(-K_{\pp^2})$, Proposition \ref{KXlinearizable} yields $\overline{3H_2}=0$. Thus, $\overline{H_2}=0$ and we obtain
     \[
     \am(X,G)=\am(\pp^1\times\pp^2,G)=0.
     \]
\end{proof}

\section{Blow-ups of \texorpdfstring{$\mathbb{P}^1\times \mathbb{P}^1\times \mathbb{P}^1$}{P1P1P1}} 

We discuss the following families
\begin{center}
\textnumero 4.3,\; \textnumero 4.6,\; \textnumero 4.8,\; \textnumero 4.13. 
\end{center}
A Fano threefold $X$ belonging to any of these families is the blow-up of $(\pp^1)^3$ along a smooth curve $C$ \cite{MM81classtable,MMerratum}. For $1\leq i\leq 3$, let $\operatorname{pr}_i \colon (\pp^1)^3 \to \mathbb{P}^1$ 
denote the projection onto the $i$th factor, and set $H_i:=\operatorname{pr}_i^*\mathcal{O}_{\pp^1}(1)$. Let $\pi: X\to (\pp^1)^3$ be the blow-up map with the exceptional divisor $E$. Then 
\[
\pic(X) = \bigoplus_{i=1}^3 \zz[\pi^*H_i] \oplus \zz[E]
\]
and $-K_X = \pi^*(2H_1+2H_2+2H_3)-E$. 

\begin{remark} \label{remark:blow-P1cube-families}
 \normalfont Let $G\subseteq \aut(X)$ is a finite group. From \cite[Lemmas 4.14, 4.15, 4.16, 4.17]{sharma2026actionspicardgroupsmooth}, we recall that $G$ acts on the set $\{\pi^*H_1, \pi^*H_2, \pi^*H_3 \}$ by permutations, and $E$ is $G$-invariant. Thus, by Lemma \ref{Lemma:autblowup}, we may identify $G$ as a subgroup of $\aut((\pp^1)^3;C)$, and $\pi$ is $G$-equivariant. From Theorem \ref{imprimitivebcdp}, we have
 \begin{equation}    \label{eqn:Am-equal-for-blowupsofP1cube}
 \am(X,G)=\am((\pp^1)^3,G).
 \end{equation} 
\end{remark}

\smallskip
Thus, for each family below, the computation of $\am(X,G)$ reduces to that of $\am((\pp^1)^3,G))$ for the induced $G$-action on $(\pp^1)^3$.  

\vspace{0.2cm}

Let $X$ be a Fano threefold in family \textnumero 4.3, then the center of the blow-up is a curve $C$ of tridegree $(1,1,2)$, which can be described as the complete intersection of two divisors, $D_1 \in |H_1+H_2|$ and $D_2 \in |H_1+H_2+H_3|$ \cite[(7.22)]{MM86}. 

Let $([x_0:x_1],[y_0:y_1],[z_0:z_1])$ be coordinates on $(\pp^1)^3$. 

\begin{lemma}  \label{Am:4.3}
    Let $X$ be a Fano threefold in \textnumero $4.3$. Let $G \subseteq \aut(X)$ be a finite group. Then 
     $\am(X,G)$ is isomorphic to $\zz/2\zz$ or trivial. 
\end{lemma}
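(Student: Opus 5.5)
By Remark \ref{remark:blow-P1cube-families}, we may regard $G\subseteq \aut((\pp^1)^3;C)$, so that $\pi$ is $G$-equivariant and $\am(X,G)=\am((\pp^1)^3,G)$. The first step is to show that $G$ fixes the third factor. The class $\pi^*H_3$ is distinguished because $C$ has tridegree $(1,1,2)$; equivalently, $C$ is the complete intersection of $D_1\in|H_1+H_2|$ and $D_2\in|H_1+H_2+H_3|$. Hence $G$ acts on $\{\pi^*H_1,\pi^*H_2,\pi^*H_3\}$ through a subgroup of $\langle(12)\rangle$, and the classes $H_1+H_2$ and $H_3$ are $G$-invariant. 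Since $H_1+H_2\neq H_1+H_2+H_3$ and no $R_{D}$ with $D=\pm H_3$ kills the needed hypothesis, we may apply Lemma \ref{lemma:G-invariance-for-CI} (ii) to the two distinct degrees. This gives $G$-semi-invariant equations, and Remark \ref{remark:CI} then yields $\overline{H_1+H_2}=0$ and $\overline{H_1+H_2+H_3}=0$. Consequently $\overline{H_3}=0$.

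Now split into cases according to $\autp$. If $G$ swaps $H_1$ and $H_2$, then $\pic((\pp^1)^3)^G=\zz[H_1+H_2]\oplus\zz[H_3]$, and the relations above give $\am(X,G)=0$. If $G$ fixes each $H_i$, then every projection $\operatorname{pr}_i$ is $G$-equivariant, and Proposition \ref{KXlinearizable} on $\pp^1$ gives $2\overline{H_i}=0$. Combined with $\overline{H_1}=-\overline{H_2}$ and $\overline{H_3}=0$, this gives $\am(X,G)=\langle\overline{H_1}\rangle\subseteq\zz/2\zz$.

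For sharpness, I would take $\mathcal{G}\cong(\zz/2\zz)^2$ acting diagonally on all three factors by $\sigma:[u_0:u_1]\mapsto[u_0:-u_1]$ and $\tau:[u_0:u_1]\mapsto[u_1:u_0]$ on each factor. This action fixes each $H_i$, and its action on $\pp^1_x$ does not lift to $\operatorname{GL}_2(\cc)$, since the lifts anticommute. I would then choose $\mathcal{G}$-semi-invariant forms. For $D_1$ of bidegree $(1,1)$ in $x,y$, take $x_0y_0+x_1y_1$, which is invariant under both generators. For $D_2$ of degree $(1,1,1)$, take a combination such as $a(x_0y_0z_0+x_1y_1z_1)+b(x_0y_1z_1+x_1y_0z_0)+c(x_0y_0z_1+x_1y_1z_0)$, keeping only monomials with consistent $\sigma$-weights. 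I would then check in Magma that $C=D_1\cap D_2$ is smooth and irreducible for generic coefficients.

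Since $\overline{H_1}\neq0$, we get $\am(X,\mathcal{G})\cong\zz/2\zz$. The main obstacle is this construction step. The chosen $D_1$ must be smooth, which holds here because $x_0y_0+x_1y_1$ defines the graph of a map $\pp^1\to\pp^1$. The parity constraints imposed by $\sigma$ must also still leave a nondegenerate $D_2$ that meets $D_1$ in a smooth irreducible curve. If this choice fails, I would vary the lift of $\sigma$ on the third factor, for instance $z\mapsto[z_0:-z_1]$ versus the identity, to gain more admissible monomials.
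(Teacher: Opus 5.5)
\textbf{Upper bound.} Your derivation of $\am(X,G)\subseteq\zz/2\zz$ is correct and is essentially the paper's argument. You use Lemma~\ref{lemma:G-invariance-for-CI}~(ii) and Remark~\ref{remark:CI} to get $\overline{H_1+H_2}=\overline{H_1+H_2+H_3}=0$, hence $\overline{H_3}=0$, and then you split according to $\autp(X,G)$. Showing that $G$ fixes $\pi^*H_3$ via the intersection numbers $H_i\cdot C=1,1,2$ is a fine substitute for quoting $\autp(X)=\zz/2\zz$. The clause about ``$R_D$ with $D=\pm H_3$'' is unnecessary, since part (ii) only needs the two degrees to be distinct. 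If the lemma is read purely as an upper bound, your argument is complete.

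\textbf{Realization.} Your primary construction cannot work. Let $\sigma$ act by $[u_0:-u_1]$ on all three factors. Its lift $S=\mathrm{diag}(1,-1)$ and the lift $T$ of the swap anticommute on each $\cc^2$. Since $(\pp^1)^3$ has an odd number of factors, $S^{\otimes 3}$ and $T^{\otimes 3}$ anticommute on $R_{(1,1,1)}=V_x\otimes V_y\otimes V_z$, and rescaling the lifts does not change this. So there is no $\mathcal{G}$-semi-invariant form of degree $(1,1,1)$ at all. Concretely, $\sigma$ sends $x_0y_0z_0+x_1y_1z_1$ to $x_0y_0z_0-x_1y_1z_1$, and likewise for your $b$- and $c$-terms. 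Equivalently, for this group $\overline{H_3}=\overline{H_1}\neq 0$, since all three factor actions are the same non-liftable one. This contradicts the relation $\overline{H_3}=0$ from your own first step, so no $\mathcal{G}$-invariant curve $C=D_1\cap D_2$ of this type exists, and no Magma search will find one.

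\textbf{Fallback.} Letting $\sigma$ act trivially on $\pp^1_z$ is forced, because you need $\overline{H_3}=0$ while $\overline{H_1}=\overline{H_2}\neq0$. This is exactly the paper's group. Your equation still has to change, however. The $\sigma$-weights are now $+1$ on the $a$- and $c$-terms and $-1$ on the $b$-term, so these terms cannot be mixed. Moreover, on $D_1=\{x_0y_0+x_1y_1=0\}$, where $[y_0:y_1]=[x_1:-x_0]$, the $a$- and $c$-terms restrict to multiples of $x_0x_1$, which makes $C$ reducible. Two choices work:
\begin{itemize}
\item[(1)] Take the $b$-term alone, $D_2=\{x_0y_1z_1+x_1y_0z_0=0\}$. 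It is $\mathcal{G}$-semi-invariant, and on $D_1\cong\pp^1_x\times\pp^1_z$ it restricts to $x_1^2z_0-x_0^2z_1=0$. This is a smooth rational curve of tridegree $(1,1,2)$.
\item[(2)] Follow the paper: $D_1=\{x_0y_1-x_1y_0=0\}$ and $D_2=\{z_0x_1y_1-z_1x_0y_0=0\}$.
\end{itemize}
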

\begin{proof}
Recall that $\autp(X)=\zz/2\zz$ (Table \ref{table:amitsur}), with the nontrivial $G$-action exchanging $\pi^*H_1$ and $\pi^*H_2$ and fixing $\pi^*H_3$. Then the divisor classes $D_1, D_2$ are $G$-invariant for any $G\subseteq \aut(X)$. 
Since $C$ is $G$-invariant, Lemma \ref{lemma:G-invariance-for-CI} (ii) and Remark \ref{remark:CI} imply that both $D_1$ and $D_2$ are $G$-linearizable. Since $H_3=D_2-D_1$, it is also $G$-linearizable.  

Suppose first that $\autp(X,G)=\autp(\pp^1)^3,G)=\zz/2\zz$. Then $\pic((\pp^1)^3)^G=\zz[H_1+H_2]\oplus\zz[H_3]$.
However, both generators are $G$-linearizable. Thus, we get
\[
\am(X,G)=\am((\pp^1)^3,G)=0.
\]

Now suppose $\autp(X,G)=\autp((\pp^1)^3,G)=0$. Then each $\operatorname{pr}_i$ is $G$-equivariant, and so $\overline{2H_i}=0$. Combined with $\overline{D_1}=0$ and $\overline{H_3}=0$, we get 
\[
\am((\pp^1)^3,G) = \langle \overline{H_1}\rangle \subseteq \zz/2\zz.
\]
To realize the nontrivial possibility, consider 
\[ 
D_1 := \{ x_0y_1-x_1y_0=0\} , \qquad D_2:= \{z_0x_1y_1-z_1x_0y_0 =0\},
\]
and let $\mathcal{G}:=\langle \sigma, \tau\rangle \subset \aut((\pp^1)^3)$, where 
\begin{equation}    \label{groupforAm(P1)3}
\begin{split}
    \sigma : ([x_0:x_1],[y_0:y_1],[z_0:z_1]) &\mapsto ([x_0:-x_1],[y_0:-y_1],[z_0:z_1])\\
   \tau :  ([x_0:x_1],[y_0:y_1],[z_0:z_1]) &\mapsto ([x_1:x_0],[y_1:y_0],[z_1:z_0]). 
\end{split}
\end{equation} 
The curve $C:=D_1\cap D_2$ is $\mathcal{G}$-invariant since each $D_1,D_2$ are individually $\mathcal{G}$-invariant. Moreover, $C$ is smooth and irreducible \cite{magma}. Thus, $\mathcal{G}$ lifts to the blow-up $X$ of $(\pp^1)^3$ in $C$ and $\autp(X,\mathcal{G})=0$.

Observe that the induced $\mathcal{G}$-action on $\pp^1_{x}$ and $\pp^1_{y}$ does not lift to a linear representation, however, the $\mathcal{G}$-action on $\pp^3$ via the Segre embedding $\pp^1_x\times\pp^1_y \hookrightarrow \pp^3$ does have a linear lift. So, $H_1$ and $H_2$ are not $\mathcal{G}$-linearizable and they have same obstruction classes in $H^2(\mathcal{G},\cc^\times)$, but $H_1+H_2$ is $\mathcal{G}$-linearizable. Using \eqref{eqn:Am-equal-for-blowupsofP1cube}, we conclude
\[
\am(X,\mathcal{G}) = \am((\pp^1)^3,\mathcal{G}) \cong \mathbb{Z}/2\mathbb{Z}.
\]
\end{proof}

Let $X$ be a Fano threefold in \textnumero 4.8, then the center of the blow-up is a curve $C$ with tridegree $(0,1,1)$, and it is a complete intersection of divisors $D_1\in |H_1|$ and $D_2 \in |H_2+H_3|$ \cite[(7.22)]{MM86}. 
Recall that $\autp(X)=\zz/2\zz$, where the nontrivial $G$-action preserves the divisor classes $D_1$ and $D_2$. The curve $C$ is $G$-invariant, so Lemma \ref{lemma:G-invariance-for-CI} (i) and the following Remark \ref{remark:CI} imply 
\begin{equation}   \label{eqn:relations-for-4.8}
   \overline{D_1}= \overline{H_1}=0, \qquad \overline{D_2}=\overline{H_2+H_3}=0. 
\end{equation}

\begin{lemma}   \label{Am:4.8}
    Let $X$ be a Fano threefold in \textnumero $4.8$. Let $G\subseteq \aut(X)$ be a finite group. Then $\am(X,G)$ is isomorphic to $\zz/2\zz$ or trivial.
    \end{lemma}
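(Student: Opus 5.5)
By Remark \ref{remark:blow-P1cube-families}, we may regard $G\subseteq \aut((\pp^1)^3;C)$ with $\pi$ being $G$-equivariant. We then have $\am(X,G)=\am((\pp^1)^3,G)$, so it suffices to compute the latter. The nontrivial element of $\autp(X)=\zz/2\zz$ fixes $D_1\in|H_1|$ and $D_2\in|H_2+H_3|$. It therefore fixes $H_1$ and, since it must permute $\{H_1,H_2,H_3\}$, it exchanges $H_2$ and $H_3$. We split into the two cases $\autp(X,G)=\zz/2\zz$ and $\autp(X,G)=0$, and use the relations \eqref{eqn:relations-for-4.8} in both.

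\emph{Case $\autp(X,G)=\zz/2\zz$.} Here $\pic((\pp^1)^3)^G=\zz[H_1]\oplus\zz[H_2+H_3]$. By \eqref{eqn:relations-for-4.8}, both generators have trivial image in the Amitsur group. Hence $\am(X,G)=0$.

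\emph{Case $\autp(X,G)=0$.} Each $\operatorname{pr}_i$ is $G$-equivariant, so Proposition \ref{KXlinearizable} applied to $\operatorname{pr}_i^*(-K_{\pp^1})=2H_i$ gives $\overline{2H_i}=0$. Combined with $\overline{H_1}=0$ and $\overline{H_2+H_3}=0$, this yields
\[
\am(X,G)=\langle \overline{H_2}\rangle=\langle\overline{H_3}\rangle\subseteq \zz/2\zz.
\]
This establishes the claimed upper bound in all cases.

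\emph{Realization.} This step is the main work: we need an explicit smooth irreducible $C$ of tridegree $(0,1,1)$ that is invariant under a group $\mathcal{G}$ acting trivially on $\pic$ and non-linearizably on $\pp^1_y$. I would take the group $\mathcal{G}=\langle\sigma,\tau\rangle$ of \eqref{groupforAm(P1)3}, modified so that it acts trivially on $\pp^1_x$ (for instance, keeping only its action on the $y,z$ factors). I would then set
\[
D_1:=\{x_0=0\},\qquad D_2:=\{y_0z_1-y_1z_0=0\}.
\]
The divisor $D_2$ is invariant under the diagonal action $y_1\mapsto -y_1$, $z_1\mapsto -z_1$ and under the swap $y_0\leftrightarrow y_1$, $z_0\leftrightarrow z_1$. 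Moreover $C=D_1\cap D_2$ is the diagonal in $\pp^1_y\times\pp^1_z$ over the point $[0:1]$, which is smooth and irreducible. The induced $\mathcal{G}$-action on $\pp^1_y$ is the Klein four-group in $\operatorname{PGL}_2$, whose matrix lifts anticommute, so it does not lift to $\operatorname{GL}_2(\cc)$ and $H_2$ is not $\mathcal{G}$-linearizable. On the other hand, the action on $\pp^1_x$ is trivial, hence linearizable, consistent with $\overline{H_1}=0$. The only point to check is that $H_2+H_3$ is linearizable, which follows from the Segre embedding argument used in Lemma \ref{Am:4.3} (or directly from \eqref{eqn:relations-for-4.8}). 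Then \eqref{eqn:Am-equal-for-blowupsofP1cube} gives $\am(X,\mathcal{G})\cong\zz/2\zz$.
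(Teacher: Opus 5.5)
Your proposal is correct and follows the paper's proof. It uses the same reduction to $(\pp^1)^3$ via Remark \ref{remark:blow-P1cube-families}, the same two cases based on \eqref{eqn:relations-for-4.8} together with $\overline{2H_i}=0$, and essentially the same realizing example; the paper's $\tau$ also swaps $x_0\leftrightarrow x_1$ and uses $D_1=\{x_0+x_1=0\}$, while your choice of a trivial action on $\pp^1_x$ with $D_1=\{x_0=0\}$ works just as well.

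One small slip: the group you actually verify (simultaneous sign change $y_1\mapsto -y_1$, $z_1\mapsto -z_1$ and simultaneous swap) is the right one. It is not, however, literally the restriction of \eqref{groupforAm(P1)3} to the $y,z$ factors, because there $\sigma$ fixes $\pp^1_z$ and would not preserve $D_2=\{y_0z_1-y_1z_0=0\}$.
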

\begin{proof}
Suppose $\autp(X,G)= \autp((\pp^1)^3,G)=\zz/2\zz$, then 
\[
\pic(Y)^G = \zz[H_1, H_2+H_3]. 
\]
Since both generators are $G$-linearizable from \eqref{eqn:relations-for-4.8}, we get $\am(X,G)=0$.

Now suppose $\autp(X,G)=\autp((\pp^1)^3,G)=0$. Then $\operatorname{pr}_i$ is $G$-equivariant, consequently, $\overline{2H_i}=0$. Using \eqref{eqn:relations-for-4.8}, we get 
\[
\am((\pp^1)^3,G)=\langle \overline{H_2}\rangle = \langle \overline{H_3}\rangle \subseteq \zz/2\zz. 
\] 
The upper bound can be realized as follows. Consider
\[
D_1 :=  \{ x_0+x_1     =0\}, \hspace{1.0cm}  D_2:=  \{ y_0z_1-y_1z_0 =0 \}
\]
and the group $\mathcal{G}:= \langle \sigma, \tau \rangle \subset \aut((\pp^1)^3)$, where 
\begin{equation*}
   \begin{split}
\sigma : ([x_0:x_1],[y_0:y_1],[z_0:z_1]) &\mapsto ([x_0:x_1],[y_0:-y_1],[z_0:-z_1]) \\
\tau :  ([x_0:x_1],[y_0:y_1],[z_0:z_1]) &\mapsto ([x_1:x_0],[y_1:y_0],[z_1:z_0]).
   \end{split} 
\end{equation*}
Then the intersection $C:=D_1\cap D_2$ is $\mathcal{G}$-invariant since the hypersurfaces defining $D_1$, $D_2$ are individually $G$-invariant. Moreover, the curve $C$ is also smooth and irreducible \cite{magma}. Finally, the induced $\mathcal{G}$-action on $\pp^1_y$ and $\pp^1_z$ does not lift to linear representation, but the $\mathcal{G}$-action on $\pp^3$ via the Segre embedding $\pp^1_y\times\pp^1_z \hookrightarrow\pp^3$ has a linear lift. From \eqref{eqn:Am-equal-for-blowupsofP1cube}, it follows
\[
\am(X,\mathcal{G}) = \am((\pp^1)^3,\mathcal{G}) \cong \zz/2\zz.
\]
\end{proof}

A smooth Fano threefold $X$ in family \textnumero 4.13 is a blow-up of $(\pp^1)^3$ in a curve $C$ of degree $(1,1,3)$ \cite{MMerratum}, where $C$ is a complete intersection of divisors $D_1 \in |H_1+H_2|$ and $D_2\in |2H_1+H_2+H_3|$. 

By Remark \ref{remark:blow-P1cube-families}, it suffices to work over $(\pp^1)^3$. 

We recall that $\autp(X)=\zz/2\zz$ (Table \ref{table:amitsur}). So, there are two cases to consider for groups $G$.

\begin{remark}
\hfill \normalfont \begin{enumerate}
        \item[(i)] Suppose $\autp(X,G)=\autp((\pp^1)^3,G)=0$. So, each $\operatorname{pr}_i$ is $G$-equivariant, and we get $\overline{2H_i}=0$ for $1\leq i \leq 3$. By Lemma \ref{lemma:G-invariance-for-CI} (ii), the divisor classes $D_1$ and $D_2$ admit $G$-linearization by Remark \ref{remark:CI}. 
Consequently, $\overline{H_2+H_3}=0$ and $\overline{H_1+H_3}=0$, and it follows that 
\[
\overline{H_1}=\overline{H_2}=\overline{H_3}.
\] 
Hence, 
\[
\am((\pp^1)^3,G) = \langle \overline{H_1}\rangle \subseteq \zz/2\zz.
\]
\item[(ii)] Suppose $\autp(X,G)=\autp((\pp^1)^3,G)=\zz/2\zz$. Then $G$ swaps $H_1$ and $H_2$ and
$\pic((\pp^1)^3)^G= \zz[H_1+H_2, H_3]$. Thus, $\operatorname{pr}_3$ is $G$-equivariant, and consequently, $\overline{2H_3}=0$. Furthermore, the $G$-invariant divisor class $D_1=H_1+H_2$ is $G$-linearizable due to Remark \ref{remark:CI}. Consequently,
\[
\am((\pp^1)^3,G)= \langle \overline{H_3}\rangle \subseteq \zz/2\zz.
\]
    \end{enumerate}
\end{remark}
We now show that these bounds can be realized. 

\begin{lemma}     \label{Am:4.13}
    Let $X$ be a Fano threefold in \textnumero $4.13$. Let $G\subseteq \aut(X)$ be a finite group. Then $\am(X,G)$ is isomorphic to $\zz/2\zz$ or trivial.
\end{lemma}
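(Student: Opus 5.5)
The upper bound $\am(X,G)\subseteq \zz/2\zz$ is already contained in the preceding remark: by Remark \ref{remark:blow-P1cube-families} we have $\am(X,G)=\am((\pp^1)^3,G)$, and parts (i) and (ii) of the remark give $\am((\pp^1)^3,G)=\langle\overline{H_1}\rangle$ or $\langle\overline{H_3}\rangle$, each of order at most $2$. So what remains is to exhibit, for each of the two cases $\autp(X,G)=0$ and $\autp(X,G)=\zz/2\zz$, a smooth irreducible curve $C=D_1\cap D_2$ of tridegree $(1,1,3)$ and a finite group $\mathcal{G}\subset\aut((\pp^1)^3;C)$ with $\am=\zz/2\zz$. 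Strictly speaking, one example suffices for the statement, but I would try to realize both.

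\emph{Case $\autp=0$.} I would take the group $\mathcal{G}=\langle\sigma,\tau\rangle\cong(\zz/2\zz)^2$ from \eqref{groupforAm(P1)3}, modified so that it acts non-linearizably on all three factors. Concretely, $\sigma$ sends $x_1\mapsto -x_1$, $y_1\mapsto -y_1$, $z_1\mapsto -z_1$, and $\tau$ swaps the coordinates in each factor. Then $\overline{H_1}=\overline{H_2}=\overline{H_3}$, consistent with the relations in the remark. I would choose $D_1=\{x_0y_1-x_1y_0=0\}$, which is $\mathcal{G}$-invariant, so that on $D_1\cong\pp^1_x\times\pp^1_z$ the curve $C$ becomes a curve of bidegree $(2,1)$ in the variables $(x,z)$ (after substituting $y=x$). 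For $D_2$ I would take a $\mathcal{G}$-semi-invariant form of degree $(2,1,1)$, for instance
\[
a(x_0^2y_0z_0+x_1^2y_1z_1)+b(x_0^2y_1z_1+x_1^2y_0z_0)+c(x_0x_1y_0z_1+x_0x_1y_1z_0).
\]
I would then check smoothness and irreducibility of $C$ for generic coefficients, for example with Magma. Since the action on $\pp^1_x$ does not lift to $\operatorname{GL}_2(\cc)$, $H_1$ is not linearizable, and hence $\am=\zz/2\zz$.

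\emph{Case $\autp=\zz/2\zz$.} I would use an involution $\rho$ swapping $x\leftrightarrow y$, combined with an action on $\pp^1_z$ that is non-liftable when restricted to a subgroup, in the spirit of $\mathcal{G}_2$ for \textnumero 4.1. Here $D_1$ must be $\rho$-invariant of degree $(1,1,0)$, for instance $x_0y_1+x_1y_0$ or $x_0y_0-x_1y_1$. The divisor $D_2$ of class $2H_1+H_2+H_3$ is not $\rho$-invariant as a class, but this is not needed: only $C$ must be invariant. Modulo $D_1$, the classes $2H_1+H_2+H_3$ and $H_1+2H_2+H_3$ agree on $D_1$. I would therefore take $D_2$ so that its restriction to $D_1$ is a $\rho$-invariant curve of bidegree $(3,1)$ on $D_1\cong\pp^1\times\pp^1_z$. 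I would then exhibit a subgroup, with $\rho$ acting on $z$ and the Klein-type pair on $\pp^1_z$ anticommuting, so that $H_3$ fails to be linearizable.

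I expect the main obstacle to be this second construction. It requires finding a group whose action on $\pp^1_z$ is non-liftable while $C$ remains $\rho$-invariant and smooth, and checking that the lifts to $\operatorname{GL}_2$ on the $z$-factor genuinely anticommute. I would try coefficients as in \eqref{eqn-second-4.1} and verify the result computationally. If this fails, the first case alone already proves the lemma as stated.
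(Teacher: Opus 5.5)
Your proposal is correct and is essentially the paper's proof: the bound is read off from the preceding remark, and for $\autp(X,G)=0$ the paper uses exactly your group and $D_1=\{x_0y_1-x_1y_0=0\}$, but with $D_2=\{x_0^2y_0z_1-x_1^2y_1z_0=0\}$ in place of your invariant form. Your form also works: on $D_1$ it cuts out a curve of bidegree $(3,1)$, not $(2,1)$ as you wrote, namely $z_0x_0(ax_0^2+(b+c)x_1^2)+z_1x_1(ax_1^2+(b+c)x_0^2)=0$, which is smooth and irreducible exactly when $a\neq 0$ and $a\neq\pm(b+c)$.

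For $\autp(X,G)=\zz/2\zz$, the paper completes your sketch by keeping the same curve $C$ and composing the factorwise Klein four-group with the swap $x\leftrightarrow y$. It uses $([x_0:x_1],[y_0:y_1],[z_0:z_1])\mapsto([y_0:-y_1],[x_0:-x_1],[z_0:-z_1])$ and $([x_0:x_1],[y_0:y_1],[z_0:z_1])\mapsto([y_1:y_0],[x_1:x_0],[z_1:z_0])$. These preserve $D_1$ and the equation $x_0^3z_1=x_1^3z_0$ of $C$ inside $D_1$, while acting on $\pp^1_z$ by a non-liftable Klein four-group, so $H_3$ is not linearizable.
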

\begin{proof}
Consider a group $\mathcal{G}_1$ generated by 
\begin{align*}
    \tau_1 : ([x_0:x_1],[y_0:y_1],[z_0:z_1]) &\mapsto ([x_0:-x_1],[y_0:-y_1],[z_0:-z_1]) \\
    \sigma_1 : ([x_0:x_1],[y_0:y_1],[z_0:z_1]) &\mapsto ([x_1:x_0],[y_1:y_0],[z_1:z_0]),
\end{align*}
where $\autp((\pp^1)^3,\mathcal{G}_1)=0$. 
Let
\begin{align}  \label{eqn:for-4.13}
    D_1 :=\{x_0y_1-x_1y_0=0\}, \hspace{0.5cm}  D_2 :=\{x_0^2y_0z_1-x_1^2y_1z_0=0\},
\end{align}
and let $C:=D_1\cap D_2$. Then $C$ is $\mathcal{G}_1$-invariant, and it can be verified to be both smooth and irreducible (\cite{magma}). Thus, $\mathcal{G}_1 \subset \aut((\pp^1)^3;C)$. Here the $\mathcal{G}_1$-action on each factor $\pp^1$ does not lift to $\operatorname{GL}_2(\mathbb{C})$. In addition, for each pair $1\leq i,j\leq 3$, $H_i+H_j$ is $\mathcal{G}_1$-linearizable. Thus, we obtain 
\[
\am(X,\mathcal{G}_1)= \am((\pp^1)^3,\mathcal{G}_1)\cong \mathbb{Z}/2\mathbb{Z}. 
\] 

Now consider the automorphisms
\begin{align*}
    \tau_2 : ([x_0:x_1],[y_0:y_1],[z_0:z_1]) &\mapsto ([y_0:-y_1],[x_0:-x_1],[z_0:-z_1]) \\
    \sigma_2 : ([x_0:x_1],[y_0:y_1],[z_0:z_1]) &\mapsto ([y_1:y_0],[x_1:x_0],[z_1:z_0]),
\end{align*}
set $\mathcal{G}_2:=\langle \sigma_2,\tau_2\rangle$, and note that $\autp((\pp^1)^3,\mathcal{G}_2)=\zz/2\zz$. 
Then $\mathcal{G}_2$ fixes $D_1$, and 
\[
\tau_2 D_2 = \{y_1^2x_1z_0-y_0^2x_0z_1 =0\} =\sigma_2 D_2.
\]
A direct computation shows that $D_1\cap \tau_2 D_2 = D_1\cap \sigma_2 D_2=C$, this can also be verified using Macaulay2 \cite{M2}. Thus, $\mathcal{G}_2\subset \aut((\pp^1)^3;C)$. Note that the $\mathcal{G}_2$-action on $\pp^1_z$ does not lift to a group homomorphism, so $H_3$ is not $\mathcal{G}_2$-linearizable. Hence, we get 
\[
\am(X,\mathcal{G}_2)=\am(((\pp^1)^3,\mathcal{G}_2)\cong \zz/2\zz.
\]
\end{proof}

A Fano threefold $X$ in family \textnumero $4.6$
is the blow-up of $(\pp^1)^3$ in the intersection $C$ of divisors $D_1, D_2$, and $D_3$ where $D_1 \in |H_1+H_2|$, $D_2 \in |H_2+H_3|$, and $D_3 \in |H_1+H_3|$. 
Let $G\subseteq \aut(X)$ be a finite group. By Remark \ref{remark:blow-P1cube-families}, we may assume that $G$ is the lift of a subgroup of $\aut((\pp^1)^3;C)$. 

Recall that $\autp(X)=S_3$. There are three cases to consider for $G$. 
\begin{remark}
\normalfont \hfill \begin{enumerate}
        \item[(i)] Suppose $\autp(X,G)=0$. Then $\autp((\pp^1)^3,G)=0$, so $\operatorname{pr}_i$ is $G$-equivariant for $1\leq i \leq 3$. Thus, the pullbacks of anticanonical divisors to $(\pp^1)^3$ and $-K_{(\pp^1)^3}$ are $G$-linearizable, thus, $\overline{2H_i}=0$ by Theorem \ref{KXlinearizable}. We observe that $D_1$, $D_2$, $D_3$ are $G$-linearizable on $(\pp^1)^3$ by Lemma \ref{lemma:G-invariance-for-CI} (i) and Remark \ref{remark:CI}. Thus, 
\begin{equation}   
    \overline{H_1+H_2}=0, \quad \overline{H_2+H_3}=0, \quad \overline{H_1+H_3}=0.
\end{equation} We obtain
        \[
        \am(X,G)=\am((\pp^1)^3,G)= \langle \overline{H_1}\rangle,
        \]
        where the latter group is isomorphic to $\zz/2\zz$ or trivial. 
        
        \item[(ii)] Suppose $\autp(X,G)=\zz/2\zz$. We may assume that $G$ permutes the generators $H_1$ and $H_2$ and keeps $H_3$ invariant. Then $\pic((\pp^1)^3)^G=\zz[H_1+H_2,H_3]$ and since $\operatorname{pr}_3$ is $G$-equivariant, we have $\overline{2H_3}=0$. In this case, the divisor class $D_1$ is $G$-invariant, and the argument in Lemma \ref{lemma:G-invariance-for-CI} (i) implies that the divisor itself is $G$-invariant. Then from the argument in Remark \ref{remark:CI}, $D_1$ is $G$-linearizable. Thus, 
        \[
        \am(X,G) = \am((\pp^1)^3,G) = \langle H_3\rangle,
        \]
        which is isomorphic to $\zz/2\zz$ or trivial. 
        \item[(iii)] Suppose $\autp(X,G)$ contains a $3$-cycle. Then $\pic((\pp^1)^3)^G= \zz[H_1+H_2+H_3]$, and we obtain 
        \[
        \am((\pp^1)^3,G) \subseteq \langle \overline{H_1+H_2+H_3}\rangle,
        \]
        where the latter group is isomorphic to $\zz/2\zz$ or trivial, since $-K_{(\pp^1)^3}=2(H_1+H_2+H_3)$. 
    \end{enumerate}
\end{remark}
In the following lemma, we verify whether the upper bounds can be realized in each case. 
\begin{lemma}  \label{Am:4.6}
    Let $X$ be a Fano threefold in family \textnumero $4.6$. Let $G \subseteq \aut(X)$ be a finite group. Then $\am(X,G)$ is isomorphic to $\zz/2\zz$ or trivial.
\end{lemma}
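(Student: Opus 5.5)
The preceding remark already gives the upper bound $\am(X,G)\subseteq\zz/2\zz$ in each of the three cases for $\autp(X,G)$. By \eqref{eqn:Am-equal-for-blowupsofP1cube}, what remains is to exhibit, for as many cases as possible, a smooth irreducible curve $C=D_1\cap D_2\cap D_3\subset(\pp^1)^3$ of the required type and a finite group $\mathcal{G}\subset\aut((\pp^1)^3;C)$ with $\am((\pp^1)^3,\mathcal{G})\cong\zz/2\zz$. Note that $C$ is a complete intersection of $D_1$ and $D_2$ in the class computations, and it is the curve of tridegree $(1,1,1)$. A convenient model is the diagonal-type curve $C=\{x_0y_1-x_1y_0=0,\ y_0z_1-y_1z_0=0,\ x_0z_1-x_1z_0=0\}$, i.e.\ the small diagonal $\{(p,p,p)\}$, or a twist of it $\{(p,\alpha(p),\beta(p))\}$ with $\alpha,\beta\in\operatorname{PGL}_2(\cc)$. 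Smoothness and irreducibility are then automatic, since $C\cong\pp^1$.

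For case (i), I would take the group $\mathcal{G}\cong(\zz/2\zz)^2$ from \eqref{groupforAm(P1)3}, modified to act diagonally on all three factors by $[u_0:u_1]\mapsto[u_0:-u_1]$ and $[u_0:u_1]\mapsto[u_1:u_0]$. This group preserves the small diagonal $C$ and each $D_i$, and $\autp((\pp^1)^3,\mathcal{G})=0$. The induced action on each $\pp^1$ is the Klein four-group in $\operatorname{PGL}_2(\cc)$, which does not lift to $\operatorname{GL}_2(\cc)$, since its lifts anticommute. Hence $H_1$ is not $\mathcal{G}$-linearizable, and by part (i) of the remark, $\am(X,\mathcal{G})=\langle\overline{H_1}\rangle\cong\zz/2\zz$.

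For cases (ii) and (iii), I would enlarge this group. I adjoin the coordinate permutation $\sigma_{12}\colon(p,q,r)\mapsto(q,p,r)$ for case (ii), or the cyclic permutation $(p,q,r)\mapsto(q,r,p)$, possibly together with $S_3$, for case (iii). These commute with the diagonal Klein four-group and preserve the diagonal curve. Since $C$ is preserved, the group lifts to $X$ with the prescribed $\autp(X,G)$. In case (ii), $H_3$ restricted to the Klein subgroup is not linearizable, so by Remark~\ref{remark:equivariant-restrction-Amitsur} it is not $G$-linearizable, and $\am\cong\zz/2\zz$.

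Case (iii) is the main obstacle. The class $H_1+H_2+H_3$ restricted to the diagonal Klein subgroup has obstruction equal to three times the nontrivial class, which is nontrivial, so it is not linearizable for that subgroup. Applying Remark~\ref{remark:equivariant-restrction-Amitsur} again gives non-linearizability for the whole group, and hence $\am\cong\zz/2\zz$. I need to double-check that the obstruction class of $H_1+H_2+H_3$ is the sum of the pullbacks of the factor obstructions. This holds because all three factor actions are the same, so the sum is $3c=c\neq0$ in $H^2(\mathcal{G},\cc^\times)$.

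If a case unexpectedly fails, the fallback is to report only the upper bound.
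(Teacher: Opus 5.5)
Your proposal is correct and matches the paper's proof. You use the same small-diagonal curve \eqref{eqn:for4.6} and the same diagonal Klein four-group for $\autp(X,G)=0$. In the other cases you enlarge it by coordinate permutations: your case-(iii) group equals the paper's $\mathcal{G}_3$, and your case-(ii) group contains the paper's $\mathcal{G}_2$. The only difference is how non-linearizability is justified: the paper checks non-liftability on $\pp^1_z$ or on $\pp^7$ via the Segre embedding, whereas you restrict to the diagonal Klein subgroup and use additivity of $\partial$ ($3c=c\neq 0$). Your argument is equivalent, and it also covers $\autp(X,G)=S_3$ explicitly.
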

\begin{proof}
Let $\mathcal{G}_1 \subset \aut((\pp^1)^3)$ be a group generated by involutions
\begin{align*}
    \sigma_1 : ([x_0:x_1],[y_0:y_1],[z_0:z_1]) &\mapsto ([x_0:-x_1],[y_0:-y_1],[z_0:-z_1])\\
  \tau_1:   ([x_0:x_1],[y_0:y_1],[z_0:z_1]) &\mapsto ([x_1:x_0],[y_1:y_0],[z_1:z_0]).
\end{align*}
Observe that $\autp((\pp^1)^3,\mathcal{G}_1)=0$. Consider the hypersurfaces given by 
\begin{equation}  \label{eqn:for4.6}
D_1:= \{x_0y_1-x_1y_0 = 0\},\hspace{0.1cm}  D_2:=\{y_0z_1-y_1z_0 =0\}, \hspace{0.1cm}  D_3:=\{x_0z_1-x_1z_0 =0\}. 
\end{equation}
Then the intersection $D_1\cap D_2\cap D_3$ is $\mathcal{G}_1$-invariant since $D_1, D_2, D_3$ are individually $\mathcal{G}_1$-invariant. We also verify that $C$ is smooth and irreducible (\cite{magma}). Thus, $\mathcal{G}_1 \subset \aut((\pp^1)^3,C)$. Now since the induced $\mathcal{G}_1$-action on $\pp^1_{x_0,x_1}$ does not lift to a homomorphism to $\operatorname{GL}_2(\mathbb{C})$, $H_1$ is not $\mathcal{G}_1$-linearizable. However, $H_1+H_2$ is $\mathcal{G}_1$-linearizable. Thus, we obtain 
\[
\am(X,G)=\am((\pp^1)^3,\mathcal{G}_1)\cong \zz/2\zz.
\]

Suppose that $\autp(X,G)$ contains a $2$-cycle. In particular, let $\mathcal{G}_2$ be a group generated by 
\begin{align*}
    \sigma_2 : ([x_0:x_1],[y_0:y_1],[z_0:z_1]) &\mapsto ([y_0:-y_1],[x_0:-x_1],[z_0:-z_1])\\
  \tau_2:   ([x_0:x_1],[y_0:y_1],[z_0:z_1]) &\mapsto ([y_1:y_0],[x_1:x_0],[z_1:z_0]),
\end{align*}
where $\autp(X,\mathcal{G}_2)=\zz/2\zz$. Consider the curve defined by the intersection of hypersurfaces in \eqref{eqn:for4.6}, it is $\mathcal{G}_2$-invariant. Thus, $\mathcal{G}_2 \subset \aut((\pp^1)^3;C)$. Since the $\mathcal{G}_2$-action on $\pp^1_{z_0,z_1}$ does not lift to a homomorphism to $\operatorname{GL}_2(\cc)$, $H_3$ is not $\mathcal{G}_2$-linearizable. But $H_1+H_2$ is $\mathcal{G}_2$-linearizable. Hence, 
\[
\am(X,\mathcal{G}_2)= \am((\pp^1)^3,\mathcal{G}_2)\cong \zz/2\zz.
\]

Finally, the intersection given by equations in \eqref{eqn:for4.6} is also invariant under the group $\mathcal{G}_3=\langle \sigma_3, \tau_3\rangle $, where 
\begin{align*}
    \sigma_3 : ([x_0:x_1],[y_0:y_1],[z_0:z_1]) &\mapsto ([y_0:-y_1],[z_0:-z_1],[x_0:-x_1])\\
  \tau_3:   ([x_0:x_1],[y_0:y_1],[z_0:z_1]) &\mapsto ([y_1:y_0],[z_1:z_0],[x_1:x_0]).
\end{align*}
such that $\autp(X,\mathcal{G}_3)=\zz/3\zz$. Thus, $\mathcal{G}_3 \subset \aut((\pp^1)^3;C)$. Here the $\mathcal{G}_3$-action induced on $\pp^7$ via the Segre embedding $(\pp^1)^3 \hookrightarrow \pp^7$ does not lift to a linear representation, therefore, $H_1+H_2+H_3$ is not $\mathcal{G}_3$-linearizable. Hence, 
\[
\am(X,\mathcal{G}_3) = \am((\pp^1)^3,\mathcal{G}_3) \cong \zz/2\zz.
\]
\end{proof}

\section{Blow-ups of cones} 
We study the Amitsur groups of Fano threefolds belonging to families 
\begin{center}
    \textnumero 3.9,\; \textnumero 4.2.
\end{center}
these arise from the blow-ups of the Veronese cone and the quadric cone, respectively. 

We recall their common construction from \cite[\S 4.6]{calabi}. For family \textnumero 3.9, let $\mathscr{S}:=\pp^2$ and $C\subset\mathscr{S}$ be a smooth quartic curve. For family \textnumero 4.2, let $\mathscr{S}:=\pp^1\times\pp^1$ and $C\subset $ be a smooth curve of bidegree $(2,2)$. Then starting with a double cover of $\pp^1\times \mathscr{S}$, resolving its singularities, followed by a contraction produces $X$, together with two birational contractionss 
\[
\phi: X \to V, \quad \phi': X \to V',
\]
where $V\cong V'$. The involution $\iota \in \aut(\pp^1\times \mathscr{S})$ defined by $([u:v],p)\mapsto ([v:u],p)$ lifts to an involution of $X$ that swaps $\phi$ and $\phi'$, thus it does not descend to an automorphism of either of them.

\subsection{Blow-up of the Veronese cone} Let $V:=\pp(\mathcal{O}\oplus\mathcal{O}(2))$. This is the Fano threefold in family \textnumero 2.36, obtained by blowing-up the Veronese cone at its vertex. Let $\operatorname{pr}: Y \to \pp^2$ denote the natural projection, and set $H:=\operatorname{pr}^*\mathcal{O}_{\pp^2}(1)$. Denote the positive and negative sections by $S_+$ and $S_-$, respectively. 

\smallskip
A Fano threefold $X$ in family \textnumero 3.9 is the blow-up of $V$ along a smooth curve $\mathcal{C}\subset S_+$, which corresponds to $C$. Following Matsuki's notation (\cite[III]{Mat95}), let $\ell_1, \dots, \ell_4$ denote the four extremal generators of $\overline{\operatorname{NE}}(X)$, and $E_1, \dots , E_4$ denote the exceptional divisors of the corresponding contractions $\operatorname{cont}_{\ell_i}$, $1\leq i \leq 4$. With this labeling, $\phi=\operatorname{cont}_{\ell_1}$ and $E_1$ is its exceptional divisor. 

Set $h=\operatorname{cont}_{\ell_1}^*H$. Then $\pic(X)=\zz[h, E_1,E_4]$ and $-K_X = 5h+2E_4-E_1$. From \cite[Lemma 4.21]{sharma2026actionspicardgroupsmooth}, we recall that $\autp(X)=\zz/2\zz$ and $\autp(X,\langle \iota\rangle)=\zz/2\zz$, where 
\[
\iota^* : h\mapsto h,\quad E_1\mapsto 4h-E_1,\quad E_4 \mapsto 2h-E_1+E_4.
\]

\begin{lemma} \label{Am:3.9} 
    Let $X$ be a smooth Fano threefold in family \textnumero 3.9. Let $G\subseteq \aut(X)$ be a finite group. Then $\am(X,G)=0$.  
\end{lemma}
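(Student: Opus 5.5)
The plan is to split according to $\autp(X,G)$, which by \cite[Lemma 4.21]{sharma2026actionspicardgroupsmooth} is either $0$ or $\zz/2\zz$, the latter acting on $\pic(X)$ like $\iota^*$. In both cases the aim is to show that the generators of $\pic(X)^G$ are $G$-linearizable. The key input is a morphism $X\to\pp^2$ under which the quartic $C$ is $G$-invariant.

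\emph{Case $\autp(X,G)=0$.} Here $E_1$, $E_4$ and $h$ are all $G$-invariant. By Lemma \ref{Lemma:autblowup}, $G$ descends to $\aut(V;\mathcal{C})$, and by Theorem \ref{imprimitivebcdp}, $\mathcal{O}_X(E_1)$ is $G$-linearizable. The divisor $E_4$ is a $G$-invariant prime (effective) divisor. Its canonical section is therefore $G$-stable, which gives a one-dimensional $G_{\mathcal{O}(E_4)}$-subrepresentation of $H^0(X,\mathcal{O}_X(E_4))$, so $\overline{E_4}=0$ by Proposition \ref{Gsubrepn_result}.

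It remains to treat $h$. The morphism $\operatorname{pr}\circ\phi\colon X\to\pp^2$ is given by $|h|$, so it is $G$-equivariant, and $C=\operatorname{pr}(\mathcal{C})\subset\pp^2$ is $G$-invariant. Its defining quartic then spans a one-dimensional $G_{\mathcal{O}(1)}$-subrepresentation of $H^0(\pp^2,\mathcal{O}(4))$, so Proposition \ref{Gsubrepn_result} gives $4\overline{H}=0$. Proposition \ref{KXlinearizable} applied to $-K_{\pp^2}=3H$ gives $3\overline{H}=0$. Hence $\overline{H}=0$, and pulling back gives $\overline{h}=0$. Since $h,E_1,E_4$ generate $\pic(X)$, this yields $\am(X,G)=0$.

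\emph{Case $\autp(X,G)=\zz/2\zz$.} Using the formula for $\iota^*$, the invariant lattice is $\pic(X)^G=\zz[h]\oplus\zz[E_1-2E_4]$: one checks that $\iota^*(aE_1+bE_4)\equiv aE_1+bE_4 \pmod{h}$ forces $b=-2a$, and that $\iota^*(E_1-2E_4)=E_1-2E_4$. Since $-K_X=5h-(E_1-2E_4)$, we get $\pic(X)^G=\zz[h]\oplus\zz[K_X]$. By Proposition \ref{KXlinearizable}, it therefore suffices to show $\overline{h}=0$.

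The class $h$ is $G$-invariant and base-point-free, with $|h|$ defining $X\to\pp^2$. Hence $G$ acts on $\pp^2=\pp(H^0(X,h)^\vee)$ and this morphism is $G$-equivariant. Concretely, it is the composite $X\to\pp^1\times\mathscr{S}\to\mathscr{S}$ from the construction in \cite[\S 4.6]{calabi}, over which $\iota$ acts fibrewise.

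The main obstacle is to see that $C\subset\pp^2$ is $G$-invariant even though $G$ does not descend to $V$. I would argue that $C$ is intrinsic to the equivariant map $X\to\pp^2$: it is the branch curve of the double cover of $\pp^1\times\mathscr{S}$ pulled back from $\mathscr{S}$, equivalently the locus over which the fibres of $X\to\pp^2$ degenerate (become reducible). Any automorphism commuting with the map must preserve this locus. Once $C$ is $G$-invariant, the same argument as in the first case gives $4\overline{H}=3\overline{H}=0$, so $\overline{h}=0$ and $\am(X,G)=0$.
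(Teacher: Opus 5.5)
Your proof is correct, but it reaches $\overline{h}=0$ by a different route from the paper.

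\textbf{Case $\autp(X,G)=0$.} The paper descends $G$ to $V$ and combines Theorem~\ref{imprimitivebcdp} with $\am(V,G)=0$ from the classification for \textnumero 2.36. You instead verify directly that each generator $E_1$, $E_4$, $h$ is linearizable.

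\textbf{Case $\autp(X,G)=\zz/2\zz$.} The paper computes intersection numbers of $h$, $E_1$, $E_4$. It then applies the period congruences of Proposition~\ref{prop:amitsur-period-congruences}, with $D=E_1-2E_4$, to get $m(h)\mid 3$ and $m(h-D)=1$. Together with $\overline{5h-D}=\overline{-K_X}=0$, these are exactly the relations $3\overline{h}=0$ and $4\overline{h}=0$. You obtain the same two relations geometrically. The first comes from pulling back $-K_{\pp^2}$ along the conic bundle $X\to\pp^2$ given by $|h|$, which is $G$-equivariant because every element of $\autp(X)$ fixes $h$. The second comes from the $G$-invariant discriminant quartic.

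\textbf{Comparison.} Your route needs no intersection table and treats $h$ uniformly for every $G$, so the case split is only needed to handle $E_1$ and $E_4$. The paper's route is purely numerical and never has to exhibit a $G$-invariant curve on $\pp^2$.

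\textbf{Two small tightenings.} First, justify the invariance of $C$ from the blow-up description rather than the double cover. The ruling of $V$ over $p\in\pp^2$ meets $\mathcal{C}\subset S_+$ in exactly one point if $p\in C$ and not at all otherwise. Hence the fibre of $X\to\pp^2$ is reducible exactly over $C$. Equivalently, the classes $E_1$ and $\iota^*E_1=4h-E_1$ each contain a unique effective divisor and are permuted by $G$. Their sum, the pullback of $C$, is therefore a $G$-invariant member of $|4h|$. Second, Proposition~\ref{Gsubrepn_result} should be applied with the lifting group of $\mathcal{O}(4)$, not $\mathcal{O}(1)$. Alternatively, just note that a $G$-invariant effective divisor carries a canonical linearization.
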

\begin{proof}
    Suppose first that $\autp(X,G)=0$. Since $E_1$ is $G$-invariant, Lemma \ref{Lemma:autblowup} allows us to regard $G$ as a subgroup of $\aut(V;C)$ so that $\phi$ is $G$-equivariant. Since $\am(V,G)=0$ from Table \ref{table:amitsur}, Theorem \cite{sharma2026amitsurgroupsprimitivefano} gives
    \[
    \am(X,G)=\am(V,G)=0.
    \]
    Now suppose that $\autp(X,G)=\zz/2\zz$. Then $\pic(X)^G=\zz[h,E_1-2E_4]$. Set $D:= E_1-2E_4$. Using the intersection product table in \cite{Mat95} and \cite[Lemma 1.22]{prokhorov-fano-threefolds}, we find
   \[
\begin{aligned}
hE_1^2 &= -4, & h^2E_1 &= 0, & E_1^3 &= -24,\\
h^2E_4 &= 1, & hE_4^2 &= -2, & E_4^3 &= 4,\\
E_1^2E_4 &= 0, & E_1E_4^2 &= 0, & hE_1E_4 &= 0.
\end{aligned}
\]
Moreover, $h^3=0$, since $h$ is the pullback of a hyperplane on $\pp^2$. Thus,  
\begin{align*}
(h-D)^2K_X =11h^2D-7hD^2+D^3 = -22+84-56=6. 
\end{align*}
We also have $h^2K_X=-2$. Applying Proposition \ref{prop:amitsur-period-congruences}, we get $m(h)\;|\;3$ and $m(h-D)=1$. Equivalently, $\overline{3h}=0$ and $\overline{h}=\overline{D}$. Since $-K_X=5h-D$ is $G$-linearizable, we also have $m(5h-D)=1$, or 
\[
\overline{5h}=\overline{2h}=\overline{D}.
\]
Combined with the relation $\overline{h}=\overline{D}$, we get $\overline{h}=0$. Thus both $h$ and $D$ are $G$-linearizable, and $\am(X,G)=0$. 
\end{proof}


\subsection{Blow-up of the quadric cone}
Let $\mathscr{S}:=\pp^1\times\pp^1$,  $V:=\pp(\mathcal{O}\oplus \mathcal{O}(1,1))$, and $\operatorname{pr}:V\to\mathscr{S}$ be the natural projection. Then $V$ belongs to family \textnumero 3.31. Set $H_1:= \operatorname{pr}^*\mathcal{O}(1,0)$ and $H_2:= \operatorname{pr}^*\mathcal{O}(0,1)$. 
Let $S_+$ and $S_-$ denote the positive and negative sections of $\operatorname{pr}$, respectively.

A Fano threefold $X$ in family \textnumero 4.2 is the blow-up of $V$ in a curve $C\subset S_+$, corresponding to a smooth curve of bidegree $(2,2)$ on $\mathscr{S}$. Let $\phi: X \to V$ denote this blow-up. 

In the notation from \cite{Mat95,Mat23}, let $\ell_1,\dots , \ell_6$ denote the extremal generators of $\overline{\operatorname{NE}}(X)$, and $E_i$ denote the exceptional divisor corresponding to the extremal contraction $\operatorname{cont}_{\ell_i}$, for $1\leq i \leq 6$. We identify $\phi$ with $\operatorname{cont}_{\ell_1}$ so that $E_1$ is its exceptional divisor. For the simplicity in notation, let $h_1:=\phi^*H_1$ and $h_2:=\phi^*H_2$. Then  
 \[
\pic(X)= \zz[h_1]\oplus\zz[h_2]\oplus\zz[E_5]\oplus\zz[\widetilde{S_+}],
\]
where $E_5=\phi^*S_-$, $\widetilde{S_+}=\phi^*S_+-E_1$. In particular, 
\begin{align}
\widetilde{S_+}
    &= E_5+h_1+h_2-E_1,
    \label{eqn:for-4.2}\\
-K_X
    &= 2h_1+2h_2+E_5+\widetilde{S_+}.
    \label{eqn:K_X-for-4.2}
\end{align}

Recall from \cite[Lemma 4.22]{sharma2026actionspicardgroupsmooth}, 
\[
\autp(X)= \langle \widetilde{\sigma}^*,\iota^*\rangle \cong(\zz/2\zz)^2,
\]
where 
\[
\widetilde{\sigma}^*: \begin{cases}
    h_1 \mapsto h_2,\\
    h_2 \mapsto h_1,\\
    E_5 \mapsto E_5,\\
    \widetilde{S_+} \mapsto \widetilde{S_+}
\end{cases} \qquad 
\iota^*: \begin{cases}
    h_1 \mapsto h_1,\\
    h_2 \mapsto h_2,\\
    E_5 \mapsto \widetilde{S_+},\\
    \widetilde{S_+} \mapsto E_5.
\end{cases}
\]
So, we have four cases to consider for $G$. We obtain an upper bound on $\am(X,G)$ in all four cases. 

\begin{lemma}   \label{Am:4.2}
    Let $X$ be a smooth Fano threefold in \textnumero 4.2. Let $G\subseteq \aut(X)$ be a finite group. Then $\am(X,G) \subseteq (\zz/2\zz)^2$. 
\end{lemma}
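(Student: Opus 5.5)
We split into the four cases according to $\autp(X,G)\subseteq\langle\widetilde{\sigma}^*,\iota^*\rangle$. Throughout we use three sources of relations. First, $\overline{-K_X}=0$ by Proposition \ref{KXlinearizable}, with $-K_X$ given by \eqref{eqn:K_X-for-4.2}. Second, when $E_1$ is $G$-invariant (equivalently, $\iota^*\notin\autp(X,G)$), Lemma \ref{Lemma:autblowup} makes $\phi$ $G$-equivariant. Then Theorem \ref{imprimitivebcdp} gives $\am(X,G)=\am(V,G)$ and $\overline{E_1}=0$, and the toric classification of family \textnumero 3.31 in Table \ref{table:amitsur} bounds $\am(V,G)$. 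Third, Proposition \ref{prop:amitsur-period-congruences} applies to any $G$-invariant class, using the intersection numbers of $X$ from \cite{Mat95,Mat23} and \cite[Lemma 1.22]{prokhorov-fano-threefolds}.

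\emph{Case $\autp(X,G)=0$.} All of $h_1,h_2,E_5,\widetilde{S_+}$ are $G$-invariant, and $\phi$ and $\operatorname{pr}\circ\phi$ are $G$-equivariant. Pulling back $-K_{\pp^1\times\pp^1}$ along each ruling gives $\overline{2h_1}=\overline{2h_2}=0$. The divisor $E_5=\phi^*S_-$ is the pullback of the unique, hence $G$-invariant, negative section, so it is $G$-linearizable by the argument of Remark \ref{remark:CI}. The class $E_1$ is linearizable, so \eqref{eqn:for-4.2} gives $\overline{\widetilde{S_+}}=\overline{h_1+h_2}$; this is consistent with the anticanonical relation. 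Hence $\am(X,G)=\langle\overline{h_1},\overline{h_2}\rangle\subseteq(\zz/2\zz)^2$.

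\emph{Case $\autp(X,G)=\langle\widetilde{\sigma}^*\rangle$.} Here $\pic(X)^G=\zz[h_1+h_2,E_5,\widetilde{S_+}]$. Again $\phi$ is equivariant and $\overline{E_5}=\overline{E_1}=0$, so $\overline{\widetilde{S_+}}=\overline{h_1+h_2}$. The period of $h_1+h_2$ divides $2$: either apply Proposition \ref{prop:amitsur-period-congruences}, or note that $2(h_1+h_2)$ is the pullback of $-K_{\pp^1\times\pp^1}$ and $\operatorname{pr}$ is equivariant for the induced action on $\mathscr{S}$. This gives $\am(X,G)\subseteq\zz/2\zz$.

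\emph{Cases containing $\iota^*$.} For $\langle\iota^*\rangle$ we have $\pic(X)^G=\zz[h_1,h_2,E_5+\widetilde{S_+}]$, and for the full group $\pic(X)^G=\zz[h_1+h_2,E_5+\widetilde{S_+}]$. The map $\phi$ is no longer equivariant. However, $\iota$ fixes $h_1,h_2$ and comes from the action on $\pp^1\times\mathscr{S}$, so the composition $X\to\mathscr{S}$ is still $G$-equivariant. Thus $\overline{2h_1}=\overline{2h_2}=0$, respectively $\overline{2(h_1+h_2)}=0$. The relation $\overline{-K_X}=0$ then expresses $\overline{E_5+\widetilde{S_+}}$ as $-2\overline{h_1+h_2}=0$. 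So $\am(X,G)$ is generated by $\overline{h_1},\overline{h_2}$, respectively by $\overline{h_1+h_2}$, and is contained in $(\zz/2\zz)^2$, respectively $\zz/2\zz$.

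The main obstacle is justifying the $G$-equivariance of $X\to\mathscr{S}$ when $\iota^*\in\autp(X,G)$, since $G$ need not descend to $V$. I would argue via the natural conic-bundle (or double cover) structure over $\mathscr{S}$ determined by the classes $h_1,h_2$: these are $\autp$-stable in the appropriate sense and give $G$-equivariant morphisms given by the linear systems $|h_i|$, in the spirit of Remark \ref{remark:equivarince-of-embedding}. If this fails, I would instead bound the period of $h_1$ (or $h_1+h_2$) using Proposition \ref{prop:amitsur-period-congruences}, with $h_i^2\cdot K_X=-2$ and $h_i^3=0$, which yields $m(h_i)\mid 2$ directly.
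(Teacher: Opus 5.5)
Your proof is correct and follows the paper's skeleton. You split according to $\autp(X,G)\subseteq\langle\widetilde{\sigma}^*,\iota^*\rangle$, use $G$-equivariance of $\phi$ when $E_1$ is invariant, and otherwise combine $\overline{-K_X}=0$ with $m(h_i)\mid 2$.

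The differences are these. In the cases $0$ and $\langle\widetilde{\sigma}^*\rangle$, the paper passes to $\am(V,G)$ and quotes the classification for \textnumero 3.31. You instead work with relations directly on $X$, which avoids the toric input. If you actually use the anticanonical relation rather than calling it `consistent', you recover the paper's sharper conclusions. Substituting $\overline{E_5}=0$, $\overline{\widetilde{S_+}}=\overline{h_1+h_2}$ and $2\overline{h_1+h_2}=0$ into \eqref{eqn:K_X-for-4.2} gives $0=\overline{-K_X}=\overline{h_1+h_2}$. Hence $\am(X,G)\subseteq\zz/2\zz$, resp.\ $\am(X,G)=0$. In the cases involving $\iota^*$, the paper simply asserts $m(h_i)\mid 2$ and $m(h_1+h_2)\mid 2$. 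Your justification via the morphism $X\to\pp^1\times\pp^1$ given by $|h_1|,|h_2|$, which every $G$ permutes, is correct and fills this in. When $h_i$ is itself $G$-invariant, Proposition \ref{Gsubrepn_result} applied to the $2$-dimensional space $H^0(X,h_i)$ gives $m(h_i)\mid 2$ even faster.

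There are two slips. First, your case list omits $\langle\widetilde{\sigma}^*\iota^*\rangle$. Relatedly, the parenthetical `$E_1$ is $G$-invariant iff $\iota^*\notin\autp(X,G)$' is false, since $\widetilde{\sigma}^*\iota^*$ also moves $E_1$. This is harmless: $\langle\widetilde{\sigma}^*\iota^*\rangle$ has the same invariant lattice $\zz[h_1+h_2,E_5+\widetilde{S_+}]$ as the full group, so your argument applies verbatim. The paper, conversely, treats $\langle\widetilde{\sigma}^*\iota^*\rangle$ but never the full group. Second, your fallback uses a wrong intersection number. Since $h_i$ is pulled back from $\pp^1$, $h_i^2=0$ and so $h_i^2\cdot K_X=0$, not $-2$. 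With the correct value, Proposition \ref{prop:amitsur-period-congruences} gives $2m\mid 4$, hence $m(h_i)\mid 2$. With your value $-2$ it would give $2m\mid 6$, i.e.\ $m\mid 3$, and your stated conclusion would not follow.
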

\begin{proof}
Suppose first that $\autp(X,G)=0$. Then from \eqref{eqn:for-4.2}, $E_1$ is $G$-invariant. So, by Lemma \ref{Lemma:autblowup} allows us to regard $G \subseteq \aut(V;C)$, and $\phi$ is $G$-equivariant. By Theorem \ref{imprimitivebcdp} and the classification for family \textnumero 3.31 (Table \ref{table:amitsur}), it follows that 
\[
\am(X,G)=\am(V,G) \subseteq \zz/2\zz. 
\]

Next, suppose that $\autp(X,G)= \zz/2\zz$. There are three cases to consider for $G$.

If $\autp(X,G)=\langle \widetilde{\sigma}^*\rangle$, then $\pic(X)^G=\zz[h_1+h_2,E_5,\widetilde{S_+}]$. 
Thus, from the relation \eqref{eqn:for-4.2}, we find that $E_1$ is $G$-invariant. So, $\phi$ is $G$-equivariant, and Theorem \ref{imprimitivebcdp} yields $\am(X,G)=\am(V,G)$. 
From the classification of $\am(V,G)$ using \cite[Proposition 3.1]{sharma2026amitsurgroupsprimitivefano}, we get $\am(V,G)=0$. Thus, $\am(X,G)=0$. 

\smallskip
Now if $\autp(X,G)= \langle \iota^*\rangle$, then  $\pic(X)^G=\zz[h_1,h_2, E_5+\widetilde{S_+}]$. Note that $m(h_i) \; | \; 2$. 
Let $D:= S_- +\widetilde{S_+}$. Since $-K_X$ is $G$-linearizable, relation \eqref{eqn:K_X-for-4.2} gives $\overline{E_5+\widetilde{S_+}} =0$. We obtain 
\[
\am(X,G) = \langle \overline{h_1}, \overline{h_2}\rangle \subseteq (\zz/2\zz)^2.
\]

Finally, if $\autp(X, G)=\langle \widetilde{\sigma}^*\iota^*\rangle$. Then $\pic(X)^G = \zz[h_1+h_2, E_5 +\widetilde{S_+}]$. Set $D_1:= h_1+h_2$ and $D_2:= E_5 +\widetilde{S_+}$.  Combined with the $G$-linearizability of $-K_X=2D_1+D_2$, the relation \eqref{eqn:K_X-for-4.2} gives $\overline{2D_1}+\overline{D_2}=0$. 
Since $m(D_1)\;|\;2$, we get $\overline{D_2}=0$, and 
\[
\am(X,G)= \langle \overline{D_1}\rangle \subseteq \zz/2\zz.
\]

\end{proof}

\section{Other blow-ups}
In this section, we discuss the Amitsur groups for the following families of smooth Fano threefolds
\begin{center}
\textnumero 3.7,\; \textnumero 3.8,\; \textnumero 4.4,\; \textnumero 5.1. 
\end{center}

\subsection{Blow-up of the Fano threefold from \textnumero 2.32} We recall that the unique Fano threefold $W$ in family \textnumero $2.32$ is a divisor of bidegree $(1,1)$ on $\pp^2\times\pp^2$, and from Table \ref{table:amitsur}, $\am(W,G)$ is isomorphic to $\zz/3\zz$ or trivial. 
Let $C:= D_1\cap D_2$ denote the smooth intersection of two divisors $D_1,D_2 \in |-\frac{1}{2}K_W|$. Let $\operatorname{pr}_i$ denote the projection of $\pp^2\times\pp^2$ onto its $i$th factor, for $i=1,2$. Set $H_i:= \big(\operatorname{pr}_i^*\mathcal{O}_{\pp^2}(1)\big)|_W$. Then $D_1, D_2\in |H_1+H_2|$. 

A Fano threefold $X$ in family \textnumero 3.7 is the blow-up of $W$ along the curve $C$. Let $\pi : X \to W$ be the blow-up map. Then $\pic(X)=\zz[\pi^*H_1,\pi^*H_2,E]$, where $E$ is the exceptional divisor, and $-K_X = \pi^*(2H_1+2H_2)-E$. 

Let $G\subseteq \aut(X)$ be a finite group. From the $G$-action on $\pic(X)$ noted in \cite{sharma2026actionspicardgroupsmooth}, we observe that $E$ is $G$-invariant. So, Lemma \ref{Lemma:autblowup} implies that we may regard $G$ as a subgroup of $\aut(W;C)$, and consequently, $\pi$ is $G$-equivariant. It now follows from Theorem \ref{imprimitivebcdp} that 
\begin{equation}   \label{eqn:bound-3.7}
   \am(X,G)=\am(W,G), 
\end{equation}
and $E$ is $G$-linearizable.

Thus, $\am(X,G)\subseteq \zz/3\zz$. In the following lemma, we show that this upper bound is sharp. 

Let $([x_0:x_1:x_2],[y_0:y_1:y_2])$ be projective coordinates on  $\pp^2\times\pp^2$. 
 
\begin{lemma}   \label{Am:3.7}
    Let $X$ be a smooth Fano threefold in \textnumero 3.7. Let $G\subseteq \aut(X)$ be a finite group. Then $\am(X,G)$ is isomorphic to  $\zz/3\zz$ or trivial. 
\end{lemma}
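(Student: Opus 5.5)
The upper bound is already in hand: by \eqref{eqn:bound-3.7}, $\am(X,G)=\am(W,G)\subseteq\zz/3\zz$. What remains is to construct a member $X$ of \textnumero 3.7 and a finite group $\mathcal{G}\subseteq\aut(X)$ with $\am(X,\mathcal{G})\cong\zz/3\zz$. By Lemma \ref{Lemma:autblowup} and \eqref{eqn:bound-3.7}, it suffices to find a finite group $\mathcal{G}\subseteq\aut(W)$ with $\am(W,\mathcal{G})\cong\zz/3\zz$ that preserves a smooth irreducible curve $C=D_1\cap D_2$ with $D_1,D_2\in|H_1+H_2|$.

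For the group I would take a Heisenberg-type group acting diagonally on both factors. Let $\omega$ be a primitive cube root of unity and define
\[
\sigma:([x_0:x_1:x_2],[y_0:y_1:y_2])\mapsto([x_0:\omega x_1:\omega^2x_2],[y_0:\omega^2y_1:\omega y_2]),
\]
\[
\tau:([x_0:x_1:x_2],[y_0:y_1:y_2])\mapsto([x_1:x_2:x_0],[y_1:y_2:y_0]).
\]
Put $\mathcal{G}:=\langle\sigma,\tau\rangle\cong(\zz/3\zz)^2$. Both generators preserve $W=\{x_0y_0+x_1y_1+x_2y_2=0\}$, since the action on the $y$-coordinates is contragredient to the action on the $x$-coordinates.

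On each factor $\pp^2$ the linear lifts $S,T$ satisfy $ST=\omega^{\pm1}TS$. Hence the action on $\pp^2_x$ does not lift to $\operatorname{GL}_3(\cc)$, and $H_1$ is not $\mathcal{G}$-linearizable on $\pp^2\times\pp^2$. To pass to $W$, I would use the identification $\am(W,G)$ from Table \ref{table:amitsur}. Concretely, $\pic(W)=\zz[H_1,H_2]$ and the restriction map is an isomorphism by Proposition \ref{thm:lefschetz}, so by Proposition \ref{prop:g-eq-morphism-sameAmitsur} we get $\am(W,\mathcal{G})=\am(\pp^2\times\pp^2,\mathcal{G})$. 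This group is generated by $\overline{H_1}$ and $\overline{H_2}$, which have opposite classes. It is therefore $\zz/3\zz$, provided $\overline{H_1}\neq0$; that was just shown.

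Next comes the curve. The $(\zz/3\zz)^2$-module $H^0(W,H_1+H_2)$ has dimension $8$, and on it the lifting group acts with trivial commutator, since the $\omega$ factors cancel. It therefore decomposes into characters of $\mathcal{G}$. I would take two semi-invariant bilinear forms, for instance
\[
f_1=x_0y_1+x_1y_2+x_2y_0,\qquad f_2=x_0y_2+x_1y_0+x_2y_1
\]
modulo the defining equation of $W$, perhaps with coefficients added inside eigenspaces. Each $\mathbb{V}(f_i)$ is then $\mathcal{G}$-invariant, so $C:=W\cap\mathbb{V}(f_1)\cap\mathbb{V}(f_2)$ is $\mathcal{G}$-invariant.

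The main obstacle is checking that $C$ is smooth and irreducible. I would verify this with Magma, as is done elsewhere in the paper. If this particular pair fails, I would instead take generic linear combinations within a single character eigenspace, which should have dimension $2$ or more for suitable characters. Once that check passes, blowing up $C$ gives a member $X$ of \textnumero 3.7 with $\mathcal{G}\subseteq\aut(X)$, and \eqref{eqn:bound-3.7} yields $\am(X,\mathcal{G})\cong\zz/3\zz$.
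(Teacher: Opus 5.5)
Your upper bound, your $W$, your Heisenberg group $\mathcal{G}\cong(\zz/3\zz)^2$ (the paper's group with the generators swapped) and your argument that $\am(W,\mathcal{G})\cong\zz/3\zz$ all agree with the paper. Strictly, the dichotomy as displayed already follows from your first sentence; the construction is what gives sharpness. The gap is the curve, and it is fatal as written.

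\textbf{Your curve is six lines.} Write $P$ for the cyclic permutation matrix. Your three forms are $x^{T}y$, $f_1=x^{T}Py$ and $f_2=x^{T}P^{2}y$. Since $I,P,P^2$ commute, the discrete Fourier transform diagonalizes them simultaneously. Put $u_j=\sum_i\omega^{ij}x_i$ and $v_j=\sum_i\omega^{-ij}y_i$. Then $\sum_j\omega^{jm}u_jv_j=3\,x^{T}P^{m}y$, so
\[
C=\{u_0v_0=u_1v_1=u_2v_2=0\},
\]
which is a cycle of six lines, i.e.\ a degenerate elliptic curve. Concretely, $\{x_0+x_1+x_2=0\}\times\{[1:1:1]\}$ lies on all three hypersurfaces. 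Also $\operatorname{pr}_1(C)$ is the triangle $x_0^3+x_1^3+x_2^3-3x_0x_1x_2=0$. So blowing up $C$ does not give a member of \textnumero 3.7.

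\textbf{The fallback does not exist.} The lifted group acts on $H^0(\pp^2\times\pp^2,H_1+H_2)=V\otimes V^{\vee}\cong\operatorname{End}(V)$ by conjugation. This is the regular representation of $(\zz/3\zz)^2$: each character occurs exactly once, spanned by $x^{T}My$ with $M=D^aP^b$ and $D=\operatorname{diag}(1,\omega,\omega^2)$. Restricting to $W$ only removes the trivial character $x^{T}y$. So every eigenspace in $H^0(W,H_1+H_2)$ is a line, not of dimension $\geq 2$.

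Consequently the $\mathcal{G}$-invariant pencils are exactly spans of two eigen-forms. There are finitely many and no generic choice. Any pair $\{M,M^2\}$ from a single cyclic subgroup gives a hexagon by the argument above.

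\textbf{The repair.} Take non-commuting $M_1,M_2$, e.g.\ $DP$ and $DP^2$. These give the paper's forms $x_0y_1+\omega x_1y_2+\omega^2x_2y_0$ and $x_0y_2+\omega x_1y_0+\omega^2x_2y_1$.
\begin{itemize}
\item The fibre of $C\to\pp^2_x$ over $x$ is a point unless $x$ is a common eigenvector of $(DP)^{T}$ and $(DP^2)^{T}$. This never happens, since these generate an irreducible group.
\item The image is the smooth cubic $\det[\,x\mid(DP)^{T}x\mid(DP^2)^{T}x\,]=x_0^3+\omega^2x_1^3+\omega x_2^3=0$.
\end{itemize}
So this $C$ is a smooth elliptic curve, consistent with the paper's Magma check. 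With it, the rest of your argument goes through unchanged.
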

\begin{proof}
Suppose $\autp(X,G)=\zz/2\zz$. Then $\pic(X)^G=\zz[\pi^*(H_1+H_2),E]$. We recall a part of the proof of \cite[Lemma 5.4]{sharma2026amitsurgroupsprimitivefano}: the natural embedding $W \hookrightarrow (\pp^2)^2$ is $G$-equivariant due to Remark \ref{remark:equivarince-of-embedding}, and since
\[
H_1+H_2 = -K_{\pp^2\times\pp^2}|_W+K_W,
\]
$H_1+H_2$ is $G$-linearizable by Proposition \ref{KXlinearizable}. Since $\pi$ is $G$-equivariant, it follows that $\pi^*(H_1+H_2)$ is $G$-linearizable. Hence, $\am(X,G)=0$.  

If $\autp(X,G)=0$, then we show that that the bound on $\am(X,G)$ is sharp. We use the following construction of $X$ from \cite[\S 4.3]{calabi}--- let $W$ be given by the hypersurface $x_0y_0 + x_1y_1 + x_2y_2=0$ in $\mathbb{P}^2 \times \mathbb{P}^2$, and consider the automorphisms of $\pp^2\times\pp^2$
\begin{align*}
    \sigma : ([x_0:x_1:x_2],[y_0:y_1:y_2]) &\mapsto ([x_2:x_0:x_1],[y_2:y_0:y_1]) \\
    \tau : ([x_0:x_1:x_2],[y_0:y_1:y_2]) &\mapsto ([x_0:\omega x_1:\omega^2 x_2],[y_0:\omega^2 y_1:\omega y_2]),
\end{align*}
where $\omega$ is a primitive cube root of unity. Set $\mathcal{G}:=\langle \sigma, \tau \rangle \cong (\zz/3\zz)^2$. Observe that $W$ is $\mathcal{G}$-invariant, and $\autp(W,\mathcal{G})=0$. Consider the curve $C$ be given by the intersection of hypersurfaces 
\[
\begin{cases}
    x_0y_1 + \omega x_1y_2 + \omega^2x_2y_0 &=0\\
    x_0y_2+\omega x_1y_0+\omega^2x_2y_1 &= 0.
\end{cases}
\]
Then $C$ is $\mathcal{G}$-invariant. We also verify the intersection $C\cap W$ to be smooth and irreducible \cite{magma}. Then the blow-up of $W$ in $C$ gives a Fano threefold $X$ in \textnumero 3.7. Thus, $\mathcal{G}$ lifts to a subgroup of $\aut(X)$, we identify it with $\mathcal{G}$.

Finally, since the induced $\mathcal{G}$-action on $\pp^2_x$ and $\pp^2_y$ does not lift to a group homomorphism to $\operatorname{GL}_3(\cc)$, $H_1$ and $H_2$ are not $\mathcal{G}$-linearizable. Hence, $\am(W,\mathcal{G})\cong \zz/3\zz$, and from \eqref{eqn:bound-3.7}, we obtain
\[\am(X,\mathcal{G}) \cong \zz/3\zz.\]
\end{proof}

\subsection{Blow-up of a Fano threefold from \textnumero 2.24} Let $Y$ be a Fano threefold in family \textnumero 2.24, realized as a divisor of bidegree $(1,2)$ in $\pp^2\times\pp^2$. Let $\mathscr{C}:=(\{p\}\times \mathbb{P}^2) \cap Y$ for $p\in \pp^2$; then $\mathscr{C}$ is a smooth conic in the second factor $\mathbb{P}^2$. A smooth Fano threefold $X$ in family \textnumero 3.8 is the blow-up of $Y$ in $\mathscr{C}$ \cite[Table 3]{MM81classtable}. 

Below we use an alternative description of $X$ from \cite[Lemma 8.6]{izv19}. Let $\operatorname{pr}_i : \mathbb{P}^1\times\mathbb{P}^2$ be natural projections to $\pp^1$ and $\pp^2$, respectively. Set $H_1:=\operatorname{pr}_1^*\mathcal{O}_{\mathbb{P}^1}(1)$ and $H_2:=\operatorname{pr}_2^*\mathcal{O}_{\mathbb{P}^2}(1)$.
Let $D_1\in |2H_2|$ and $D_2\in |H_1+2H_2|$ be two divisors on $\pp^1\times\pp^2$, and $C:=D_1\cap D_2$ be their complete intersection. Then $X$ is the blow-up of $\mathbb{P}^1\times \mathbb{P}^2$ in $C$. Let $\pi: X \to \mathbb{P}^1\times\mathbb{P}^2$ be the blow-up map and $E$ denote the exceptional divisor. Then $\pic(X)=\zz[\pi^*H_1,\pi^*H_2,E]$.

Recall that $\autp(X)=0$, so it suffices to assume that $G \subseteq \aut(\pp^1\times\pp^2;C)$, and $\pi$ is $G$-equivariant. Furthermore, since $G$ acts trivially on each factor of $\pp^1\times\pp^2$, the projection mappings $\operatorname{pr}_i$ are also $G$-equivariant. Therefore, the pullbacks of $-K_{\pp^1}$ and $-K_{\pp^2}$ are $G$-linearizable, and we have 
\begin{equation}   \label{eqn:relations-for-3.8}
    2\overline{\pi^*H_1} =0,\qquad 3\overline{\pi^*H_2}=0.
\end{equation}

\smallskip
\begin{lemma}  \label{Am:3.8}
    Let $X$ be a smooth Fano threefold in \textnumero 3.8. Let $G\subseteq \aut(X)$ be a finite group. Then $\am(X,G)=0$. 
\end{lemma}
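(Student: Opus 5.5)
We work on $\pp^1\times\pp^2$ and show that both generators $\pi^*H_1$ and $\pi^*H_2$ of $\pic(X)$ modulo $E$ are $G$-linearizable. Since $\autp(X)=0$, Lemma \ref{Lemma:autblowup} lets us regard $G\subseteq\aut(\pp^1\times\pp^2;C)$. Theorem \ref{imprimitivebcdp} then gives $\am(X,G)=\am(\pp^1\times\pp^2,G)$, and $E$ is $G$-linearizable. So it suffices to show $\overline{H_1}=\overline{H_2}=0$ on $\pp^1\times\pp^2$. The relations \eqref{eqn:relations-for-3.8} already give $2\overline{H_1}=0$ and $3\overline{H_2}=0$.

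Next we use the complete intersection description $C=D_1\cap D_2$ with $D_1\in|2H_2|$ and $D_2\in|H_1+2H_2|$. Since $G$ acts trivially on $\pic(\pp^1\times\pp^2)$, both degrees are $G$-invariant. They are also distinct, so Lemma \ref{lemma:G-invariance-for-CI}(ii) applies to the lifted action on the Cox ring. It produces $G$-semi-invariant equations $f_1',f_2'$ of the same degrees cutting out $C$.

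The ordering required in that lemma is satisfied. We have $R_{D_1-D_2}=R_{-H_1}=0$. The other difference, $R_{D_2-D_1}=R_{H_1}$, is nonzero, so we order $D_1$ first. This is exactly the situation treated in the proof of (ii): we may modify $f_2$ by multiples of $f_1$. By Remark \ref{remark:CI}, $\mathcal{O}(D_1)$ and $\mathcal{O}(D_2)$ are $G$-linearizable. Hence
\[
\overline{2H_2}=0,\qquad \overline{H_1+2H_2}=0 .
\]

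Combining these relations finishes the argument. From $\overline{2H_2}=0$ and $\overline{3H_2}=0$ we get $\overline{H_2}=0$. Then $\overline{H_1}=\overline{D_2}-2\overline{H_2}=0$. Therefore $\am(\pp^1\times\pp^2,G)=0$, and so $\am(X,G)=0$.

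The main point needing care is the application of Lemma \ref{lemma:G-invariance-for-CI}(ii). We need the ideal of $C$ to be minimally generated by $f_1,f_2$ in these degrees, which holds because $C$ is a complete intersection. We also need the graded pieces to be $\widetilde G$-stable, which holds because $\autp=0$.

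An alternative route for $\overline{H_2}$ avoids the Cox ring. The conic $\operatorname{pr}_2(C)\subset\pp^2$ is $G$-invariant. Its equation spans a one-dimensional $G_{\mathcal{O}(2)}$-subrepresentation of $H^0(\pp^2,\mathcal{O}(2))$. Proposition \ref{Gsubrepn_result} then gives $\overline{2H_2}=0$, and hence $\overline{H_2}=0$ as before.
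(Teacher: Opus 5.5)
Your proof is correct and follows essentially the same argument as the paper. You apply Lemma \ref{lemma:G-invariance-for-CI}(ii) and Remark \ref{remark:CI} to obtain $\overline{2H_2}=\overline{H_1+2H_2}=0$ on $\pp^1\times\pp^2$, then combine this with $3\overline{H_2}=0$ from the $G$-equivariance of the projections. Your explicit check of the ordering $R_{D_1-D_2}=R_{-H_1}=0$ is a useful detail the paper leaves implicit, and your alternative route to $\overline{2H_2}=0$ via the invariant conic $\operatorname{pr}_2(C)$ (as in Lemma \ref{Am:3.22}) is valid, though it still needs the $D_2$ step to kill $\overline{H_1}$.
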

\begin{proof}
Since $C$ is $G$-invariant, it follows from Lemma \ref{lemma:G-invariance-for-CI} (ii) that $D_1$ is $G$-invariant and there is divisor $D_2'\sim D_2$ such that $D_2'$ is $G$-invariant. Moreover, by Remark \ref{remark:CI}, both $D_1$ and $D_2'$ are $G$-linearizable on $\pp^1\times\pp^2$. Consequently, 
\[
2\overline{\pi^*H_2}= 0, \qquad \overline{\pi^*H_1+2\pi^*H_2}=0.
\]
Together with relations from \eqref{eqn:relations-for-3.8}, we get $\overline{\pi^*H_1}=\overline{\pi^*H_2}=0$. Hence, $\am(X,G)=0$.
\end{proof}

\subsection{Blow-up of a Fano threefold from \textnumero 3.19}
Recall the family \textnumero 3.19 from \S 3. A Fano threefold $Y$ in this family is the blow-up of a quadric $Q\subset \pp^4$ in two points $p$ and $q$. Let $\phi: Y\to Q$ be the blow-up map with exceptional divisor $E_p$ and $E_q$, and $C$ denote a conic containing $p,q$. 

Mori-Mukai describe a Fano threefold $X$ in family \textnumero 4.4 as the blow-up of $Y$ in the strict transform of $C$. Let $\pi:X \to Y$ denote this blow-up, and $F$ be its exceptional divisor. 

Let $H:=\pi^*\phi^*\mathcal{O}_Q(1)$, and $\widetilde{E_p}, \widetilde{E_q}$ denote the pullbacks to $X$ of $E_p,E_q$, respectively. Then 
\[
\pic(X)=\zz[H, \widetilde{E_p},\widetilde{E_q},F].
\]

\begin{lemma}   \label{Am:4.4}
     Let $X$ be a smooth Fano threefold in \textnumero 4.4. Let $G\subseteq \aut(X)$ be a finite group. Then $\am(X,G)=0$. 
\end{lemma}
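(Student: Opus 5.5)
We reduce to the blow-up $\pi\colon X\to Y$ and then to $\phi\colon Y\to Q$, separating cases according to $\autp(X,G)$. By the classification in \cite{sharma2026actionspicardgroupsmooth} (Table \ref{table:amitsur}), $\autp(X)$ is either trivial or $\zz/2\zz$. In the latter case the nontrivial element should exchange $\widetilde{E_p}$ and $\widetilde{E_q}$ and fix $H$ and $F$. This needs to be checked against the cited lemma. If it holds, $F$ is $G$-invariant in every case, because $F$ is the exceptional divisor of the extremal contraction onto the strict transform of the conic, which is symmetric in $p$ and $q$.

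\textbf{Step 1.} Since $F$ is $G$-invariant, Lemma \ref{Lemma:autblowup} lets us regard $G$ as a subgroup of $\aut(Y;\widetilde{C})$. Then $\pi$ is $G$-equivariant, and Theorem \ref{imprimitivebcdp} gives $\am(X,G)=\am(Y,G)$.

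\textbf{Step 2.} Lemma \ref{Am:3.19} gives $\am(Y,G)=0$ for every finite $G\subseteq\aut(Y)$. Hence $\am(X,G)=0$.

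If instead the Picard action of $\autp(X)$ moves $F$, for instance by exchanging $F$ with some $\widetilde{E_p}$ under a different contraction, I would argue directly in $\pic(X)^G$. The relation $\overline{-K_X}=0$ from Proposition \ref{KXlinearizable} should handle the orbit sum. If that is not enough, I would bound Amitsur periods of the remaining invariant generators through the congruences of Proposition \ref{prop:amitsur-period-congruences}, using intersection numbers computed via \cite[Lemma 1.22]{prokhorov-fano-threefolds}, as was done in Lemma \ref{Am:3.9}. The aim is to show $m=1$ for each generator. I would also use that $H$ pulls back from $Q$, where $\am(Q,G)=0$ by \eqref{eqn:AmQ}; this applies once some equivariant contraction to $Q$ exists.

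The main obstacle is confirming exactly how $\autp(X)$ acts on $\pic(X)$, and in particular that $F$ is fixed. Everything after that reduces formally to the already established Lemma \ref{Am:3.19}.
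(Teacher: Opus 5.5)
Your proposal is correct and matches the paper's proof. The paper also takes from \cite[Lemma 4.23]{sharma2026actionspicardgroupsmooth} that the only nontrivial Picard action swaps $\widetilde{E_p}$ and $\widetilde{E_q}$ while fixing $H$ and $F$; then Lemma \ref{Lemma:autblowup}, Theorem \ref{imprimitivebcdp} and Lemma \ref{Am:3.19} give $\am(X,G)=\am(Y,G)=0$, so your fallback paragraph is not needed.
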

\begin{proof}
From Table \ref{table:amitsur}, $\autp(X)=\zz/2\zz$. By the description in \cite[Lemma 4.23]{sharma2026actionspicardgroupsmooth}, we recall that the only nontrivial $G$-action on $\pic(X)$ swaps $\widetilde{E_p}$ and $\widetilde{E_p}$, while fixing $H$ and $F$. Thus, $F$ is $G$-invariant for all $G$. By Lemma \ref{Lemma:autblowup}, we may regard $G$ as a subgroup of $\aut(Y;C)$ so that $\pi$ is $G$-equivariant. Hence, by Theorem \ref{imprimitivebcdp} and Lemma \ref{Am:3.19}, it follows that 
\[
\am(X,G)=\am(Y,G)=0.
\]
\end{proof}

\subsection{Blow-up of a Fano threefold from \textnumero 2.29}
Let $C$ be a smooth conic on a quadric $Q\subset \pp^4$, and let $p_1, p_2, p_3 \in Q$ be three distinct points on $C$. 
The blow-up $\pi_1: Y\to Q$ along $C$ is a Fano threefold $Y$ in family \textnumero 2.29 \cite[Table 2]{MM81classtable}. 

A Fano threefold $X$ in \textnumero 5.1 is obtained by blowing-up $Y$ along $e_1,e_2,e_3$, where $e_i$ is the exceptional line on $Y$ contracted by $\pi_1$ to $p_i$, for $1\leq i \leq 3$. 

Using \cite[Corollary 2.6]{MM86}, the following is an equivalent construction of $X$: let $f: \widetilde{Y} \to Q$ denote the blow-up of $Q$ in $p_1,p_2,p_3$, with exceptional divisors $E_1,E_2,E_3$ and $\widetilde{C}$ denote the strict transform of the conic $C$. Then $X$ is obtained as the blow-up $\pi: X\to \widetilde{Y}$ along $\widetilde{C}$. Setting $E_7$ the exceptional divisor of this blow-up, and $H:=\pi^*f^*\mathcal{O}_Q(1)$, we have
\[
\pic(X) = \zz[H]\oplus\zz[\pi^*E_1]\oplus\zz[\pi^*E_2]\oplus\zz[\pi^*E_3]\oplus\zz[E_7].
\] 

\begin{lemma}\label{Am:5.1}
    Let $X$ be a smooth Fano threefold in family \textnumero 5.1. Let $G\subseteq \aut(X)$ be a finite group. Then $\am(X,G)=0$. 
\end{lemma}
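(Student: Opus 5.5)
The plan is to reduce to the quadric $Q$ and use \eqref{eqn:AmQ}, exactly as in the earlier blow-ups of $Q$ (Lemmas \ref{Am:3.10,3.15,3.20}, \ref{Am:3.19}, \ref{Am:4.4}).

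First I will recall from \cite{sharma2026actionspicardgroupsmooth} and Table \ref{table:amitsur} the possible images $\autp(X,G)$. Geometrically, the three points $p_1,p_2,p_3\in C$ play symmetric roles, so I expect $\autp(X)$ to be a subgroup of $S_3$. This subgroup permutes the classes $\pi^*E_1,\pi^*E_2,\pi^*E_3$, or more precisely the classes of the exceptional divisors over the $p_i$. It fixes $H$ and the class $E_7$. The point to check is that no element of $\autp(X,G)$ mixes $E_7$ or $H$ with the other generators. I will verify this from the extremal contractions listed in \cite{sharma2026actionspicardgroupsmooth} and \cite{MM86}: the divisor $E_7$ should be the exceptional divisor of the unique extremal contraction of its type. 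If this description holds, then $E_7$ is $G$-invariant for every finite $G\subseteq\aut(X)$. Also, the sum of the three point-exceptional divisors is $G$-invariant.

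Next, Lemma \ref{Lemma:autblowup} applied to $\pi$ shows that $G$ descends to $\aut(\widetilde{Y};\widetilde{C})$ and that $\pi$ is $G$-equivariant. Since $G$ permutes $E_1,E_2,E_3$, the total exceptional divisor $E_1+E_2+E_3$ of $f$ is $G$-invariant. Applying Lemma \ref{Lemma:autblowup} again, as in Lemma \ref{Am:3.19}, the action descends to $\aut(Q;\{p_1,p_2,p_3\}\cup C)$ and $f$ is $G$-equivariant. Theorem \ref{imprimitivebcdp}, used twice, then gives
\[
\am(X,G)=\am(\widetilde{Y},G)=\am(Q,G)=0
\]
by \eqref{eqn:AmQ}.

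The main obstacle is the first step: confirming that every element of $\autp(X)$ preserves $E_7$ and the set $\{\pi^*E_i\}$. Family \textnumero 5.1 might have extra symmetry that swaps $E_7$ with another exceptional divisor, for example one coming from the alternative description as the blow-up of $Y$ along $e_1,e_2,e_3$.

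If such an element exists, I would fall back on a direct argument. I would list $\pic(X)^G$ for that case and use the $G$-linearizability of $-K_X$ (Proposition \ref{KXlinearizable}). I would also use the linearizability of exceptional divisors of those equivariant contractions that do exist, noting that an invariant sum of disjoint exceptional divisors is linearizable because it is the exceptional divisor of an equivariant blow-up. The remaining classes would be controlled with the congruences of Proposition \ref{prop:amitsur-period-congruences}, computing $D^3$ and $D^2\cdot K_X$ via \cite[Lemma 1.22]{prokhorov-fano-threefolds} as in Lemma \ref{Am:3.9}. The aim is to show that each remaining generator has Amitsur period $1$.
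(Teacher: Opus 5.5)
Your proposal is correct and is essentially the paper's argument. Both take the $S_3$-action on $\pic(X)$ from \cite{sharma2026actionspicardgroupsmooth}, which fixes $H$ and $E_7$ and permutes the $\pi^*E_i$; both then make $X\to Q$ $G$-equivariant and conclude with Theorem \ref{imprimitivebcdp} and \eqref{eqn:AmQ}. The paper gets equivariance in one step from the $G$-invariance of $H$, since $\iota\circ f\circ\pi$ is the morphism $\phi_{|H|}$. This avoids your appeal to Lemma \ref{Lemma:autblowup} for $\pi\colon X\to\widetilde{Y}$, whose hypothesis fails because $\widetilde{Y}$ is not Fano ($-K_{\widetilde{Y}}\cdot\widetilde{C}=6-6=0$), although the universal-property argument behind the lemma still works there.
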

\begin{proof}
From \cite[Lemma 4.24]{sharma2026actionspicardgroupsmooth}, we recall 
that $\autp(X)=S_3$. So, for groups $G\subseteq \aut(X)$, the divisor classes $H$ and $E_7$ are preserved and $G$ acts on $\{\pi^*E_1,\pi^*E_2,\pi^*E_3\}$ by permutation. Let $\iota\colon Q\hookrightarrow\pp^4$ denote the natural embedding. 
Note that $|H|$ defines the map 
$\phi_{|H|}:=\iota\circ f\circ\pi: X \to \pp^4$. Since $H$ is $G$-invariant, $\phi_{|H|}$ is a $G$-equivariant map. Thus, $\phi_{|H|}(X)=Q$ and the set $\{p_1,p_2,p_3\}$ are preserved under the $G$-action on $\pp^4$, and we may regard $G\subseteq \aut(Q,\{p_1,p_2,p_3\})$. Consequently, the composition $f\circ\pi:X \to Q$ is $G$-equivariant. By Theorem \ref{imprimitivebcdp} and \eqref{eqn:AmQ}, it follows that $\am(X,G) =0$.
\end{proof} 

\section{Direct Products}   \label{section:directproducts}
We study Amitsur groups for Fano threefolds that are products of the form $X\times Y$ and for which $\aut(X\times Y) \cong \aut(X)\times \aut(Y)$. This will yield an upper bound for the Amitsur groups of Fano threefolds belonging to families
 \begin{center}
 \textnumero 6.1, \textnumero 7.1, \textnumero 8.1, \textnumero 9.1, \textnumero 10.1.
 \end{center}
By Mori-Mukai classification, these families consist of products $\pp^1\times S_d$, where $S_d$ is a del Pezzo surface of degree $d=5, \dots , 1$. Each $S_d$ is obtained by blowing up $\pp^2$ at $9-d$ points in general position. 

\smallskip
We identify $\aut(X\times Y)$ with 
$\aut(X)\times \aut(Y)$ under the isomorphism 
\[
f: \aut(X)\times\aut(Y)\xrightarrow{\sim}\aut(X\times Y), \qquad (h,k) \to ((x,y) \mapsto (hx,ky)).
\]

Let $\operatorname{pr}_1: \aut(X \times Y)\to \aut(X)$ and $\operatorname{pr}_2: \aut(X \times Y)\to \aut(Y)$ denote the canonical projections. For a finite group $G\subseteq \aut(X\times Y)$, set $H:=\operatorname{pr}_1(G)$ and $K:=\operatorname{pr}_2(G)$. Then we may regard $G$ as a subgroup of $H\times K$. 

Finally, let $\pi_1 : X \times Y \to X$ and $\pi_2: X \times Y \to Y$ denote the projections. These are equivariant with respect to $\operatorname{pr}_1|_G$ and $\operatorname{pr}_2|_G$ .

\begin{lemma}  \label{lemma:directproducts-pic}
    Let $X$ and $Y$ be smooth projective varieties, and let $\pi_1,\pi_2$ denote the projections from $X\times Y$. Assume that the natural homomorphisms $\aut(X)\times \aut(Y) \to \aut(X\times Y)$ and 
    \[
    \pic(X \times Y) \to \pic(X)\oplus \pic(Y), \qquad (\mathcal{L},\mathcal{M})\mapsto \pi_1^*\mathcal{L}\otimes\pi_2^*\mathcal{M}
    \]
    are isomorphisms. 
    Let $G\subseteq \aut(X \times Y)$ be a finite group, and let $H\subseteq \aut(X)$ and $K\subseteq \aut(Y)$ denote its images under the corresponding projections. Then 
    \[
    \pic(X\times Y)^G \cong \pic(X)^H \oplus \pic(Y)^K.
    \]
\end{lemma}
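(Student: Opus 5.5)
The plan is to identify $\pic(X\times Y)$ with $\pic(X)\oplus\pic(Y)$ through the given isomorphism $\Phi(\mathcal{L},\mathcal{M})=\pi_1^*\mathcal{L}\otimes\pi_2^*\mathcal{M}$, and then compare the $G$-action on the left with the product action of $H$ and $K$ on the right. Write $h:=\operatorname{pr}_1(g)$ and $k:=\operatorname{pr}_2(g)$ for $g\in G$, so that $g=f(h,k)$. Since $\pi_1\circ g=h\circ\pi_1$ and $\pi_2\circ g=k\circ\pi_2$, we get
\[
g^*\Phi(\mathcal{L},\mathcal{M})\cong \pi_1^*h^*\mathcal{L}\otimes\pi_2^*k^*\mathcal{M}=\Phi(h^*\mathcal{L},k^*\mathcal{M}).
\]
Thus $\Phi$ is equivariant, where $g$ acts on $\pic(X)\oplus\pic(Y)$ by $(h^*,k^*)$.

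Next I compute the invariants. Because $\Phi$ is injective, the class $\Phi(\mathcal{L},\mathcal{M})$ is $G$-invariant exactly when $h^*\mathcal{L}\cong\mathcal{L}$ and $k^*\mathcal{M}\cong\mathcal{M}$ for every $g\in G$. The maps $\operatorname{pr}_1|_G\colon G\to H$ and $\operatorname{pr}_2|_G\colon G\to K$ are surjective by the definition of $H$ and $K$. So as $g$ runs over $G$, $h$ runs over all of $H$ and $k$ runs over all of $K$, independently in each coordinate. The invariance condition therefore says precisely that $\mathcal{L}\in\pic(X)^H$ and $\mathcal{M}\in\pic(Y)^K$. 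Restricting $\Phi$ then gives the isomorphism $\pic(X)^H\oplus\pic(Y)^K\cong\pic(X\times Y)^G$.

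I do not expect a real obstacle; the argument is essentially formal. The one point needing care is that the coordinate conditions decouple even though $G$ need not equal $H\times K$. This works because each condition involves only one coordinate of $g$, so only the surjectivity of the projections is used. The isomorphism $\aut(X)\times\aut(Y)\cong\aut(X\times Y)$ is needed only so that every $g$ has the product form $f(h,k)$.
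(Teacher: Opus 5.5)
Your proposal is correct and follows essentially the same route as the paper. The paper likewise views $G$ inside $H\times K$, notes that $G$-invariance of $(\mathcal{L},\mathcal{M})$ means $h^*\mathcal{L}\cong\mathcal{L}$ and $k^*\mathcal{M}\cong\mathcal{M}$ for the components of each $g$, and concludes via $(\pic(X)\oplus\pic(Y))^{H\times K}\cong\pic(X)^H\oplus\pic(Y)^K$. Your explicit check that $\Phi$ intertwines the two actions, and your use of surjectivity of $\operatorname{pr}_1|_G$ and $\operatorname{pr}_2|_G$, spell out steps the paper leaves implicit.
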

\begin{proof}
Since $G$ can be identified with a subgroup of $H\times K$, we get
\[
(\pic(X)\oplus \pic(Y))^{H\times K} \hookrightarrow (\pic(X)\oplus \pic(Y))^{G} \cong \pic(X\times Y)^G.
\]
Conversely, let $(\mathcal{L},\mathcal{M})\in (\pic(X)\oplus\pic(Y))^G$. Then for any $g\in G$, we have 
\[
h^*\mathcal{L}\cong \mathcal{L}, \quad k^*\mathcal{M}\cong \mathcal{M},
\]
where $h=\operatorname{pr}_1(g)$ and $k=\operatorname{pr}_2(g)$. Finally, the result follows from the fact that $(\pic(X)\oplus \pic(Y))^{H\times K} \cong \pic(X)^H \oplus \pic(Y)^K$. 
\end{proof}

Under the hypothses of the previous lemma, we obtain an upper bound for $\am(X\times Y,G)$.
\begin{prop}
    Let $G\subseteq \aut(X\times Y)$ be a finite group. Then 
    \[
    \am(X,H) \oplus \am(Y,K) \twoheadrightarrow \am(X\times Y,G).
    \]
\end{prop}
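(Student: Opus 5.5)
We plan to build the surjection from the two projections $\pi_1\colon X\times Y\to X$ and $\pi_2\colon X\times Y\to Y$. By the remark preceding Lemma \ref{lemma:directproducts-pic}, $\pi_1$ is equivariant with respect to $\operatorname{pr}_1|_G\colon G\to H$, and $\pi_2$ with respect to $\operatorname{pr}_2|_G\colon G\to K$. Contravariant functoriality of the exact sequence \eqref{sequence:cohomological-for-amitsur} along such equivariant morphisms (as in \cite[Proposition 2.12]{duncan2025numericalamitsurgroup}) then gives maps
\[
\pi_1^*\colon \am(X,H)\to \am(X\times Y,G),\qquad \pi_2^*\colon \am(Y,K)\to \am(X\times Y,G).
\]
Concretely, let $\mathcal{L}\in\pic(X)^H$. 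Pulling back the extension $1\to\cc^\times\to H_{\mathcal{L}}\to H\to 1$ along $\operatorname{pr}_1|_G$ gives the lifting group $G_{\pi_1^*\mathcal{L}}$. Hence $\partial(\pi_1^*\mathcal{L})=(\operatorname{pr}_1|_G)^*\partial(\mathcal{L})$ in $H^2(G,\cc^\times)$, and similarly for $\pi_2$. We take the sum $\Phi:=\pi_1^*+\pi_2^*$, which is a homomorphism of abelian groups since $\partial$ is a homomorphism.

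For surjectivity, we work with the description $\am(X\times Y,G)=\operatorname{Im}\partial$ and use Lemma \ref{lemma:directproducts-pic}. Every class in $\pic(X\times Y)^G$ has the form $\pi_1^*\mathcal{L}\otimes\pi_2^*\mathcal{M}$ with $\mathcal{L}\in\pic(X)^H$ and $\mathcal{M}\in\pic(Y)^K$. Then
\[
\partial(\pi_1^*\mathcal{L}\otimes\pi_2^*\mathcal{M})=\partial(\pi_1^*\mathcal{L})+\partial(\pi_2^*\mathcal{M})=\Phi\bigl(\partial_X(\mathcal{L}),\partial_Y(\mathcal{M})\bigr),
\]
so every element of $\am(X\times Y,G)$ lies in the image of $\Phi$.

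The main point requiring care is the compatibility of $\partial$ with pullback along a morphism equivariant with respect to a group homomorphism $G\to H$ that need not be injective or surjective. We would verify this directly. Choose isomorphisms $\phi_h\colon h^*\mathcal{L}\to\mathcal{L}$ for $h\in H$, with cocycle $c(h,h')$. Then the isomorphisms $\pi_1^*\phi_{\operatorname{pr}_1(g)}$ serve as choices for $g^*\pi_1^*\mathcal{L}\to\pi_1^*\mathcal{L}$, and their cocycle is $c(\operatorname{pr}_1 g,\operatorname{pr}_1 g')$. This uses only $\pi_1\circ g=\operatorname{pr}_1(g)\circ\pi_1$. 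Everything else is formal given Lemma \ref{lemma:directproducts-pic}.
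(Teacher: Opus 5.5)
Your proof is correct and is essentially the paper's argument. Both rest on Lemma \ref{lemma:directproducts-pic} and on the commutative diagram comparing the sequences \eqref{sequence:cohomological-for-amitsur} for $(X,H)$, $(Y,K)$ and $(X\times Y,G)$. The only difference is which square you use. The paper uses the left square: $\pi_1^*\mathcal{L}\otimes\pi_2^*\mathcal{M}$ is $G$-linearizable whenever $\mathcal{L}$ and $\mathcal{M}$ are, and it then passes to cokernels. You work with $\operatorname{Im}\partial$ and verify the right square, $\partial(\pi_1^*\mathcal{L})=(\operatorname{pr}_1|_G)^*\partial(\mathcal{L})$, at the cocycle level; this exhibits the surjection explicitly as the restriction of $H^2(H,\cc^\times)\oplus H^2(K,\cc^\times)\to H^2(G,\cc^\times)$.
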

\begin{proof}
Consider the following diagram, 
\begin{center}
\resizebox{0.95\linewidth}{!}{$
\begin{tikzcd}[ampersand replacement=\&, column sep=large, row sep=large]
    {\operatorname{Pic}(X,H)\oplus\operatorname{Pic}(Y,K)}
    \&
    {\operatorname{Pic}(X)^H\oplus\operatorname{Pic}(Y)^K}
    \&
    {H^2(H,\mathbb{C}^*)\oplus H^2(K,\mathbb{C}^*)}
    \\
    {\operatorname{Pic}(X\times Y,G)}
    \&
    {\operatorname{Pic}(X\times Y)^G}
    \&
    {H^2(G,\mathbb{C}^*)}
    \arrow["{\epsilon_X\times\epsilon_Y}", from=1-1, to=1-2]
    \arrow[from=1-1, to=2-1]
    \arrow["{\delta_X\times\delta_Y}", from=1-2, to=1-3]
    \arrow["\sim",
        shift right, from=1-2, to=2-2]
    \arrow[from=1-3, to=2-3]
    \arrow["{\epsilon_{XY}}", from=2-1, to=2-2]
    \arrow["{\delta_{XY}}", from=2-2, to=2-3]
\end{tikzcd}
$}
\end{center}
Let $\mathcal{L}\in \pic(X)^H$ be $H$-linearizable line bundle on $X$ and $\mathcal{M}\in \pic(Y)^K$ be a $K$-linearizable line bundle on $Y$. Then $\pi_1^*\mathcal{L}\otimes\pi_2^*\mathcal{M} \in \pic(X\times Y)^G$ admits a $G$-linearization. Thus, $\operatorname{Im}\epsilon_X \oplus \operatorname{Im}\epsilon_Y \hookrightarrow \operatorname{Im}\epsilon_{XY}$. Combined with Lemma \ref{lemma:directproducts-pic}, we obtain
\[
\am(X,H)\oplus \am(Y,K) \twoheadrightarrow \am(X\times Y,G). 
\]
\end{proof}

Let $X=\pp^1\times S_d$ be a Fano threefold with $d\leq 5$. Then we have
\begin{equation}   \label{eqn:aut-for-delPezzos}
\aut(\pp^1\times S_d) \cong \aut(\pp^1) \times \aut(S_d).
\end{equation}

If $S_d$ is a $\widetilde{G}$-minimal del Pezzo surface, the Amitsur groups $\am(S_d,\widetilde{G})$ have been classified in \cite[Proposition A.7]{BCDP}. In particular, when $\operatorname{rk}\pic(X)^G=1$ and $d\leq 6$, then $\am(S_d,\widetilde{G})=0$. 
\begin{coro}  \label{amforFanodirectproducts}
    Let $X$ be a smooth Fano threefold belonging to one of the families
    \begin{center}
        \textnumero 6.1,\; \textnumero 7.1,\; \textnumero 8.1,\; \textnumero 9.1,\;, \textnumero 10.1.
    \end{center} 
    Let $G\subseteq \aut(X)$ be finite group. Then we have the following possibilities for  $\am(X,G)$
   \[
        (\mathbb{Z}/2\mathbb{Z})^3,\; \zz/2\zz\times\zz/3\zz,\; (\mathbb{Z}/2\mathbb{Z})^2,\; \zz/3\zz,\; \zz/2\zz,\; 0.
    \]
\end{coro}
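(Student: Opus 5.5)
The argument reduces to the two factors using the preceding proposition. By \eqref{eqn:aut-for-delPezzos} and the Künneth-type decomposition $\pic(\pp^1\times S_d)\cong\pic(\pp^1)\oplus\pic(S_d)$, both hypotheses of Lemma \ref{lemma:directproducts-pic} hold. The second holds because $S_d$ is rational, so $H^1(S_d,\mathcal{O})=0$ and the Picard group of the product splits. For a finite $G\subseteq\aut(X)$ write $H\subseteq\aut(\pp^1)$ and $K\subseteq\aut(S_d)$ for the two projections of $G$. The proposition then gives a surjection
\[
\am(\pp^1,H)\oplus\am(S_d,K)\twoheadrightarrow\am(X,G).
\]
It therefore suffices to bound the two summands and then to show that each quotient in the list actually occurs.

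For the first factor, $\pic(\pp^1)=\zz[\mathcal{O}(1)]$ and $-K_{\pp^1}=\mathcal{O}(2)$ is $H$-linearizable by Proposition \ref{KXlinearizable}. Hence $\am(\pp^1,H)\subseteq\zz/2\zz$, with equality for instance for the Klein four-group of \eqref{2-group} acting on one $\pp^1$. For the second factor, I will quote the classification of Amitsur groups of rational surfaces in \cite[Proposition A.7]{BCDP}. I expect it to give $\am(S_d,K)\in\{0,\zz/2\zz,(\zz/2\zz)^2,\zz/3\zz\}$ for $d\le 5$; the $(\zz/2\zz)^2$ should come from conic-bundle structures on $S_4$, for example, and $\zz/3\zz$ from $\pp^2$-type descriptions with $K$ fixing a $3$-torsion obstruction. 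If $K$ is not minimal, I will pass to a $K$-minimal model via Theorem \ref{imprimitivebcdp}, which leaves the Amitsur group unchanged, before applying the classification. The direct sum is then contained in one of $(\zz/2\zz)^3$, $\zz/2\zz\times\zz/3\zz$, $(\zz/2\zz)^2$, $\zz/3\zz$, $\zz/2\zz$ or $0$. Every quotient of such a group is again in this list, since quotients of $(\zz/2\zz)^a$ and of $\zz/6\zz$ stay within it. This proves the corollary as stated, which lists possibilities and so is an upper-bound statement.

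The main obstacle is pinning down exactly which groups $\am(S_d,K)$ can occur for each $d\le 5$, in particular whether $(\zz/2\zz)^2$ and $\zz/3\zz$ can both appear. I would first rely on \cite[Proposition A.7]{BCDP} and the remark that $\operatorname{rk}\pic(S_d)^K=1$ forces the group to vanish. If that remark cannot be used directly, I would bound the group by hand. For a conic-bundle structure $S_d\to\pp^1$, $\pic(S_d)^K$ is generated by $K_{S_d}$, the fibre class $F$ (of period dividing $2$) and possibly one more class, which caps the group at $(\zz/2\zz)^2$. For $\operatorname{rk}\pic(S_d)^K=1$ the group is trivial.

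For the realization claims, I would take $G=H\times K$ with $H$ and $K$ chosen to realize the factors separately. Restricting to a slice $\{p\}\times S_d$ or $\pp^1\times\{q\}$, together with Remark \ref{remark:equivariant-restrction-Amitsur}, shows that the obstruction classes stay independent in $H^2(G,\cc^\times)$. This makes the surjection an isomorphism when $G=H\times K$.
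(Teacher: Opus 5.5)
Your argument is correct and is essentially the paper's: the surjection $\am(\pp^1,H)\oplus\am(S_d,K)\twoheadrightarrow\am(X,G)$ from the direct-product proposition, the bound $\am(\pp^1,H)\subseteq\zz/2\zz$ coming from $-K_{\pp^1}$, and \cite[Proposition A.7]{BCDP} for the del Pezzo factor. Your passage to a $K$-minimal model via Theorem \ref{imprimitivebcdp} makes explicit a step the paper leaves implicit.

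The corollary is only an upper-bound statement, and the paper claims no realization for these families, so your realization paragraph is not needed. If you did want realization, you would still have to exhibit some $S_d$ with $d\le 5$ and $\am(S_d,K)\cong(\zz/2\zz)^2$, respectively $\zz/3\zz$. Also, the slice $\{p\}\times S_d$ is invariant only under $\{1\}\times K$, not under $G$.
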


\newpage

\section{Table for Amitsur groups}   \label{section:tableforAmitsur}
In the following tables, column 1 lists the Mori–Mukai names of the deformation
families of smooth Fano threefolds. Column 2 lists the possible groups AutP(X,G), that is, the images of finite groups $G \subseteq \aut(X)$ in $\aut(\pic(X))$. For each entry in
column 2, column 3 records the largest possible Amitsur group $\am(X,G)$. Column 5 lists the references where $\am(X,G)$ has been classified for the
corresponding family.
\small{\begin{longtable} {|p{1.8cm}|p{2.0cm}|p{3.0cm}|p{3.5cm}|}
 \caption{Fano threefolds}
 \label{table:amitsur}\\
    \hline
     MM \textnumero & $\autp(X,G)$ & $\am(X,G)$ &  Refer\\
\hline
     1.1, \dots , 1.10    & 0   & 0 & \cite{sharma2026amitsurgroupsprimitivefano} \\
\hline
     1.11    & 0  & $0$ & \cite[Lemma 4.3]{sharma2026amitsurgroupsprimitivefano} \\
\hline
     1.12  & 0   & $\zz/2\zz$ & \cite[Lemma 4.4]{sharma2026amitsurgroupsprimitivefano} \\
\hline
    1.13  & 0    & $0$ & \cite[Lemma 4.6]{sharma2026amitsurgroupsprimitivefano} \\
\hline
    1.14  & 0  & $\zz/2\zz$ & \cite[Lemma 4.7]{sharma2026amitsurgroupsprimitivefano} \\
\hline
   1.15  &  0  & $0$ & \cite[Lemma 4.8]{sharma2026amitsurgroupsprimitivefano} \\
\hline
   1.16  & 0  & $0$ & \cite[Lemma 4.6]{sharma2026amitsurgroupsprimitivefano} \\
\hline
   1.17  &  0  & $\zz/4\zz$ & \cite[Example 2.3]{sharma2026amitsurgroupsprimitivefano} \\
\hline
 2.1  &  0   & $0$ & Corollary \ref{Am:2.1}\\
\hline
 2.2  & $0$  & 0 & \cite[Lemma 7.2]{sharma2026amitsurgroupsprimitivefano} \\
\hline
 2.3  & 0  & $\zz/2\zz$ & Lemma \ref{Am:2.3} \\
\hline
2.4 &  0 & $\zz/2\zz$ & Lemma \ref{Am:2.4}\\
\hline
 2.5  & 0  & $0$ & Corollary \ref{Am:2.5,2.11}\\
\hline
 2.6 (a)  &  $\zz/2\zz$ & $0$ & \cite[Lemmas 5.2, 7.5]{sharma2026amitsurgroupsprimitivefano} \\
         &    $0$       &   $\zz/3\zz$  &      \\
\hline
 2.7  & 0   & $0$ & Lemma \ref{Am:2.7,2.13,2.17,2.21,2.23,2.29,2.31} \\
\hline
 2.8  & 0   & $0$ & \cite[Lemma 7.6]{sharma2026amitsurgroupsprimitivefano} \\
\hline
 2.9  & 0  & $0$ & Lemma \ref{Am:2.9} \\
\hline
 2.10  & 0  & $\zz/2\zz$ & Lemma \ref{Am:2.10} \\
\hline
 2.11  &  0  & $0$ & Corollary \ref{Am:2.5,2.11} \\
\hline
 2.12  & $\zz/2\zz$  & $0$ & Lemma \ref{Am:2.12}\\
       &   $0$      & $\zz/4\zz$ &      \\
\hline
 2.13  & 0  & $0$ & Lemma \ref{Am:2.7,2.13,2.17,2.21,2.23,2.29,2.31} \\
\hline
 2.14 &  0   & $0$ & Lemma \ref{Am:2.14,2.20,2.22,2.26} \\
\hline
 2.15  & 0   & $0$ & Lemma \ref{Am:2.15}\\
\hline
 2.16  & 0    & $0$ & Lemma \ref{Am:2.16} \\
\hline
 2.17  &  0  & $0$ & Lemma \ref{Am:2.7,2.13,2.17,2.21,2.23,2.29,2.31} \\
\hline
 2.18  &  0   & $0$ & \cite[Lemma 7.3]{sharma2026amitsurgroupsprimitivefano} \\
\hline
 2.19  & 0   & $\zz/2\zz$ & Lemma \ref{Am:2.19}\\
\hline
2.20  & $0$  & $0$ & Lemma \ref{Am:2.14,2.20,2.22,2.26} \\
\hline
 2.21  &  $\zz/2\zz$   & $0$ & Lemma \ref{Am:2.7,2.13,2.17,2.21,2.23,2.29,2.31} \\
       &    $0$        &  $0$  &   \\
\hline
 2.22  &  $0$   & $0$ & Lemma \ref{Am:2.14,2.20,2.22,2.26} \\
\hline
 2.23  & $0$    & $0$ & Lemma \ref{Am:2.7,2.13,2.17,2.21,2.23,2.29,2.31} \\
\hline
 2.24  & $0$   & $\zz/3\zz$ & \cite[Lemma 5.3]{sharma2026amitsurgroupsprimitivefano} \\
\hline
2.25  & $0$   & $\zz/2\zz$ & Lemma \ref{Am:2.25}\\
\hline
 2.26  & $0$    & $0$ & Lemma \ref{Am:2.14,2.20,2.22,2.26} \\
\hline
 2.27  & $0$     & $\zz/2\zz$ & Lemma \ref{Am:2.27}\\
\hline
 2.28  & $0$    & $0$ & Lemma \ref{Am:2.28}\\
\hline
 2.29  & $0$   & $0$ & Lemma \ref{Am:2.7,2.13,2.17,2.21,2.23,2.29,2.31} \\
\hline
 2.30  & $0$   & $0$ & Lemma \ref{Am:2.30}\\
\hline
 2.31  & $0$  & $0$ & Lemma \ref{Am:2.7,2.13,2.17,2.21,2.23,2.29,2.31} \\
\hline
 2.32  & $\zz/2\zz$   & $0$ & \cite[Lemma 5.4]{sharma2026amitsurgroupsprimitivefano}\\ 
      &   $0$      &   $\zz/3\zz$ &  \\
\hline
 2.33  & $0$   & $\zz/2\zz$ & Lemma \ref{Am:3.6,3.12}; Toric, \cite{sharma2026amitsurgroupsprimitivefano} \\ 
\hline
 2.34  & $0$    & $\zz/6\zz$  & \\  
    &     &     & Toric, \cite{sharma2026amitsurgroupsprimitivefano} \\
\hline
 2.35  & $0$   & $0$ & Lemma \ref{Am:2.35}, Toric \\
\hline
 2.36  & $0$   & $0$ & Toric, \cite{sharma2026amitsurgroupsprimitivefano} \\
\hline
 3.1  & $0$   &  $(\zz/2\zz)^2$ & \cite[Lemma 7.8]{sharma2026amitsurgroupsprimitivefano}\\
     &  $\zz/2\zz$   & $\zz/2\zz$  &  \\
     &  $\zz/3\zz$   & $0$   &    \\
     &  $S_3$       & $0$    &    \\
\hline
 3.2  & $0$   & $\zz/2\zz$ & \cite[Lemma 6.1]{sharma2026amitsurgroupsprimitivefano}\\
\hline
 3.3  & $\zz/2\zz$   & $0$ & Lemma \ref{Am:3.3} \\
       &  $0$      &  $\zz/2\zz$   &    \\
\hline
 3.4  & $0$   & $0$ & \cite[Corollary 7.4]{sharma2026amitsurgroupsprimitivefano}\\
\hline
 3.5  & $0$   & $\zz/2\zz$ &  Lemma \ref{Am:3.5}\\
\hline
 3.6  & $0$     & $\zz/2\zz$ & Lemma \ref{Am:3.6,3.12}\\
\hline
 3.7  & $\zz/2\zz$   & $0$ &  Lemma \ref{Am:3.7}\\
      &         & $\zz/3\zz$    &  \\
\hline
 3.8  & $0$  & $0$ & Lemma \ref{Am:3.8}\\
\hline
 3.9  & $\zz/2\zz$  & 0 & Lemma \ref{Am:3.9} \\
\hline
 3.10  & $\zz/2\zz$   & $0$ & Lemma \ref{Am:3.10,3.15,3.20} \\
\hline
 3.11  & $0$   & $0$ & Corollary \ref{Am:blowup_of_2.35}\\
\hline
 3.12  & $0$    & $\zz/2\zz$ & Lemma \ref{Am:3.6,3.12} \\
\hline
 3.13  & $S_3$    & $0$  & Lemma \ref{Am:3.13}\\
       & $\zz/3\zz$   & $0$ &              \\
       & $\zz/2\zz$    & $0$ &             \\
       &   $0$         & $0$ &             \\
\hline
 3.14  & $0$    & $0$ &  Corollary \ref{Am:blowup_of_2.35}\\
\hline
 3.15  & $0$    & $0$ & Lemma \ref{Am:3.10,3.15,3.20} \\
\hline
 3.16  & $0$   & $0$ & Corollary \ref{Am:blowup_of_2.35}\\
\hline
 3.17  & $\zz/2\zz$   & $0$ & Lemma \ref{Am:3.17}\\
       &   $0$      & $\zz/2\zz$      &    \\
\hline
 3.18  & $0$   & $0$ &  Lemma \ref{Am:3.18}\\
\hline
 3.19  & $\zz/2\zz$  & $0$ & Lemma \ref{Am:3.19} \\
\hline
 3.20  & $\zz/2\zz$   & $0$ &  Lemma \ref{Am:3.10,3.15,3.20}\\
\hline
 3.21  & $0$   & 0 & Lemma \ref{Am:3.21} \\
\hline
 3.22  &  $0$   & $0$ & Lemma \ref{Am:3.22}\\
\hline
 3.23  & $0$   & $0$ & Corollary \ref{Am:blowup_of_2.35} \\
\hline
 3.24  & $0$   & $0$  & Lemma \ref{Am:3.24} \\
\hline
 3.25  & $\zz/2\zz$   & $\zz/4\zz$  & Toric, \cite{sharma2026amitsurgroupsprimitivefano}\\
       &  $0$      & $\zz/2\zz$  &    \\
\hline
 3.26  & $0$    & $0$ & Corollary \ref{Am:blowup_of_2.35}; Toric, \cite{sharma2026amitsurgroupsprimitivefano} \\
\hline
 3.27  & $S_3$   & $\zz/2\zz$ & Toric, \cite{sharma2026amitsurgroupsprimitivefano} \\
   
 & $\zz/3\zz$    & $\zz/2\zz$ &  \\
  
 & $\zz/2\zz$  & $(\zz/2\zz)^2$  &   \\
 
 & $0$ &  $(\zz/2\zz)^3$  &   \\
\hline
 3.28  & $0$   & $\zz/2\zz$ & Toric, \cite{sharma2026amitsurgroupsprimitivefano} \\
\hline
 3.29  & $0$   & $0$ & Corollary \ref{Am:blowup_of_2.35}; Toric, \cite{sharma2026amitsurgroupsprimitivefano} \\
\hline
 3.30  & $0$   & $0$ & Corollary \ref{Am:blowup_of_2.35}; Toric, \cite{sharma2026amitsurgroupsprimitivefano}\\
\hline
 3.31  & $\zz/2\zz$   & $0$ & Toric, \cite{sharma2026amitsurgroupsprimitivefano}\\
 &     $0$  &   $\zz/2\zz$  &  \\
\hline
 4.1  & $0$ &  $(\zz/2\zz)^3$ & Lemma \ref{Am:4.1}\\
    &  $\zz/2\zz$  &   $(\zz/2\zz)^2$     &     \\   & $\zz/3\zz$  &   $\zz/2\zz$     &    \\
    &  $\zz/4\zz$   &     $0$   &  \\ 
    &  $(\zz/2\zz)^2$      & ?    &  \\
    &  $S_3$   &  $\zz/2\zz$     &  \\
    &  $A_4$   &  $0$   &  \\
    &  $S_4$   &   $0$   &  \\
\hline
 4.2  & $(\zz/2\zz)^2$ &  ?  & Lemma \ref{Am:4.2}\\
\hline
 4.3  & $\zz/2\zz$    & $0$ &  Lemma \ref{Am:4.3}\\
 &  $0$   &     $\zz/2\zz$   &  \\
\hline
 4.4  & $\zz/2\zz$   & $0$ &  Lemma \ref{Am:4.4}\\
\hline
 4.5  & $0$   & $0$ & Corollary \ref{Am:4.5} \\
\hline
 4.6  & $S_3$   & $\zz/2\zz$ & Lemma \ref{Am:4.6}\\
      &  $\zz/3\zz$   & $\zz/2\zz$     &                 \\
      &  $\zz/2\zz$   & $\zz/2\zz$      &                 \\
      &  $0$   &  $\zz/2\zz$     &                 \\
\hline
 4.7  & $\zz/2\zz$  & $0$ & Lemma \ref{Am:4.7} \\
\hline
 4.8  & $\zz/2\zz$  & $0$ &  Lemma \ref{Am:4.8} \\
 &  $0$   &    $\zz/2\zz$   &  \\
\hline
 4.9 &  $0$ &  $0$ & Toric, \cite{sharma2026amitsurgroupsprimitivefano} \\
\hline
 4.10  & $\zz/2\zz$  & $\zz/2\zz$ & Toric, \cite{sharma2026amitsurgroupsprimitivefano} \\
\hline
 4.11  & $\zz/2\zz$    & $\zz/2\zz$ & Toric, \cite{sharma2026amitsurgroupsprimitivefano} \\
 &  $0$  &    $0$   & \\
\hline
 4.12  & $0$   & $0$ & Toric, \cite{sharma2026amitsurgroupsprimitivefano}\\
     &                                 &  &  \\
\hline
 4.13  & $0$   & $\zz/2\zz$ & Lemma \ref{Am:4.13} \\
       & $\zz/2\zz$    &  $\zz/2\zz$  &   \\
\hline
 5.1  &  $S_3$      &  $0$ & Lemma \ref{Am:5.1}\\
      & $\zz/3\zz$    & $0$ &  \\ 
      & $\zz/2\zz$   &  $0$  &   \\
      & $0$        &   $0$  &   \\
\hline
 5.2  &  $\zz/2\zz$   & $\zz/2\zz$ & Toric, \cite{sharma2026amitsurgroupsprimitivefano} \\
     &   $0$        &   $0$   &    \\
\hline
 5.3  & 0 &  $\zz/2\zz$ & Toric, \cite{sharma2026amitsurgroupsprimitivefano} \\
 &   $\zz/2\zz$ &   $\zz/2\zz$  & \\
 &    $\zz/2\zz$ &   $\zz/2\zz$  &  \\
 &   $\zz/2\zz$ &   $(\zz/2\zz)^3$  & \\
 &  $\zz/3\zz$ &    $\zz/6\zz$  &  \\
 &    $(\zz/2\zz)^2 $   &  $(\zz/2\zz)^2$  & \\
 &  $S_3$ &   $\zz/2\zz$  & \\
 &   $S_3$ &    $\zz/6\zz$  &  \\
 &    $\zz/6\zz$    & $\zz/2\zz$   &  \\
 &   $D_6$ &    $\zz/2\zz$  &   \\
\hline
 6.1  & $S_5$   & - & Corollary \ref{amforFanodirectproducts} \\
\hline
 7.1  & -   & - & Corollary \ref{amforFanodirectproducts}\\
\hline
 8.1  &  -   & - & Corollary \ref{amforFanodirectproducts}\\
\hline
 9.1  & -    & - & Corollary \ref{amforFanodirectproducts}\\
\hline
 10.1  & -   & - & Corollary \ref{amforFanodirectproducts} \\
\hline  
\end{longtable}}

\newpage

\bibliographystyle{alphaurl} 
\bibliography{bibis} 

\end{document}